\documentclass{amsart}
\usepackage{amssymb}
\usepackage{hyperref}
\usepackage{graphicx} 
\usepackage{float}
\usepackage[matrix, arrow,all,cmtip,color]{xy}
\usepackage{comment}
\usepackage{tikz}

\newtheorem{theorem}{Theorem}[section]
\newtheorem*{theorem*}{Theorem}

\newtheorem{fact}[theorem]{Fact}
\newtheorem{lemma}[theorem]{Lemma}
\newtheorem{corollary}[theorem]{Corollary}
\newtheorem{claim}[theorem]{Claim}
\newtheorem{proposition}[theorem]{Proposition}

\newtheorem{alphthm}{Theorem}

\theoremstyle{definition}
\newtheorem{definition}[theorem]{Definition}
\newtheorem{example}[theorem]{Example}
\newtheorem{non-example}[theorem]{Non-Example}
\newtheorem{convention}[theorem]{Convention}
\newtheorem{punchline}[theorem]{Punchline}
\newtheorem{notation}[theorem]{Notation}

\newtheorem{assumption}[theorem]{Assumption}
\newtheorem{remark}[theorem]{Remark}

\newcommand{\acl}{\operatorname{acl}}

\newcommand{\tp}{\operatorname{tp}}

\makeatletter
\let\@wraptoccontribs\wraptoccontribs
\makeatother

\title[Topological Reconstruction Theorems]{Topological Reconstruction Theorems over Uncountable Algebraically Closed Fields}

\author{Benjamin Castle}
\address{Department of Mathematics, University of Illinois Urbana-Champaign}
\email{btcastl2@illinois.edu}

\author{Ronan O'Gorman}
\address{Department of Mathematics, University of California Berkeley}
\email{ronan\_ogorman@berkeley.edu}

\thanks{Castle was supported by NSF grant DMS-2452735.}

\begin{document}

\maketitle

\begin{abstract}
    Working over uncountable algebraically closed fields, we extend the theorems of Koll\'ar-Lieblich-Olsson-Sawin on reconstructing varieties from their Zariski topological spaces. In particular, we adapt their results to arbitrary quasi-projective varieties in arbitrary characteristic, and thus we give positive answers to each of the relevant `speculations' made by the original authors in our setting. Our proofs use techniques from model theory: in particular, we employ a general model-theoretic setting for algebro-geometric reconstruction problems, known as the `Zilber trichotomy for ACF-relics'. 
\end{abstract}

\tableofcontents

\section{Introduction}

\subsection{Motivation} In \cite{KLOS} (which we often refer to as `KLOS'), the authors prove the following remarkable result.\footnote{The authors also prove slightly weaker statements over countable fields of characteristic zero; we state the uncountable version here because it is clean and closest to the present paper.}

\begin{notation}
    Below, if $X$ is a scheme, then we denote its underlying topological space by $|X|$; and if $f:X\rightarrow Y$ is a morphism of schemes, then we similarly denote its underlying topological map by $|f|:|X|\rightarrow|Y|$. (We will later identify $X$ with $|X|$ when clear from context; see Convention \ref{Con: set of points}).
\end{notation}

\begin{fact}[\cite{KLOS}, uncountable case]\label{F: KLOS}
Let $K_1$ and $K_2$ be uncountable fields of characteristic $0$, and let $X_1$ and $X_2$ be geometrically integral, normal, projective varieties of dimension at least 2 over $K_1$ and $K_2$, respectively. Then any homeomorphism
\[
\varphi : |X_1| \to |X_2|
\]
    factors as a field isomorphism followed by an isomorphism of $K_2$-varieties.\footnote{The statement in \cite[Main Theorem 1.2.1]{KLOS} is a bit more precise, because it tracks the embedding into projective space across the isomorphism, but the paraphrasing above is the main point.} That is, there are a field isomorphism $\sigma:K_1\rightarrow K_2$, and an isomorphism $f:\sigma(X_1)\rightarrow X_2$ of $K_2$-varieties, such that on $|X_1|$ we have $\varphi=|f|\circ\sigma$.
\end{fact}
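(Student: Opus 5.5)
Since this is the KLOS theorem (Fact \ref{F: KLOS}), I would follow the strategy of \cite{KLOS}: from the bare topological space $|X_1|$, recover enough projective-geometric structure to invoke the fundamental theorem of projective geometry. I work with closed points (after base change to an algebraic closure, or using uncountability to control Galois-orbit phenomena). The first step is to read off purely topologically as much as possible: dimension (Krull dimension of $|X|$), irreducible closed subsets, and hence curves (the one-dimensional irreducible closed subsets). A homeomorphism $\varphi$ therefore induces a bijection between the curves on $X_1$ and those on $X_2$ that respects incidence.

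The second step is to identify linear equivalence of divisors topologically; this is the heart of KLOS. Uncountability of the field is what makes this possible. For an effective divisor $D$, the set $|D|$ can be characterized as a maximal family of divisors such that some pair $D, D'$ has a pencil (a one-parameter family of hypersurface sections whose members cover $X$ and meet pairwise only in the base locus). Pencils are recognized topologically as follows: $X \setminus \mathrm{Bs}$ is partitioned into uncountably many closed subsets of codimension one, any two of which have the same closures' intersection. Normality and $\dim \ge 2$ are needed to make Weil divisors, their intersections, and the pencil criterion behave well. I expect this step, showing that topological ``pencil-like'' decompositions are exactly genuine pencils (linearly equivalent divisors), to be the main obstacle. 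My first attempt would use Bertini/Hilbert-irreducibility-type arguments to show that an uncountable partition into closed sets must come from a rational map to a curve. A further argument (Lüroth, plus normality) would then show that, after Stein factorization, the curve is $\mathbb{P}^1$ and the fibers are linearly equivalent.

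Third, having recovered the linear systems $|nH|$ for an ample $H$ as sets of divisors together with their pencils (lines in the projective space $\mathbb{P}(H^0(nH))$), I apply the fundamental theorem of projective geometry. Since the dimension is at least 2, the linear systems have projective dimension at least 2, so $\varphi$ induces a semilinear isomorphism $\mathbb{P}(H^0(X_1,nH_1)) \cong \mathbb{P}(H^0(X_2,nH_2))$ with respect to some field isomorphism $\sigma: K_1 \to K_2$. Compatibility across $n$ (via multiplication of sections and the section ring $\bigoplus_n H^0(nH)$) gives a $\sigma$-semilinear graded ring isomorphism. Taking $\mathrm{Proj}$ then yields $f: \sigma(X_1) \to X_2$. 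Finally, I would check that $|f| \circ \sigma$ agrees with $\varphi$ on closed points: a closed point is determined as the intersection of the divisors in $|nH|$ through it, and these divisors are matched by construction. Density of closed points then extends the agreement to all of $|X_1|$.
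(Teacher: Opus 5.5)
The paper does not prove this statement; it quotes it from \cite{KLOS}. The introduction summarises the KLOS argument in two steps: (1) $\varphi$ preserves linear equivalence, via ``topological pencils are algebraic'' (\cite[Corollary 7.3.5, Theorem 7.6.1]{KLOS}); (2) a \emph{definable} fundamental theorem of projective geometry (\cite[Theorem 3.1.5]{KLOS}). The paper's own machinery only reproves the case of algebraically closed $K_i$ (Proposition \ref{P: qp spec}), by a different route: definability of hyperplane images together with the Zilber trichotomy.

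Your outline follows the KLOS route, but both load-bearing steps have genuine gaps.

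\textbf{Step 2.} Your pencil criterion does not characterise linear equivalence on normal varieties. Let $X\subset\mathbb P^3$ be the projective cone over a smooth plane cubic $E$, with vertex $v$; this is a normal, geometrically integral, projective surface. The ruling lines $L_a$ ($a\in E$) partition $X\setminus\{v\}$ and pairwise meet exactly in $\{v\}$, so they form a topological pencil in your sense. They are the fibres of the projection $X\dashrightarrow E$, and under $\operatorname{Cl}(X)\cong\operatorname{Cl}(E)$ (\cite[Exercise II.6.3]{Hartshorne}) the class of $L_a$ is $[a]$. Hence $L_a\not\sim L_b$ for $a\neq b$. The exceptional curve over the base point is $E$ itself, so ``L\"uroth plus normality'' gives nothing here. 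You would need an extra hypothesis, such as a base point in the smooth locus, and then an argument that such restricted pencils still generate linear equivalence.

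Separately, the claim that such an uncountable partition comes from a rational map to a curve at all is the heart of the matter, and you do not supply it. Uncountability has to enter through something like a countability argument: the relevant Hilbert or Chow schemes have only countably many components, so uncountably many members lie in a single algebraic family. Bertini or Hilbert irreducibility does not do this.

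\textbf{Step 3.} The classical fundamental theorem of projective geometry does not apply, because a homeomorphism only sees supports. You do not recover $|nH|$ ``as a set of divisors'': distinct members can have the same support, e.g.\ $2D_1+D_2$ and $D_1+2D_2$ in $|3H|$ for $D_1,D_2\in|H|$. So $\varphi$ does not induce a well-defined bijection $\mathbb P(H^0(nH_1))\to\mathbb P(H^0(nH_2))$. At best you get a bijection between the dense open loci of reduced members that respects traces of lines. You then need a theorem extending such a partial collineation to a semilinear isomorphism; this is exactly why KLOS prove their ``definable'' version. The same problem affects your compatibility of the maps across $n$ via the section ring.

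\textbf{Two smaller points.}
\begin{itemize}
\item The parenthetical base change to $\overline{K_i}$ is not available. A homeomorphism of $|X_i|$ induces nothing on $|X_{i,\overline{K_i}}|$, and $\sigma$ must relate $K_1$ and $K_2$ themselves. You must work throughout with $K_i$-rational divisors and with closed points that have nontrivial residue fields.
\item ``Density of closed points'' does not by itself extend agreement from closed points to all of $|X_1|$ in a non-Hausdorff space. Use instead that $X_1$ is Jacobson and sober: every point is the generic point of the closure of the closed points in its closure. Hence two homeomorphisms that agree on closed points agree everywhere.
\end{itemize}
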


This says that in many cases, the full algebraic structure of a variety is (quite surprisingly) encoded entirely in its underlying Zariski topological space. Of course, one is naturally led to wonder if the same holds e.g. for \textit{affine} varieties, and in all characteristics. Indeed, the authors of \cite{KLOS} state several `speculations' about potential adaptations of Fact \ref{F: KLOS} to varieties which are (i) quasi-projective\footnote{We should note that over uncountable algebraically closed fields of characteristic zero, the authors \textit{did} prove the analog of Fact \ref{F: KLOS} for \textit{proper} irreducible normal varieties (in place of projective ones) in \cite{OlderKLOS}; but as far as we know, the general quasi-projective case (and in particular the \textit{affine} case) has remained open.}, (ii) potentially non-normal, (iii) potentially reducible, and (iv) defined over positive characteristic fields. In particular, they point out that their argument fails in various ways in each of these more general settings. \textbf{In the current paper, we use model theory to prove each of the given speculations in the special case that the $K_i$ are algebraically closed.} 

\subsection{Statement of the main theorem in the irreducible case}

Before stating the main result, we note (as do the authors of \cite{KLOS}) that the formulation in Fact \ref{F: KLOS} \textit{cannot} work in general, for two main reasons:

\begin{enumerate}
    \item There are many varieties with bijective normalization maps, and these give homeomorphisms which are not isomorphisms (and are not even reducible to isomorphisms modulo field isomorphisms). In fact, the \textit{inverse} of a bijective normalization gives a homeomorphism which can't even be reduced to a \textit{morphism} (let alone an isomorphism).
    \item In positive characteristic, the Frobenius map introduces many new homeomorphisms in the form of \textit{purely inseparable maps}. For example, one can check that the map $\varphi:\mathbb A^2\rightarrow\mathbb A^2$, $(x,y)\mapsto(x,y^p)$, is a homeomorphism over any perfect field of characteristic $p$ which does not admit a factorization of the form in Fact \ref{F: KLOS}. \footnote{As noted in \cite{KLOS}, a similar projective example can be arranged using the product of two elliptic curves. Interestingly, the $p$th root map $(x,y)\mapsto(\operatorname{
    Fr}^{-1}(x),\operatorname{Fr}^{-1}(y))$ on $\mathbb A^2$ \textit{does} factor in the form of Fact \ref{F: KLOS}, using the inverse Frobenius as the field isomorphism $\sigma$; the point of the $(x,y)\mapsto(x,y^p)$ example is to have two \textit{different} Frobenius powers in the same map, so that no field isomorphism can handle both simultaneously.}
\end{enumerate}

In light of (1) and (2) above, when considering arbitrary varieties, one must at minimum accept two changes to the statement of Fact \ref{F: KLOS}:

\begin{enumerate}
    \item One should only hope to see an algebraic map of $K_2$-varieties after passing to the \textit{normalizations} of the $X_i$;
    \item and one should only hope for this algebraic map to be a \textit{\textit{purely inseparable morphism}}, not an isomorphism.
\end{enumerate}

We have now motivated our main theorem in the irreducible case (Theorem \ref{T: main} below -- the reducible case will be discussed in the next subsection). The theorem says that for irreducible varieties over uncountable algebraically closed fields, Fact \ref{F: KLOS} holds in full generality up to normalizations and purely inseparable morphisms.

    Recall that a \textit{universal homeomorphism} of schemes is a morphism of schemes which induces a homeomorphism on the underlying topological spaces, even after arbitrary base change. A universal homeomorphism of $K$-varieties is a morphism of $K$-varieties which is a universal homeomorphism of schemes. Note that if $K$ is algebraically closed, this is equivalent to the simpler assertion that $f$ is a morphism over $K$ and $|f|$ is a homeomorphism (as the statement about base change follows). 
    

\begin{convention}
    Throughout the paper, we denote the normalization of a scheme $X$ by $\widetilde{X}$.
\end{convention}

\begin{theorem}[Main theorem, irreducible case]\label{T: main}
Let $K_1$ and $K_2$ be uncountable algebraically closed fields, and for $i=1,2$ let $X_i$ be an irreducible quasi-projective variety of dimension at least 2 over $K_i$. Let
\[
\varphi : |X_1| \to |X_2|
\]
be a homeomorphism. Then there are a field isomorphism $\sigma : K_1 \to K_2$, and a universal homeomorphism $f:\widetilde{\sigma(X_1)}\rightarrow\widetilde{X_2}$ of $K_2$-varieties, such that we have a commuting square
\[
\xymatrix{
\widetilde{|\sigma(X_1)|} \ar[r]^{|f|} \ar[d] & |\widetilde{X_2}| \ar[d] \\
|\sigma(X_1)|\ar[r]^{\psi} & |X_2|
}
\]
where:
\begin{enumerate}
\item $\varphi=\psi\circ\sigma$ on $|X_1|$.
\item The vertical maps are the (underlying topological maps of the) normalization maps.
\end{enumerate}
\end{theorem}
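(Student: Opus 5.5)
Here is how I would prove the theorem. The idea is to transport the field structure of $K_2$ to $|X_1|$ through $\varphi$, and then invoke the Zilber trichotomy for ACF-relics to recover the field and the algebraic structure.

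\textbf{Step 1: reduce to closed points.} Since $K_i$ is algebraically closed, $|X_i|$ is determined by its closed points with the induced Zariski topology. So $\varphi$ restricts to a homeomorphism of $X_1(K_1)$ onto $X_2(K_2)$. Following Convention \ref{Con: set of points}, we identify each $X_i$ with this set of points.

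\textbf{Step 2: build a relic.} Consider the structure $\mathcal M$ with universe $X_2(K_2)$ and, as basic relations, the $\varphi$-images of all Zariski closed subsets of $X_1^n$ for all $n$. Each such relation is a closed subset of $X_2^n$ in the product of the Zariski topologies. It is not, however, automatically constructible in $K_2$.

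The crucial point is to show that $\mathcal M$ is (interdefinable with) a reduct of $(K_2,+,\cdot)$, i.e.\ an ACF-relic. Only the topology on $X_1$ is transported, not the topology on $X_1^n$. So I would first recover the product topology and the graphs of algebraic relations from the topology of $X_1$ alone.

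Following KLOS, I would use the lattice of closed subsets of $X_1$ to recover linear equivalence of curves, and hence pencils. Uncountability enters here, in the style of KLOS: a family of curves is \emph{countably} determined exactly when it moves algebraically. From pencils one defines the fibers of rational maps $X_1\dashrightarrow\mathbb P^1$. This makes a copy of a one-dimensional algebraic family definable in $\mathcal M$. Because $\varphi$ sends closed curves to closed curves, all of this is also constructible over $K_2$.

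Working with curves rather than Weil divisors, and using the normalization only through its bijection on a dense open set, should let me avoid the normality and projectivity hypotheses.

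\textbf{Step 3: apply the trichotomy.} Both $\mathcal M$ and its pullback structure on $X_1$ are non-locally-modular of rank $\geq 2$: they define two-dimensional families of curves, namely the pencils through pairs of points. By the Zilber trichotomy for ACF-relics, $\mathcal M$ interprets a field $F$ definably isomorphic to $K_2$. The same structure, pulled back, interprets a field isomorphic to $K_1$ over $X_1$, since the definable field is an invariant of the structure.

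Rigidity of definable fields in ACF, namely that every definable field isomorphism is a field automorphism composed with a Frobenius power, then produces $\sigma:K_1\to K_2$. With $\psi=\varphi\circ\sigma^{-1}$, the map $\psi$ is a homeomorphism $\sigma(X_1)\to X_2$ whose graph is constructible over $K_2$.

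\textbf{Step 4: conclude.} A bijection with constructible graph between varieties over an algebraically closed field is generically a purely inseparable rational map. It is a homeomorphism, and a homeomorphism maps irreducible closed sets to irreducible closed sets. Hence the Zariski closure $\Gamma$ of its graph is a closed subvariety projecting bijectively onto both factors.

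Take normalizations. The map $\widetilde\Gamma\to\widetilde{\sigma(X_1)}$ is finite, birational up to inseparability, and bijective, so by Zariski's main theorem it is a universal homeomorphism. Inverting it, and after replacing $\sigma$ by $\sigma$ composed with a Frobenius power if necessary so that this side is an isomorphism, we get the morphism $f:\widetilde{\sigma(X_1)}\to\widetilde{X_2}$. The square commutes because it commutes on a dense set and all the maps are continuous.

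\textbf{Main obstacle.} The hardest step is Step 2: showing that the transported relations are constructible over $K_2$, i.e.\ that $\mathcal M$ really is an ACF-relic rather than an arbitrary topological structure. This requires recovering product-type information from the topology of $X_1$ alone. I would first try to define, using uncountability, the family of irreducible curves through a point and the incidence relation on $X\times X$ given by lying on a common member of a pencil. I would then show that this incidence relation is constructible, since it is a countable union of constructible sets that is closed under the relevant operations.
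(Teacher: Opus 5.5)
Your Step 2 is not a step of the proof. As you set it up, it restates the whole reduction. Requiring the $\varphi$-images of \emph{all} closed subsets of all $X_1^m$ to be constructible in $K_2$ is equivalent to requiring that $\varphi$ be a field isomorphism followed by a $K_2$-definable map: one direction is the isomorphism theorem for full relics, the other is trivial. That is, it is equivalent to Theorem \ref{T: reduction to definable case}, and it is also why fullness looks free in your Step 3.

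The sketch you give for Step 2 does not work. ``Because $\varphi$ sends closed curves to closed curves, all of this is also constructible over $K_2$'' is a non sequitur. That each member of a transported family is closed says nothing about whether the incidence relation of the family on $X_2^m$ is constructible. Likewise, a countable union of constructible sets is not made constructible by being ``closed under the relevant operations''.

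The KLOS linear-equivalence route is exactly what breaks in this generality:
\begin{itemize}
\item linear equivalence is often trivial on affine varieties, e.g.\ on $\mathbb A^n$;
\item homeomorphisms coming from purely inseparable maps need not respect it;
\item its recovery in KLOS uses the geometry of normal projective varieties;
\item pencils give only one-dimensional families, whereas your Step 3 needs a two-dimensional family (and for $\dim X\geq 3$ curves are not divisors at all).
\end{itemize}

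The missing idea is how to transport a higher-dimensional algebraic family through $\varphi$ and then certify that its image is constructible. The paper transports only the relation $CH$ of lying on a common component of a hyperplane section, in four steps:
\begin{enumerate}
\item It characterizes these components as the codimension-one subvarieties not $n$-swept by the generic hyperplane orbit (Theorem \ref{L: hyperplane sweeping}).
\item By induction on $k$, using twisted spans, it builds an orbit in $X_2$ that agrees with the $\varphi$-image of the hyperplane orbit on a large set in the Baire-category sense (Theorem \ref{T: existence of linked orbits}).
\item It shows that such linked orbits sweep the same good subvarieties (Theorem \ref{T: preservation}). This yields only that $\varphi(CH_1)$ is a countable union of constructible sets.
\item It upgrades this to constructibility using a homeomorphism-invariant ``definable shape'' with a chain condition. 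This applies because $\varphi(CH_1)$ is the image of a $K_1$-constructible set (Corollary \ref{C: ind-definable implies definable}).
\end{enumerate}
Because this relic is small, its fullness is no longer automatic. It genuinely needs the trichotomy and the internality argument of Theorem \ref{T: full relic}.

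Step 4 also hides real work. You infer that since $\psi$ preserves irreducible closed sets, the closure of its graph projects bijectively onto both factors. That inference cannot be right as stated, because it applies equally to curves. There, the transposition of $0$ and $1$ in $\mathbb A^1$ is a definable homeomorphism whose graph closure contains both $(0,0)$ and $(0,1)$.

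In dimension $\geq 2$ the conclusion holds, but proving it is the content of Section 11. One compares $\varphi\circ\nu_X$ with the rational map on $\widetilde X$. Agreement at codimension-one points comes from a closure argument using Fact \ref{F: curve through tuple}. Normality then extends the map across codimension two, and closedness of $\varphi\circ\nu_X$ gives agreement everywhere.

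Similarly, bijectivity of $\widetilde\Gamma\to\widetilde{X_2}$ does not follow from bijectivity of $\Gamma\to X_2$, and a Frobenius twist can make only one side birational. The paper gets bijectivity of the lift by applying the lifting result to $\varphi^{-1}$ (Lemma \ref{L: nonrational commuting square}). It then uses that $\widetilde\Gamma$ contains only one $d$-dimensional irreducible closed subvariety (Lemma \ref{L: gen rat full}).
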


\begin{remark} In fact, the same statement holds for the larger class of \textit{strongly connected varieties} -- see Definition \ref{D: str con} and Theorem \ref{T: main reducible}.
\end{remark}

As in Fact \ref{F: KLOS}, Theorem \ref{T: main} provides a factorization of the original homeomorphism $\varphi$ into two different homeomorphisms, where the first is induced by a field isomorphism. The difference is that now, rather than being an isomorphism of the varieties themselves, the second map \textit{lifts to a universal homeomorphism of the normalizations}. Note that by Zariski's main theorem (\cite[\href{https://stacks.math.columbia.edu/tag/05K0}{Tag 05K0}]{stacks-project}), \cite[\href{https://stacks.math.columbia.edu/tag/01S2}{Tag 01S2}]{stacks-project} and \cite[\href{https://stacks.math.columbia.edu/tag/04DF}{Tag 04DF}]{stacks-project}, a morphism of varieties which induces a homeomorphism of underlying topological spaces is a universal homeomorphism if and only if it is purely inseparable (i.e. radicial). Thus, we are really saying that (i) our original homeomorphism $\varphi$ lifts to a homeomorphism of normalizations $\tilde\varphi:|\widetilde{X_1}|\rightarrow|\widetilde{X_2}|$, and (ii) after factoring out an appropriate field isomorphism, the lifted map of normalizations arises from a purely inseparable morphism of varieties. 

 In light of the discussion preceding Theorem \ref{T: main}, this theorem statement seems as strong as one could expect for arbitrary (irreducible) varieties. In particular, using Theorem \ref{T: main}, it is straightforward to deduce each of the relevant speculations in \cite{KLOS} (over uncountable algebraically closed fields); see the last section of this paper for more details. For example, if the $K_i$ have characteristic zero and the $X_i$ are normal, then $f$ is just an isomorphism $\sigma(X_1)\rightarrow X_2$, and we get exactly the original conclusion of Fact \ref{F: KLOS} (i.e. over algebraically closed fields, this shows that Fact \ref{F: KLOS} holds verbatim without assuming projectivity, which is precisely \cite[Speculation 2.2.14]{KLOS}).

 \subsection{Extensions to reducible varieties}

 Let us now describe how Theorem \ref{T: main} generalizes to reducible varieties. In \cite[Theorem 2.3.3]{KLOS}, the authors also give a purely scheme-theoretic formalization of their main theorem. Namely, let $X_1,X_2$ be integral, normal, projective varieties of dimension at least 2 over uncountable fields $K_i$ of characteristic zero. Then \textit{every homeomorphism $|X_1|\rightarrow|X_2|$ underlies a scheme isomorphism} (the idea being that both factors $|f|$ and $\sigma$ of the composition $\varphi=|f|\circ\sigma$ from Fact \ref{F: KLOS} underlie scheme isomorphisms).

 At this level of abstract schemes, we get a clean generalization to the reducible case, which in particular recovers \cite[Speculation 2.2.10]{KLOS} in our setting (again, see the last section of this paper for a full proof):

 \begin{theorem}[Main theorem, reducible case]\label{T: reducible schemes}
     Let $K_1,K_2$ be uncountable algebraically closed fields, and let $X_i$ be quasi-projective varieties over $K_i$, all of whose irreducible components have dimension at least 2. Then any homeomorphism $|X_1|\rightarrow |X_2|$ lifts to a universal homeomorphism of normalized schemes $\widetilde{X_1}\rightarrow\widetilde{X_2}$.
 \end{theorem}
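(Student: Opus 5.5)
Our plan is to reduce Theorem \ref{T: reducible schemes} to the irreducible-type statement (Theorem \ref{T: main}, and more precisely its strongly connected version, Theorem \ref{T: main reducible}) applied componentwise, and then glue. First, a homeomorphism $\varphi:|X_1|\to|X_2|$ maps irreducible closed subsets to irreducible closed subsets, and hence generic points to generic points and maximal irreducible closed subsets to maximal ones. So $\varphi$ induces a bijection between the irreducible components $Z$ of $X_1$ and $W$ of $X_2$, together with homeomorphisms $\varphi_Z:|Z|\to|W|$ (reduced structures). The normalization $\widetilde{X_i}$ is the disjoint union of the normalizations of the reduced irreducible components, $\widetilde{X_i}=\bigsqcup_Z\widetilde{Z}$. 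Hence it suffices to produce, for each component $Z$ with image $W$, a universal homeomorphism of schemes $\widetilde Z\to\widetilde W$ lifting $\varphi_Z$. These then assemble on the disjoint union into a morphism lifting $\varphi$, and a disjoint union of universal homeomorphisms is again a universal homeomorphism.

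For a single component, we apply Theorem \ref{T: main} to $\varphi_Z$. This is legitimate since $Z,W$ are irreducible, quasi-projective, of dimension at least $2$, and the $K_i$ are uncountable and algebraically closed. It yields a field isomorphism $\sigma_Z:K_1\to K_2$ and a universal homeomorphism $f_Z:\widetilde{\sigma_Z(Z)}\to\widetilde W$ of $K_2$-varieties compatible with $\psi_Z$ and the normalization maps. Now $\sigma_Z$ induces an isomorphism of abstract schemes $\widetilde Z\cong\widetilde{\sigma_Z(Z)}$, since base change along a field isomorphism commutes with normalization. Composing gives a scheme morphism $g_Z:\widetilde Z\to\widetilde W$ that is a universal homeomorphism (an isomorphism composed with one), and whose underlying map is $\tilde\varphi_Z=|f_Z|\circ\sigma_Z$. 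The commuting square of Theorem \ref{T: main} together with $\varphi_Z=\psi_Z\circ\sigma_Z$ shows that $g_Z$ lies over $\varphi_Z$, i.e.\ that it lifts $\varphi$ on this component.

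The subtle points are as follows. First, we must check that ``universal homeomorphism of $K_2$-varieties'' in Theorem \ref{T: main} is a universal homeomorphism of schemes. This holds by the definition recalled before the theorem. Second, different components may require different $\sigma_Z$. This is harmless, because the conclusion concerns only abstract schemes and the normalization is a disjoint union, so no compatibility between the $\sigma_Z$ is needed. We therefore expect no serious obstacle beyond the correct bookkeeping of the non-reduced versus reduced structure: normalization only sees $X_{\mathrm{red}}$, and $|X|=|X_{\mathrm{red}}|$. Finally, we note that the lifted map is uniquely determined on points, because the normalization map is a bijection over the dense open normal locus and $\tilde\varphi$ is continuous. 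So ``lifts'' here means precisely that the square with the normalization maps commutes.
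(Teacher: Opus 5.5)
Your proposal is correct and follows the paper's proof: use the bijection of irreducible components induced by $\varphi$, apply Theorem \ref{T: main} to each component, note that both the field isomorphism and the universal homeomorphism of $K_2$-varieties underlie universal homeomorphisms of abstract schemes (so different $\sigma$'s on different components do no harm), and glue over $\widetilde{X_i}=\bigsqcup_C\widetilde C$.

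Your closing uniqueness remark is not needed for the statement, and its justification is too quick. Continuity plus agreement on a dense open set does not force equality in the non-Hausdorff Zariski topology: for a nodal curve, swapping the two points over the node is a continuous lift of the identity. The claim does hold here, but only because the constructed lift is a homeomorphism (hence closed) and $\dim\geq 2$ lets you pass an irreducible subvariety through a point while avoiding another.
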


Note that Theorem \ref{T: reducible schemes} does \textit{not} say anything about factorizations in the form of Theorem \ref{T: main}, and (as pointed out in \cite{KLOS}) that issue is a bit more subtle; see Example \ref{E: planes meeting at a point}.

Theorem \ref{T: reducible schemes} is a straightforward corollary of Theorem \ref{T: main}. We note that a homeomorphism of varieties induces homeomorphisms on irreducible components, then apply Theorem \ref{T: main} to get a universal homeomorphism of normalizations on each component. But normalizations separate components, so our component-wise universal homeomorphisms can then be glued into a global universal homeomorphism. Note that this last step crucially needs the scheme-theoretic formulation: while we do \textit{separately} get factorizations $\varphi_j=\psi_j\circ\sigma_j$ on each normalized component of $X_1$, there is no reason that the field isomorphisms $\sigma_j$ should be the same (thus we cannot automatically glue componentwise into a morphism of \textit{varieties}). Indeed, essentially for this reason, the naive translation of Theorem \ref{T: main} to the reducible case \textit{cannot} work:

\begin{example}\label{E: planes meeting at a point}
   Let $X$ be a quasi-projective variety over the uncountable algebraically closed field $K$, and assume $X$ is the union of two proper closed sets $X=C\cup D$ with finite intersection (e.g. consider two planes meeting in a point, see \cite[Remark 2.2.11]{KLOS}). Let $F\leq K$ be a finitely generated field capable of defining $X$, $C$, $D$, and all points of $C\cap D$. Now define a homeomorphism $\varphi:X\rightarrow X$ by acting as the identity on $C$ and acting as some $\tau\in\operatorname{Gal}(K/F)$ on $D$. Then one can show that for suitably general $\tau$, the resulting homeomorphism $\varphi$ cannot be factored into a \textit{single} field automorphism composed with a morphism of varieties (see Lemma \ref{L: counterexample reducible}).
\end{example}

In \cite[Remark 2.2.11]{KLOS}, the authors mention that 0-dimensional intersections (as in Example \ref{E: planes meeting at a point}) might be the only obstruction to obtaining a factorization as in Fact \ref{F: KLOS}. This is the content of our final main theorem:

\begin{definition}\label{D: str con}
    Let $X$ be a variety over an algebraically closed field $K$. Say that $X$ is \textit{strongly connected} if one cannot write $|X|$ as the union of two proper closed subsets with finite intersection.
\end{definition}

\begin{theorem}[Main theorem, strongly connected version]\label{T: main reducible}
    Let $K_1,K_2$ be uncountable algebraically closed fields, and for $i = 1,2$ let $X_i$ be a strongly connected quasi-projective variety over $K_i$, all of whose irreducible components have dimension at least 2. Then any homeomorphism $\varphi:|X_1|\rightarrow |X_2|$ satisfies the conclusion of Theorem \ref{T: main}. That is, there are a field isomorphism $\sigma : K_1 \to K_2$ and a commuting square
\[
\xymatrix{
\widetilde{|\sigma(X_1)|} \ar[r]^{|f|} \ar[d] & |\widetilde{X_2}| \ar[d] \\
|\sigma(X_1)| \ar[r]^{\psi} & |X_2|
}
\]
where $\varphi=\psi\circ\sigma$ on $|X_1|$ and $f:\widetilde{\sigma(X_1)}\rightarrow\widetilde{X_2}$ is a universal homeomorphism of $K_2$-varieties.
\end{theorem}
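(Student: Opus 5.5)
The plan is to reduce to the irreducible case, Theorem \ref{T: main}, and then use strong connectedness to show that the field isomorphisms obtained on the different components agree.

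\textbf{Step 1: componentwise factorizations.} Write $X_1=\bigcup_{j} C_j$ as the union of its irreducible components. Since $\varphi$ is a homeomorphism, it sends irreducible closed sets to irreducible closed sets. Hence it permutes components, sending $C_j$ homeomorphically onto a component $D_j$ of $X_2$. Each $C_j$ has dimension at least $2$, so Theorem \ref{T: main} gives, for each $j$, a field isomorphism $\sigma_j:K_1\to K_2$ and a universal homeomorphism $f_j:\widetilde{\sigma_j(C_j)}\to\widetilde{D_j}$ lifting $\psi_j:=\varphi|_{C_j}\circ\sigma_j^{-1}$. Normalization separates components, so $\widetilde{X_i}$ is the disjoint union of the normalizations of the components. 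Therefore it suffices to find a \emph{single} $\sigma$ that works for every $j$. Concretely, I must show that for each $j$ the twisted map $\varphi|_{C_j}\circ\sigma^{-1}$ still lifts to a universal homeomorphism of normalizations; the $f_j$ then glue.

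\textbf{Step 2: comparing two field isomorphisms.} Suppose $C_j\cap C_k$ has positive dimension. Then on an irreducible curve $Z\subseteq C_j\cap C_k$ we have two descriptions of $\varphi|_Z$: as $\psi_j\circ\sigma_j$ and as $\psi_k\circ\sigma_k$, where each $\psi$ is, up to normalization, a purely inseparable morphism. Thus $\tau:=\sigma_k\circ\sigma_j^{-1}$ is an automorphism of $K_2$. It carries the curve $\sigma_j(Z)$ to $\sigma_k(Z)$ in such a way that, after normalizing, $\tau$ restricted to the point set agrees with a composition of a purely inseparable morphism and the inverse of another. I then want a rigidity lemma: \emph{if an automorphism $\tau$ of an uncountable algebraically closed field, applied pointwise to an infinite constructible set (e.g. a curve), agrees with a constructible (algebraic) correspondence, then $\tau$ is a power of Frobenius on $K_2$.} (In characteristic $0$, this says $\tau=\mathrm{id}$.)

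I would prove this using a non-constant rational function $g$ defined over a countable field $F$ of definition. The identity $g(\tau(z))=h(g(z))$ shows that $\tau$ agrees with a definable function (over countably many parameters) on the image of $g$, which is cofinite in $K_2$. So $\tau$ is a definable field automorphism of ACF, and such automorphisms are Frobenius powers. A Frobenius power is itself a purely inseparable morphism, so I can absorb it into $f_k$: replace $\sigma_k$ by $\sigma_j$ and $f_k$ by $f_k$ composed with the appropriate twist. Frobenius twists of normalizations remain normalizations, since $\operatorname{Fr}$ is a homeomorphism of the underlying spaces.

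\textbf{Step 3: propagating along a chain.} Form the graph on components, joining $C_j$ and $C_k$ when $\dim(C_j\cap C_k)\ge 1$. If this graph were disconnected, the unions of components in two complementary parts would be proper closed sets covering $X_1$ with finite intersection, contradicting strong connectedness. So the graph is connected. Fix a base component and a spanning tree, and successively normalize each $\sigma_k$ to equal the base $\sigma$ by Step 2, absorbing Frobenius powers into the $f_k$. This yields a single $\sigma$. Setting $\psi:=\varphi\circ\sigma^{-1}$, the square commutes on each normalized component, hence globally.

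\textbf{Main obstacle.} I expect the hardest part to be the rigidity lemma in Step 2. The difficulty is showing rigorously that agreement on a curve with a purely inseparable correspondence (passing through normalizations of possibly singular intersections) forces $\tau$ to be definable. A further subtlety is that $\sigma_j$ from Theorem \ref{T: main} is only determined up to such twists, so the absorption has to be done compatibly with the lifts. If the direct argument is messy, I would instead exploit the uniqueness built into the proof of Theorem \ref{T: main}: the field recovered by the model-theoretic Zilber-trichotomy reconstruction from any positive-dimensional piece of a curve is canonical up to definable isomorphism.
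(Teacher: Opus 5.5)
Your route differs from the paper's and is sound in outline, but one step fails as written: the absorption of Frobenius twists in Steps 2--3. Your rigidity lemma gives $\sigma_k\circ\sigma_j^{-1}=\operatorname{Fr}^{m}$ with $m\in\mathbb Z$, and $m$ may be negative. For $m<0$, $\operatorname{Fr}^{m}$ is not a morphism, so ``a Frobenius power is itself a purely inseparable morphism'' is false there.

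If your fixed base component carries a larger exponent than $C_k$, then $\varphi|_{C_k}\circ\sigma^{-1}=\psi_k\circ\operatorname{Fr}^{m}$ with $m<0$. This need not lift to any morphism of normalizations, and it really happens. Take two planes meeting in a line, with $\varphi$ given by $(x,y)\mapsto(x^p,y)$ on the first plane and by Frobenius on the second. Then $\sigma_1=\mathrm{id}$ and $\sigma_2=\operatorname{Fr}$ are legitimate outputs of Theorem \ref{T: main}. But taking the second component as base forces the $p$-th root map $(x,y)\mapsto(x,y^{1/p})$ on the first (normal) plane, which is not a morphism.

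The repair uses that there are finitely many components. Connectivity gives $\sigma_k=\operatorname{Fr}^{m_k}\circ\sigma_1$ for all $k$; set $\sigma:=\operatorname{Fr}^{\mu}\circ\sigma_1$ with $\mu=\min_k m_k$. Then every $\sigma_k\circ\sigma^{-1}=\operatorname{Fr}^{m_k-\mu}$ has nonnegative exponent. It is therefore the relative Frobenius $\sigma(C_k)\to\sigma_k(C_k)$, a universal homeomorphism. It lifts to the relative Frobenius of normalizations because normalization commutes with base change along the field automorphism $\operatorname{Fr}$ of $K_2$, not merely because $\operatorname{Fr}$ is a homeomorphism. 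This matches the paper's remark that one negative Frobenius power serves all components.

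The step you flag as the main obstacle is actually routine. Each $\psi_j$ is $K_2$-definable: its graph is the image of the graph of $f_j$ under $\nu\times\nu$, hence constructible by Chevalley. So no normalization of the intersection is needed, and $\tau$ agrees on $\sigma_j(Z)$ with the definable bijection $\psi_k^{-1}\circ\psi_j$. To make your identity precise, take $g$ to be a coordinate ratio over the prime field that is nonconstant on $\sigma_j(Z)$. Then $\tau\circ g=g\circ\tau=g\circ\psi_k^{-1}\circ\psi_j$ on $\sigma_j(Z)$. This exhibits the graph of $\tau$ on the cofinite set $g(\sigma_j(Z))$ as constructible, so $\tau$ is definable and hence a Frobenius power.

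As for the comparison: the paper never compares componentwise field isomorphisms. It glues the components' hyperplane relics into one relic $\mathcal X^{hyp}$ and proves it full (Theorem \ref{T: reducible full}). It does this by showing that the interpreted fields of adjacent components are mutually internal through a common curve, upgrading non-orthogonality to internality via very ampleness. It then applies the isomorphism theorem for full relics once to obtain a single $\sigma$, and only afterwards fixes one Frobenius power and glues the componentwise lifts.

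You instead treat Theorem \ref{T: main} as a black box and replace that internality analysis with an elementary rigidity lemma for field automorphisms. Your version is more self-contained and shows directly how an infinite intersection pins down $\sigma_k\circ\sigma_j^{-1}$. The paper's version stays inside the relic framework it has already built and gets the common $\sigma$ in one step. Both rest on the same connectivity graph.
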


As before, in light of Example \ref{E: planes meeting at a point}, the statement of Theorem \ref{T: main reducible} seems as strong as one could hope for.

\subsection{Linear equivalence and the KLOS strategy}
We have now finished describing our main theorems. We now discuss the basic proof strategy in the irreducible case (which contains most of the work). To begin, let us review the general idea of the proof in \cite{KLOS}, with the goal of motivating the appearance of model theory. The `slogan' is that the proof has two main parts:
\begin{itemize}
    \item \textbf{Part 1:} Recover linear equivalence.
    \item \textbf{Part 2:} Apply a variant of the fundamental theorem of projective geometry.
\end{itemize}

Below we elaborate on these two parts and how we will adapt them. First, let us state the following once and for all:

\begin{convention}\label{Con: set of points}
    \textbf{Throughout the rest of this paper, we adopt the following conventions: 
    \begin{enumerate}
        \item We identify a variety $X$ over an algebraically closed field $K$ with the underlying Zariski topological space on the set $X(K)$ of closed points. In particular, the notation `$x\in X$' should be read as `$x$ is a closed point of $X$'. If $Y$ is another variety over $K$, we identify a morphism of varieties $X\rightarrow Y$ with the underlying map on closed points. 
        \item If $Y$ is another variety, a \textit{homeomorphism} $X \to Y$ will be any homeomorphism of (the closed points of) the underlying topological spaces.
        \item A \textit{universal homeomorphism} will continue to refer to a morphism of varieties which induces a homeomorphism on the underlying topological spaces after arbitrary base change.
    \end{enumerate} 
    }
    
\end{convention}

\begin{remark}
    Convention \ref{Con: set of points} is harmless here, because a homeomorphism of (underlying spaces of) schemes in particular induces a homeomorphism of closed points, and over algebraically closed fields the converse holds also. Working only with closed points will allow us to view varieties as model-theoretic structures in a natural way.
\end{remark}

Now let us assume we have a homeomorphism $\varphi:X_1\rightarrow X_2$ as above, where the $X_i$ are irreducible quasi-projective varieties of dimension at least 2 over uncountable algebraically closed fields $K_i$. We discuss the basic proof strategy of \cite{KLOS} and its limitations.

To start, suppose $K_1=K_2=\mathbb C$ and $X_1=X_2=\mathbb P^2$. So $\varphi$ is a self homeomorphism of the complex projective plane. In this case, a reasonable idea is to show (by tracking the number of intersection points of curves) that $\varphi$ sends lines to lines, and then apply the \textit{classical} fundamental theorem of projective geometry (which gives exactly the conclusion of Fact \ref{F: KLOS} for permutations of the projective plane sending lines to lines). Indeed, it is possible to make this sketch precise, and one obtains a fairly quick proof of Fact \ref{F: KLOS} in this case.

Now assume only that $K_1,K_2,X_1,X_2,\varphi$ satisfy the hypotheses of Fact \ref{F: KLOS}. A naive attempt at adapting the above strategy would be to show that $\varphi$ sends \textit{hyperplane sections} to \textit{hyperplane sections}, and then apply an appropriate adaptation of the fundamental theorem of projective geometry for hyperplane sections. Unfortunately this cannot work, because the family of hyperplane sections is not an invariant of a variety (it depends on the embedding into projective space). Instead, the correct move (and that taken in \cite{KLOS}) is to consider arbitrary \textit{linear systems of divisors} (of which the hyperplane sections form a canonical example). Indeed, the main intermediate step in the proof of Fact \ref{F: KLOS} shows that $\varphi$ preserves the linear equivalence relation on divisors (\cite[Theorem 7.6.1]{KLOS}); the authors then develop a `generic' (in their words, `definable') version of the fundamental theorem of projective geometry (\cite[Theorem 3.1.5]{KLOS}), aimed at reconstructing a variety from its linear systems.

Now if we generalize to arbitrary varieties in arbitrary characteristic, this strategy begins to fail; the slogan here is that linear equivalence is `too precise' and `too rigid', and only behaves as expected under strong assumptions. For instance, linear equivalence is often trivial in affine varieties (e.g. any two divisors in $\mathbb A^n$ are linearly equivalent), and therefore recovering linear equivalence cannot possibly be enough to recover all algebraic structure; moreover, in positive characteristic, one can build purely inseparable maps that do not even respect linear equivalence in the first place; and finally, even assuming characteristic zero and projectivity, the recovery of linear equivalence in \cite{KLOS} uses very precise geometric information that does not seem applicable to non-normal varieties.

\subsection{The role of model theory}

So what should one do? The first main challenge in adapting Fact \ref{F: KLOS} is to identify an appropriate coarse analog of linear equivalence (i.e. to serve as an analog of Part 1 above). Precisely, let $\mathcal H$ be the collection of irreducible components of hyperplane sections in $X_1$, and let $\varphi(\mathcal H)$ be the family of images in $X_2$. Then one must identify the `right' property of $\varphi(\mathcal H)$ which is (1) weak enough to be true and provable in full generality, and (2) strong enough to imply a version of the fundamental theorem of projective geometry.

\begin{punchline} \textbf{In this paper, we argue that the `right' notion to study is model-theoretic \textit{definability} in the language of fields.}
\end{punchline}

Indeed, our main intermediate step (Theorem \ref{T: definability of hyperplane images}), in analogy with that of \cite{KLOS}, is that $\varphi(\mathcal H)$ is a \textit{definable} family of varieties in the sense of the field structure on $K_2$. Importantly, we \textit{do not} claim $\varphi(\mathcal H)$ is a linear system, or even that it forms a locally closed family; the proof \textit{only} gives abstract definability. On the other hand, deep results in model theory imply that definability is sufficient: indeed, we then proceed to apply a very coarse and flexible model-theoretic analog of the fundamental theorem of projective geometry -- namely the \textit{Zilber trichotomy} (\cite{CasHasYe}, discussed more below) -- which runs purely on abstract definability data. The upshot is that, using model theory, we can adapt the argument from \cite{KLOS} to run only on very coarse geometric information on $X_1$ and $X_2$ -- and this turns out to be well suited for handling homeomorphisms between less rigid classes of varieties.

To summarize, we have the following diagram comparing the proof strategies of \cite{KLOS} and the current paper:

\begin{table}[h]
\centering
\renewcommand{\arraystretch}{1.3}
\begin{tabular}{|p{0.45\textwidth}|p{0.45\textwidth}|}
\hline
\textbf{KLOS} & \textbf{This Paper} \\
\hline
\textbf{Part 1:} Recover linear equivalence of divisors
&
\textbf{Part 1:} Show that $\varphi(\mathcal H)$ is definable
\\
\hline
\textbf{Part 2:} Apply a variant fundamental theorem of projective geometry
&
\textbf{Part 2:} Apply Zilber's trichotomy
\\
\hline
\end{tabular}
\end{table}

\subsection{Full relics and isomorphisms}

Let us now give more background on the model-theoretic content of the paper, expanding on Parts 1 and 2 above. Our proof of Theorem \ref{T: main} is expresed most naturally in the language of \textit{relics} of algebraically closed fields (or \textit{ACF-relics}):

\begin{definition}[See \cite{CasHasVA}, Section 4]\label{D: relic}
    Let $K$ be an algebraically closed field.
    \begin{enumerate}
        \item A \textit{$K$-relic} is a model-theoretic structure $\mathcal M=(M,...)$ where
        \begin{enumerate}
            \item The universe $M$ of $\mathcal M$ is a constructible\footnote{Recall that a \textit{constructible} set is a finite union of locally closed sets, or equivalently a finite Boolean combination of closed sets. By \textit{quantifier elimination} in algebraically closed fields, the constructible subsets of a given projective space are exactly those subsets which are definable in the language of fields.} subset of some $\mathbb P^n(K)$.
            \item Every basic relation of $\mathcal M$ is a constructible subset of some $M^m$.
        \end{enumerate}
        \item A $K$-relic $\mathcal M=(M,...)$ is \textit{full} if \textit{every} constructible subset of every power of $M$ is definable in $\mathcal M$.
    \end{enumerate}
\end{definition}

One should think of a \textit{relic} as a variety equipped with partial data, and a \textit{full relic} as a relic admitting a \textit{reconstruction theorem} (i.e. where the given partial data determines all algebraic structure on the variety). This is expressed concretely in the following \textit{isomorphism theorem for full relics} (\cite[Lemma 2.6]{CaHaDC}) (in particular, note the resemblance of \ref{F: iso thm} to both Fact \ref{F: KLOS} and Theorem \ref{T: main}):\footnote{In full generality, this statement first appeared in \cite{CaHaDC}. However, the argument goes back at least to \cite{PoizatBT} and has been used in other papers since, e.g. \cite[Lemma 2.3]{CasHasAV}.}

\begin{fact}[Isomorphism theorem for full relics]\label{F: iso thm} Let $K_1,K_2$ be algebraically closed fields. Let $\mathcal M_1$ be a full $K_1$-relic, and let $\mathcal M_2$ be a full $K_2$-relic\footnote{In fact, in Fact \ref{F: iso thm} we do not need to assume $\mathcal M_2$ is full, because by \cite[Remark 4.13]{CasHasVA}, any ACF-relic isomorphism to a full ACF-relic is itself full.} in the same language as $\mathcal M_1$. Then any isomorphism of structures $\varphi:\mathcal M_1\rightarrow\mathcal M_2$ decomposes into a field isomorphism composed with a definable isomorphism. That is, there are a field isomorphism $\sigma:K_1\rightarrow K_2$, and a $K_2$-definable isomorphism of structures $f:\sigma(\mathcal M_1)\rightarrow\mathcal M_2$, so that on $\mathcal M_1$ we have $\varphi=f\circ\sigma$.
\end{fact}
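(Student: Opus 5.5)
The plan is to run Poizat's argument: code the field $K_1$ inside $\mathcal M_1$ by $\mathcal M_1$-definable data, transport the coding along $\varphi$, and read off a field isomorphism. Since $\mathcal M_1$ is full and $K_1$ is infinite, choose a constructible injection $\iota_1$ of an infinite constructible subset of $\mathbb P^1(K_1)$ into some power $M_1^k$. Take for instance a curve in $M_1$ with a finite-to-one map to $\mathbb P^1$, and then quotient by a constructible equivalence relation. Over $K_1$ this yields an interpretation of the field: a set $F_1$ with $F_1\subseteq M_1^k$ (or $M_1^k/E_1$) and graphs of $+,\cdot$, all constructible. By fullness these are definable in $\mathcal M_1$ by formulas $\theta(\bar x,\bar a)$ with parameters $\bar a\in M_1$. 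Together with $\iota_1$, this gives an $\mathcal M_1$-definable field $\mathbb F_1$ and a $K_1$-definable bijection $\beta_1:K_1\to\mathbb F_1$ that is a field isomorphism. It is cleanest to choose $\mathbb F_1$ in bijection with all of $K_1$, which is possible because $K_1$ is constructibly in bijection with $\mathbb P^1$ minus a point.

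Next apply $\varphi$. Because $\varphi$ is an isomorphism of structures in the same language, the same formulas with parameters $\varphi(\bar a)$ define in $\mathcal M_2$ a field $\mathbb F_2=\varphi(\mathbb F_1)$, and $\varphi$ restricts to a field isomorphism $\mathbb F_1\to\mathbb F_2$. Now $\mathcal M_2$ is a $K_2$-relic, so $\mathbb F_2$ is an infinite field interpretable in the algebraically closed field $K_2$. By Poizat's theorem (an infinite field interpretable in ACF is definably isomorphic to the ground field), there is a $K_2$-definable isomorphism $\beta_2:K_2\to\mathbb F_2$. Set $\sigma=\beta_2^{-1}\circ\varphi\circ\beta_1:K_1\to K_2$; this is a field isomorphism.

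It remains to verify that $f:=\varphi\circ\sigma^{-1}$, defined on $\sigma(M_1)$, is $K_2$-definable. By fullness (and the fact that relic universes are constructible), every point $m\in M_1$ lies in a constructible family coded by field coordinates. Concretely, the coordinate map $c_1:M_1\to K_1^N$ (affine charts of $\mathbb P^n$) is constructible, hence $\mathcal M_1$-definable after composing with $\beta_1$. So $m\mapsto \beta_1(c_1(m))\in\mathbb F_1^N$ is an $\mathcal M_1$-definable injection, and its image under $\varphi$ is an $\mathcal M_2$-definable, hence $K_2$-definable, injection $\gamma:M_2\to\mathbb F_2^N$. Chasing the diagram then gives $\varphi(m)=\gamma^{-1}(\beta_2(\sigma(c_1(m))))$, so $f=\gamma^{-1}\circ\beta_2\circ c_2$ on $\sigma(M_1)$, where $c_2$ is the coordinate map for $\sigma(M_1)$. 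This composite is $K_2$-definable. Finally, $f$ is an isomorphism of structures because $\sigma$ carries each basic relation of $\mathcal M_1$ to the corresponding relation of $\sigma(\mathcal M_1)$ and $\varphi$ preserves them.

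The main obstacle is the Poizat step, that is, identifying the interpreted field $\mathbb F_2$ with $K_2$ by a definable isomorphism. I will simply quote it. The other delicate point is bookkeeping: $\mathbb F_1$ must be defined with the correct parameters, and imaginaries must be handled, by working with $M_1^k/E$ or by using elimination of imaginaries in ACF to replace quotients with constructible sets.
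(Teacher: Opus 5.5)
Your proposal is correct and is exactly the Poizat-style argument that the paper relies on by citing \cite[Lemma 2.6]{CaHaDC} rather than reproving it; as the footnote anticipates, you never use fullness of $\mathcal M_2$. Two things should be made explicit: $M_1$ must be infinite (a finite relic is trivially full, and then no $\sigma$ need exist, e.g.\ if $K_1\not\cong K_2$), and the copy of $K_1$ really has to be interpreted in $\mathcal M_1^{eq}$ --- say as $C^2/E$ for a curve $C\subseteq M_1$ and a finite-to-one coordinate function $g$ on it, with $(c,d)$ representing $g(c)-g(d)$, which also covers all of $K_1$ rather than only a cofinite part --- because your alternative of replacing the quotient by a constructible set via elimination of imaginaries in ACF produces sets that $\varphi$ cannot transport.
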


Now suppose $\varphi:X_1\rightarrow X_2$ is our homeomorphism (so $X_i$ is an irreducible, quasi-projective variety of dimension at least 2 over the uncountable algebraically closed field $K_i$). We want to prove Theorem \ref{T: main} by applying Fact \ref{F: iso thm} to $\varphi$. However, this does \textit{not} immediately work. The problem is that the Zariski topology on a variety $X$ is \textit{not} a definable object, so does \textit{not} obviously determine a $K$-relic structure on $X$ (in model-theoretic language, the collection of Zariski open sets is not uniformly definable). Indeed, arguably the main challenge of the whole paper is to identify a map of $K_i$-relics to which Fact \ref{F: iso thm} can be applied. 

Our solution to the above problem is precisely where the `two-part' strategy outlined above comes into play. Indeed, let us assume $X_1$ has already been embedded as a locally closed subvariety of some $\mathbb P^n$, which is moreover non-degenerate (i.e. not contained in any hyperplane). Then we will prove Theorem \ref{T: main} by applying Fact \ref{F: iso thm} to the following structures:

\begin{definition}
    Let $\mathcal X_1^{hyp}$ be the structure with universe $X_1$ and with the following basic relations:
    \begin{itemize}
        \item The $(n+1)$-ary relation $CH_1\subset X_1^{n+1}$, where $(a_1,...,a_{n+1})\in CH_1$ if and only if $a_1,...,a_{n+1}$ lie on a common irreducible component of some hyperplane section in $X_1$.
        \item A unary predicate for each $K_0$-definable subset of $X_1$, where $K_0\leq K$ is a fixed countable algebraically closed field over which $X_1$ is defined.
    \end{itemize}
    Then we let $\mathcal X_2^{hyp}=(X_2,...)$ be the structure on $X_2$ obtained by applying $\varphi$ to all basic relations of $\mathcal X_1^{hyp}$.
\end{definition}

Then $\mathcal X_1^{hyp}$ is a $K_1$-relic, which encodes hyperplane incidence (via $CH_1$), in addition to some topological data (via the unary predicates).

Trivially, $\varphi$ defines an isomorphism $\mathcal X_1^{hyp}\rightarrow\mathcal X_2^{hyp}$; so we hope to apply Fact \ref{F: iso thm}. There are two obstacles to making this work, and they correspond precisely to the two-part slogan described above:

\begin{enumerate}
    \item Fact \ref{F: iso thm} requires that $\mathcal X_2^{hyp}$ is also a $K_2$-relic. This precisely amounts to the statement that the family of hyperplane images $\varphi(\mathcal H)$ considered above is definable in $K_2$, and thus is solved by Part 1 above.
    \item Fact \ref{F: iso thm} also requires that the $\mathcal X_i^{hyp}$ are \textit{full} $K_i$-relics. Fortunately, there are powerful model-theoretic tools for verifying fullness of ACF-relics -- the most important being \textit{Zilber's trichotomy} mentioned above -- and indeed, fullness follows here from Zilber's trichotomy combined with a short model-theoretic analysis of $\mathcal X_1^{hyp}$. So this is solved by Part 2 above.
\end{enumerate}

To summarize, we can now rewrite our `proof strategy' table in the language of relics. This will serve as an organizational guideline for the rest of the paper.

\begin{table}[h]
\centering
\renewcommand{\arraystretch}{1.3}
\begin{tabular}{|p{0.45\textwidth}|p{0.45\textwidth}|}
\hline
\textbf{KLOS} & \textbf{This Paper} \\
\hline
\textbf{Part 1:} Recover linear equivalence of divisors
&
\textbf{Part 1:} Show that $\mathcal X_2^{hyp}$ is a $K_2$-relic.
\\
\hline
\textbf{Part 2:} Apply a variant fundamental theorem of projective geometry
&
\textbf{Part 2:} Show that the $\mathcal X_i^{hyp}$ are full and apply the isomorphism theorem for full relics.
\\
\hline
\end{tabular}
\end{table}

After Parts 1 and 2 are finished, we have achieved a massive reduction in the problem at hand: namely, in proving Theorem \ref{T: main}, we may assume $\varphi$ is a \textit{definable homeomorphism} between varieties over the \textit{same field}. In particular, the graph of $\varphi$ is now a constructible subset of $X_1\times X_2$, and it follows easily that $\varphi$ is given generically by a Frobenius power composed with a rational map. Theorem \ref{T: main} now follows after a (fairly short) analysis of generically rational homeomorphisms (see Theorem \ref{T: normalization square}); the idea is to show that if $\varphi$ agrees \textit{generically} with a rational map $r$, it must agree \textit{everywhere} with $r$; thus $\varphi=r$ is just a bijective morphism. This is conceptually similar to \cite[Section 5.1]{KLOS}. Note that if we were in the setting of \cite{KLOS} (normal varieties in characteristic zero), then our bijective morphism would be an isomorphism, and we would be done. In our more general case, a similar strategy works only for the composite map $\widetilde{X_1}\rightarrow X_1\rightarrow X_2$. Ultimately, we get the analogous conclusion that $\widetilde{X_1}\rightarrow X_2$ is a morphism, which must factor through $\widetilde{X_2}$ by the universal property of normalizations. This results in the commuting square from Theorem \ref{T: main}. Then the fact that $\widetilde{X_1}\rightarrow\widetilde{X_2}$ is bijective (i.e. a universal homeomorphism) follows from applying the same reasoning to $\varphi^{-1}$.

\subsection{Summary of Part 1}

Since Part 1 is the longest and most technical part of the paper, we give a brief motivational guide to the argument. Recall that we want to show $\varphi(\mathcal H)$ is a $K_2$-definable family, where $\varphi:X_1\rightarrow X_2$ is our homeomorphism and $\mathcal H$ is the collection of irreducible components of hyperplane sections in $X_1$. This will be obtained as a corollary to four separate theorems, stated below (in a simplified form) as Theorems \ref{T: intro link existence}, \ref{T: intro link preservation}, \ref{T: intro hyperplane sweeping}, and \ref{T: intro definably shaped}. We hope the reader will find the current outline acceptable motivation for why definability of $\varphi(\mathcal H)$ is the right invariant to study.

Our approach is inspired by the material in \cite[Section 7.3]{KLOS} on \textit{topological pencils}. These are essentially combinatorial analogs of one-dimensional linear systems, formed by partitioning a variety into codimension 1 subvarieties which are potentially allowed to overlap along a fixed base locus (a motivating example is the collection of hyperplanes containing a given codimension 2 linear space). Topological pencils play a crucial role in \cite{KLOS} -- indeed, a key step in the proof is to show that topological pencils are `algebraic' \cite[Corollary 7.3.5]{KLOS}. The authors then proceed to reconstruct various other notions by describing them using pencils. Unfortunately, this process involves very precise geometry, and (just as with linear systems) the arguments seem too rigid to work in our more general setting. In full generality, topological pencils really only tell us something about one-dimensional families, which are not enough to see the family $\mathcal H$ of hyperplanes under consideration.

Instead, we will take `topological pencils are algebraic' as the \textit{base case for an induction}, whose higher-dimensional output is captured by Theorems \ref{T: intro link existence} and \ref{T: intro link preservation} below. In very coarse language, we prove something like `topological $k$-pencils are definable for all $k$'. To be precise, this description is intended only as a guiding heuristic: we do not actually define topological $k$-pencils, and Theorems \ref{T: intro link existence} and \ref{T: intro link preservation} are really more intricate statements in a different language. However, the idea of `extending topological pencils inductively' was and still is the main motivation for everything that follows. In particular, the resulting statements are more complicated than their analogs in \cite{KLOS}, but they are also more powerful. The idea is that the family $\varphi(\mathcal H)$ under consideration is essentially a topological $n$-pencil -- so if our induction is set up correctly, it essentially finishes Part 1. 

To make the above sketch precise, we introduce the language of \textit{sweeping orbits} (more precisely, we will recognize $\varphi(\mathcal H)$ definably using so-called \textit{sweeping data} of a certain orbit). This language is inspired by the model-theoretic notion of sweeping developed in \cite[Section 5]{CasHasVA}. Everything is explicitly defined in Section 2, but we give a summary here. Suppose $K$ is any uncountable algebraically closed field, and $X$ is an irreducible quasi-projective $K$-variety of dimension at least 2. Then:

\begin{enumerate}
    \item A \textit{$k$-sweeping orbit in $X$} is a $\operatorname{Gal}(K/F)$-orbit $O$ of closed irreducible codimension 1 subvarieties of $X$, for some countable subfield $F\leq K$, with the property that any $k$ independent generic points in $X$ lie on a member of $O$.
    \item If the member of $O$ containing a generic $k$-tuple is unique, then $O$ is \textit{uniquely $k$-sweeping}.
    \item Given a uniquely $k$-sweeping orbit $O$, we say that $O$ $k$-\textit{sweeps} a closed irreducible subvariety $Y\subset X$ if any $k$ independent generic points of $Y$ lie on a generic member of $O$ (where `generic' is over the field of definition of $Y$).
\end{enumerate}

\begin{example}\label{E: intro ex sweeping} The canonical example of a uniquely $k$-sweeping orbit is the orbit of generic hyperplane sections in $X$ (which is uniquely $k$-sweeping as long as $X$ is non-degenerately embedded into $\mathbb P^k$). More generally, a $k$-dimensional linear system of divisors on $X$ determines a uniquely $k$-sweeping orbit (but not the other way around). So uniquely $k$-sweeping orbits are intended as a combinatorial generalization of linear systems.
\end{example}

\begin{example}\label{E: intro ex rel sweeping} The `relativization' of sweeping to a subvariety $Y\subset X$ (i.e. (3) above) is intended to capture whether a $k$-dimensional linear system remains $k$-dimensional inside $Y$. For example, if $X=\mathbb P^n$, then the generic hyperplanes form an $n$-dimensional linear system on $X$, and therefore a uniquely $n$-sweeping orbit $O^{hyp}$ on $X$. But if we relativize everything to a hyperplane $Y$ (replacing each $H$ with $H\cap Y$), then the dimension of the system collapses to $n-1$. This says that $O^{hyp}$ does not $n$-sweep $Y$.
\end{example}

Now return to our homeomorphism $\varphi:X_1\rightarrow X_2$, and let $O_1$ be the uniquely $n$-sweeping orbit of generic hyperplane sections in $X_1$. We now state Theorems \ref{T: intro link existence}, \ref{T: intro link preservation}, and \ref{T: intro hyperplane sweeping} in this language: 

 \begin{alphthm}[= Theorem \ref{T: existence of linked orbits}]\label{T: intro link existence} There is a uniquely $n$-sweeping orbit $O_2$ in $X_2$ so that $\varphi(O_1)$ has `non-negligible intersection' with $O_2$ (the precise meaning of non-negligible is given in Definition \ref{D: linked}). 
 \end{alphthm}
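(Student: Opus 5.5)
We want a uniquely $n$-sweeping orbit $O_2$ in $X_2$ with non-negligible overlap with $\varphi(O_1)$. The plan is to \emph{build} $O_2$ from the countability/uncountability mismatch. Only the image family $\varphi(\mathcal H)$ is available, not a definable family in $K_2$, so $O_2$ will be produced by a Baire-category/pigeonhole argument over countable fields.

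\textbf{Step 1: reduce to countably many candidate families.} Fix a countable algebraically closed $F_2\leq K_2$ over which $X_2$ is defined. Each member $H\in O_1$ is a closed irreducible hypersurface in $X_1$. Since $\varphi$ is a homeomorphism, $\varphi(H)$ is a closed irreducible subset of $X_2$ of codimension $1$, because topological dimension (the length of chains of irreducible closed sets) is preserved. So $\varphi(H)$ is a hypersurface defined over some finitely generated extension of $F_2$. There are only countably many constructible families over $F_2$ of closed irreducible codimension-$1$ subvarieties of $X_2$, obtained by stratifying Chow/Hilbert-type parameter spaces. Hence each $\varphi(H)$ is a generic member of some $\operatorname{Gal}(K_2/F)$-orbit, for one of countably many pairs consisting of a countable field $F\supseteq F_2$ and a family.

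\textbf{Step 2: pigeonhole onto one orbit.} The orbit $O_1$ has uncountably many members, and is indexed by generic points of the dual projective space over $K_1$. So some single family $\mathcal F$ must contain $\varphi(H)$ for uncountably many $H\in O_1$. More precisely, we want $\mathcal F$ to capture a set of $H$ that is not contained in any countable union of proper subvarieties of the parameter space of $O_1$. This is the meaning I expect `non-negligible' to have in Definition \ref{D: linked}.

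Uncountability is essential here. A countable union of negligible sets cannot exhaust the generic hyperplanes, because the uncountable field $K_1$ is not a countable union of proper Zariski-closed subsets of the dual space. Let $O_2$ be the generic $\operatorname{Gal}(K_2/F)$-orbit in $\mathcal F$, possibly after shrinking $\mathcal F$ to an irreducible component of the right dimension.

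\textbf{Step 3: $O_2$ is uniquely $n$-sweeping.} This is the main obstacle. In $X_1$, any $n$ independent generic points lie on a unique member of $O_1$. Transporting this through $\varphi$, we get that for non-negligibly many $n$-tuples in $X_2^n$, there is a unique member of $\varphi(O_1)$, hence of $O_2$, through them.

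The difficulty is that $\varphi$ need not send $K_1$-generic tuples to $K_2$-generic tuples. I would handle this with the topological characterization of genericity: a point avoiding the countably many proper closed subsets defined over a countable field. That notion is preserved by $\varphi$, provided we match countable fields appropriately, using a countable field $F_1$ over which $\varphi$ respects all relevant closed sets.

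\begin{itemize}
\item \emph{Dimension count.} The set of $n$-tuples lying on a common member of $O_2$ is then constructible over $F$ and contains a non-negligible, hence Zariski-dense, subset of $X_2^n$. It is therefore all generic tuples, which forces $\dim\mathcal F\geq n$.
\item \emph{Uniqueness.} Uniqueness on a dense set, together with constructibility, gives generic uniqueness: the fibres of the incidence correspondence over $X_2^n$ are generically finite of size one. If needed, we shrink to $\dim \mathcal F=n$.
\end{itemize}

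The delicate point is justifying that `dense in the topological-genericity sense' implies the constructible incidence set is generic. For this I would again use that a constructible set containing an uncountable non-negligible set must contain a generic point.
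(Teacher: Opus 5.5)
Your proposal has a genuine gap, in Step 3.

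\textbf{Uniqueness is claimed for the wrong family.} The failing step is the passage ``a unique member of $\varphi(O_1)$, hence of $O_2$.'' Your pigeonhole only shows that non-negligibly many images $\varphi(H)$ land in $O_2$. It does not show $O_2\subseteq\varphi(O_1)$. (Incidentally, the pigeonhole must be run over $\operatorname{Orb}(B_2)$ for a \emph{fixed} countable $B_2$: there are uncountably many countable fields, so ``countably many pairs $(F,\mathcal F)$'' is not right as stated.)

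$O_2$ is a full Galois orbit. It generally contains many members that are not images of generic hyperplane sections, and these can pass through the same tuple $\varphi(a)$. If $\dim O_2>n$, a generic $n$-tuple of $X_2$ lies on infinitely many members of $O_2$, only one of which is a hyperplane image. Even if $\dim O_2=n$, it could lie on several. So `uniqueness on a dense set' has never been established for $O_2$.

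Likewise, `shrink to $\dim\mathcal F=n$' is not a harmless normalization. Producing a family of the right size through whose generic tuples the member is unique is exactly what the theorem asserts. The paper partially handles this by choosing $(A_2,O_2)$ with $\dim O_2$ minimal among \emph{all} parameter sets and orbits with large preimage. That choice yields only weak linkage (Lemma \ref{L: weakly linked}): over any $B_i$ there are independent $Y_i,Z_i\in(O_i)_{B_i}$ with $\varphi(Y_1)=Y_2$ and $\varphi(Z_1)=Z_2$. Uniqueness still requires a separate argument through Proposition \ref{P: generic witness to non sweeping}.

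\textbf{Genericity transfer only works in arity one.} A countable field over which $\varphi$ ``respects all relevant closed sets'' is the paper's coherent pair, and it only works for subsets of $X_i$. The Zariski topology on $X_i^n$ is not the product topology, so $\varphi\times\dots\times\varphi$ need not carry constructible subsets of $X_1^n$ to constructible subsets of $X_2^n$. If it did, all of Part 1 of the paper would be unnecessary.

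Coherence does settle the case $k=1$. Two independent members of $O_2$ through an $A_2$-generic point would pull back to two members of $O_1$ through an $A_1$-generic point, contradicting unique sweeping (Lemma \ref{L: link existence base case}). For $k\geq 2$, however, you cannot pull an $A_2$-generic tuple in $Y_2\cap Z_2$ back to an $A_1$-generic tuple of $X_1^k$. Nothing in your proposal substitutes for this.

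The missing ingredient in the paper is an induction on $k$:
\begin{itemize}
\item It applies the inductive hypothesis to the smaller pencils $P_1=H^I_{B_1}(X_1)$ and $Q_1=H^J_{B_1}(X_1)$, with $I,J\supset L$, to obtain linked orbits $P_2,Q_2$.
\item It uses Theorem \ref{T: preservation}(2) to show that the twisted span $\operatorname{Span}_2(x)=P_2(x)\cap Q_2(x)$ is $B_2x$-definable at $(B_1,B_2)$-generic $(k-1)$-tuples.
\item It compares generic intersection formats to get $Y\cap Z=\operatorname{Span}_2(y)$ for a generic witness $Y,Z,(y,z)$ of non-uniqueness. This contradicts the genericity of $z$ over $B_2y$.
\end{itemize}
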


\begin{alphthm}[= Theorem \ref{T: preservation}]\label{T: intro link preservation} Fix $O_2$ as above. Then for all but finitely many closed irreducible codimension 1 subvarieties $Y\subset X_1$, we have that $O_1$ $n$-sweeps $Y_1$ if and only if $O_2$ $n$-sweeps $\varphi(Y_1)$.
\end{alphthm}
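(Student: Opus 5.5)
The plan is to characterize ``$O$ $n$-sweeps $Y$'' through a property that $\varphi$ visibly preserves, using the non-negligible intersection of $\varphi(O_1)$ and $O_2$ from Theorem \ref{T: intro link existence}. Fix a countable field $F$ over which $X_1,X_2,O_1,O_2$ and the relevant part of $\varphi$ are defined; this is possible since $\varphi$ is a homeomorphism and only countably many closed sets are involved. The uncountability of the $K_i$ gives enough generic points over $F$ and over any countable extension. First I would reformulate sweeping of a codimension-1 subvariety $Y$ in terms of dimension counting. Set $Y_H=H\cap Y$. Then $O_1$ fails to $n$-sweep $Y$ exactly when the family $\{Y_H: H\in O_1\}$ collapses, i.e. has dimension less than $n$. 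Equivalently, it fails when some $n$ independent generic points of $Y$ lie on a common member $H$ that is non-generic over the field of $Y$, so that $H$ and $Y$ are in special position. Example \ref{E: intro ex rel sweeping} is the model case: when $Y$ is itself a hyperplane section, the family drops to dimension $n-1$.

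Second, I would transport this through $\varphi$ using the linking. The non-negligible intersection should mean that for a generic $H\in O_1$, the image $\varphi(H)$ agrees with a generic member of $O_2$, perhaps up to a component or for a positive-dimensional subfamily. Since $\varphi$ is a homeomorphism, it preserves closed irreducible sets, incidence, and the Galois-orbit structure over $F$ up to the expected correspondence. I would then show that ``$n$ independent generic points of $Y$ lie on a generic member of $O_1$'' is equivalent to the same statement for $\varphi(Y)$ and $O_2$. The tool is an inductive count: choose points $y_1,\dots,y_k$ one at a time, and compare at each stage the subfamily of members through $y_1,\dots,y_k$, together with its trace on $Y$, with its image. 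Topologically, the stage $k$ subfamily is recognized as the set of members through the chosen points. Unique $n$-sweeping forces a drop of exactly one in dimension at each step, and because $\varphi$ is a homeomorphism it preserves these drops as closures of irreducible loci.

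Third, I would deal with the exceptional $Y$. Only finitely many codimension-1 $Y$ can lie where the correspondence between $\varphi(O_1)$ and $O_2$ breaks down, for instance components of the base locus, or components of the locus where $\varphi(H)$ and the linked member of $O_2$ differ. These are defined over $F$, and the finiteness should follow because the bad locus is a proper closed subset, which has only finitely many codimension-1 components. Every other $Y$ meets a generic member of each orbit properly, so the dimension counts above apply.

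The main obstacle will be the second step. The link between $\varphi(O_1)$ and $O_2$ is only ``non-negligible'' rather than an equality of families, so I would need to upgrade partial agreement to agreement of the traces on $Y$ for generic members. My first attempt would use the unique sweeping property: a generic $n$-tuple in $\varphi(Y)$ determines a unique member of $O_2$ and, through $\varphi^{-1}$, a unique member of $O_1$. Non-negligibility should then force these two members to correspond, because otherwise two different one-parameter degenerations would both contain the tuple, and that contradicts uniqueness.
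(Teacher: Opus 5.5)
There is a genuine gap, and it sits in your second step. You assume that $\varphi$ preserves `the Galois-orbit structure', the genericity of tuples, and the dimension drops of subfamilies of members through chosen points. None of this is available. $\varphi$ is an arbitrary homeomorphism, not known to be definable. It respects closed subsets of $X_i$ and their dimensions, but it need not send closed subsets of $X_1^k$ to closed subsets of $X_2^k$, and it need not send generic tuples to generic tuples. It also acts on members of $O_1$ without inducing any map of parameter spaces, so `the dimension of the subfamily through $y_1,\dots,y_k$' is nothing $\varphi$ visibly preserves. Likewise, a single countable $F$ over which `the relevant part of $\varphi$' is defined does not make sense. The best available is a coherent pair $(B_1,B_2)$, which controls only \emph{unary} definable sets; the paper stresses that nothing of the sort exists in higher arity.

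Worse, linkedness only gives $\varphi(O_1(a))=O_2(\varphi(a))$ for large many $a\in X_1^k$. But $Y^k$ is a small set, so the hypothesis says nothing directly about the members through generic tuples of $Y$, which is exactly what you need. Your closing argument does not repair this. It assumes a generic tuple of $\varphi(Y)$ lies on a unique member of $O_2$, which is essentially the conclusion. It also assumes the $\varphi^{-1}$-image of that tuple is generic in $Y$, which is precisely what is unknown.

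Your third step misidentifies the exceptions as well: the locus where $\varphi(H)$ differs from a linked member is not a proper closed subset of $X_1$. In the paper the finitely many exceptions are the $O_i$-bad subvarieties, i.e.\ codimension-1 components of the singular loci (Lemma \ref{L: good cofinite}). They must be excluded because the arguments use the dimension theorem and Zariski's main theorem at generic points of $Y$ (Lemmas \ref{L: generic sweeping witness} and \ref{L: relativized sweeping is unique}).

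The missing idea is to strengthen the statement so that it can be inducted on. Theorem \ref{T: preservation} works with arbitrary good $Z_i^1,\dots,Z_i^k$ satisfying $\varphi(Z_1^j)=Z_2^j$, and it proves two things. First, $O_1$ sweeps $(Z_1^1,\dots,Z_1^k)$ iff $O_2$ sweeps $(Z_2^1,\dots,Z_2^k)$. Second, when both sweep, $\varphi(O_1(a_1))=O_2(a_2)$ for \emph{every} $(A_1,A_2)$-generic pair $(a_1,a_2)$; that is, linking propagates from $X_1^k$ to products of good subvarieties.

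For $k=1$, the paper takes a coherent pair $(B_1,B_2)$, so unary genericity transfers exactly, and adds a countability argument. Every $B_2$-generic point of $Y_2=\varphi(O_1(a_1))$ lies on some member of $O_2$, by goodness. No such point lies on a member of $(O_2)_{B_2}$, by linking at a doubly generic point, coherence, and unique 1-sweeping. Members of $O_2$ outside $(O_2)_{B_2}$ are $\acl(B_2)$-definable, hence countably many. Compactness then forces $Y_2$ to be one of them, namely $O_2(a_2)$.

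For $k\ge 2$, one restricts along generic coordinates, in three moves:
\begin{enumerate}
\item $O_1\restriction a$ and $O_2\restriction\varphi(a)$ remain linked for large many $a$.
\item The inductive hypothesis shows each $O_i$ sweeps, hence by Lemma \ref{L: relativized sweeping is unique} uniquely sweeps, $(X_i,Z_i^2,\dots,Z_i^k)$.
\item Clause (2) of the inductive hypothesis makes $O_1\restriction b_1$ and $O_2\restriction b_2$ linked, uniquely 1-sweeping orbits for doubly generic $b_i$. This reduces to the base case.
\end{enumerate}
Your `one point at a time' count is loosely in this spirit. Without the strengthened clause and the coherence/countability argument, though, it has no mechanism for transporting information through $\varphi$.
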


\begin{alphthm}[= Theorem \ref{L: hyperplane sweeping}]\label{T: intro hyperplane sweeping} $\mathcal H$ is precisely the collection of closed irreducible codimension 1 subvarieties of $X_1$ which are not $n$-swept by $O_1$.
\end{alphthm}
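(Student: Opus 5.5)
The plan is to show that, for a closed irreducible codimension $1$ subvariety $Y\subset X_1$, membership in $\mathcal H$ is equivalent to $Y$ being \emph{degenerate}, i.e. contained in some hyperplane of $\mathbb P^n$. I will then show that degenerate subvarieties are never $n$-swept by $O_1$, while nondegenerate ones always are. Throughout, $d=\dim X_1\geq 2$, so $\dim Y=d-1\geq 1$, and $F$ is a countable field over which $X_1$ and the orbit $O_1$ are defined; I enlarge it to contain a field of definition of $Y$ when relativizing. By Bertini, the members of $O_1$ are just the generic hyperplane sections $H\cap X_1$, which are irreducible because $d\geq 2$.

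\textbf{Step 0: $Y\in\mathcal H$ if and only if $Y$ is degenerate.} If $Y\subset H_0$ for a hyperplane $H_0$, then $Y$ is an irreducible closed subset of $H_0\cap X_1$ of dimension $d-1$. This intersection is a proper closed subset of $X_1$, since $X_1$ is nondegenerate, so it has dimension at most $d-1$. Hence $Y$ is an irreducible component of $H_0\cap X_1$, i.e. $Y\in\mathcal H$. The converse is immediate.

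\textbf{Step 1: degenerate $Y$ are not $n$-swept.} Let $L$ be the linear span of $Y$, so $\dim L=m\leq n-1$. Take $n$ independent generic points $y_1,\dots,y_n$ of $Y$ over $F(Y)$. Since $Y$ spans $L$, generic points of $Y$ are in linear general position in $L$, so $m+1\leq n$ of them already span $L$. Suppose some hyperplane $H$ with $H\cap X_1\in O_1$ contains all the $y_i$. Then $H\supseteq L\supseteq Y$. But if $H$ is generic over $F(Y)$, it cannot contain the fixed positive-dimensional variety $Y$, because the hyperplanes containing $Y$ form a proper closed subfamily defined over $F(Y)$. So no generic member of $O_1$ contains $y_1,\dots,y_n$, and $O_1$ does not $n$-sweep $Y$.

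\textbf{Step 2: nondegenerate $Y$ are $n$-swept.} Here I use a dimension count. Let $H$ be a hyperplane generic over $F(Y)$, so $\dim(H/F(Y))=n$. The key input is that $H\cap Y$ is nondegenerate in $H$, i.e. spans $H$.
\begin{itemize}
\item For $\dim Y\geq 2$, this is the standard fact that a generic hyperplane section of an irreducible nondegenerate variety is irreducible (Bertini) and nondegenerate in the hyperplane.
\item For curves ($d=2$), I would cite that a generic hyperplane section of a nondegenerate curve spans the hyperplane, which holds in every characteristic.
\end{itemize}
Given this, choose $n$ points $y_1,\dots,y_n\in H\cap Y$ independent and generic over $F(Y,H)$; they span $H$. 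Their dimension over $F(Y)$ is at least
\[
n+n(d-2)=n(d-1)=\dim Y^n,
\]
so $(y_1,\dots,y_n)$ is a generic point of $Y^n$ over $F(Y)$. Since the $y_i$ span $H$, it is the unique hyperplane through them. So a generic $n$-tuple of $Y$ lies on the generic member $H\cap X_1$ of $O_1$. Because all generic $n$-tuples have the same type over $F(Y)$, the same holds for every generic $n$-tuple, and $O_1$ $n$-sweeps $Y$.

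The main obstacle is Step 2 in the curve case and in positive characteristic. There I must make sure the nondegeneracy of generic hyperplane sections is available without a uniform position principle; I would first try a direct argument that the span map $Y^n\dashrightarrow(\mathbb P^n)^*$ is dominant. A secondary point to check against the precise definitions in Section 2 is whether ``lie on a generic member'' should be read over $F(Y)$ or over $F$ together with the points; the argument above is arranged to work with either reading.
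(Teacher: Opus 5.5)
Your proposal is correct. Steps 0 and 1 agree in substance with the paper. For (1)$\Rightarrow$(3) the paper uses a dimension count with a code for $H\cap H_0$ to force $Z\subset H$; your span argument forces $H\supseteq L\supseteq Y$. Both then contradict the genericity of $H$ over a field defining $Y$.

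\textbf{Where you differ: Step 2.} You start from a generic hyperplane $H$ and build a generic tuple inside $(H\cap Y)^n$. This costs you the auxiliary fact that $H\cap Y$ spans $H$, which you cite but flag as the main obstacle.

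The paper runs the incidence correspondence in the other direction. Take $a\in Y^n$ generic over a field defining $Y$, and let $H$ be \emph{any} hyperplane through $a$ (one exists, since any $n$ points lie on a hyperplane), with code $t$. Nondegeneracy gives $\dim(H\cap Y)<\dim Y$, so $\dim(a/t)\le n(\dim Y-1)=\dim(a)-n$. This forces two things:
\begin{itemize}
\item $\dim(t)=n$, so $H$ is generic and $H\cap X_1$ lies in the generic sub-orbit;
\item $t\in\acl(a)$, so $H$ is the unique hyperplane through $a$, because the hyperplanes through $a$ form a linear space.
\end{itemize}
This is exactly the ``span map is dominant'' argument you list as a fallback. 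It is characteristic-free and uniform in $\dim Y$, curves included, so the obstacle you flag is not real. It also implies your auxiliary fact: by conjugacy, every generic $H$ is the span of $n$ independent generic points of $Y$ lying on it.

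\textbf{What each route buys.} The paper's direction also yields \emph{unique} $n$-sweeping, and it applies to closed irreducible $Z$ of any dimension. That stronger form is used later, e.g.\ in Proposition \ref{P: internality to subvariety}. Your direction gives only existence of sweeping in codimension 1, which suffices for this statement.

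\textbf{Two small corrections.}
\begin{itemize}
\item In the curve case, ``independent generic points of $H\cap Y$'' should read ``$n$ points of the finite set $H\cap Y$ spanning $H$''.
\item The reading of sweeping is not optional, contrary to your closing remark. By Lemma \ref{L: sweeping invariant}, sweeping is relative to the generic sub-orbit over a field defining $Y$, which is what your Step 1 correctly uses. Read over $F$ alone, the statement is false: a degenerate $Y=H\cap X_1$ with $H$ generic over $F$ would trivially be swept.
\end{itemize}
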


Theorem \ref{T: intro link existence} says that we can recover a non-negligible portion of hyperplane sections; Theorem \ref{T: intro link preservation} says that this non-negligible portion is enough recover the `sweeping data' of the generic hyperplane sections; and Theorem \ref{T: intro hyperplane sweeping} says that this sweeping data is enough to recover \textit{all} hyperplane sections. In particular, we immediately obtain:

\begin{corollary}
    With at most finitely many exceptions, $\varphi(\mathcal H)$ is precisely the collection of closed irreducible codimension 1 subvarieties of $X_2$ which are not $n$-swept by $O_2$. 
\end{corollary}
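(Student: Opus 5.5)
The corollary follows by chaining Theorems \ref{T: intro link existence}, \ref{T: intro link preservation} and \ref{T: intro hyperplane sweeping}, using that $\varphi$ is a homeomorphism. The first step is to fix the uniquely $n$-sweeping orbit $O_2$ in $X_2$ provided by Theorem \ref{T: intro link existence}. This orbit is linked to $\varphi(O_1)$, which is exactly the hypothesis needed to invoke Theorem \ref{T: intro link preservation}.

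Next, I check that $\varphi$ induces a bijection between the closed irreducible codimension 1 subvarieties of $X_1$ and those of $X_2$. A homeomorphism preserves closed sets and irreducibility. It also preserves Krull dimension of closed irreducible subsets, since dimension is the supremum of lengths of chains of closed irreducible subsets. Closed points correspond under Convention \ref{Con: set of points}, so $Y\mapsto\varphi(Y)$ is such a bijection.

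Now let $Y\subset X_1$ be a closed irreducible codimension 1 subvariety outside the finite exceptional set $E$ from Theorem \ref{T: intro link preservation}. By Theorem \ref{T: intro hyperplane sweeping}, $Y\in\mathcal H$ if and only if $O_1$ does not $n$-sweep $Y$. By Theorem \ref{T: intro link preservation}, this holds if and only if $O_2$ does not $n$-sweep $\varphi(Y)$. Hence, for $Z=\varphi(Y)$ with $Y\notin E$, we have $Z\in\varphi(\mathcal H)$ exactly when $Z$ is not $n$-swept by $O_2$. Because of the bijection above, this covers every codimension 1 subvariety $Z$ of $X_2$ except the finitely many in $\varphi(E)$. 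So $\varphi(\mathcal H)$ and the family of non-$n$-swept subvarieties differ only on a finite set, which is the claim.

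There is essentially no obstacle once the three cited theorems are available. The one point to check is a compatibility condition. Theorem \ref{T: intro link preservation} quantifies over all but finitely many $Y$, and the members of $\mathcal H$ are irreducible components of hyperplane sections; we need these components to be of codimension 1, so that Theorem \ref{T: intro hyperplane sweeping} applies to them. This holds by Krull's principal ideal theorem, since $X_1$ is irreducible of dimension at least 2 and non-degenerately embedded, so no hyperplane contains $X_1$.
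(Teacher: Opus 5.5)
Your proposal is correct and is essentially the paper's own argument. The paper also fixes the orbit $O_2$ linked to $O_1$ and runs the chain ($Y\in\mathcal H$ if and only if $O_1$ does not $n$-sweep $Y$, if and only if $O_2$ does not $n$-sweep $\varphi(Y)$) through the bijection $Y\mapsto\varphi(Y)$, using Theorem \ref{L: hyperplane sweeping} and Theorem \ref{T: preservation}. The only difference is that the paper spells out the finite exceptional set as those $Y$ for which $Y$ is not $O_1$-good or $\varphi(Y)$ is not $O_2$-good, which is finite by Lemma \ref{L: good cofinite} applied on each side.
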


For example, it already follows that $\varphi(\mathcal H)$ is $\operatorname{Gal}(K_2/F)$-invariant for some countable $F\leq K_2$. With a bit more work (see Section 7), we even get:

\begin{corollary}[= Proposition \ref{P: ctbly many families}]\label{C: intro ind definable}
    $\varphi(\mathcal H)$ is the union of countably many definable families in $K_2$.
\end{corollary}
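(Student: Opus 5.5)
The plan is to combine the preceding corollary with a uniformity argument over the countable field of definition. Fix a countable algebraically closed $F\leq K_2$ over which $X_2$ and the orbit $O_2$ are defined. Any closed irreducible codimension 1 subvariety $Y\subset X_2$ lies in one of countably many $F$-definable families of codimension 1 subvarieties. One can organise these by fixing an $F$-definable locally closed family $\{Y_t\}_{t\in T}$ for each Hilbert-polynomial or Chow-type datum, and then passing to irreducible components. By the corollary, with finitely many exceptions $\varphi(\mathcal H)$ consists exactly of those $Y$ that are not $n$-swept by $O_2$. The finitely many exceptions form a finite family and are trivially definable, so they can be added or removed at the end. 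It therefore suffices to show that, for each such $F$-definable family $\{Y_t\}_{t\in T}$, the set
\[
S=\{t\in T: O_2 \text{ does not } n\text{-sweep } Y_t\}
\]
is a countable union of constructible subsets of $T$.

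To analyse $S$, I would unwind the definition of sweeping. Let $D\in O_2$ be a generic member over $F$; then $O_2$ is the $\operatorname{Gal}(K_2/F)$-orbit of $D$, and $D$ lies in some $F$-definable family $\{D_s\}_{s\in Z}$ with $D$ corresponding to a generic point $s_0$ of a component of $Z$. The statement ``$O_2$ $n$-sweeps $Y_t$'' says that for $n$ independent generic points $y_1,\dots,y_n$ of $Y_t$ over $Ft$, some member of $O_2$ generic over $Ft$ contains all the $y_i$. In model-theoretic terms this is a statement about the type $\tp(y_1\dots y_n/Ft)$. Equivalently, after uniqueness, it says that the unique member $D_s$ containing the $y_i$ has $s$ generic in the locus of $s_0$ over $Ft$. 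This is a dimension condition: $\dim(s/Ft)=\dim(s_0/F)$, or equivalently some constructible incidence set has the expected dimension. In ACF, dimension is definable in families. Hence for each $t$ the truth value depends only on $\tp(t/F)$, and it is governed by a definable condition on $t$ of the form ``the fiber over $t$ of a certain constructible set $W\subset T\times Z\times X_2^n$ has dimension $e$''.

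The main obstacle is exactly this step. ``Generic member of $O_2$'' refers to a complete type over $F$, not a formula, so a priori the condition is only type-definable rather than definable. My first attempt would use the fact that the locus of $s_0$ over $F$ is an $F$-definable irreducible set $Z_0$, with $O_2$ its set of generic points. A point $s$ is generic in $Z_0$ over $Ft$ exactly when it avoids every proper $Ft$-definable subvariety of $Z_0$, and this can be detected by a dimension count. The count is uniformly definable in $t$ because, for each fixed family, there are only finitely many relevant dimensions. The remaining work is the point that distinguishes ``$D_s\in O_2$'' from merely ``$s\in Z_0$''. For this I would use that, for each $t$, the condition becomes a Boolean combination of the sets $\{t:\dim W_t = e\}$, which are constructible over $F$. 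The countably many families and components of $Z$ then yield the countable union in the statement of Corollary~\ref{C: intro ind definable}. Since everything is over the countable field $F$, only countably many formulas arise, and the conclusion follows.
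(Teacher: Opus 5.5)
Your reduction matches the paper's: by the preceding corollary it suffices to show that, in each of countably many $F$-definable families $\{Y_t\}_{t\in T}$ of codimension-1 subvarieties, the set of $t$ for which $O_2$ does not $n$-sweep $Y_t$ is (a countable union of) definable sets, and then adjust by finitely many members. The gap is that this step, which carries all the real content, is never carried out. You correctly observe that ``the witness is a generic member of $O_2$ over $Ft$'' is a type condition, not a formula. You then only assert that it ``can be detected by a dimension count'' and becomes a Boolean combination of sets $\{t:\dim W_t=e\}$, without specifying $W$ or proving any equivalence.

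The concrete hints you do give fail. There is no ``unique member $D_s$ containing the $y_i$'' when $Y_t$ is not swept. Uniqueness (Lemma \ref{L: relativized sweeping is unique}) needs both sweeping and goodness, and $Y_t$ may itself be a special member of the family containing all the $y_i$. Nor does the dimension of the fibre of the incidence set $\{(t,s,y): y\in (Y_t\cap D_s)^n\}$ detect sweeping. For lines in $\mathbb P^2$, a conic (swept) and a line $L$ (not swept) both give $2$-dimensional fibres. For $L$ this comes from $\{s_L\}\times L^2$, where $D_{s_L}=L$, and from the pairs $(y,y)$ with $y=D_s\cap L$.

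The missing idea is the paper's Lemma \ref{L: definability of sweeping}(2). Write $O_2$ as the generic members of a family $\{D_u\}_{u\in U}$ with $U$ irreducible and $\dim U=\dim O_2=n$. This uses \emph{unique} $n$-sweeping via Lemma \ref{L: dim of sweeping orbit}, which your argument never invokes. The lemma then says that for $O_2$-good $Y_t$, $O_2$ $n$-sweeps $Y_t$ if and only if, for generic $a\in Y_t^n$, there is \emph{some} $u\in U$ with $a\in D_u^n$ and $\dim(D_u\cap Y_t)<\dim Y_t$.
\begin{itemize}
\item The forward direction uses goodness: a sweeping witness cannot contain $Y_t$.
\item For the converse, $\dim(ua/t)\le \dim(u/t)+n(\dim Y_t-1)$ and $\dim(a/t)=n\dim Y_t$ force $\dim(u/t)\ge n=\dim U$. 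So any such witness is automatically generic.
\end{itemize}
This converts the type condition into a formula. The right-hand side is first-order in $t$, since dimension is definable in families, so the non-swept good members of each family form a definable subfamily.

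If you want to avoid goodness, you could instead show that $O_2$ sweeps $Y_t$ if and only if some irreducible component of $\{(u,a)\in U\times Y_t^n: a\in D_u^n\}$ dominates both $U$ and $Y_t^n$. This condition is definable in $t$ because irreducible components are definable in families. Some argument of this kind must be supplied; as written, the central step of the proof is left unproved.
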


To conclude, we need a way to reduce a countable union to a finite union. This motivates the fourth main theorem of Part 1:

\begin{alphthm}[= Corollary \ref{C: ind-definable implies definable}]\label{T: intro definably shaped} Suppose $Y\subset X_1^k$ and $\varphi(Y)\subset X_2^k$ are both countable unions of definable sets. Then $Y$ is definable if and only if $\varphi(Y)$ is.
\end{alphthm}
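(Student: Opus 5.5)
By symmetry (replace $\varphi$ by $\varphi^{-1}$) it suffices to show: if $Y$ is definable, then so is $\varphi(Y)$. The plan is to get this from compactness. Uncountable algebraically closed fields are saturated, and in particular $\aleph_1$-saturated. So whenever a definable set is covered by countably many definable sets, each defined over a countable parameter set, finitely many of them already cover it.

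Concretely, write $\varphi(Y)=\bigcup_i Z_i$ with each $Z_i$ definable. Suppose we also knew that the complement $X_2^k\setminus\varphi(Y)=\varphi(X_1^k\setminus Y)$ is a countable union $\bigcup_j W_j$ of definable sets. Then $X_2^k$ is covered by the countable family $\{Z_i\}\cup\{W_j\}$. All these sets are defined over a single countable field $F\le K_2$, so the partial type over $F$ saying ``in $X_2^k$ and in no $Z_i$ and in no $W_j$'' is not realized in $K_2$. By $\aleph_1$-saturation it is inconsistent, so finitely many $Z_i,W_j$ cover $X_2^k$. Since the $Z_i$ and $W_j$ are disjoint from one another, $\varphi(Y)$ equals the union of the finitely many $Z_i$ in this subcover, and is therefore definable.

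So everything reduces to one claim: \emph{$\varphi$ sends definable subsets of $X_1^k$ whose image is ind-definable to sets with ind-definable complement}. Equivalently, ind-definability of images is preserved under complements. This is the main obstacle, because $\varphi^k$ is not a homeomorphism for the Zariski topology on $X^k$, and complementation does not commute with countable unions in any formal way. I would approach it through the Stone space of types over a countable field.

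First, fix countable algebraically closed fields $F_1\le K_1$ and $F_2\le K_2$ large enough that $Y$ and each $Z_i$ are defined over them, and that $\varphi$ maps $\operatorname{Aut}(K_1/F_1)$-invariant subsets of $X_1^k$ to $\operatorname{Aut}(K_2/F_2)$-invariant ones. I expect this can be arranged using the countable-orbit and sweeping machinery of Part 1, by enlarging $F_2$ to account for countably many data.

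Then, since $\mathrm{ACF}$ is $\omega$-stable, there are only countably many complete types over $F_2$. Hence an $\operatorname{Aut}(K_2/F_2)$-invariant set is a countable union of type-definable sets $p(K_2)$. For $\varphi(X_1^k\setminus Y)$, I would try to show that each such type lies inside a definable set contained in $\varphi(X_1^k\setminus Y)$. To do this I would induct on the dimension (Morley rank) of the type, and use that $\varphi$ preserves the dimension of closed subsets of each factor and sends products of closed sets to products of closed sets.

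If this direct route fails, the fallback is to show that the types over $F_2$ meeting $\varphi(Y)$ form a clopen subset of the type space. Openness comes from ind-definability of $\varphi(Y)$. Closedness would come from transferring the analogous property of $Y$ through the invariance established in the first step.
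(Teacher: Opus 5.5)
Your compactness step is correct, but it only restates the problem. Once $\varphi(Y)$ is known to be ind-definable, ``$X_2^k\setminus\varphi(Y)$ is ind-definable'' is \emph{equivalent} to ``$\varphi(Y)$ is definable''. So the one claim you isolate carries the whole content of the statement, and neither route you offer for it works.

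Step 1 asks for countable $F_1,F_2$ such that $\varphi$ carries $\operatorname{Aut}(K_1/F_1)$-invariant subsets of $X_1^k$ to $\operatorname{Aut}(K_2/F_2)$-invariant ones. Part 1 does not supply this. The sweeping machinery only controls codimension-one subvarieties of the $X_i$ (its output is ind-definability of $CH_2$), and coherent pairs only handle unary sets.
\begin{itemize}
\item When $\dim X_i\geq 2$ the invariance does hold a posteriori, since $\varphi=f\circ\sigma$ with $f$ definable. But that factorization (Theorem \ref{T: reduction to definable case}) rests on Theorem \ref{T: definability of hyperplane images}, which is proved \emph{using} the present statement, so appealing to it is circular.
\item In the generality of Corollary \ref{C: ind-definable implies definable} the invariance is false. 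For $X_1=X_2=\mathbb A^1$ every bijection is a homeomorphism. So $\varphi$, applied coordinatewise, can carry the graph of $x\mapsto x+1$ to the graph of a bijection that agrees with no definable function off a countable set. Such a graph is not $\operatorname{Aut}(K_2/F_2)$-invariant for any countable $F_2$, yet the statement still holds there.
\end{itemize}

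Even granting Step 1, you would only know that $\varphi$ respects the partitions into complete types over $F_1$ and $F_2$. Nothing makes the induced correspondence between $S_k(F_1)$ and $S_k(F_2)$ continuous for the Stone topologies. This is because $\varphi$ is not continuous for the Zariski topology on $X^k$, and it is not known to send definable subsets of $X_1^k$ to definable sets; that is exactly the issue. Given openness, closedness of the set of types of $\varphi(Y)$ is literally the definability you want, so it cannot be ``transferred'' from $Y$. Likewise, the proposed induction on Morley rank has nothing to run on beyond the fact that $\varphi$ preserves products of constructible subsets of $X$.

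The missing idea is a homeomorphism-invariant finiteness invariant that detects definability among ind-definable sets directly, with no complement needed. The paper builds one in two steps.
\begin{itemize}
\item Each definable $D\subset X$ gets a shape in $\mathbb Z[x]$: the number of top-dimensional components of $\overline D$ times $x^{\dim D}$, corrected recursively by the lower-dimensional differences. This shape is topological, strictly monotone under inclusion, and satisfies a chain condition by compactness.
\item A set $D\subset X^{n+1}$ is called definably shaped if every fiber $D_a$ is definable, only finitely many shapes occur among the fibers, and each set $\{a:\operatorname{Sh}(D_a)\geq s\}$ is definably shaped in $X^n$.
\end{itemize}
This class contains the definable sets and is preserved by $\varphi$, because everything is read off fiberwise from unary data. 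It also inherits the chain condition. Writing $\varphi(Y)$ as an increasing union of definable sets then forces the chain to stabilize, so $\varphi(Y)$ is definable.
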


Theorem \ref{T: intro definably shaped} says that out of all countable unions of definable sets (called \textit{ind-definable sets}), we can topologically single out the definable ones. One should view this as a sort of first-order compactness theorem pushed through the map $\varphi$. Indeed, the idea is that an ind-definable set is definable if and only if it satisfies every conceivable finiteness condition. We make this precise in Section 8 via the notion of a \textit{definably shaped} set. Definably shaped sets are roughly those which appear `finitary' in a large number of specified ways. Then we show that (1) homeomorphisms preserve the class of definably shaped sets, and (2) an ind-definable set is definable if and only if it is definably shaped.

Now given Theorem \ref{T: intro definably shaped}, we finally conclude that $\varphi(\mathcal H)$ is definable. Indeed, let $CH_1$ be the set of $(n+1)$-tuples in $X_1$ belonging to a common member of $\mathcal H$, and let $CH_2=\varphi(CH_1)$. Then $\mathcal H$ is precisely the collection of closed irreducible codimension 1 subvarieties of $X_1$ occuring as fibers of $CH_1$ over $X_1^n$, and $\varphi(\mathcal H)$ is the analogous collection of fibers of $CH_2$ over $X_2^n$. It is easy to see that $CH_1$ is $K_1$-definable, and Corollary \ref{C: intro ind definable} gives that $CH_2$ is a countable union of definable sets. So by Theorem \ref{T: intro definably shaped}, $CH_2$ is $K_2$-definable, and thus so is $\varphi(\mathcal H)$. 

\subsection{Organization of the Paper}

We conclude the introduction by describing the various sections of the paper:\\

\textbf{Part 1:}

\begin{itemize}
    \item Section 2 gives preliminaries on some model-theoretic notions (such as dimension and canonical bases) that we use throughout.
    \item Section 3 introduces the language of sweeping orbits, with the main result being Theorem \ref{T: intro hyperplane sweeping} (= Theorem \ref{L: hyperplane sweeping}). Another key notion we develop here is that of an $O$-\textit{good} subvariety of $X$, where $O$ is a given orbit inside $X$. These are the `all but finitely many' subvarieties for which Theorem \ref{T: intro link preservation} works. In codimension 1, the `bad' subvarieties all come from the irreducible components of the singular loci of $X_1$ and $X_2$ -- which is why there are only finitely many.
    \item Section 4 then introduces the basic setup and terminology for our homeomorphism $\varphi:X_1\rightarrow X_2$, focusing on model-theoretic notions of genericity and how they transfer across a homeomorphism.
    \item Section 5 contains the proof of Theorem \ref{T: intro link preservation}. In particular, in this section we clarify the meaning of `non-negligible' in the statement of Theorem \ref{T: intro link existence}. 
    \item Section 6 contains the proof of Theorem \ref{T: intro link existence}.
    \item Section 7 extracts Corollary \ref{C: intro ind definable} and some related statements from Theorems \ref{T: intro link existence}, \ref{T: intro link preservation}, and \ref{T: intro hyperplane sweeping}.
    \item Finally, Section 8 introduces `definably shaped sets' and proves Theorem \ref{T: intro definably shaped}, thereby ending Part 1 with the proof that $\varphi(\mathcal H)$ is definable.
\end{itemize}

\textbf{Part 2:}
\begin{itemize}
    \item Section 9 is the model-theoretic heart of the paper. Here we define the relics $\mathcal X_1^{hyp}$ and $\mathcal X_2^{hyp}$ descirbed above and show that they are full. This involves an application of the Zilber trichotomy, together with an analysis of the notion of \textit{internality} from model theory.
    \item Section 10 then applies the isomorphism theorem for full relics, thereby reducing Theorem \ref{T: main} to the case of definable homeomorphisms.
\end{itemize}

\textbf{The Rest}

\begin{itemize}
    \item Section 11 begins the study of definable homeomorphisms. Any such homeomorphism is a Frobenius power composed wth a generically rational map. This section considers only the generically rational case, and shows that such a map lifts to a morphism of normalizations.
    \item Section 12 then shows how to apply Section 10 simultaneously to both $\varphi$ and $\varphi^{-1}$ (this is a bit subtle in positive characteristic, because we cannot assume both $\varphi$ and $\varphi^{-1}$ are generically rational). The result is that the \textit{morphism} above is upgraded to a \textit{universal homeomorphism}. Thus, we deduce Theorem \ref{T: main}.
    \item Section 13 considers reducible varieties, and proves Theorems \ref{T: reducible schemes} and \ref{T: main reducible}. The jump from Theorem \ref{T: main} to Theorem \ref{T: main reducible} is again expressed using model-theoretic \textit{internality}. The idea is that under the hypotheses of Theorem \ref{T: main reducible}, we can simply define a full relic structure separately on each component of $X_1$, and by the `strongly connected' hypothesis and an internality argument, these automatically patch together into a full relic structure on all of $X_1$ (so that the rest of the proof of Theorem \ref{T: main} can be copied verbatim). 
    \item Finally, Section 14 ends the paper by showing how to explicitly derive all of the relevant `speculations' from \cite{KLOS} using our main theorems.
\end{itemize}

\section{Preliminaries}

The paper frequently uses model-theoretic language. Most of the time the structure in question is an algebraically closed field. In this case, the model-theoretic language is fairly mild, and fairly similar to the language of classical varieties over such fields. We will make some conventions explicit below. If additional confusion arises, we direct the reader to any standard text in model theory (e.g. \cite{MaBook}).

In a couple of specific sections (e.g. Sections 9, 10, and 13), we need to consider other structures (called `relics'), and for these we delve deeper into the language of \textit{stability theory}. For this, we direct the algebro-geometric reader to the appendix of \cite{CasHasAV}, which develops a dictionary between various stability-theoretic notions and their counterparts in algebraic geometry (in particular, every structure we consider in this paper satisfies the assumptions of that appendix). For completeness, we also describe some of the basic notions here. This will allow us to fix some useful conventions. Readers would likely benefit from skipping this section and referring back to it when needed.

\subsection{Imaginaries, Saturation, Dimension, and Parameter Sets} Throughout, we work with saturated, uncountable, finite Morley rank structures in countable languages. Fix such a structure $\mathcal M$. Recall that $\mathcal M^{eq}$ denotes the expansion of $\mathcal M$ by naming new `sorts' for all quotients of $\emptyset$-definable sets\footnote{Recall that a \textit{$\emptyset$-definable set} is a subset of $M^n$ singled out by a formula without parameters.} by $\emptyset$-definable equivalence relations. One often calls the elements of $\mathcal M^{eq}$ \textit{imaginaries}, and the definable sets of $\mathcal M^{eq}$ \textit{interpretable sets}.

\begin{convention}
    \textbf{Throughout this paper, in the presence of a structure $\mathcal M$ as above, the term \textit{parameter set} will refer to a \textit{countable} subset of $\mathcal M^{eq}$}. 
\end{convention}

If $A$ is a parameter set, we may say that a set $X$ is $A$-definable (this means it can be defined using only parameters from $A$). We stress that parameter sets are \textit{always countable} (this will be rather important). So for example, unless said otherwise the phrase `$X$ is $A$-definable' assumes that $A$ is a countable set.

Given any definable (or even interpretable) set $X$, one can associate an integer valued `dimension' $\dim(X)$ (for the model theorists, we mean Morley rank), and this assignment has various nice properties.\footnote{Actually Morley rank can behave a bit oddly in general, but in algebraically closed fields it is quite tame.} In an algebraically closed field, $\dim(X)$ is just the algebraic dimension of the Zariski closure $\overline X$. In particular, if $X$ is a quasi-projective variety, then $\dim(X)$ is just the dimension of $X$ as a variety, and thus we use the notation $\dim$ unambiguously.

Now given a tuple $a\in\mathcal M^{eq}$ and a parameter set $B$, one sets $\dim(a/B)$ to be the smallest value of $\dim(X)$ where $X$ ranges over all $B$-interpretable sets containing $a$. So for any $B$-interpretable set $X$ realizing this dimension, we have $\dim(a/B)=\dim(X)$. In this case, we call $a$ a \textit{$B$-generic point of $X$}. In algebraically closed fields, if $V$ is a variety defined over $A$, then an $A$-generic point of $V$ is just a point not belonging to any proper closed subvariety of $V$ defined over $A$.\footnote{This is literally different than scheme-theoretic generic points, but the two notions are morally the same. Indeed, the assertion that $a\in V$ is $A$-generic (in our language) just says that $a$ lies over the (scheme-theoretic) generic point of $V$ over the field generated by $A$.} Note that generic points always exist (that is, every $A$-definable set $X$ has an $A$-generic point); this follows by a compactness argument, using that $A$ is countable.

In algebraically closed fields, the function $\dim$ is the same as the transcendence degree map. Thus in this special case, we have the \textit{additivity formula} (which we use throughout): $$\dim(ab/C)=\dim(a/C)+\dim(b/Ca).\footnote{Here and throughout, we adopt the model-theoretic convention that in dimension-like expressions as above, the notation $AB$ denotes $A\cup B$ and the notation $ab$ denotes the pair $(a,b)$}$$

Using additivity, one can define the notion `$a$ and $b$ are independent over $C$' in a natural way, and check its basic properties. We do not dwell on this.

\subsection{Codes and Canonical Bases}\label{S: codes}

Here we outline the quite important notion of canonical bases, and (in the process) the related notion of codes. We use these notions (especially canonical bases) throughout the paper. \textbf{Geometers should view each of these as a model-theoretic version of Hilbert schemes}: in each case, the idea is to find a canonical parametrizing space for a model-theoretic object (codes parametrize definable sets, and canonical bases parametrize them up to almost equality). For example, in the appropriate context we will use notation such as $c=\operatorname{Cb}(X)$ (made precise below); this is analogous to `$c$ is the point corresponding to the variety $X$ on the relevant Hilbert scheme'.

Now let us be more precise. The construction (of both codes and canonical bases) will rely on the expansion to $\mathcal M^{eq}$, so we work with interpretable sets. First, if $X$ is interpretable in $\mathcal M$, then a \textit{code} of $X$ is a tuple $c$ so that (1) $X$ is definable by some formula $\varphi(x,c)$ and (2) for all $c'\neq c$, the formula $\varphi(x,c')$ does not define $X$. The idea is that we can now view the whole set $X$ as a single tuple in $\mathcal M^{eq}$ by identifying it with $c$. Codes can be constructed relatively easily using $\mathcal M^{eq}$, by starting with any formula $\varphi(x,b)$ defining $X$, and then replacing $b$ with its equivalence class under an obvious equivalence relation. If $c$ is a code of $X$, one often writes $c=\lceil X\rceil$ (this is well-defined in a sense because any two codes of $X$ are interdefinable).

Canonical bases are a bit more involved, and make non-trivial use of $\omega$-stability (a more general version works assuming only \textit{stability}, but this will not matter to us). First, say that an interpretable set $X$ is \textit{stationary} if $X$ is not the disjoint union of two subsets of the same dimension as $X$. Then say that two stationary interpretable sets $X,Y$ are \textit{almost equal} if the symmetric difference $X\Delta Y$ has smaller dimension than $X$ and $Y$. Then (by a non-trivial argument) one can construct `codes up to almost equality' of stationary sets, and these are called canonical bases. Precisely, for a stationary $X$, a \textit{canonical base} of $X$ is a tuple $c$ so that (1) some formula $\varphi(x,c)$ defines a set almost equal to $X$, and (2) for any $c'\neq c$, $\varphi(x,c)$ does not define such a set. As with codes, any two canonical bases of $X$ are interdefinable, so in practice one identifies them and just writes $c=\operatorname{Cb}(X)$.

One might benefit from the following intuition: over an algebraically closed field, canonical bases correspond to irreducible projective varieties, and codes correspond to arbitrary constructible sets. For example, suppose $X\subset\mathbb P^n$ is an irreducible quasi-projective variety, and $\overline X$ is its closure. Then both $\overline X$ and $X$ are stationary interpretable sets. In this case, $X$ and $\overline X$ have the \textit{same} canonical bases, but will generally have \textit{different} codes. In other words, $\operatorname{Cb}(X)$ only remembers the projectivized version $\overline X$, while $\lfloor X\rfloor$ remembers the exact set of points of $X$.

If desired, one can also make the construction of canonical bases more explicit:  
\begin{example}
    In an algebraically closed field, every irreducible quasi-projective variety $X\subset\mathbb P^n$ gives a stationary interpretable set; and in this case, $\operatorname{Cb}(X)$ is some (or any) generator of the minimal field of definition of the closure $\overline X$.
\end{example}

In a general $\omega$-stable theory, a definable stationary set might not be definable over its canonical base (it is just definable up to a set of smaller dimension). But in algebraically closed fields, things are a bit better, and this will be important:

\begin{fact}\label{F: def over cb}
    Suppose $\mathcal M$ is an algebraically closed field, and $X$ is a quasi-projective variety defined over $A$. Then for any (relatively) closed, irreducible subvariety $Y\subset X$, $Y$ is definable over $A$ together with $\operatorname{Cb}(Y)$.
\end{fact}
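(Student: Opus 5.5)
We show that $Y$ is fixed by every automorphism of the field $\mathcal M$ that fixes $A\cup\{\operatorname{Cb}(Y)\}$ pointwise. Then the standard Galois-theoretic characterization of definability in a saturated algebraically closed field applies: a definable set invariant under $\operatorname{Aut}(\mathcal M/B)$ is $B$-definable (more precisely $\operatorname{dcl}(B)$-definable, which suffices). Since $Y$ is constructible, hence definable with some parameters, this will finish the proof.

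\textbf{Step 1.} Let $\overline Y$ be the Zariski closure of $Y$ in $\mathbb P^n$. By the Example preceding this Fact, $\operatorname{Cb}(Y)$ generates the minimal field of definition of $\overline Y$. In particular, $\overline Y$ is definable over $\operatorname{Cb}(Y)$.

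\textbf{Step 2.} Since $Y$ is relatively closed in $X$, we have $Y=\overline Y\cap X$. To check this, write $Y=Z\cap X$ for some closed $Z\subset\mathbb P^n$. Then $\overline Y\subset Z$, so $\overline Y\cap X\subset Z\cap X=Y$, and the reverse inclusion is trivial.

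\textbf{Step 3.} Now $X$ is $A$-definable and $\overline Y$ is $\operatorname{Cb}(Y)$-definable, so $Y=\overline Y\cap X$ is definable over $A\cup\{\operatorname{Cb}(Y)\}$. This argument in fact bypasses automorphisms altogether.

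\textbf{Main obstacle.} The key input is the identification in Step 1: the canonical base really recovers the closure $\overline Y$ itself, not merely a set almost equal to $Y$. If one is unwilling to use the Example, argue as follows. The canonical base $c$ defines, via some formula, a set $W$ almost equal to $Y$. Then the Zariski closure of $W$, minus its lower-dimensional components, equals $\overline Y$, because $Y$ is irreducible. Taking the Zariski closure and discarding components of lower dimension are $c$-definable operations in ACF; this follows from quantifier elimination together with the definability of dimension and of irreducible components in families. So $\overline Y$ is $c$-definable, and Steps 2 and 3 proceed unchanged.
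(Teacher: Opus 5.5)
Your argument is correct and is essentially the paper's. The paper justifies the Fact in one line: $Y$ is the unique top-dimensional component of the relative closure in $X$ of any $\operatorname{Cb}(Y)$-definable set almost equal to $Y$. That is your fallback argument, with the closure taken inside $X$ directly rather than in $\mathbb P^n$ and then intersected with $X$ as in your Step 2. One wording fix: replace ``minus its lower-dimensional components'' by ``the union of its top-dimensional components'', since a literal set difference could delete points of $\overline Y$ that lie on lower-dimensional components.
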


Fact \ref{F: def over cb} just says that one can recover $Y$ precisely from any definable set almost equal to it (since for any $Z$ almost equal to $Y$, $Y$ is the unique top-dimensional component of the relative closure of $Z$ in $X$). We use this fact without mention from now on.

\subsection{Algebraic Closure}

If $a$ is a tuple and $B$ is a parameter set, we say that $a$ is \textit{algebraic over $B$}, denoted $a\in\acl(B)$, if $a$ belongs to some finite $B$-definable set. This notion agrees with usual algebraicity over algebraically closed fields. For example, we frequently use that an $A$-definable set $X$ always splits (up to almost equality) into a union of stationary $\acl(A)$-definable sets; and over an algebraically closed field, the irreducible components of an $A$-definable variety are themselves $\acl(A)$-definable.

For example, the following simple application will be used many times throughout the paper, and serves as a good illustration of the various tools we have discussed in this section.

\begin{lemma}\label{L: codim 1 definable over acl}
    Let $W$ be an $A$-definable irreducible quasi-projective variety over an uncountable algebraically closed field, and let $V\subset W$ be a closed irreducible codimension 1 subvariety defined over $B\supset A$. Suppose that some $B$-generic element of $V$ is not $A$-generic in $W$. Then $V$ is defined over $\acl(A)$.
\end{lemma}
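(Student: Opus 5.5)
Let $v$ be a $B$-generic point of $V$ that is not $A$-generic in $W$, and set $c=\operatorname{Cb}(V)$. The plan is a dimension count with the additivity formula. Since $V$ is $B$-definable and $v$ is $B$-generic in $V$, we have
\[
\dim(v/B)=\dim V=\dim W-1.
\]
Because $v$ lies in the $A$-definable set $W$ and is not $A$-generic there, $\dim(v/A)\le \dim W-1$. Since $A\subset B$, also $\dim(v/A)\ge\dim(v/B)$. Hence
\[
\dim(v/A)=\dim(v/B)=\dim W-1 .
\]

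Next I compute $\dim(v/Ac)$. By Fact \ref{F: def over cb}, $V$ is definable over $Ac$, since $W$ is $A$-definable and $V$ is a closed irreducible subvariety of $W$. So $\dim(v/Ac)\le\dim V=\dim W-1$. On the other hand, $c\in\operatorname{dcl}(B)$ up to interdefinability, because $V$ is $B$-definable and the canonical base is definable over any parameters defining the set. Therefore
\[
\dim(v/Ac)\ge \dim(v/B)=\dim W-1 .
\]
Thus $\dim(v/Ac)=\dim(v/A)$, which says that $v$ is independent from $c$ over $A$.

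Now consider $\dim(vc/A)$ in two ways via additivity:
\[
\dim(c/A)+\dim(v/Ac)=\dim(v/A)+\dim(c/Av).
\]
This gives $\dim(c/A)=\dim(c/Av)$. The key remaining step is to show $c\in\acl(Av)$, so that $\dim(c/Av)=0$ and hence $\dim(c/A)=0$, i.e.\ $c\in\acl(A)$. Combined with Fact \ref{F: def over cb}, this gives that $V$ is definable over $\acl(A)$.

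Showing $c\in\acl(Av)$ is the main obstacle, since a single point need not determine the divisor $V$ through it. I would argue as follows. The point $v$ lies in some $A$-definable closed subset of $W$ of dimension $\dim W-1$: let $Z$ be the Zariski closure in $W$ of the locus of $\operatorname{tp}(v/A)$, i.e.\ the smallest $A$-definable closed set containing $v$. Then $\dim Z=\dim(v/A)=\dim W-1$.

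Since $v\in V$ and $V$ is closed, and $v$ is $Ac$-generic in $Z'$, where $Z'$ is the relevant $\acl(A)$-component of $Z$ containing $v$, we get $Z'\subseteq V$. Indeed, $v$ is $\acl(A)c$-generic in $Z'$ because $\dim(v/\acl(A)c)=\dim Z'$. Both $Z'$ and $V$ are irreducible of dimension $\dim W-1$, so $Z'=V$. Since $Z'$ is $\acl(A)$-definable, this shows directly that $V$ is defined over $\acl(A)$.

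This last argument in fact bypasses the canonical-base computation, so in writing it up I would present this streamlined route. Let $Z$ be the smallest $A$-definable closed subset of $W$ containing $v$; it has dimension $\dim W-1$. Let $Z'$ be an irreducible component of $Z$ containing $v$ of full dimension; it is $\acl(A)$-definable, and $v$ is generic in it. The closedness of $V$ together with the genericity of $v$ over $\acl(A)B$ in $Z'$ forces $Z'\subseteq V$, and equality follows by comparing dimensions and using irreducibility.

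For the genericity of $v$ over $\acl(A)B$ in $Z'$, note that $\dim(v/\acl(A)B)=\dim(v/B)=\dim W-1=\dim Z'$.
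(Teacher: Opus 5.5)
Your streamlined argument is correct and is essentially the paper's proof. Both arguments place $v$ in an $A$-definable closed set $Z\subset W$ of dimension $\dim W-1$ and identify $V$ with an irreducible component of $Z$ by a genericity-plus-dimension count. The paper derives $V\subseteq Z$ from the $B$-genericity of $v$ in $V$, whereas you derive $Z'\subseteq V$ from the $\acl(A)B$-genericity of $v$ in the component $Z'$; this reversal is harmless, and the opening canonical-base detour can be dropped, as you already noticed.
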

\begin{proof}
    Let $v\in V$ be $B$-generic but not $A$-generic in $W$. Then $V$ belongs to some proper $A$-definable subvariety $Z\subset W$. By dimension considerations (i.e. since $V$ has codimension 1), we get $$\dim(v/A)=\dim(v/B)=\dim(W)-1=\dim(V)-\dim(Z).$$ So $v$ is $B$-generic in $V$ and belongs to $Z$, which (by closedness and irreducibility) gives $V\subset Z$. Then since $\dim(V)=\dim(Z)$, $V$ is an irreducible component of $Z$. So since $Z$ is defined over $A$, $V$ is defined over $\acl(A)$.
\end{proof}

\part{Definability of Hyperplane Images}

\section{Orbits and Sweeping}

In this section we introduce the language of sweeping orbits and study the basic properties of such orbits. The main goal is to prove Theorem \ref{T: intro hyperplane sweeping}.

\textbf{Throughout this section, fix $K$, an uncountable algebraically closed field, and $X$, an irreducible quasiprojective variety over $K$ of dimension at least 2. We also fix a non-degenerate projective embedding, thereby assuming $X\subset\mathbb P^n(K)$ for some $n$, and that $X$ is not contained in any hyperplane. Absorbing parameters, we assume throughout that $X$ is definable over $\emptyset$.}

\subsection{Orbits}

We start with the definitions:

\begin{definition}
    Let $A$ be a parameter set.
    \begin{enumerate}
        \item An $A$-\textit{orbit} (or \textit{orbit over $A$}) is an orbit of closed irreducible codimension 1 subvarieties of $X$ under the action of $\operatorname{Gal}(K/A)$ (the automorphisms of $K$ fixing $A$ pointwise).
        \item If $Y$ is a closed irreducible codimension 1 subvariety of $X$, the unique $A$-orbit containing $Y$ is denoted $[Y]_A$.
        \item The set of all $A$-orbits is denoted $\operatorname{Orb}(A)$.
        \item If $O$ is an $A$-orbit, then a \textit{sub-orbit} of $A$ is an orbit of the form $P=[Y]_B$ where $B\supset A$ and $Y\in O$. We then say that $P$ is a \textit{sub-orbit of $O$ over $B$}, and we write $P\leq O$.
    \end{enumerate}
\end{definition}

Algebro-geometrically, one should think of an $A$-orbit as a family (over $A$) of subvarieties of $X$, where two such families are identified if they have the same generic members (and the members of the orbit are just the generic members of any such family). Much of our work can go through with usual families of varieties in place of orbits, but this would lead to unpleasant technicalities.

Model-theoretically, an orbit is really the same as the \textit{type of a canonical base}, in the sense that one can identify $[Y]_A$ with $\tp(c/A)$ where $c:=\operatorname{Cb}(Y)$ is the canonical base of the generic type of $Y$. In particular, there is a well-defined notion of the \textit{dimension} of an orbit: one sets $\dim([Y]_A)=\dim(\operatorname{Cb}(Y)/A)$ (this is just the dimension of any (canonically parametrized) family of varieties over $A$ with $Y$ as a generic member). Of course, if $P$ is a sub-orbit of $O$, then $\dim(P)\leq\dim(O)$.

\begin{definition}
    Let $A\subset B$, $O\in\operatorname{Orb}(A)$, and $P\in\operatorname{Orb}(B)$, with $P\leq O$. Say that $P$ is \textit{generic} in $O$ (or $P$ is a \textit{generic sub-orbit of $O$}) if $\dim(P)=\dim(O)$.
\end{definition}

Generic sub-orbits correspond to \textit{base change} in algebraic geometry, and to \textit{non-forking extensions} in model theory. (The idea is that $P$ contains the $B$-generic members of the same family over $A$ that gave rise to $O$). In particular, there is always at least one generic sub-orbit.

\begin{definition}
    Let $O\in\operatorname{Orb}(A)$. Say that $O$ is \textit{reducible} if there are $B\supset A$ and two distinct generic sub-orbits of $O$ over $B$. Otherwise, say that $O$ is \textit{irreducible}.
\end{definition}

Irreducibility corresponds to \textit{geometric irreducibility} (as a family) in algebraic geometry, and to \textit{stationarity} in model theory. In particular, every orbit over an algebraically closed field is irreducible. 

If $O\in\operatorname{Orb}(A)$ is irreducible, then for each $B\supset A$ there is exactly one generic sub-orbit of $O$ over $A$. We denote this generic sub-orbit by $O_B$.

\begin{definition}
 Given irreducible orbits $O\in\operatorname{Orb}(A),P\in\operatorname{Orb}(B)$, we say that $O$ and $P$ are \textit{parallel} if $O_{A\cup B}=P_{A\cup B}$ (i.e. they have the same generic extension to $A\cup B$).
\end{definition}

Parallelism says that $O$ and $P$ are `almost equal' (i.e. they arise as generics of the same irreducible family). It gives an equivalence relation on irreducible orbits, which is the same as the model-theoretic notion of parallelism of stationary types. Note that parallelism of $O$ and $P$ is equivalent to the existence of any $C\supset A\cup B$ with $O_C=P_C$.


The main fact we need about orbits is the following:

\begin{fact}\label{F: omega stable} For each (countable) $A$, the set $\operatorname{Orb}(A)$ is countable.
\end{fact}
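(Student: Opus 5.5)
The plan is to identify $A$-orbits with types over $A$ and count parameters. By the discussion preceding the statement, the orbit $[Y]_A$ is determined by, and determines, $\tp(c/A)$, where $c=\operatorname{Cb}(Y)$. So it suffices to show two things. First, the possible canonical bases of closed irreducible codimension 1 subvarieties of $X$ range over tuples from a countable collection of sorts. Second, over a countable $A$ there are only countably many complete types of such tuples.

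The argument is cleaner if we avoid $\mathcal M^{eq}$. By Fact \ref{F: def over cb} and quantifier elimination, each such $Y$ is the closure in $X$ of $\overline Y\cap X$. Here $\overline Y\subset\mathbb P^n$ is irreducible and of dimension $\dim X-1$. Every such $\overline Y$ is cut out by a Chow form, or alternatively by a point of one of the Hilbert schemes or Chow varieties of $\mathbb P^n$. There are countably many such varieties: one for each degree $d$ and dimension, and they are $\emptyset$-definable, hence definable over the prime field. The code of $\overline Y$ is a point $c\in\mathcal C_d(K)$ of a $\emptyset$-definable set, and $\sigma\in\operatorname{Gal}(K/A)$ maps $Y$ to $Y'$ if and only if it maps $c$ to $c'$. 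Therefore $\operatorname{Orb}(A)$ injects into the union over $d$ of the sets of $\operatorname{Gal}(K/A)$-orbits on $\mathcal C_d(K)$.

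It remains to show that each $\mathcal C_d(K)$ has only countably many $\operatorname{Gal}(K/A)$-orbits. This is where uncountability and saturation of $K$ enter. By saturation (homogeneity) of $K$, two tuples lie in the same $\operatorname{Gal}(K/A)$-orbit if and only if they have the same type over $A$. That type is determined by the isomorphism type over $k=$ the field generated by $A$ of the field $k(c)$, i.e. by the prime ideal of $c$ in $k[x_1,\dots,x_m]$ on an affine chart. By Hilbert's basis theorem each such ideal is finitely generated, and since $k$ is countable there are only countably many finite subsets of $k[x]$. Equivalently, ACF is $\omega$-stable, so there are at most $|A|+\aleph_0=\aleph_0$ types over $A$. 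Taking the countable union over the charts and over $d$ gives the result.

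The main obstacle is justifying that orbits correspond exactly to types, which needs $K$ to be strongly homogeneous over countable sets. An uncountable algebraically closed field satisfies this: any isomorphism between countable subfields extends to an automorphism of $K$, since transcendence bases of equal uncountable cardinality can be matched. That is all the homogeneity we need. The rest is routine.
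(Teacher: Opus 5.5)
Your argument is correct and is essentially the paper's model-theoretic proof: an $A$-orbit is determined by a defining formula together with the type over $A$ of its parameter, and there are only countably many of each, by $\omega$-stability or by Hilbert's basis theorem (the latter is the paper's alternative route). The one difference is that you take the Chow/Hilbert point as a canonical parameter, which gives an injection you don't need, since the paper's version works with any defining parameter $b$; in return, you make explicit the homogeneity of $K$ over countable sets (same type over $A$ implies same $\operatorname{Gal}(K/A)$-orbit), which the paper uses tacitly.
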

\begin{proof} One can prove this algebraically by identifying each orbit with an ideal in some finitely generated $K(A)$-algebra, and then applying Hilbert's Basis Theorem (this takes a little work because one has to reduce to the affine case and then realize that the $K(A)$-algebra in question also depends on the orbit).

Alternatively, the countability of $\operatorname{Orb}(A)$ is a rather immediate corollary of the $\omega$-stability of algebraically closed fields in model theory. Indeed, each $A$-orbit is equal to $[\varphi(K)]_A$ where $\varphi=\varphi(x,b)$ is a formula with parameters $b\in K^n$ for some $n$. Then the $\operatorname{Gal}(K/A)$-orbit of $\varphi(K)$ is determined by $\tp(b/A)$. In other words, an $A$-orbit is determined by (1) a formula in the language of rings, and (2) a type over $A$. Each of these choices has only countably many possibilities (for (1) this is clear and for (2) it is $\omega$-stability). 
\end{proof}

\subsection{Sweeping and Uniquely Sweeping Orbits}

We now define the crucial `sweeping' terminology:

\begin{definition}\label{D: sweeping orbits}
    Let $O$ be an irreducible $A$-orbit, and fix a positive integer $k$.
    \begin{enumerate}
        \item $O$ is \textit{$k$-sweeping} if for $A$-generic $a\in X^k$, there is $Y\in O$ with $a\in Y^k$.
        \item $O$ is \textit{uniquely $k$-sweeping} if for $A$-generic $a\in X^k$, there is exactly one $Y\in O$ with $a\in Y^k$.
        \item More generally, let $Z\subset X$ be an $A$-definable closed irreducible subvariety. Then $O$ \textit{$k$-sweeps $Z$} (resp. \textit{uniquely $k$-sweeps $Z$}) if for $A$-generic $a\in Z^k$, there is at least one (resp. exactly one) $Y\in O$ with $a\in Y^k$.
        \item More generally, let $(Z_1,...,Z_k)$ be closed irreducible $A$-definable subvarieties of $X$. Then $O$ \textit{sweeps $(Z_1,...,Z_k)$} (resp. \textit{uniquely sweeps $(Z_1,...,Z_k)$}) if for $A$-generic $a\in Z_1\times...\times Z_k$, there is at least one (resp. exactly one) $Y\in O$ wth $a\in Y^k$.
    \end{enumerate}
\end{definition}

We immediately note the following two lemmas, which we use throughout:

\begin{lemma}\label{L: dim of sweeping orbit}
    If $O\in\operatorname{Orb}(A)$ is uniquely $k$-sweeping, then $\dim(O)=k$.
\end{lemma}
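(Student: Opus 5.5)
The plan is to compute $\dim(a,c/A)$ in two ways via the additivity formula, where $a\in X^k$ is $A$-generic and $c=\operatorname{Cb}(Y)$ for the unique $Y\in O$ with $a\in Y^k$. The point is that $\dim(O)=\dim(c/A)$ by definition, so it suffices to show $\dim(c/A)=k$.

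\textbf{First computation (through $a$).} Fix an $A$-generic $a=(a_1,\dots,a_k)\in X^k$, and let $Y\in O$ be the unique member with $a\in Y^k$. Set $c=\operatorname{Cb}(Y)$.

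Since $Y$ is unique, it is fixed by every automorphism in $\operatorname{Gal}(K/Aa)$. Indeed, such an automorphism preserves $O$, because $O$ is a $\operatorname{Gal}(K/A)$-orbit, and it fixes $a$. Hence $c\in\acl(Aa)$; in fact $c\in\operatorname{dcl}(Aa)$, but algebraicity suffices. More precisely, the set of members of $O$ containing $a$ is an $Aa$-invariant collection and is a singleton, so its canonical base is algebraic over $Aa$. This is the step to be careful about, since $O$ is only type-definable: I would phrase it as the statement that the type $\tp(c/Aa)$ has a unique realization, which forces $c\in\acl(Aa)$. So $\dim(c/Aa)=0$, and additivity gives
\[
\dim(ac/A)=\dim(a/A)+\dim(c/Aa)=k\dim(X).
\]

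\textbf{Second computation (through $c$).} Here we use $\dim(ac/A)=\dim(c/A)+\dim(a/Ac)$. Each $a_i$ lies in $Y$, which is defined over $Ac$ (by Fact \ref{F: def over cb}) and has dimension $\dim(X)-1$. This gives the upper bound $\dim(a/Ac)\le k(\dim X-1)$. Combining,
\[
k\dim X\le\dim(c/A)+k\dim X-k,
\]
so $\dim(c/A)\ge k$.

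\textbf{Reverse inequality.} This is the main obstacle, since the second computation only gives $\dim(c/A)\ge k$. I would argue that $a$ is $Ac$-generic in $Y^k$ after replacing $c$ with a generic realization. Take $c'$ realizing $O$, viewed as $\tp(c/A)$, and take $a'$ to be $Ac'$-generic in $Y'^k$, where $Y'$ is the member coded by $c'$.

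Suppose, for contradiction, that $\dim(c/A)>k$. Then
\[
\dim(a'c'/A)=\dim(c'/A)+k(\dim X-1)>k\dim X,
\]
so the fiber $\dim(c'/Aa')$ must be positive. Now split into two cases.

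If $a'$ is $A$-generic in $X^k$, then by uniqueness $c'\in\acl(Aa')$, contradicting the positive fiber dimension.

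If $a'$ is not $A$-generic, then $\dim(a'/A)<k\dim X$, and hence $\dim(c'/Aa')>\dim(c/A)-k$. I would handle this case by first showing that $a'$ is $A$-generic after all: the set of tuples lying in $Y''^k$ for some $Y''$ realizing $O$ has dimension at most $k\dim X$, and the generic fiber over it has dimension $\dim(c/A)-k>0$. This contradicts the uniqueness of $Y$ over generic tuples. I would make this precise by a definable-family argument: choose an $A$-definable family $\{Y_t\}$ whose generic member lies in $O$, and compare the dimension of the incidence set $\{(t,a):a\in Y_t^k\}$ computed via its projections to the parameter space and to $X^k$.

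Combining both inequalities gives $\dim(O)=\dim(c/A)=k$.
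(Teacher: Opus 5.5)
Your lower bound $\dim(c/A)\geq k$ is correct and is exactly the paper's first step. Your Case 1 is also correct. The gap is in Case 2.

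There you need that an $Ac'$-generic point $a'$ of $(Y')^k$ is $A$-generic in $X^k$, and the incidence-set count you propose cannot deliver this. Comparing $\dim\{(t,x):x\in Y_t^k\}$ with the dimension of its image in $X^k$ only shows that the fiber is positive-dimensional over the image of the \emph{generic point of the incidence set}. Uniqueness says nothing about that fiber unless this image point is $A$-generic in $X^k$, which is exactly what Case 2 was meant to establish, so the argument is circular. Read literally, your claim that the generic fiber over the set $S$ of swept tuples has dimension $\dim(c/A)-k$ is in fact false. By sweeping, the generic points of $S$ are the $A$-generic points of $X^k$, and by uniqueness exactly one member of $O$ lies over each of them. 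Without further input, dimension counting cannot exclude that the top-dimensional part of the incidence set sits over a proper subvariety of $X^k$ while only a lower-dimensional part dominates $X^k$.

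The missing idea is to reuse your original tuple $a$, which lies in $Y^k$ and is $A$-generic in $X^k$. After an automorphism over $A$, you may take $c'=c$. Suppose an $Ac$-generic point $a'$ of $Y^k$ lay in a proper $A$-definable closed $W\subset X^k$. Then $W\cap Y^k$ would be an $Ac$-definable closed subset of the irreducible variety $Y^k$ containing its $Ac$-generic point. Hence $Y^k\subseteq W$, so $a\in W$, a contradiction. This is the same trick used in the proof of Lemma \ref{L: generic sweeping witness}.

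So Case 2 never occurs, and your argument closes directly without the case split or a separate lower bound. Uniqueness gives $c\in\acl(Aa')$, so $k\dim(X)=\dim(a'c/A)=\dim(c/A)+k(\dim(X)-1)$, which gives $\dim(c/A)=k$.

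The paper argues differently. It takes $b$ that is $Ac$-generic in $Y^{k+1}$ and finds the first coordinate $b_{i+1}$ that is not generic over $Ab_1\dots b_i$. It then applies Lemma \ref{L: codim 1 definable over acl} to get $c\in\acl(Ab_1\dots b_i)$, hence $\dim(c/A)=i\leq k$. The observation above shows that $i=k$ always holds there, so your repaired route is the shorter one.
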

\begin{proof}
    Let $a\in X^k$ be $A$-generic, and let $Y$ be the unique member of $O$ containing $a$. Let $c=\operatorname{Cb}(Y)$. We want to show that $\dim(c/A)=k$. First, since $\dim(Y)=\dim(X)-1$, we have $$\dim(a/Ac)\leq k\cdot(\dim(X)-1)=\dim(a/A)-k,$$ from which it follows that $\dim(c/A)\geq k$. The rest of the proof will show that $\dim(c/A)\leq k$.
    
    Let $b=(b_1,...,b_{k+1})\in Y^{k+1}$ be $Ac$-generic. First we claim that $b$ is not $A$-generic in $X^{k+1}$. Indeed, if $b$ is $A$-generic, then so is $b'=(b_1,...,b_k)$ (in $X^k$). So since $O$ is uniquely $k$-sweeping, $Y$ is the unique member of $O$ containing $b'$. So $Y$ is $\operatorname{Gal}(K/Ab')$-invariant, and thus $Y$ is $Ab'$-definable. But then $\dim(b_{k+1}/Ab')\leq\dim(Y)<\dim(X)$, so that $b$ is not $A$-generic after all, a contradiction.
    
    So there is some $i\in\{0,...,k\}$ so that $b_{i+1}$ is not generic in $X$ over $Ab_1...b_i$. Fix the smallest such $i$. Then a generic element of $Y$ over $Acb_1...b_i$ is not generic in $X$ over $Ab_1...b_i$; by Lemma \ref{L: codim 1 definable over acl}, this means $Y$ is definable over $\acl(Ab_1...b_i)$. In particular, $c\in\acl(Ab_1...b_i)$, and thus $$\dim(cb_1...b_i/A)=\dim(b_1...b_i/A)=i\cdot\dim(X).$$ On the other hand, counting in the opposite direction gives $$\dim(cb_1...b_i/A)=\dim(c/A)+\dim(b_1...b_i/Ac)=\dim(c/A)+i\cdot(\dim(X)-1)$$ $$=i\cdot\dim(X)+(\dim(c/A)-i).$$ So $\dim(c/A)-i=0$, and thus $\dim(c/A)=i\leq k$, which proves the lemma.
    \end{proof}

\begin{lemma}\label{L: sweeping invariant}
    Each of the notions in Definition \ref{D: sweeping orbits} is parallelism-invariant. That is, if we replace $A$ with $B\supset A$ and replace $O$ with $O_B$ in Definition \ref{D: sweeping orbits}, the resulting notions would be equivalent to the original versions.
\end{lemma}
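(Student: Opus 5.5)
We show that each notion in Definition \ref{D: sweeping orbits} means the same thing whether it is computed over $A$ for $O$ or over $B$ for $O_B$. The argument reduces to one basic lemma: if $a$ is a $B$-generic point of an $A$-definable set that is independent from the parameters needed, then the orbit data over $A$ and over $B$ agree at $a$. It suffices to treat the most general notion, sweeping (resp. unique sweeping) of a tuple $(Z_1,\dots,Z_k)$ of $A$-definable closed irreducible subvarieties; items (1)--(3) are the special cases $Z_i=X$ or $Z_i=Z$.

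\textbf{Step 1: the property is type-invariant.} Over $A$, whether "there is at least one (resp. exactly one) $Y\in O$ with $a\in Y^k$" holds depends only on $\tp(a/A)$. Indeed, $O$ is $\operatorname{Gal}(K/A)$-invariant, and saturation/homogeneity lets us move between realizations of $\tp(a/A)$ by automorphisms. All $A$-generic points of $Z_1\times\dots\times Z_k$ have the same type over $\acl(A)$ up to conjugacy, and the condition is $\operatorname{Gal}(K/A)$-invariant. So "for $A$-generic $a$" may be replaced by "for some $A$-generic $a$". The same holds over $B$.

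\textbf{Step 2: choose $a$ generic over $B$ together with the canonical bases.} Take $a\in Z_1\times\dots\times Z_k$ generic over $B$. Since the $Z_i$ are $A$-definable, $a$ is also $A$-generic, and $a$ is independent from $B$ over $A$. By Step 1 we must show two things:
\begin{enumerate}
\item there exists $Y\in O$ with $a\in Y^k$ if and only if there exists $Y'\in O_B$ with $a\in Y'^k$;
\item uniqueness transfers in both directions.
\end{enumerate}
Since $O_B\subseteq O$, the direction from $O_B$ to $O$ is trivial for existence. Conversely, given $Y\in O$ containing $a$, let $c=\operatorname{Cb}(Y)$, so that $\tp(c/A)$ is the type encoding $O$. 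We need to replace $Y$ by a member of $O$ whose canonical base is generic over $B$, i.e. independent from $B$ over $A$, while still containing $a$. Choose $c'$ realizing $\tp(c/Aa)$ with $c'$ independent from $B$ over $Aa$; this uses existence of non-forking extensions. Then $ac'$ is independent from $B$ over $A$, so $\dim(c'/B)=\dim(c'/A)=\dim(O)$. Since $O$ is irreducible (stationary), $[Y_{c'}]_B=O_B$, and $a\in Y_{c'}^k$ because $c'\equiv_{Aa}c$.

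\textbf{Step 3: uniqueness.} Suppose first that uniqueness holds for $O$. Then at most one member of $O_B\subseteq O$ contains $a$, so uniqueness holds for $O_B$. For the converse, suppose uniqueness holds for $O_B$ but two distinct members $Y_1,Y_2\in O$ contain $a$. Apply Step 2 to the pair: realize $\tp(c_1c_2/Aa)$ by $c_1'c_2'$ independent from $B$ over $Aa$. Then both $c_i'$ are generic over $B$, so both corresponding members lie in $O_B$. They are distinct and contain $a$, which is a contradiction.

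\textbf{Main obstacle.} The expected difficulty is the independence bookkeeping in Step 2, specifically extracting $\dim(c'/B)=\dim(O)$ from the independence of $ac'$ from $B$ over $A$. This follows from additivity: transitivity of independence combined with $a$ being independent from $B$ over $A$. The other point needing care is the identification of $O_B$ as the unique generic sub-orbit, which is where irreducibility is used.
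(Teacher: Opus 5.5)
Your proposal is correct and follows essentially the same route as the paper: both take an independent (non-forking) realization and apply an automorphism to move a witness $Y\in O$ into the generic sub-orbit $O_B$. The only cosmetic difference is that you fix a $B$-generic $a$ and move $\operatorname{Cb}(Y)$ over $Aa$, whereas the paper moves the pair $(a,\operatorname{Cb}(Y))$ jointly over $A$; you also spell out the uniqueness clause by moving a pair of canonical bases, which the paper leaves to the reader.
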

\begin{proof}
    For example, we show that $O$ is $k$-sweeping if and only if $O_B$ is. If $O_B$ is $k$-sweeping, then any witness also shows that $O$ is $k$-sweeping. Conversely, suppose $O$ is $k$-sweeping. Let $a\in X^k$ be $A$-generic, and let $Y\in O$ with $a\in Y^k$. Let $c=\operatorname{Cb}(Y)$. Then choose $(a',c')\models\tp(a,c/A)$ which is independent from $B$ over $A$, and let $\sigma\in\operatorname{Gal}(K/A)$ with $\sigma(a,c)=(a',c')$. Then $a'\in\sigma(Y)$ is a witness showing that $O_B$ is $k$-sweeping.

    The other cases of the lemma are similar, and we leave them to the reader.
\end{proof}

By Lemma \ref{L: sweeping invariant}, we can use terminology such as `$O$ sweeps $(Z_1,...,Z_k)$' in Definition \ref{D: sweeping orbits}, even if $O\in\operatorname{Orb}(A)$ and $Z_1,...,Z_k$ are not defined over $A$. In this case, the meaning of `$O$ sweeps $(Z_1,...,Z_k)$' is simply that $O_B$ sweeps $(Z_1,...,Z_k)$ for some (equivalently any) $B\supset A$ defining $Z_1,...,Z_k$.

\begin{example}
    Suppose $X=\mathbb P^n$, and $O$ is the $\emptyset$-orbit of generic hyperplanes. Let $H$ be any hyperplane. Then $O$ \textit{does not} $n$-sweep $H$ (and this is crucial). Indeed, any $n$ independent generics of $H$ \textit{do} lie on a hyperplane -- but that hyperplane is $H$ itself, which does not belong to $O_A$ for any $A$ defining $H$. 
\end{example}

\begin{notation} Suppose $Z_1,...,Z_k$ are closed irreducible subvarieties of $X$ so that $(Z_1,...,Z_k)$ is uniquely $k$-swept by $O\in\operatorname{Orb}(A)$. Then for generic $a\in Z_1\times...\times Z_k$, we denote the unique member of $O$ containing $a$ by $O(a)$.
\end{notation}

As with Definition \ref{D: sweeping orbits}, the map $a\mapsto O(a)$ is parallelism-invariant, so long as we restrict to those $a$ which are generic over some parameter set defining both $Z$ and some orbit in the parallelism class of $O$.

\begin{example}[Hyperplane Families] The canonical way to construct uniquely $k$-sweeping orbits is with hyperplane sections. Namely, let $L\leq\mathbb P^n(K)$ be a generic linear subspace of codimension $d$, which is $A$-definable for some set $A$. Then (using Bertini's irreducibility theorem, \cite[Theorem 6.3(4)]{BertiniIrreducibility}) there is a unique irreducible $A$-orbit in $X$ consisting of the $A$-generic hyperplane sections in $X$ over $L$, i.e. intersections of the form $H\cap X$ where $H$ is $A$-generic among all hyperplanes containing $L$. We call this orbit $H^L_A(X)$.
\end{example}

Given $L$ as above, any $d-1$ generic points determine a unique hyperplane containing $L$, and this hyperplane remains $A$-generic. Translated into the language of orbits, we have the following (if the reader prefers, this also follows from Theorem \ref{L: hyperplane sweeping} by replacing $X$ with $\mathbb P^n$ and $Y$ with $X$):

\begin{fact}
    Let $L\leq\mathbb P^n(K)$ be a generic codimension $d$ linear subspace. Then $\mathcal H^L(X)$ is uniquely $k$-sweeping for $k:=d-1$. 
\end{fact}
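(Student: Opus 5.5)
We prove that for $A$-generic $a=(a_1,\dots,a_k)\in X^k$, with $k=d-1$, there is exactly one member of $\mathcal H^L_A(X)$ containing all the $a_i$. We work inside $\mathbb P^n$ and use $a$ to cut down the linear subspace $L$ one point at a time.

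\textbf{Existence.} Since $L$ has codimension $d$ and the $a_i$ are generic over $A$, we do not expect them to lie in $L$. We show by induction on $i\le k$ that the span $L_i:=\langle L,a_1,\dots,a_i\rangle$ has codimension $d-i$. At the inductive step we must show $a_{i+1}\notin L_i$. The point $a_{i+1}$ is $Aa_1\dots a_i$-generic in $X$, and $L_i$ is $Aa_1\dots a_i$-definable. So if $a_{i+1}\in L_i$, we would get $X\subset L_i$. Since $L_i$ is a proper linear subspace (codimension $d-i\geq 1$), this contradicts non-degeneracy. Hence $L_k$ has codimension $1$, i.e.\ it is a hyperplane $H$ containing $L$ and all the $a_i$.

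\textbf{Uniqueness.} Any hyperplane containing $L$ and the $a_i$ contains their span $L_k=H$, so it equals $H$. Consequently any member of the orbit containing the $a_i$ is an irreducible component of $H\cap X$ (in fact all of it, by Bertini), so it too is unique. It remains to check that $H\cap X$ actually lies in the orbit $\mathcal H^L_A(X)$, i.e.\ that $H$ is $A$-generic among hyperplanes containing $L$.

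\textbf{Genericity of $H$ (the main step).} The hyperplanes containing $L$ form a $\mathbb P^{d-1}$, which has dimension $k$. We do a dimension count. Since $H$ is defined over $Aa$, we have $\dim(a/AH)\le k(\dim X-1)$, because each $a_i$ lies in the codimension-one subvariety $H\cap X$. Then additivity gives
\[
\dim(H/A)\geq \dim(a/A)-\dim(a/AH)\geq k\dim X-k(\dim X-1)=k.
\]
So $H$ is $A$-generic in the $k$-dimensional family of hyperplanes containing $L$.

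Finally, Bertini's irreducibility theorem, already invoked to define the orbit, makes $H\cap X$ irreducible, so it is a member of $\mathcal H^L_A(X)$. This gives existence, and uniqueness was shown above, so the orbit is uniquely $k$-sweeping.

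The only subtle point is the use of non-degeneracy in the inductive span argument, which relies on each $a_{i+1}$ being generic over the previous points. The rest is a routine dimension count.
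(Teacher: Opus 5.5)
Your proof is correct and is essentially the paper's argument made explicit: the paper justifies this Fact only by the remark that $d-1$ generic points, together with $L$, span a unique hyperplane which remains $A$-generic. That is exactly what your inductive span argument (via non-degeneracy) and your dimension count establish. The paper also notes that the Fact follows from the (3)$\Rightarrow$(2) argument of Theorem \ref{L: hyperplane sweeping}. There, uniqueness comes differently: the dimension count forces the hyperplane to be algebraic over $a$, and compactness plus linearity then leave only one; you instead get uniqueness directly from $\langle L,a_1,\dots,a_k\rangle$ having codimension $1$.
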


\begin{example}[Sub-orbit through a tuple] Suppose $k\geq 2$ and $O\in\operatorname{Orb}(A)$ uniquely sweeps $(Z_1,...,Z_k)$, where each $Z_i$ is $A$-definable. Then for $A$-generic $a\in Z_1$, we can form the `sub-orbit through $a$', consisting of those members of the form $O(a,b)$ with $b$ an $Aa$-generic point of $Z_2\times...\times Z_k$. We denote this orbit by $O\restriction a$. One sees easily that $O\restriction a$ uniquely sweeps $(Z_2,...,Z_k)$. In particular, if $O$ is uniquely $k$-sweeping, then for generic $a\in X$, the restriction $O\restriction a$ is uniquely $(k-1)$-sweeping. Moreover, one sees easily that the parallelism class of $O\restriction a$ only depends on $a$ and the parallelism class of $O$. Note that if $O=H_A^L(X)$ for some $L$, then $O\restriction a$ is just $H_A^{L'}(X)$, where $L'=L\langle a\rangle$ (the linear span of $a$ together with $L$). 

In a dual way, if $O$ uniquely sweeps $(Z_1,...,Z_k)$ and $b\in Z_2\times...\times Z_k$ is $A$-generic, then we can form the orbit $O\restriction b$ consisting of $O(a,b)$ for $Ab$-generic $a\in Z_1$. Similar to above, this orbit is uniquely 1-sweeping. We will use orbits of this form in the proof of Theorem \ref{T: intro link preservation} (see Proposition \ref{P: preservation inductive step}), and we hope no confusion will arise from using the usual restriction symbol regardless of which coordinates we are restricting. 
    
\end{example}

\subsection{Good Subvarieties}

We now introduce `good' and `bad' subvarieties of $X$ with respect to a given orbit. As described in the introduction, the bad subvarieties are the source of the `all but finitely many' clause of Theorem \ref{T: intro link preservation}.

\begin{definition}
    Let $O\in\operatorname{Orb}(A)$ be irreducible, and $Z\subset X$ a closed irreducible subvariety. $Z$ is \textit{$O$-common} if $Z\subset Y$ for some (equivalently, any) $Y\in O_{\operatorname{Cb}(Z)}$.
\end{definition}

\begin{definition}\label{D: good}
    Let $O\in\operatorname{Orb}(A)$ be irreducible, and $Z\subset X$ a closed irreducible subvariety. $Z$ is $\textit{$O$-bad}$ if it is either $O$-common or contained the singular locus of $X$. Otherwise, $Z$ is \textit{$O$-good}.
\end{definition}

By the definition, goodness is parallelism invariant (i.e. if $O,P$ are parallel, then $Y\subset X$ is $O$-good if and only if it is $P$-good).

As promised, we immediately get:

\begin{lemma}\label{L: good cofinite}
    Let $O\in\operatorname{Orb}(A)$ be irreducible of positive dimension. Then only finitely many closed irreducible codimension 1 subvarieties of $X$ are $O$-bad.
\end{lemma}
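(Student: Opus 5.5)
Throughout, `codim 1 subvariety' means a closed irreducible codimension 1 subvariety of $X$. The plan is to split the $O$-bad codim 1 subvarieties into the two kinds in Definition \ref{D: good} and show each kind is finite.

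\textbf{Subvarieties inside the singular locus.} The singular locus $\operatorname{Sing}(X)$ is a proper closed subset of $X$. A codim 1 subvariety $Z$ contained in $\operatorname{Sing}(X)$ has the same dimension as any component of $\operatorname{Sing}(X)$ containing it, and by irreducibility must equal that component. Since $\operatorname{Sing}(X)$ has finitely many components, there are only finitely many such $Z$.

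\textbf{$O$-common subvarieties.} Let $Z$ be codim 1 and $O$-common, and set $c=\operatorname{Cb}(Z)$. Pick $Y\in O_{Ac}$ with $Z\subset Y$. Both are closed, irreducible and of codimension 1, so $Y=Z$. Now $Y$ is a generic member of $O$ over $Ac$, so
$$\dim(\operatorname{Cb}(Y)/Ac)=\dim(O)>0.$$
On the other hand $\operatorname{Cb}(Y)=\operatorname{Cb}(Z)=c$, and $\dim(c/Ac)=0$. This contradiction shows no $O$-common codim 1 subvariety exists.

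The only delicate point is the identification $\dim(O_{Ac})=\dim(\operatorname{Cb}(Y)/Ac)$. This holds because a generic sub-orbit over $Ac$ has the same dimension as $O$, and by definition the dimension of an orbit $[Y]_{Ac}$ is $\dim(\operatorname{Cb}(Y)/Ac)$. Combining the two cases gives finitely many $O$-bad codim 1 subvarieties.
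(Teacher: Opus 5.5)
Your proof is correct and follows essentially the same route as the paper. It uses the same split into the singular-locus case (finitely many components of $\operatorname{Sing}(X)$) and the $O$-common case, where codimension 1 forces the member of $O_{Ac}$ containing $Z$ to equal $Z$. The paper phrases the resulting contradiction as ``$O_c$ has just a single member,'' which amounts to your observation that $\dim(c/Ac)=0<\dim(O)$.
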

\begin{proof}
      Suppose $Y\subset X$ is closed, irreducible, codimension 1, and $O$-bad. There are two ways this could happen:
      \begin{itemize}
          \item $Y$ is contained in the singular locus of $X$. Then since $Y$ has codimension 1, it must be a component of the singular locus, and there are only finitely many such components.
          \item $Y$ is $O$-common. In this case, let $c=\operatorname{Cb}(Y)$. Then for any $Z\in O_c$ we have $Y\subset Z$, and since $Y$ has codimension 1, dimension considerations give $Y=Z$. Thus $O_c$ has just a single member, contradicting that $O$ has positive dimension. So this case is impossible.
      \end{itemize}
\end{proof}

We also need some basic examples of subvarieties we know are good:

\begin{lemma}\label{L: good preservation}
    Let $O\in\operatorname{Orb}(A)$ be uniquely $k$-sweeping.
    \begin{enumerate}
        \item $X$ is $O$-good.
        \item Suppose $k\geq 2$ and $O$ uniquely sweeps $(Z_1,...,Z_k)$, where each $Z_i$ is $A$-definable. Let $Y\subset X$ be $A$-definable and $O$-good. Then for $A$-generic $a\in Z_1$, $Y$ remains $O\restriction a$-good. Moreover, for $A$-generic $b\in Z_2\times...\times Z_k$, $Y$ also remains $O\restriction b$-good.
        
    \end{enumerate}
\end{lemma}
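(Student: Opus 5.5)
For part (1), I would check the two clauses of Definition \ref{D: good} directly. First, $X$ is not contained in its singular locus, since $X$ is an irreducible variety and so its singular locus is a proper closed subset. Second, $X$ is not $O$-common: every member of $O_{\operatorname{Cb}(X)}$ is a closed codimension $1$ subvariety of $X$, so it cannot contain $X$. (Since $X$ is $\emptyset$-definable, $O_{\operatorname{Cb}(X)}$ is just $O$ up to parallelism.) This part is immediate.

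For part (2), the clause about the singular locus does not involve the orbit at all. Since $Y$ is $O$-good, it is not contained in the singular locus of $X$, and this does not change when $O$ is replaced by $O\restriction a$ or $O\restriction b$. So it remains only to show that $Y$ is not $O\restriction a$-common, and similarly for $O\restriction b$. Because $Y$ is $A$-definable, $\operatorname{Cb}(Y)$ lies in $\operatorname{dcl}^{eq}(A)$. Hence, for an orbit $P$ over $Aa$, the generic extension $P_{Aa\operatorname{Cb}(Y)}$ is $P$ itself, and for $O$ itself $O_{A\operatorname{Cb}(Y)}=O$. So the condition "$Y$ is $O$-good" says exactly that $Y$ is not contained in any member of $O$. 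Similarly, "$Y$ is $O\restriction a$-common" would mean that $Y\subset O(a,b)$ for some, equivalently every, $Aa$-generic $b\in Z_2\times\cdots\times Z_k$.

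The key step is then the following observation. If $a$ is $A$-generic in $Z_1$ and $b$ is $Aa$-generic in $Z_2\times\cdots\times Z_k$, then by additivity of dimension the tuple $(a,b)$ is $A$-generic in $Z_1\times\cdots\times Z_k$. By the definition of unique sweeping, $O(a,b)$ is therefore a genuine member of $O$. So if $Y\subset O(a,b)$, then $Y$ is contained in a member of $O$, i.e. $Y$ is $O$-common, which contradicts the assumption that $Y$ is $O$-good. The dual case is symmetric: for $A$-generic $b\in Z_2\times\cdots\times Z_k$ and $Ab$-generic $a\in Z_1$, the pair $(a,b)$ is again $A$-generic, and the same argument shows that $Y$ is not $O\restriction b$-common.

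The only real subtlety is bookkeeping. The restricted orbits $O\restriction a$ and $O\restriction b$ need to be irreducible for goodness to be defined. I would handle this by replacing $Aa$ with $\acl(Aa)$ (respectively $\acl(Ab)$) and invoking the parallelism invariance of goodness noted after Definition \ref{D: good}. This replacement does not affect genericity of the tuples used above, so the argument goes through unchanged.
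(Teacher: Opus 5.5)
Your proposal is correct and is essentially the paper's proof. Part (1) uses the same codimension-one and singular-locus observations. In part (2) you use, as the paper does, that $\operatorname{Cb}(Y)$ is $A$-definable, so that $Y$ being $O\restriction a$-common would put $Y$ inside a member of $O\restriction a$, hence inside a member of $O$, contradicting $O$-goodness.

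The only difference is minor: you handle $O\restriction b$ directly by the symmetric genericity argument, whereas the paper permutes coordinates and restricts one coordinate at a time. Your irreducibility bookkeeping is harmless.
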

\begin{proof}
    \begin{enumerate}
        \item Since the members of $O$ have codimension 1 in $X$, none of them can contain $X$. Moreover, since the singular locus of $X$ is a proper closed subvariety, it does not contain $X$. 
        \item The `moreover' clause about $O\restriction b$ follows after rearranging the coordinates and then inductively restricting one at a time. So we need only show the first clause, i.e. that $Y$ is $O\restriction a$-good for $A$-generic $x\in Z_1$. So fix such an $a\in Z_1$.
        
        Now since $Y\subset X$ is $O$-good, it is not contained in the singular locus of $X$, and clearly this property is preserved under passing to $O\restriction a$. So we only need to show that $Y$ is not $O\restriction a$-common. So suppose $Y$ is $O\restriction a$-common. Let $c=\operatorname{Cb}(Y)$. Since $Y$ is $A$-definable, so is $c$, so that $O_c=O$ and $(O\restriction a)_c=O\restriction a$. Then by commonness, there is some $Z\in O\restriction a$ with $Y\subset Z$. Then $Z\in O=O_c$ and $Y\subset Z$, which shows that $Y$ is $O$-common, a contradiction.  
    \end{enumerate}
\end{proof}

Sweeping and unique sweeping behave better in $O$-good subvarieties (and this is the reason Theorem \ref{T: intro link preservation} appears the way it does). We will prove two main facts to this effect, namely Lemmas \ref{L: generic sweeping witness} and \ref{L: relativized sweeping is unique} below. 

Each of these two lemmas needs a bit of algebraic geometry (the idea being that we need to isolate what goes wrong geometrically inside the singular locus of $X$). For Lemma \ref{L: generic sweeping witness}, we need the well-known `sub-addivitiy of codimesion' in smooth varieties (which model theorists may know as the `dimension theorem' in Zariski structures). We state the special case we will use:

\begin{fact}\label{F: dimension theorem}\cite[\href{https://stacks.math.columbia.edu/tag/0AZP}{Lemma 0AZP}]{stacks-project} Suppose $W_1,W_2\subset V$ are two closed irreducible subvarieties of the smooth variety $V$, and $W_2$ has codimension 1 in $V$. Then every irreducible component of $W_1\cap W_2$ has dimension at least $\dim(W_1)-1$.
\end{fact}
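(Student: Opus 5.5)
The plan is to reduce the statement to Krull's principal ideal theorem, using smoothness of $V$ only to make $W_2$ locally cut out by a single equation. The dimension of a component is local, since any nonempty open subset of an irreducible variety has the same dimension. So it suffices to fix an irreducible component $C$ of $W_1\cap W_2$ and a closed point $p\in C$ lying on no other component of $W_1\cap W_2$, and then work in a small affine open neighbourhood $U=\operatorname{Spec} R$ of $p$ in $V$. Shrinking $U$, we may assume $U\cap W_1\cap W_2=U\cap C$.

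\textbf{Step 1: $W_2$ is locally principal.} Since $V$ is smooth over the algebraically closed field $K$, the local ring $\mathcal O_{V,p}$ is a regular local ring. By the Auslander--Buchsbaum theorem it is therefore a UFD. The ideal of $W_2$ in $\mathcal O_{V,p}$ is a height one prime, because $W_2$ is irreducible of codimension $1$ and passes through $p$ (otherwise $C$ would be empty near $p$). In a UFD every height one prime is principal, say generated by $f$. Spreading out, after shrinking $U$ we get $W_2\cap U=V(f)$ scheme-theoretically, and in particular set-theoretically, for a single $f\in R$. I expect this step to be the main conceptual input. It is exactly where smoothness is used, and it is what fails at singular points: a Weil divisor through a cone point need not be locally principal, and the statement can then fail.

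\textbf{Step 2: apply the Hauptidealsatz on $W_1$.} Let $A=R/I(W_1\cap U)$, which is an integral domain of finite type over $K$ with $\dim A=\dim W_1$. Set-theoretically $W_1\cap W_2\cap U=V(\bar f)\subset\operatorname{Spec}A$, where $\bar f$ is the image of $f$. There are two cases.
\begin{enumerate}
\item If $\bar f=0$, then $W_1\cap U\subset W_2$. Hence $C\cap U=W_1\cap U$, and $\dim C=\dim W_1$.
\item If $\bar f\neq 0$, then $C\cap U$ corresponds to a minimal prime $\mathfrak q$ over $(\bar f)$. Krull's principal ideal theorem gives $\operatorname{ht}\mathfrak q\le 1$.
\end{enumerate}

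\textbf{Step 3: convert height to dimension.} For a domain of finite type over a field, $\dim A/\mathfrak q+\operatorname{ht}\mathfrak q=\dim A$; this is the catenarity and dimension formula for affine domains. Applying it in the second case gives $\dim C=\dim(C\cap U)=\dim A/\mathfrak q\ge\dim W_1-1$. If $W_1\cap W_2$ is empty there is nothing to prove.

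Once Step 1 is in hand, Steps 2--3 are routine commutative algebra.
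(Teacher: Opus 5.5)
Your argument is correct, but it does not follow the route behind the citation. The paper gives no proof of its own. It quotes the Stacks Project, where this is the codimension-one case of a general inequality: every irreducible component $Z$ of $W_1\cap W_2$ in a nonsingular variety $V$ satisfies $\dim Z\ge\dim W_1+\dim W_2-\dim V$. The standard proof of that inequality reduces to the diagonal. One writes $W_1\cap W_2=\Delta^{-1}(W_1\times W_2)$. Since $V$ is smooth, $\Delta\colon V\to V\times V$ is a regular immersion, locally cut out by $\dim V$ equations, so Krull's height theorem bounds the codimension of each component inside $W_1\times W_2$ by $\dim V$.

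You use smoothness only through local factoriality (Auslander--Buchsbaum), to make $W_2$ locally principal at a point $p$ of $C$ off the other components. After that you need only the one-equation Hauptidealsatz on the coordinate ring of $W_1\cap U$ and the dimension formula $\dim A/\mathfrak q+\operatorname{ht}\mathfrak q=\dim A$. Your localization details are sound: the choice of $p$, spreading out the generator $f$, and the case $\bar f=0$.

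The trade-off is this. Your argument matches exactly the case the paper uses, and it shows the conclusion holds on any locally factorial variety. The price is the nontrivial theorem that regular local rings are UFDs. The diagonal argument avoids that theorem and handles $W_2$ of arbitrary codimension. In exchange, it needs smoothness in the stronger form that $\Delta$ is a regular immersion.
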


Now the idea of Lemma \ref{L: generic sweeping witness} is to show that when a good subvariety is swept, it is swept in a `generic' way.

\begin{definition}\label{D: generic sweeping witness}
    Suppose $O\in\operatorname{Orb}(A)$ is irreducible, $Z_1,...,Z_k$ are $A$-definable closed irreducible subvarieties, and $O$ sweeps $(Z_1,...,Z_k)$. A \textit{generic witness to $O$ sweeping $(Z_1,...,Z_k)$} consists of a member $Y\in O$, with canonical base $c$, and a tuple $a=(a_1,...,a_k)\in Z_1\times...\times Z_k$, so that:
    \begin{enumerate}
        \item $a\in Y^k$.
    \item $a$ is $A$-generic in $Z_1\times...\times Z_k$.
    \item $\dim(Y\cap Z_i)=\dim(Z_i)-1$ for all $i$.
    \item $a$ is $Ac$-generic in $(Y\cap Z_1)\times...\times(Y\cap Z_k)$.
    \end{enumerate}
\end{definition}

\begin{lemma}\label{L: generic sweeping witness}
   Let $O,A,Z_1,...,Z_k$ be as in Definition \ref{D: generic sweeping witness}. If each $Z_i$ is $O$-good, then there is a generic witness to $O$ sweeping $(Z_1,...,Z_k)$.
\end{lemma}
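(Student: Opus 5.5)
Since $O$ sweeps $(Z_1,\dots,Z_k)$, we start from an arbitrary witness and then use goodness to make it generic. Fix an $A$-generic $a=(a_1,\dots,a_k)\in Z_1\times\dots\times Z_k$ and some $Y\in O$ with $a\in Y^k$. Conditions (1) and (2) then hold by construction, so the work lies in (3) and (4). The ambiguity in the choice of $Y$ is handled by replacing $(a,c)$, where $c=\operatorname{Cb}(Y)$, with a realization of $\tp(a,c/A)$. This is the same manipulation as in Lemma \ref{L: sweeping invariant}. The key step is to choose $Y$ so that it is generic in $O$ over $A\cup\{\operatorname{Cb}(Z_i)\}$. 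This is already automatic, since the $Z_i$ are $A$-definable and $Y\in O$ is arbitrary. So it remains to exploit the $O$-goodness of each $Z_i$.

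\textbf{Step 1 (condition (3)).} Since the $Z_i$ are $A$-definable, $O_{\operatorname{Cb}(Z_i)}$ is parallel to $O$, and $Y\in O$. Because $Z_i$ is not $O$-common, $Z_i\not\subset Y$. Hence every component of $Y\cap Z_i$ has dimension at most $\dim(Z_i)-1$. For the lower bound we use that $Z_i$ is not contained in the singular locus of $X$. Work in the smooth open $V=X\setminus\operatorname{Sing}(X)$, where $Z_i\cap V$ is nonempty and irreducible of the same dimension, and $Y\cap V$ is a codimension $1$ closed subset, possibly reducible but with each component of codimension $1$. Fact \ref{F: dimension theorem} then gives that every component of $Z_i\cap Y\cap V$ has dimension at least $\dim(Z_i)-1$. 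Nonemptiness is needed here. The point $a_i$ is $A$-generic in $Z_i$, so it avoids the $A$-definable proper closed set $Z_i\cap\operatorname{Sing}(X)$. Thus $a_i\in Y\cap Z_i\cap V\neq\emptyset$, and $\dim(Y\cap Z_i)=\dim(Z_i)-1$.

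\textbf{Step 2 (condition (4)).} We show that $a$ is $Ac$-generic in $\prod_i(Y\cap Z_i)$ by a dimension count. By additivity, $\dim(a/A)=\sum_i\dim(Z_i)$ equals $\dim(c/A)+\dim(a/Ac)-\dim(c/Aa)$, and $\dim(a/Ac)\le\sum_i(\dim Z_i-1)$. This alone does not force equality. Instead we re-choose the pair: take $c$ with $\dim(c/A)$ maximal, equivalently $Y$ a generic member of $O$ over $A$, and then choose $a'$ to be $Ac$-generic in a component of $\prod_i(Y\cap Z_i)$ that passes through $a$. We must check that $a'$ is still $A$-generic in $\prod Z_i$. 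The intended argument runs as follows. Consider the $A$-definable incidence set $I=\{(y,x): y \text{ in the family of } O,\ x\in\prod(y\cap Z_i)\}$ restricted to the component through $(c,a)$. Its projection to $\prod Z_i$ contains the $A$-generic point $a$, so that component dominates $\prod Z_i$. By Step 1, each fiber over $y$ has dimension exactly $\sum(\dim Z_i-1)$, at least on the good part, and so a generic point of that component projects generically. Applying $\operatorname{Gal}(K/A)$ then yields the required $(Y',a')$, with $Y'\in O$ since $\tp(c'/A)=\tp(c/A)$.

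\textbf{Main obstacle.} The obstacle is Step 2: we must ensure that the component of $I$ containing $(c,a)$ lies over the generic type of $O$, so that $c'$ realizes $\tp(c/A)$ rather than a specialization. We must also ensure that this component dominates $\prod Z_i$. To handle this, we will first replace $a$ by a point $a$ that is generic over $Ac$ in $\prod(Y\cap Z_i)$ for a generic $c$, and then check directly that $\dim(a/A)=\dim(c/A)+\sum(\dim Z_i-1)-\dim(c/Aa)$ attains $\sum\dim Z_i$. This is where sweeping is used: sweeping guarantees that the union of the $\prod(Y\cap Z_i)$ over $Y\in O$ is generically all of $\prod Z_i$. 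The fiber-dimension equality from Step 1 is what makes this count come out right.
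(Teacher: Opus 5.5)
Your Step 1 is correct and is the paper's argument for condition (3). The gap is in Step 2, at the one point that needs an idea: showing that the re-chosen point $a'$ is still $A$-generic in $Z_1\times\dots\times Z_k$, i.e.\ condition (2).

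\textbf{What is missing.} Your incidence-variety sketch claims that a generic point of the component $I_0\ni(c,a)$ works. It never shows that $(c,a')$ is such a generic point. You also leave open, as your ``main obstacle'', whether a generic $(c',a')\in I_0$ satisfies $c'\models\tp(c/A)$ and $\dim(a'/Ac')=\sum_i(\dim Z_i-1)$.

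\textbf{Why your proposed repair fails.} The displayed count in your last paragraph only moves the unknown into $\dim(c/Aa)$, and sweeping does not control that quantity. The repair also drops the requirement that the component pass through the original witness. Without that requirement the conclusion is false. Take $X=\mathbb P^3$, $O$ the orbit of generic planes through an $A$-definable line $\ell$, and $Z$ a smooth quadric containing $\ell$, with $k=1$.
\begin{itemize}
\item $Z$ is $O$-good and is swept by $O$.
\item For $Y\in O$ we have $Y\cap Z=\ell\cup\ell'_Y$.
\item A point $p$ that is $Ac$-generic on $\ell$ is $Ac$-generic in $Y\cap Z$, yet $\dim(p/A)=1<\dim Z$.
\end{itemize}
Separately, ``take $c$ with $\dim(c/A)$ maximal'' is vacuous, since all members of $O$ have $A$-conjugate canonical bases.

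\textbf{The missing idea.} A short specialization argument settles (2), and with it you never need to move $Y$.
\begin{itemize}
\item Keep $Y$ and $c$. Let $C_i$ be a component of $Y\cap Z_i$ through the original $a_i$.
\item Then $C=C_1\times\dots\times C_k$ is irreducible, $\acl(Ac)$-definable, contains $a$, and has dimension $\sum_i(\dim Z_i-1)$ by Step 1.
\item Let $a'$ be $Ac$-generic in $C$. This gives (1) and (4) at once.
\item Suppose $a'$ lay in a proper $A$-definable closed $W\subset\prod_i Z_i$. Every $Ac$-generic point of $C$ realizes the unique generic type of $C$ over $\acl(Ac)$, so all of them would lie in $W$.
\item By irreducibility and closedness, $C\subset W$. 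Hence $a\in W$, contradicting the $A$-genericity of $a$.
\end{itemize}
This is how the paper concludes.

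Your incidence route can also be completed. $I_0$ is $\acl(A)$-definable and contains $(c,a)$ with $c$ generic in the parameter variety, so it dominates that variety automatically. The fiber dimension over $c$ then follows from Step 1, because every component of $\prod_i(Y\cap Z_i)$ through $a$ has full dimension. That argument, however, rests on the same fact in heavier form.
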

    \begin{proof}
    Let $b =(b_1,...,b_k)\in Z_1\times...\times Z_k$ be $A$-generic, and (by sweeping) let $Y\in O$ with $b\in Y^k$. Let $c=\operatorname{Cb}(Y)$. By goodness, each $Y\cap Z_i$ is a proper closed subvariety of $Z_i$ containing the generic point $a_i\in Z_i$ (otherwise $Z_i$ would be $O$-common). Let $C_i$ be a component of $Y\cap Z_i$ containing $a_i$. So $C_i$ is $\operatorname{acl}(Ac)$-definable. Now since $a_i\in Z$ is generic, and $Z_i$ is $O$-good, we get that $a_i$ is a smooth point of $X_i$. By applying Fact \ref{F: dimension theorem} to the smooth locus of $X_i$, we conclude that $C_i$ has codimension 1 in $Z_i$. In particular, this implies (3) from Definition \ref{D: generic sweeping witness}.
    
    Now let $a=(a_1,...,a_k)$ be $Ac$-generic in $C_1\times...\times C_k$. Then automatically we have (1) and (4) from Definition \ref{D: generic sweeping witness}, so it remains to show (2) (i.e. that $a$ is $A$-generic in $Z_1\times...\times Z_k$). But if not, there is some proper $A$-definable closed subvariety $W\subset Z_1\times...\times Z_k$ containing $a$. By automorphism invariance, every $Ac$-generic element of $C_1\times...\times C_k$ belongs to $W$, so that by irreducibility, $C_1\times...\times C_k\subset W$. But then $b\in W$, contradicting that $b$ is $A$-generic in $Z_1\times...\times Z_k$.


\end{proof}

Next, Lemma \ref{L: relativized sweeping is unique} will show that relativized sweeping in $O$-good subvarieties is \textit{always} unique, as long as $O$ is itself uniquely sweeping. After appropriate setup, this is essentially a restatement of Zariski's Main Theorem. Below we state the version most appropriate for our needs. In what follows, let us say that a morphism $f:V\rightarrow W$ of irreducible varieties is \textit{generically bijective} if there are dense open subvarieties $V'\subset V$, $W'\subset W$ so that, if $\Gamma\subset V\times W$ is the graph of $f$, then $\Gamma\cap(V'\times W')$ is the graph of a bijection $V'\rightarrow W'$.

\begin{fact}\label{F: ZMT}
    Let $f:V\rightarrow W$ be a quasi-finite morphism of irreducible varieties, and assume $W$ is normal. If $f$ is generically bijective, then $f$ is injective.
\end{fact}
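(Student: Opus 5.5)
Fact \ref{F: ZMT} is a form of Zariski's Main Theorem, and I would derive it from the Stacks version (Tag 05K0) as follows. Since $f$ is quasi-finite and separated (morphisms of varieties are separated), Zariski's Main Theorem factors it as $f=\pi\circ j$. Here $j:V\rightarrow V'$ is an open immersion and $\pi:V'\rightarrow W$ is finite. Replacing $V'$ by the scheme-theoretic closure of $j(V)$, we may assume $V'$ is irreducible and $j$ has dense image. Passing to reduced structures, which changes nothing on closed points, we may also assume $V'$ is integral. Injectivity of $f$ on closed points then follows from injectivity of $\pi$, since $j$ is injective. So it suffices to show: a finite, generically bijective morphism $\pi:V'\rightarrow W$ of irreducible varieties, with $W$ normal, is injective.

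For this I would pass to function fields. Generic bijectivity and finiteness give a finite extension $K(V')/K(W)$ of degree $d$. The generic fiber of $\pi$ over a dense open set consists of exactly one point. Hence the separable degree $[K(V'):K(W)]_s$ equals $1$, because the number of points in a general fiber of a finite dominant map equals the separable degree. So $K(V')/K(W)$ is purely inseparable.

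Now work locally. Take an affine open $\operatorname{Spec}B\subset W$, with $B$ normal, whose preimage is $\operatorname{Spec}C$, where $C$ is a finite $B$-algebra and a domain. Each $c\in C$ satisfies $c^{p^e}\in K(W)$ for some $e$; in characteristic $0$ this reads $c\in K(W)$. Since $c$ is integral over $B$, so is $c^{p^e}$, and normality of $B$ forces $c^{p^e}\in B$.

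Let $\mathfrak p,\mathfrak q$ be maximal ideals of $C$ over the same maximal ideal $\mathfrak m$ of $B$. If $c\in\mathfrak p$, then $c^{p^e}\in\mathfrak p\cap B=\mathfrak m\subset\mathfrak q$, so $c\in\mathfrak q$. By symmetry $\mathfrak p=\mathfrak q$, and therefore $\pi$ is injective on closed points.

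The main obstacle is the claim that generic bijectivity forces pure inseparability, which requires the standard count of a general fiber of a finite dominant morphism. I would cite it by passing to a dense open set over which $\pi$ is finite flat, and then using that the geometric fiber over the generic point has $[K(V'):K(W)]_s$ points. A secondary care point is that the closure $V'$ from Zariski's Main Theorem might only be irreducible after taking the component containing $j(V)$; this is harmless since $V$ is irreducible.
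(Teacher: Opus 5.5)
Your proof is correct, but it takes a different route from the paper's.

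The paper splits by characteristic. In characteristic $0$ it notes that $f$ is birational and cites the EGA form of Zariski's Main Theorem: a quasi-finite separated birational morphism to a normal variety is an open immersion. In characteristic $p$ it factors $f$ as a bijective morphism onto an intermediate variety followed by a separable morphism to $W$, citing outside references for this factorization. It then reruns the characteristic-zero argument on the separable part.

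You instead use the factorization form of Zariski's Main Theorem (open immersion followed by finite) and handle the finite part by hand. The fiber count makes $K(V')/K(W)$ purely inseparable, and normality of $B$ forces $c^{p^e}\in B$, so $\operatorname{Spec}C\to\operatorname{Spec}B$ is radicial. This treats all characteristics uniformly and avoids constructing a separable intermediate variety. It also gives more than needed: your prime-ideal argument shows $\pi$ is injective on all points, and in characteristic $0$ it gives $C=B$, recovering the paper's open-immersion conclusion.

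Both routes rest on the same input: generic one-point fibers force separable degree $1$. The paper uses this implicitly when it asserts birationality.

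One step you should make explicit is that the general fiber of $\pi$, not just of $f$, is a single point. Since $V'\setminus j(V)$ is a proper closed subset of the irreducible $V'$, its image under the closed map $\pi$ has dimension $<\dim W$, so you can delete it from the dense open set where $f$ is bijective. The same dimension argument is needed to pass from the paper's definition of generic bijectivity, which only controls a dense open of $V$, to the statement that general fibers of $f$ are singletons.
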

\begin{proof}
    In characteristic zero, any $f$ as above is birational. Then by Zariski's main theorem (\cite[Corollaire 18.12.13]{ega4}), $f$ is an isomorphism from $V$ to an open subvariety of $W$. This gives injectivity.
    
    In characteristic $p>0$, $f$ might not be birational, but this does not matter. In this case, we can factor $f$ as $V\rightarrow V'\rightarrow W$, where $V\rightarrow V'$ is a bijective morhpism and $V'\rightarrow W$ is separable (e.g. see \cite[Section 1.2]{ezfields} and \cite[Lemma 9.26]{CasHasYe}). Now the argument in the previous paragraph shows that $V'\rightarrow W$ is injective, and since $V\rightarrow V'$ is bijective, this is enough to see that $V\rightarrow W$ is also injective.
\end{proof}

Using Fact \ref{F: ZMT}, we now show:

\begin{lemma}\label{L: relativized sweeping is unique}
    Let $O\in\operatorname{Orb}(A)$ be uniquely $k$-sweeping, and let $Z_1,...,Z_k$ be $A$-definable $O$-good subvarieties. If $O$ sweeps $(Z_1,...,Z_k)$, then $O$ uniquely sweeps $(Z_1,...,Z_k)$.
\end{lemma}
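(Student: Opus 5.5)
The plan is to encode the orbit $O$ as an irreducible algebraic family. I would then apply Fact \ref{F: ZMT} to the incidence correspondence over the smooth locus of $X^k$, and use the generic witness from Lemma \ref{L: generic sweeping witness} to land in the region where Zariski's Main Theorem applies.

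\textbf{Setting up the family.} Fix $Y_0\in O$ with $c_0=\operatorname{Cb}(Y_0)$. Let $T$ be an $A$-definable irreducible variety having $c_0$ as an $A$-generic point; by Lemma \ref{L: dim of sweeping orbit}, $\dim T=k$. By Fact \ref{F: def over cb} there is an $A$-definable family $\{Y_c\}_{c\in T}$ with $Y_{c_0}=Y_0$. Members of $O$ are exactly the $Y_c$ with $c$ realizing $\tp(c_0/A)$.

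Consider the incidence variety
\[
I=\{(x,c): x\in Y_c^k\}\subset X^k\times T .
\]
Let $I_0$ be the closure of the part of $I$ lying over $A$-generic $c$. Over such $c$ the fiber is $Y_c^k$, which is irreducible, so $I_0$ is irreducible. Its dimension is $k+k(\dim X-1)=\dim X^k$.

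Now restrict to $U^k$, where $U$ is the smooth locus of $X$; then $U^k$ is smooth and hence normal. Let $V\subset I_0\cap(U^k\times T)$ be the open locus where the projection $\pi:I_0\to X^k$ is quasi-finite. Unique $k$-sweeping says exactly that $\pi$ is generically bijective onto $X^k$. Hence $\pi|_V:V\to U^k$ is quasi-finite and generically bijective, and Fact \ref{F: ZMT} shows that $\pi|_V$ is injective.

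\textbf{Using the witness.} By Lemma \ref{L: generic sweeping witness}, there is a generic witness $(Y,a)$ with $c=\operatorname{Cb}(Y)$. Since each $Z_i$ is $O$-good and $a_i$ is generic in $Z_i$, each $a_i$ is a smooth point, so $a\in U^k$. A dimension count using conditions (3) and (4) of Definition \ref{D: generic sweeping witness} gives
\[
\dim(ac/A)=k+\sum_i(\dim Z_i-1)=\sum_i\dim Z_i=\dim(a/A),
\]
so $c\in\acl(Aa)$. Therefore $c$ is an isolated point of the $Aa$-definable fiber $\pi^{-1}(a)$, and $(a,c)\in V$. Moreover $(a,c)\in I_0$, because $c$ is $A$-generic in $T$.

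It remains to show that every other $Y'=Y_{c'}\in O$ containing $a$ also satisfies $(a,c')\in V$; injectivity of $\pi|_V$ then forces $c'=c$. This gives uniqueness at one $A$-generic $a$ of $Z_1\times\cdots\times Z_k$, and hence at all of them by automorphism invariance.

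\textbf{Main obstacle.} The hard step is showing $(a,c')\in V$ for an arbitrary second member $Y'$. As with $c$, we have $(a,c')\in I_0$ because $c'$ is $A$-generic in $T$. For quasi-finiteness at $(a,c')$, I would rerun the argument of Lemma \ref{L: generic sweeping witness} with $Y'$ in place of $Y$. Each $Y'\cap Z_i$ is a proper subvariety of $Z_i$ by goodness, and every component through $a_i$ has codimension $1$ by Fact \ref{F: dimension theorem}. Hence
\[
\dim(a/Ac')\le\sum_i(\dim Z_i-1),
\]
which forces $\dim(c'/Aa)\le 0$. Thus $c'\in\acl(Aa)$, so $c'$ is isolated in the fiber and $(a,c')\in V$. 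The delicate bookkeeping is making sure that this dimension inequality, and the smoothness of $a$, are exactly what is needed to place both points in the quasi-finite normal region where Fact \ref{F: ZMT} applies.
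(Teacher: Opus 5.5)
Your proposal is correct and is essentially the paper's proof. Both realize $O$ as the $A$-generic members of an $A$-definable family, pass to the quasi-finite locus of the incidence correspondence over the smooth (hence normal) part of $X^k$, use goodness together with $\dim(O)=k$ (Lemma \ref{L: dim of sweeping orbit}) to show that every member of $O$ through an $A$-generic $a\in Z_1\times\cdots\times Z_k$ has canonical base in $\acl(Aa)$, and conclude with Fact \ref{F: ZMT}. Two small remarks: the appeals to Lemma \ref{L: generic sweeping witness} and Fact \ref{F: dimension theorem} are unnecessary, since goodness alone gives $\dim(Y'\cap Z_i)<\dim(Z_i)$ for every $Y'\in O$, which is all your count needs; and the passage from $c'\in\acl(Aa)$ to ``$c'$ is isolated in the fiber'' deserves one more line (a positive-dimensional fiber component through $c'$ would have an $\acl(Aa)$-generic point that is still $A$-generic in $T$, contradicting your count), though the paper glosses this same point.
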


\begin{proof}
    First, we realize $O$ as the set of generic members of an algebraic family over $A$. For example, this can be done efficiently using model theory as follows. Let $Y\in O$, let $c=\operatorname{Cb}(Y)$, and let $a\in Y$ be $Ac$-generic. Then $\tp(c/A)$ is stationary (since $O$ is irreducible), so is the generic type of an irreducible variety $T$ defined over $A$; and then $\tp(ac/A)$ is also stationary, so is the generic type of an irreducible variety $\mathcal Y\subset X\times T$ defined over $A$. Then the $A$-generic fibers of $\mathcal Y$ over $T$ are just the $\operatorname{Gal}(K/A)$ conjugates of the fiber over $c$, which are equivalently the members of $O$. Note that after replacing $T$ with an $A$-definable open subset, we may assume all fibers of $\mathcal Y\rightarrow T$ are irreducible of dimension $\dim(X)-1$.

    Now let $\mathcal Y^k_T$ be the $k$-fold fiber product of $\mathcal Y$ over $T$. Then all fibers of $\mathcal Y^k_T\rightarrow T$ are irreducible of the same dimension; it follows that $\mathcal Y^k_T$ has a unique irreducible component $V$ of top dimension, which therefore contains all $A$-generic elements of $\mathcal Y_T^k$. In particular, since the fibers of $\mathcal Y_T^k\rightarrow T$ are irreducible, this means $V$ contains the \textit{entire} fiber at every $A$-generic point of $T$.
    
    Next, we use that $O$ is uniquely $k$-sweeping. In geometric terms, this says precisely that $V\rightarrow X^k$ is generically bijective. Our goal is to set up an application of Fact \ref{F: ZMT} to this map (or rather a dense open restriction of it).

    \begin{claim} Let $z\in Z_1\times...\times Z_k$ be $A$-generic, and let $t\in T$ be $A$-generic with $z\in\mathcal Y_t^k$. Then $t\in\acl(Az)$.
    \end{claim}
    \begin{proof}
        Equivalently, we show that if $Y\in O$ contains $z$, and $c=\operatorname{Cb}(Y)$, then $c\in\acl(Az)$.

        Let $m=\sum_i\dim(Z_i)=\dim(z/A)$. By goodness, each $\dim(Z_i\cap Y)<\dim(Z_i)$, so adding over all $i$ gets $$\dim(z/Ac)\leq m-k.$$ But by Lemma \ref{L: dim of sweeping orbit}, $\dim(c/A)=\dim(O)=k$, so in total it follows that $\dim(cz/A)\leq m$. Since $\dim(z/A)=m$, the claim follows.
    \end{proof}
    
    Now suppose $O$ does not uniquely sweep $(Z_1,...,Z_k)$. Then there are an $A$-generic $z\in Z_1\times...\times Z_k$ and distinct $A$-generics $s,t\in T$ with $z\in\mathcal Y_s\cap\mathcal Y_t$. So $(z,s),(z,t)\in\mathcal Y_T^k$. Since $s,t$ are $A$-generic in $T$, we get as above that their entire fibers in $\mathcal Y_T^k$ are contained in $V$. Thus $(z,s),(z,t)\in V$.
    
    By the claim, every point in $V$ lying over $z$ is contained in $\acl(Az)$. By compactness, this implies the fiber in $V$ over $z$ is finite. Thus, both $(z,s)$ and $(z,t)$ belong to the quasi-finite locus $V_0$ of $V\rightarrow X^k$. Recall that the quasi-finite locus is open in $V$ (\cite[\href{https://stacks.math.columbia.edu/tag/01TI}{Lemma 01TI}]{stacks-project}). It follows that the restriction $V_0\rightarrow X^k$ is another generically bijective morphism of varieties, which is moreover quasi-finite.
    
    We have almost set up an application of Fact \ref{F: ZMT}; the last step is ensuring a normal target, and this is where we need goodness. Namely, since $a\in Z_1\times...\times Z_k$ is generic, and each $Z_i$ is $O$-good, we get that $a$ belongs to the smooth locus $W$ of $X^k$. Now let $V_1\subset V_0$ be the preimage of $W$, so that $(z,s),(z,t)\in V_1$. Since the smooth locus is dense open in $X^k$, it follows that $V_1\rightarrow W$ is yet another generically bijective morphism; but this time, we have both quasi-finiteness (inherited from $V_0$) \textit{and} a normal target (since $W$ is smooth). So we can apply Fact \ref{F: ZMT} to conclude that $V_1\rightarrow W$ is injective. But this contradicts that $(z,s),(z,t)\in V_1$ both map to $z\in W$.
\end{proof}

\subsection{Characterization of Unique Sweeping} Later on, we will need to recognize that a $k$-sweeping orbit is uniquely $k$-sweeping using purely topological means. Similar to Lemma \ref{L: generic sweeping witness}, it will help to know that we can look `generically' for a failure of unique sweeping:

\begin{proposition}\label{P: generic witness to non sweeping}
    Let $O\in\operatorname{Orb}(A)$ be irreducible and $k$-sweeping but not uniquely $k$-sweeping. Then there are members $Y,Z\in O$ with canonical bases $c,d$ respectively, and a tuple $a\in X^k$, so that the following hold:
    \begin{enumerate}
        \item $\dim(Y\cap Z)=\dim(X)-2$ (in particular $Y\neq Z$).
        \item $a$ is $A$-generic in $X^k$, $Ac$-generic in $Y^k$, $Ad$-generic in $Z^k$, and $Acd$-generic in $(Y\cap Z)^k$.
        \item $c$ and $d$ are independent over both $A$ and $Aa$.
    \end{enumerate}
\end{proposition}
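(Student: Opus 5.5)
Since $O$ is $k$-sweeping but not uniquely so, I would start from an $A$-generic $b\in X^k$ lying on two distinct members $Y_0,Z_0\in O$, and then regenerate the data in the style of Lemma \ref{L: generic sweeping witness}.

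The first step is to compare each member through $b$ with $b$. Let $c_0=\operatorname{Cb}(Y_0)$. The argument of Lemma \ref{L: dim of sweeping orbit} gives $\dim(b/Ac_0)\le k(\dim X-1)$. Since $O$ is $k$-sweeping, a generic member through a generic tuple must have $\dim(c_0/A)$ equal to $\dim(c_0/Ab)+k$, or at least be arranged so. I would take $c_0,d_0$ realizing $\tp(c_0/Ab)$ and independent over $Ab$, with $\tp(c_0/Ab)$ chosen as a type of maximal dimension among members of $O$ through $b$. Non-uniqueness forces $\tp(c_0/Ab)$ to be non-algebraic, or to have at least two realizations. In the algebraic case, independence over $Ab$ is automatic for distinct conjugates, so I choose two distinct realizations.

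The second step is to get independence of $c,d$ over $A$. Counting gives
$$\dim(cd/A)=\dim(b/A)+2\dim(c/Ab)-\dim(b/Acd).$$
I then use $\dim(b/Ac)=\dim(b/A)+\dim(c/Ab)-\dim(c/A)$, together with $\dim(c/A)=\dim O$. I want to show that $b$ is generic in $(Y\cap Z)^k$ over $Acd$, which would give $\dim(b/Acd)\le k(\dim X-2)$ and, with $\dim(b/Acd)\ge\dim(b/Ac)+\dim(b/Ad)-\dim(b/A)$ style counting, also $\dim(cd/A)=2\dim(c/A)$.

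Rather than fight this directly, I would redo the construction as in Lemma \ref{L: generic sweeping witness}. Take a component $C$ of $Y_0\cap Z_0$ containing $b_i$. Then re-choose $a$ to be $Acd$-generic in $C^k$, where the component dimension is at least $\dim X-2$ by Fact \ref{F: dimension theorem}, applied at smooth points because generic $b_i$ is smooth. It is at most $\dim X-2$ since $Y\neq Z$ are irreducible of codimension $1$. This gives (1), assuming the components through the $b_i$ have this dimension. As in Lemma \ref{L: generic sweeping witness}, irreducibility shows that $a$ is still $A$-generic in $X^k$, and then $Ac$-generic in $Y^k$ by the same argument with $Y$ in place of $X$.

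For (3), independence over $Aa$ should follow from the choice of $c,d$ as independent conjugates over $Ab$ after transporting by an automorphism fixing $Acd$ that sends $b$ to $a$. This is not literally possible, so instead I would choose $c,d$ independent over $Aa$ from the start. Fix $a$ generic and $c$ with $a\in Y^k$. Let $d$ be an $Aa$-independent realization of a second type through $a$, which is possible unless every member through $a$ is algebraic over $Aac$. That case is handled by taking two distinct conjugates over $Aa$. In either case independence over $Aa$ holds. Independence over $A$ then follows from counting: $\dim(c/A)=\dim(c/Aa)+k$ because $a$ is generic both in $X^k$ and in $Y^k$ over $c$, and similarly for $d$. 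Adding $\dim(a/A)$ and subtracting $\dim(a/Acd)=k(\dim X-2)$ gives $\dim(cd/A)=2\dim O$.

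The main obstacle is keeping all the genericity conditions of (2) simultaneously while making $a$ generic in $(Y\cap Z)^k$. In particular, one has to check that $\dim(Y\cap Z)$ is exactly $\dim X-2$ near the chosen points, which needs smoothness and the dimension theorem. One also has to check that the counting identity above genuinely forces $\dim(a/Acd)=k(\dim X-2)$ rather than merely giving an upper bound.
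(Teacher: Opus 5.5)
Your final route is the paper's route: fix an $A$-generic $a$, take a member $Y\ni a$ with $a$ $Ac$-generic in $Y^k$, take a second member $Z$ with $c,d$ independent over $Aa$, then count. As written, however, it does not close. In the last computation you subtract $\dim(a/Acd)=k(\dim X-2)$. That equality is exactly the $Acd$-genericity in (2) and the content of (1), so it is part of what must be proved, and you list it yourself as an unresolved obstacle. Your detour of re-choosing $a$ generic in a component of $Y\cap Z$, via smoothness and Fact~\ref{F: dimension theorem}, is unnecessary. As you noticed, it also loses control of independence over $Aa$.

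The missing step is a two-sided count with no geometry. Put $m=\dim(O)$. The $Ac$-genericity gives $\dim(c/Aa)=m+\dim(a/Ac)-\dim(a/A)=m-k$, and likewise $\dim(d/Aa)=m-k$ once $a$ is $Ad$-generic in $Z^k$. Independence over $Aa$ then gives
\[
\dim(acd/A)=\dim(a/A)+\dim(c/Aa)+\dim(d/Aa)=2m+k(\dim X-2).
\]
On the other hand, trivially $\dim(cd/A)\le 2m$, and $\dim(a/Acd)\le k(\dim X-2)$ because $Y\ne Z$ are irreducible of codimension $1$ and $a\in(Y\cap Z)^k$. Both bounds are therefore equalities, which gives (1), the last clause of (2), and independence over $A$ at once. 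Your second paragraph would also have closed this way with $a=b$. Given $Ac$- and $Ad$-genericity of $b$, independence over $Ab$ already yields the lower bound you wrote down, $\dim(b/Acd)\ge\dim(b/Ac)+\dim(b/Ad)-\dim(b/A)=k(\dim X-2)$.

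The other soft spot is the case $c\in\acl(Aa)$, where you assert that non-uniqueness provides ``two distinct conjugates'', i.e.\ a second realization of $\tp(c/Aa)$. Non-uniqueness only gives some other member $Z$ with $a\in Z^k$, a priori of a different type over $Aa$, and you must still show that $a$ is $Ad$-generic in $Z^k$. The paper does this by counting:
\begin{enumerate}
\item $\dim(c/Aa)=0$ together with $Ac$-genericity gives $\dim(c/A)=k$.
\item Since $d$ is $A$-conjugate to $c$, $\dim(d/A)=k$.
\item Hence $\dim(ad/A)\le k+k(\dim X-1)=\dim(a/A)$, which forces $\dim(d/Aa)=0$ and $Ad$-genericity.
\end{enumerate}
Independence over $Aa$ is then automatic. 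Your conjugacy claim is in fact true: this count plus uniqueness of the generic type of $Y^k$ over $Ac$ proves it. But it requires proof, and it is unnecessary.

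Relatedly, the case split must be made for a $c$ that is a generic witness (Lemma~\ref{L: generic sweeping witness}, or your maximal-dimension choice), according to whether $c\in\acl(Aa)$. ``Every member through $a$ is algebraic over $Aac$'' is not the right condition. Also, ``fix $c$ with $a\in Y^k$'' alone does not give the $Ac$-genericity your count uses.
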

\begin{proof}

    Throughout, for ease of notation, we will abbreviate $\dim(X)$ by $\hat d$.
    
    By Lemma \ref{L: good preservation}, $X$ is $O$-good. So by Lemma \ref{L: generic sweeping witness}, let $Y,a$ be a generic witness to $O$ seeping $(X,...,X)$ ($k$ times). Let $c=\operatorname{Cb}(Y)$.
    
    \begin{claim}
        There is $Z\neq Y$ in $O$ with canonical base $d$ so that $a$ is $Ad$-generic in $Z$, $\dim(c/Aa)=\dim(d/Aa)$, and $c$ and $d$ are independent over $Aa$. 
    \end{claim}
    \begin{proof}
        If $\dim(c/Aa)>0$ this is automatic by taking $d\models\tp(c/Aa)$ independent from $c$ over $Aa$ (then $c\neq d$ because $\dim(d/Aa)=\dim(c/Aa)>0$).

        Now assume $\dim(c/Aa)=0$. In this case, let $Z$ be \textit{any} other member of $O$ with $a\in Z^k$ (which exists because $O$ is not uniquely $k$-sweeping). We claim that $Z$ and $d=\operatorname{Cb}(Z)$ satisfy the claim. It is clear that $c$ and $d$ are independent over $Aa$, since $\dim(c/Aa)=0$. So it remains to show that $a$ is $Ad$-generic in $Y^k$ and $\dim(d/Aa)=\dim(c/Aa)=0$.
        
        Now we have $\dim(ac/A)=\dim(a/A)=k\hat d$, while $\dim(a/Ac)=k\cdot(\hat d-1)$; thus $\dim(c/A)=k$, and thus $\dim(d/A)=k$ (as $c,d$ are automorphism conjugate over $A$ by definition). Since $a\in Z^k$, we also have $\dim(a/Ad)\leq k\cdot(\hat d-1)$. So: $$\dim(ad/A)=\dim(d/A)+\dim(a/Ad)\leq k+k\cdot(\hat d-1)=k\hat d=\dim(a/A)\leq\dim(ad/A).$$ So all inequalities above are equalities. In particular, we get $\dim(a/Ad)=k\cdot(\hat d-1)$, so that $a$ is $Ad$-generic in $Z^k$; and we get $\dim(ad/A)=\dim(a/A)$, so that $\dim(d/Aa)=0$. This proves the claim.
        \end{proof}
        
        Now let $Z$ and $d$ be as in the claim. We show that they satisfy the requirements of the proposition. It remains to show that $c$ and $d$ are independent over $A$, and that $\dim(a/Acd)=\hat d-2$ (which combined with $Y\neq Z$ shows that $\dim(Y\cap Z)=2$ and $a$ is $Acd$-generic in $Y\cap Z$).
        
        Now let $m=\dim(c/A)=\dim(d/A)=\dim(O)$. Then $\dim(ac/A)=m+k\cdot(\hat d-1)$, while $\dim(a/A)=k\cdot d$. Thus $\dim(c/Aa)=m-k$, and thus $$\dim(acd/A)=\dim(ac/A)+\dim(d/Aac)=\dim(ac/A)+\dim(c/Aa)$$ $$=m+k\cdot(\hat d-1)+m-k=2m+k\cdot(\hat d-2).$$ (where we use $\dim(d/Acd)=\dim(d/Aa)=\dim(c/Aa)$, which follows from the claim).
        
        On the other hand, clearly $\dim(cd/A)\leq 2m$ and (since $Y\cap Z$) $\dim(a/Acd)\leq k\cdot(\hat d-2)$. So since $\dim(acd/A)=2m+k\cdot(\hat d-2)$, both of these inequalities must be equalities. This finishes the proof of the proposition.
\end{proof}

\subsection{Characterization of hyperplane sections via sweeping}

Here we prove Theorem \ref{T: intro hyperplane sweeping}. This theorem says that the collection of \textit{all} irreducible components of hyperplane sections in $X$ is determined by the sweeping data of just the \textit{generic} hyperplanes. Recall that $n$ is the dimension of the ambient projective space containing $X$.

\begin{theorem}\label{L: hyperplane sweeping} Let $O$ be the orbit of generic hyperplane sections in $X$ (i.e. $O=H_\emptyset^{\emptyset}(X)$). Then for any closed irreducible subvariety $Z\subset X$, the following are equivalent:
\begin{enumerate}
    \item $O$ $n$-sweeps $Z$.
    \item $O$ uniquely $n$-sweeps $Z$.
    \item $Z$ is non-degenerate.
\end{enumerate}

In particular, the irreducible components of the hyperplane sections in $X$ are precisely the closed irreducible codimension 1 subvarieties of $X$ which are not $n$-swept by $O$.
\end{theorem}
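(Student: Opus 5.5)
The plan is to prove the cycle $(3)\Rightarrow(2)\Rightarrow(1)\Rightarrow(3)$, and then read off the final clause. The step $(2)\Rightarrow(1)$ is trivial. Throughout, fix a parameter set $B$ over which $Z$ is defined. By Lemma \ref{L: sweeping invariant}, it suffices to work with $O_B$, whose members are the sets $H\cap X$ with $H$ a $B$-generic hyperplane; these are irreducible by Bertini. Since $X$ is non-degenerate, $H\cap X$ determines $H$ (it spans $H$), so ``$a$ lies on a member of $O_B$'' just means ``$a$ lies on a $B$-generic hyperplane''.

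\textbf{$(1)\Rightarrow(3)$.} Suppose $Z$ is degenerate, and let $M$ be the linear span of $Z$, a $B$-definable linear space of dimension $m<n$. Let $a=(a_1,\dots,a_n)\in Z^n$ be $B$-generic. Then $a_1,\dots,a_{m+1}$ (note $m+1\le n$) already span $M$. Indeed, inductively, $a_{i+1}$ is generic in $Z$ over $Ba_1\dots a_i$, while the span of $a_1,\dots,a_i$ is a $Ba_1\dots a_i$-definable proper linear subspace of $M$ (for $i\le m$), which cannot contain $Z$. So any hyperplane $H$ containing $a$ contains $M$. The hyperplanes containing $M$ form a $B$-definable family of dimension $n-m-1<n$, so $\dim(H/B)<n$, and $H$ is not $B$-generic. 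Hence no member of $O_B$ contains $a$, and $O$ does not $n$-sweep $Z$.

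\textbf{$(3)\Rightarrow(2)$.} Suppose $Z$ is non-degenerate and $a\in Z^n$ is $B$-generic. The same inductive argument shows that $a_1,\dots,a_n$ are linearly independent, so they span a unique hyperplane $H$, which is $Ba$-definable. Uniqueness of the member of $O_B$ containing $a$ is then immediate: any hyperplane containing $a$ equals $H$, and $H\cap X$ determines $H$. It remains to show that $H$ is $B$-generic, i.e.\ $\dim(H/B)=n$. Since $Z$ is non-degenerate, $Z\not\subset H$, so $Z\cap H$ has dimension at most $\dim Z-1$, giving $\dim(a/BH)\le n(\dim Z-1)$. As $H\in\operatorname{dcl}(Ba)$, additivity gives
\[
n\dim Z=\dim(a/B)=\dim(aH/B)=\dim(H/B)+\dim(a/BH)\le \dim(H/B)+n(\dim Z-1),
\]
so $\dim(H/B)\ge n$, as required.

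\textbf{The final clause.} Let $Z\subset X$ be closed, irreducible and of codimension 1. If $Z$ is a component of some $H\cap X$, then $Z\subset H$ is degenerate. Conversely, if $Z\subset H$, then $Z\subset H\cap X$. Since $X$ is non-degenerate, $\dim(H\cap X)\le\dim X-1=\dim Z$, so $Z$ is an irreducible component of $H\cap X$. Combining this with $(1)\Leftrightarrow(3)$ gives the characterization. I expect the only delicate points to be the bookkeeping that identifies members of $O_B$ with $B$-generic hyperplanes (via Bertini and non-degeneracy of $X$), and the fiber-dimension count in $(3)\Rightarrow(2)$. Neither looks like a serious obstacle.
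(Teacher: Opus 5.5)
Your proof is correct. The overall structure is the paper's: the cycle $(1)\Rightarrow(3)\Rightarrow(2)\Rightarrow(1)$, the same count $\dim(a/BH)\le n(\dim Z-1)$ forcing $\dim(H/B)=n$, and the same codimension-one remark for the final clause. The key tool is different, though.

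You run both directions off one elementary lemma: for $B$-generic $a\in Z^n$ with $Z$ irreducible, the points $a_1,\dots,a_{m+1}$ already span $\operatorname{span}(Z)$.
\begin{itemize}
\item In $(1)\Rightarrow(3)$ this gives a direct argument. Any hyperplane through $a$ contains $\operatorname{span}(Z)$, so it lies in a $B$-definable family of dimension $n-m-1<n$ and cannot be $B$-generic. The paper instead argues by contradiction. It fixes a hyperplane $H_0\supset Z$ and notes that $a$ lies in $H\cap H_0$, whose code has dimension at most $n-1$. It deduces that some $a_i$ is generic in $Z$ over that code, so $Z$ lies in a generic hyperplane section, i.e.\ is $O$-common. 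That is impossible because $n+1$ general hyperplanes have empty intersection.
\item For uniqueness in $(3)\Rightarrow(2)$, the paper shows every hyperplane through $a$ is algebraic over $a$. It then uses compactness and the fact that a finite linear system of hyperplanes is a single point. You get uniqueness at once from linear independence, and also the bonus that $H\in\operatorname{dcl}(Ba)$.
\end{itemize}
Your route is shorter and more elementary, and it never touches canonical bases or commonness. The paper's route stays in the dimension-counting-on-codes style it uses for general sweeping orbits elsewhere.

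One small remark concerns your preliminary claim that $H\cap X$ determines $H$ because $X$ is non-degenerate. This is true for $B$-generic $H$, but it is a general-position statement that needs its own argument; it does not follow immediately from non-degeneracy of $X$. Fortunately you never use it. Rewriting ``$a$ lies on a member of $O_B$'' as ``$a$ lies on a $B$-generic hyperplane'' only needs that the members of $O_B$ are the sets $H\cap X$ with $H$ a $B$-generic hyperplane. Your uniqueness step only needs that $H\cap X$ is a function of $H$: two distinct members through $a$ would give two distinct hyperplanes through the linearly independent points $a_1,\dots,a_n$.
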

\begin{proof}

    The second clause is immediate from the equivalence of (1) and (3): the only thing to note is that if $\dim(Z)=\dim(X)-1$, then saying $Z$ is degenerate (i.e. contained in a hyperplane section) is equivalent to saying that $Z$ is an irreducible component of that hyperplane section (which is clear by dimension considerations).

    The rest of the proof will show that (1) implies (3) and (3) implies (2) (since (2) implies (1) is trivial). For each of these, by Lemma \ref{L: sweeping invariant}, we may assume $Z$ is $\emptyset$-definable, since adding parameters does not change whether $O$ $n$-sweeps $Z$ (uniquely or not).
    
    Now we first show (1) implies (3). So toward a contradiction, assume $O$ $n$-sweeps $Z$ and $Z$ is contained in the hyperplane $H_0$. Adding parameters again, we may assume $H_0$ is $\emptyset$-definable. Now let $a=(a_1,...,a_n)\in Z^n$ be generic, and (by sweeping) let $Y\in O$ with $a\in Y^k$. Then $Y=H\cap H_0$ for some generic hyperplane $H$. Let $c=\operatorname{Cb}(Y)$, and let $t$ be a code for the intersection $H\cap H_0$. Viewing $H\cap H_0$ as a generic hyperplane inside the $(n-1)$-dimensional projective space $H_0$, we have $\dim(t)=n-1$, and thus (since $Y$ is determined by $t$) we get $\dim(c)\leq n-1$.
    
    Now we have $$\dim(ac)\geq\dim(a)=n\cdot\dim(Z),$$ so that $$\dim(a/c)=\dim(ac)-\dim(c)\geq n\cdot\dim(Z)-(n-1)>n\cdot(\dim(Z)-1).$$ It follows that $\dim(a_i/c)=\dim(Z)$ for some $i$. So a generic member of $Z$ belongs to $Y$, and thus (by closedness and irreducibility) $Z\subset Y$. But then by automorphism invariance (since $Z$ is $\emptyset$-definable), $Z$ belongs to \textit{every} generic hyperplane section (that is, $Z$ is $O$-common) -- and this is impossible because the intersection of $n+1$ hyperplanes in general position is empty. (Precisely, we are contradicting that $Z$ is assumed irreducible, thus non-empty).
    
    Now we show (3) implies (2). So assume $Z$ is non-degenerate. Let $a\in Z^n$ be generic, and let $H$ be a hyperplane containing $a$ (here we use that any $n$ points lie on some hyperplane). This time, let $t$ be a code for $H$ (see Section \ref{S: codes}). Since $Z$ is non-degenerate over $L$ we have $\dim(H\cap Z)<\dim(Z)$, so that $$\dim(a/t)\leq n\cdot(\dim(Z)-1)=\dim(a)-n.$$ By additivity, it must be that $\dim(t)\geq n$, i.e. that $H$ is a generic hyperplane. Clearly, then, $\dim(t)$ is exactly $n$, which implies all equalities above are equalities. Thus $\dim(a/t)=\dim(a)-n$, and so adding to $\dim(t)=n$ gives $\dim(at)=\dim(a)$, i.e. $t\in\acl(a)$. Since $t$ was arbitrary (subject to coding a hyperplane through $a$), it follows by compactness that only finitely many hyperplanes pass through $a$. But the hyperplanes through $a$ form a linear subspace of $\mathbb P^n$, and the only finite subspace is the trivial one; so in fact there is only one hyperplane through $a$, which must be our chosen hyperplane $H$ from above. So we have shown that $H\cap X$ is the unique generic member of $O$ containing $a$, which proves (2). 
\end{proof}

\section{The Setup}

We now introduce the setup and terminology we will use for the remainder of Part 1 of the paper. The main mathematical goal of this section is to show that homeomorphisms `usually' behave well with respect to model-theoretic genericity and independence. This will be crucial in the next section for transferring sweeping data through a homeomorphism.

\subsection{Notation and Conventions}

\begin{convention}\label{Convention homeomorphism} \textbf{Until stated otherwise, we fix $\varphi:X_1\rightarrow X_2$, a homeomorphism between irreducible quasiprojective varieties $X_i$ of dimension $\hat d\geq 2$ over uncountable algebraically closed fields $K_i$. We can and do assume $X_1$ is a non-degenerate locally closed subvariety of $\mathbb P^n(K_1)$ for some fixed $n$.} 
\end{convention}

\begin{convention} We moreover fix the following notational conventions until otherwise stated:
\begin{enumerate}
    \item The subscripts 1 and 2 will correspond to the fields $K_1$ and $K_2$. For example, a parameter set denoted $A_1$ should be assumed to reside in $K_1$.
    \item All model-theoretic notions are interpreted in the sense of the fields $K_1$ and $K_2$, which we may augment as we go by adding countably many constants to each. \textbf{In particular, we assume throughout that $X_i$ is $\emptyset$-definable in $K_i$ for each $i$.}
    \item We reiterate that all parameter sets are assumed countable. A \textit{pair} of parameter sets is denoted $(A_1,A_2)$ (or similar with $B$, $C$, ...) and refers to a choice of countable sets $A_i$ from each $K_i$.
    \end{enumerate}
\end{convention}

\subsection{Generic points}

We would like to continue using the model-theoretic tools associated with generic points and independence. However, this is now a bit subtle, because a priori $\varphi$ might not preserve genericity. For example, one might imagine a generic point $a_1\in X_1^k$ for some $k$ so that $a_2=\varphi(a_1)\in X_2^k$ is not generic. Fortunately, this turns out to be a rare occurrence. Our goal in this subsection is to make precise the statement that `we can always take generic points that remain generic under $\varphi$'.

First we make the relevant definitions:

\begin{definition}\label{D: generic pair}
    Let $Y_i^1,...,Y_i^k\subset X_i$ be $A_i$-definable with each $\varphi(Y_1^j)=Y_2^j$.
    \begin{enumerate}
        \item Say that $y_1\in Y_1^1\times...\times Y_1^k$ is $(A_1,A_2)$-\textit{generic} if $y_1$ is $A_1$-generic in $Y_1^1\times...\times Y_1^k$ and $\varphi(y_1)$ is $A_2$-generic in $Y_2^1\times...\times Y_2^k$. 
        \item Similarly, say that $y_2\in Y_2^1\times...\times Y_2^k$ is $(A_1,A_2)$-generic if $y_2$ is $A_2$-generic in $Y_2^1\times...\times Y_2^k$ and $\varphi^{-1}(y_2)$ is $A_1$-generic in $Y_1^1\times...\times Y_1^k$.
        \item The phrases `$(y_1,y_2)$ is $(A_1,A_2)$-generic in $(Y_1^1\times...\times Y_1^k,Y_2^1\times...\times Y_2^k)$' and `$(y_1,y_2)\in(Y_1^1\times...\times Y_1^k,Y_2^1\times...\times Y_2^k)$ is $(A_1,A_2)$-generic' mean that each $y_i$ is $A_i$-generic in $Y_i^1\times...\times Y_i^k$, and moreover $\varphi(y_1)=y_2$.
    \end{enumerate}
\end{definition}

So `$y_1$ is $(A_1,A_2)$-generic in $Y_1^1\times...\times Y_1^k$' is equivalent to `$(y_1,\varphi(y_1))$ is $(A_1,A_2)$-generic in $(Y_1^1\times...\times Y_1^k,Y_2^1\times...\times Y_2^k)$', and similarly for $y_2$ and $Y_2$. 

\begin{remark} It is tempting to define $(A_1,A_2)$-genericity for arbitrary definable subsets of powers of $X$; but that does not a priori make sense, because we do not know that $\varphi$ preserves all definable sets. The point of Definition \ref{D: generic pair} is that $\varphi$ \textit{does} preserve definable sets which are products of unary sets, so in that case the definition \textit{does} make sense.
\end{remark}

Now we show in a precise sense that $(A_1,A_2)$-generic points always exist:

\begin{lemma}\label{L: generics exist}
    Let $\varphi(Y_1^j)=Y_2^j$ for $j=1,...,k$, where each $Y_i^j\subset X_i$ is $A_i$-definable. Then there is an $(A_1,A_2)$-generic point of $(Y_1^1\times...\times Y_1^k,Y_2^1\times...\times Y_2^k)$.
\end{lemma}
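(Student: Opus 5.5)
The plan is a countability argument: the non-generic points on each side form a countable union of proper closed subvarieties, and such a union is meagre in a sense that homeomorphisms preserve and that an irreducible variety over an uncountable field cannot achieve. Write $Y_i=Y_i^1\times\dots\times Y_i^k$ and $\Phi=\varphi\times\dots\times\varphi:X_1^k\to X_2^k$. The map $\Phi$ is a bijection sending $Y_1$ onto $Y_2$. It need not be a homeomorphism for the Zariski topology on $X_1^k$, but it is a homeomorphism for the product topologies. We may assume each $Y_i^j$ is irreducible of positive dimension (or first reduce to components). If some factor is finite, its points are algebraic over $A_i$, so that coordinate is generic automatically and we drop it. If a factor is reducible, we replace it by a component defined over $\acl(A_i)$ and match components via $\varphi$. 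Homeomorphisms preserve irreducible components, and adding finitely many algebraic parameters does not affect genericity.

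\textbf{Step 1: describe the bad sets.} A point of $Y_1$ fails to be $A_1$-generic exactly when it lies in some proper $A_1$-definable closed subset of $Y_1$. Since $A_1$ is countable, there are only countably many such subsets (Fact \ref{F: omega stable}, or simply countably many formulas). So the bad set is $B_1=\bigcup_m W_m$ with each $W_m\subsetneq Y_1$ closed. Similarly the bad set in $Y_2$ is $B_2=\bigcup_m V_m$. We need to show $Y_1\neq B_1\cup\Phi^{-1}(B_2)$.

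\textbf{Step 2: make the bad sets small in a product-invariant sense.} Call $S\subset Y_1$ \emph{thin} if it is contained in a countable union of sets of the form $Y_1^1\times\dots\times C\times\dots\times Y_1^k$, where $C$ is a proper closed subset of one factor, or, iterating over fibres, if $S$ is thin fibrewise. The clean way is induction on $k$. A proper closed $W\subset Y_1$ has the property that, for all $y'$ outside a proper closed subset of the first $k-1$ factors, the fibre $W_{y'}$ is a proper closed subset of $Y_1^k$. We then choose coordinates one at a time. First pick $y^1\in Y_1^1$ avoiding countably many proper closed sets together with their $\varphi$-preimages. In one factor $\varphi$ \emph{is} a homeomorphism, so $\varphi^{-1}$ of a proper closed subset of $Y_2^1$ is a proper closed subset of $Y_1^1$. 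An irreducible positive-dimensional variety over an uncountable algebraically closed field is not a countable union of proper closed subsets, so such a $y^1$ exists. Then pass to the fibres over $y^1$ and $\varphi(y^1)$ and continue. Equivalently, we pick $y_1^1$ to be $(A_1,A_2)$-generic in $Y_1^1$, and then pick $y_1^2$ to be $(A_1y_1^1,A_2\varphi(y_1^1))$-generic in $Y_1^2$, and so on. Additivity of dimension then makes the tuple $A_1$-generic in $Y_1$, and its image $A_2$-generic in $Y_2$. This reduces everything to the case $k=1$, since the parameter sets stay countable.

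\textbf{Step 3: the case $k=1$, which is the main point.} We need $y\in Y_1^1$ outside $\bigcup_m W_m\cup\bigcup_m\varphi^{-1}(V_m)$. Each $\varphi^{-1}(V_m)$ is a proper closed subset of the irreducible set $Y_1^1$, because $\varphi$ restricts to a homeomorphism $Y_1^1\to Y_2^1$. So we must show an irreducible variety of positive dimension over an uncountable algebraically closed field is not a countable union of proper closed subsets. This is standard: cut down by a curve, or note that a countable family of proper closed sets is defined over a countable subfield $F$, and any point with coordinates algebraically independent over $F$ of the right transcendence degree avoids them all. Those points are exactly the $F$-generic points, which exist by the compactness remark in Section 2.

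I expect the only real subtlety to be in Step 2: one must not apply $\varphi$ to non-product sets. The coordinate-by-coordinate choice with enlarged countable parameter sets avoids this, and additivity of dimension (on each side separately) confirms that the resulting tuples are generic.
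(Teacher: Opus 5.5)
Your proposal is correct and follows essentially the same route as the paper. For $k=1$, the points failing genericity on either side, pulled back to $Y_1$ through the one-variable homeomorphism $\varphi$, form a countable union of lower-dimensional definable sets, and these cannot cover $Y_1$ over an uncountable field. The general case follows by choosing coordinates one at a time, each $(A_1y^{<j},A_2\varphi(y^{<j}))$-generic, and applying additivity on both sides. Your preliminary reduction to irreducible factors and the \emph{thin set} digression are unnecessary, since the paper phrases non-genericity via dimension and needs no special treatment of reducible factors, but they do no harm.
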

\begin{proof}
    First assume $k=1$, and write $Y_i^1$ as just $Y_i$. Consider the collection $\mathcal C$ of all definable sets $Z\subset Y_1$ so that $\dim(Z)<\dim(Y_1)$ and either $Z$ is $A_1$-definable or $\varphi(Z)$ is $A_2$-definable. Since $A_1,A_2$ are countable, so is $\mathcal C$. Now if there are no $(A_1,A_2)$-generic points of $(Y_1,Y_2)$, then every element of $Y_1$ belongs to an element of $\mathcal C$. Thus $Y_1$ is contained in the union of countably many definable subsets of smaller dimension, and this contradicts that $K_1$ is uncountable (model-theoretically, we contradict that $K_1$ is $\aleph_1$-saturated). 

    Now consider $Y_i^1,...,Y_i^{k+1}$ where $k\geq 1$ and the lemma is true for $1\leq j\leq k$. By the above paragraph, there is an $(A_1,A_2)$-generic element $(a_1,a_2)\in(Y_1^1\times...\times Y_1^k,Y_2^1\times...\times Y_2^k)$. Then by the inductive hypothesis, there is an $(A_1a_1,A_2a_2)$-generic element $(b_1,b_2)\in (Y_1^{k+1},Y_2^{k+1})$. Then $(a_1b_1,a_2b_2)$ is $(A_1,A_2)$-generic in $(Y_1^1\times...\times Y_1^{k+1},Y_2^1\times...\times Y_2^{k+1})$.
\end{proof}

\subsection{Coherent Parameter Sets}

In the case of unary sets (i.e. subsets of $X$ only and not their products), Lemma \ref{L: generics exist} can be strengthened to a much cleaner statement. The idea is that we can force $\varphi$ to \textit{exactly} preserve generic points, at the cost of adding parameters to the language. This formalism will help, for example, in the base case of Theorem \ref{T: intro link preservation} (but it is not too helpful after we move to higher arities).

The following is inspired by the notion of coherence in \cite{CasACF0}: 

\begin{definition}
    Let $(A_1,A_2)$ be a pair of parameter sets. Say that $(A_1,A_2)$ is \textit{coherent} if for every definable $Y_i\subset X_i$ with $\varphi(Y_1)=Y_2$, we have have that $Y_1$ is $A_1$-definable if and only if $Y_2$ is $A_2$-definable.
\end{definition}

So if $(A_1,A_2)$ is coherent, then for all $a\in X_1$ we have $\dim(a/A_1)=\dim(\varphi(a)/A_2)$.

\begin{lemma}\label{L: existence of coherent sets}
    Let $(A_1,A_2)$ be any pair of parameter sets. Then there is a coherent pair $(B_1,B_2)$ with each $B_i\supset A_i$.
\end{lemma}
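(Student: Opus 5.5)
The plan is a back-and-forth closure argument of length $\omega$, using only that parameter sets are countable and that each definable set is definable over a finite (hence countable) set. The key observation is the following. For a fixed countable $A_1$, there are only countably many $A_1$-definable subsets of $X_1$: each is given by a formula in a countable language with finitely many parameters from the countable set $A_1$. Symmetrically, a countable $A_2$ gives only countably many $A_2$-definable subsets of $X_2$.

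Given a pair $(C_1,C_2)$, I form an enlargement $(C_1',C_2')$ as follows.
\begin{itemize}
\item For each $C_1$-definable $Y_1\subset X_1$, the image $\varphi(Y_1)$ may or may not be definable in $K_2$. If it is, I choose a finite tuple of parameters over which it is definable and add that tuple to $C_2'$. Alternatively, I may add a code $\lceil\varphi(Y_1)\rceil$ (an element of $K_2^{eq}$, which is allowed since parameter sets live in $\mathcal M^{eq}$).
\item Symmetrically, for each $C_2$-definable $Y_2\subset X_2$ with $\varphi^{-1}(Y_2)$ definable, I add parameters defining $\varphi^{-1}(Y_2)$ to $C_1'$.
\item I set $C_1'\supseteq C_1$ and $C_2'\supseteq C_2$.
\end{itemize}
Since we add countably many finite tuples, both $C_i'$ remain countable.

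I then iterate. Set $(B_1^0,B_2^0)=(A_1,A_2)$ and $(B_1^{m+1},B_2^{m+1})=((B_1^m)',(B_2^m)')$, and put $B_i=\bigcup_m B_i^m$, which is again countable.

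To check coherence, suppose $Y_1,Y_2$ are definable with $\varphi(Y_1)=Y_2$, and $Y_1$ is $B_1$-definable. A defining formula uses finitely many parameters, so $Y_1$ is $B_1^m$-definable for some $m$. Since $Y_2=\varphi(Y_1)$ is definable by hypothesis, at stage $m+1$ we added parameters defining it, so $Y_2$ is $B_2^{m+1}$-definable and hence $B_2$-definable. The converse direction is symmetric.

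There is essentially no obstacle here. The only points requiring care are that the definition of coherence quantifies only over pairs where both $Y_1$ and $\varphi(Y_1)$ are definable, so we only need to add parameters when the image is definable, and that countability is preserved at each stage and in the union.
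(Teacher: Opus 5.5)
Your proof is correct and is the same construction as the paper's: build chains $B_i^0=A_i\subset B_i^1\subset\cdots$, alternately adding parameters that define the $\varphi$-images and $\varphi$-preimages of all sets definable over the previous stage, then take the union; you give the finite-character and countability details that the paper leaves implicit. Your caveat about $\varphi(Y_1)$ possibly failing to be definable is unnecessary, since a homeomorphism carries constructible subsets of $X_1$ to constructible subsets of $X_2$, but it does no harm.
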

\begin{proof}
    Write $B_i$ as the union of a chain $A_i=B_i^0\subset...\subset B_i^j\subset...$, where $B_1^{j+1}$ can define the $\varphi$-preimages of all $B_2^j$-definable sets and $B_2^{j+1}$ can define the $\varphi$-images of all $B_1^j$-definable sets. 
\end{proof}


\subsection{Large and Small Sets}

We now begin to foreshadow the meaning of `non-negligible' in the statement of Theorem \ref{T: intro link existence}. The idea is that we want to isolate a class of \textit{small} subsets of $X_i^k$ for each $k$, which should meaningfully reflect the topology of $X_i$ in some way. For $k=1$, the small sets will be exactly the \textit{meager} sets in the sense of descriptive set theory (so what we are really doing is setting up some Baire category style arguments about generic and negligible behavior; this crucially uses the uncountability of the fields $K_i$, and is actually one of the key uses of uncountability in this paper).

For $k>1$, the small sets are defined inductively by analyzing fibers of projections. This makes the resulting notion rather awkward (for example the small sets are not invariant under coordinate permutations), but it will still be easy to use and sufficient for our needs.

Let us proceed with the definition:

\begin{definition}
    We inductively define the \textit{large} and \textit{small} subsets of $X_1^k$ for $k\geq 1$ as follows:
    \begin{enumerate}
        \item Suppose $S\subset X_1$. Then $S$ is \textit{small} if it is contained in the union of countably many proper closed subvarieties of $S$, and is \textit{large} otherwise.
        \item Now suppose $k\geq 1$ and we have defined the large and small subsets of $X_1^j$ for $1\leq j\leq k$. Let $S\subset X_1^{k+1}$, and let $\pi:X_1^{k+1}\rightarrow X$ be the leftmost projection. Now say that $S$ is \textit{large} if there is a large set $S'\subset X_1$ so that for each $a\in S'$, the set $S_a=\{x\in X_1^k:(a,x)\in S\}$ is large. Otherwise, say that $S$ is \textit{small}.
    \end{enumerate}
\end{definition}

Lemma \ref{L: sigma ideal} says that the small sets behave like a notion of `small':

\begin{lemma}\label{L: sigma ideal}
    For each $k\geq 1$, the small subsets of $X_1^k$ form a $\sigma$-ideal. That is:
    \begin{enumerate}
    \item $X_1^k$ is large.
    \item Any subset of a small set is small.
    \item The union of countably many small sets is small.
    \end{enumerate}   
\end{lemma}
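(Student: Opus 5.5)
The plan is to argue by induction on $k$, proving all three clauses simultaneously for each $k$. Throughout, in the definition for $k=1$, ``proper closed subvarieties of $S$'' will be read as proper closed subvarieties of $X_1$; this is the intended meaning, since otherwise every subset would trivially be small.

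\textbf{Base case $k=1$.} Clause (2) is immediate from the definition. Clause (3) also follows directly: a countable union of countable unions of proper closed subvarieties is again a countable union. Clause (1) says that the irreducible variety $X_1$ is not a countable union of proper closed subvarieties $Z_j$. I will prove it by the same saturation argument used in Lemma \ref{L: generics exist}. Let $A$ be a countable parameter set over which every $Z_j$ is defined. Then an $A$-generic point of $X_1$ exists, since $K_1$ is uncountable, and it lies in no proper $A$-definable closed subvariety. In particular it lies in no $Z_j$.

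\textbf{Inductive step.} Assume the lemma holds for $X_1^j$ with $j\le k$, and consider subsets of $X_1^{k+1}$.
\begin{enumerate}
\item For (1), take $S'=X_1$. Then every fiber of $X_1^{k+1}$ is $X_1^k$, which is large by induction, and $X_1$ is large by the base case. So $X_1^{k+1}$ is large.
\item For (2), we argue by contraposition. Suppose $T\subset S$ and $T$ is large, witnessed by a large set $T'\subset X_1$ all of whose fibers $T_a$ are large. Each $T_a$ is contained in $S_a$, so $S_a$ is large by the inductive hypothesis (2). Hence $T'$ also witnesses that $S$ is large.
\item For (3), let $S=\bigcup_m S_m$ with each $S_m$ small, and suppose toward a contradiction that $S$ is large, witnessed by $S'$. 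For each $a\in S'$, the fiber $S_a=\bigcup_m (S_m)_a$ is large. By the inductive hypothesis (3), some $(S_m)_a$ must then be large. Set $S'_m=\{a\in S': (S_m)_a \text{ is large}\}$. Then $S'=\bigcup_m S'_m$. Since $S'$ is large, clause (3) for $k=1$ shows that some $S'_m$ is large. But then $S'_m$ witnesses that $S_m$ is large, which is a contradiction.
\end{enumerate}

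The only non-formal step is clause (1) in the base case, where uncountability of $K_1$ is used via the existence of generics over countable parameter sets. Everything else is bookkeeping with fibers.
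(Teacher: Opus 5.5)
Your proof is correct and follows the paper's route: induction on $k$, clause (1) in the base case from the fact that the irreducible variety $X_1$ is not a countable union of proper closed subvarieties (by uncountability/saturation of $K_1$), and the inductive step for (3) obtained by splitting the witness $S'$ into the sets $S'_m$ where $(S_m)_a$ is large and then applying the $k=1$ case. Your reading of ``proper closed subvarieties of $S$'' as proper closed subvarieties of $X_1$ is also the intended one.
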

\begin{proof}
\begin{enumerate}
    \item For $k=1$ this is because $X_1$ is not the union of countably many proper closed subvarieties (for example because $K_1$ is $\aleph_1$-saturated, being uncountable and algebraically closed). Then the general case is clear by induction.
    \item Clear.
    \item For $k=1$ this is clear. Now working by induction, let $k\geq 1$ and let $S_1,S_2,...$ be small subsets of $X_1^{k+1}$. Toward a contradiction, suppose $S=S_1\cup S_2\cup...$ is large. Let $S'\subset X_1$ be large so that $S_a$ is large for all $a\in S'$. Noting that $S_a=\bigcup_i(S_i)_a\subset X_1^k$, it follows by the inductive hypothesis that for each $a\in S'$, some $(S_i)_a$ is large. Let $T_i$ be the set of $a$ so that $(S_i)_a$ is large. Then the $T_i$ union to $S'$, and by the base case, some $T_i$ is large. Then by definition, $S_i$ is large, a contradiction.
\end{enumerate}
\end{proof}

Lemma \ref{L: sigma ideal} justifies common $\sigma$-ideal terminology when speaking of large and small sets. For example, we might say that a property holds `large often', or `for large many $a\in X_1^k$', etc.

Lemma \ref{L: generics in large set} and Corollary \ref{C: non-generics are small} refine Lemma \ref{L: generics exist} by showing that `most' points are generic on both sides of $\varphi$:

\begin{lemma}\label{L: generics in large set}
    Let $S\subset X_1^k$ be large. Then for any $(A_1,A_2)$, $S$ contains an $(A_1,A_2)$-generic element of $X_1^k$.
\end{lemma}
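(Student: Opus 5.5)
We argue by induction on $k$, mirroring both the inductive definition of largeness and the proof of Lemma \ref{L: generics exist}. The idea is that the set of points which fail to be $(A_1,A_2)$-generic is covered by countably many ``negligible'' sets, and a large set cannot be covered by these.

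\textbf{Base case $k=1$.} As in the proof of Lemma \ref{L: generics exist}, let $\mathcal C$ be the collection of definable sets $Z\subset X_1$ with $\dim(Z)<\dim(X_1)$ such that either $Z$ is $A_1$-definable or $\varphi(Z)$ is $A_2$-definable. Since $A_1$ and $A_2$ are countable, $\mathcal C$ is countable.

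Each $Z\in\mathcal C$ is contained in its Zariski closure $\overline Z$, which is a proper closed subvariety of $X_1$. In the case where $\varphi(Z)$ is $A_2$-definable, we use that $\varphi$ is a homeomorphism: the closure of $\varphi(Z)$ is a proper closed subset of $X_2$, and its $\varphi$-preimage is again proper and closed. In either case, $Z$ lies in a finite union of proper closed irreducible subvarieties of $X_1$.

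Hence if $S\subset X_1$ contains no $(A_1,A_2)$-generic point, then $S\subset\bigcup\mathcal C$. This exhibits $S$ inside a countable union of proper closed subvarieties of $X_1$, so $S$ is small, contradicting the hypothesis.

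There is one subtlety here: the definition literally says ``proper closed subvarieties of $S$''. I read this as meaning $X_1$, since that is what makes Lemma \ref{L: sigma ideal}(1) meaningful.

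\textbf{Inductive step.} Suppose $S\subset X_1^{k+1}$ is large. Choose a large $S'\subset X_1$ such that the fiber $S_a$ is large for every $a\in S'$.

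By the base case, $S'$ contains an $(A_1,A_2)$-generic point $a_1$. Write $a_2=\varphi(a_1)$.

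The fiber $S_{a_1}\subset X_1^k$ is large. We apply the inductive hypothesis to $S_{a_1}$ with the enlarged (still countable) parameter pair $(A_1a_1,A_2a_2)$. This yields $b_1\in S_{a_1}$ that is $A_1a_1$-generic in $X_1^k$, and whose image $\varphi(b_1)$ is $A_2a_2$-generic in $X_2^k$.

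Finally, additivity of dimension gives the conclusion. We have
\[
\dim(a_1b_1/A_1)=\dim(a_1/A_1)+\dim(b_1/A_1a_1)=(k+1)\dim(X_1),
\]
and the analogous computation holds on the $X_2$ side. Hence $(a_1,b_1)\in S$ is $(A_1,A_2)$-generic in $X_1^{k+1}$.

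\textbf{Expected obstacle.} The only delicate point is in the base case. We must check that every non-generic point, on either side of $\varphi$, lies in a proper closed subset of $X_1$. This uses closedness of $\varphi$ together with the fact that lower-dimensional constructible sets have proper closure. The inductive step is routine bookkeeping.
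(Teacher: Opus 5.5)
Your proof is correct and is essentially the paper's argument: the base case covers the non-$(A_1,A_2)$-generic points by countably many proper closed subsets of $X_1$ (the paper takes proper closed subvarieties directly instead of closing up lower-dimensional definable sets, which is only cosmetic), and the inductive step picks an $(A_1,A_2)$-generic $a_1\in S'$ and then an $(A_1a_1,A_2a_2)$-generic $b_1\in S_{a_1}$, exactly as you do. Your reading of ``proper closed subvarieties of $S$'' as ``of $X_1$'' in the definition of small sets is also the intended one.
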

\begin{proof}
    Essentially by the same argument as Lemma \ref{L: generics exist}. First assume $k=1$ and consider the (countable) collection $\mathcal C$ of all proper closed subvarieties $Y\subset X_1$ so that either $Y$ is $A_1$-definable or $\varphi(Y)$ is $A_2$-definable. Since $S$ is large, it cannot be covered by $\mathcal C$ -- and any element of $S-\bigcup\mathcal C$ is $(A_1,A_2)$-generic.

    Now assume $k\geq 1$ and the lemma is true for $1\leq j\leq k$. By definition, there is a large set $S'\subset X_1$ so that for each $a\in S'$ the set $S_a$ is large. By the base case, we can choose $a_1\in S'$ which is $(A_1,A_2)$-generic in $X_1$. Let $a_2=\varphi(a_1)$. Then by the inductive hypothesis, we can choose $b_1\in S_{a_1}$ which is $(A_1a_1,A_2a_2)$-generic in $X_1^k$. Then $a_1b_1\in X_1^{k+1}$ is $(A_1,A_2)$-generic and belongs to $S$. 
\end{proof}

\begin{corollary}\label{C: non-generics are small}
    \begin{enumerate}
        \item For any $(A_1,A_2)$, the set $N_k$ of $a\in X_1^k$ which are not $(A_1,A_2)$-generic is small.
        \item If $S\subset X_1^k$ is large, then for any $(A_1,A_2)$, the set $S'$ of $(A_1,A_2)$-generics in $X_1^k$ which belong to $S$ is also large.
    \end{enumerate}
\end{corollary}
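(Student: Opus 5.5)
Both parts follow from Lemma \ref{L: generics in large set} combined with the $\sigma$-ideal properties in Lemma \ref{L: sigma ideal}. The plan is to do part (1) by contradiction and then deduce part (2) from it.

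For (1), suppose toward a contradiction that $N_k$ is large. Applying Lemma \ref{L: generics in large set} to $S=N_k$ with the same pair $(A_1,A_2)$ produces an element of $N_k$ that is $(A_1,A_2)$-generic in $X_1^k$. This contradicts the definition of $N_k$, so $N_k$ is small. The only point to check is that ``$(A_1,A_2)$-generic'' means the same thing in both statements. Lemma \ref{L: generics in large set} is proved by induction on coordinates, yielding $a_1b_1$ with $a_1$ generic over $(A_1,A_2)$ and $b_1$ generic over $(A_1a_1,A_2a_2)$. By additivity of dimension, such a tuple is $A_1$-generic in $X_1^k$ and its image is $A_2$-generic in $X_2^k$, which is exactly Definition \ref{D: generic pair} with $Y_i^j=X_i$. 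So the two notions agree.

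For (2), write $S=S'\cup(S\cap N_k)$. If $S'$ were small, then since $S\cap N_k\subset N_k$ is small by (1) and Lemma \ref{L: sigma ideal}(2), Lemma \ref{L: sigma ideal}(3) would make $S$ a union of two small sets, hence small. That contradicts the assumption that $S$ is large, so $S'$ is large.

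I expect no real obstacle here. The only subtlety is that smallness is defined through leftmost-coordinate fibers rather than symmetrically. The argument above never permutes coordinates, though: it uses only the $\sigma$-ideal properties and Lemma \ref{L: generics in large set} as stated, so this asymmetry causes no trouble.
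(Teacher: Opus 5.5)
Your proof is correct and is essentially the paper's own argument: (1) by contradiction from Lemma \ref{L: generics in large set}, and (2) from the $\sigma$-ideal properties in Lemma \ref{L: sigma ideal}. Your decomposition $S=S'\cup(S\cap N_k)$ is slightly more precise than the paper's $S=S'\cup N_k$. Re-checking the meaning of ``$(A_1,A_2)$-generic'' is harmless but unnecessary, since the statement of Lemma \ref{L: generics in large set} already provides it.
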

\begin{proof}
    \begin{enumerate}
        \item Otherwise the previous lemma would give an $(A_1,A_2)$-generic element of $N_k$, which is impossible.
        \item Let $N_k$ be the small set from (1). Then $S=S'\cup N_k$. If $S'$ were small, then since $N_k$ is also small by (1), we would get that $S$ is small, a contradiction.
    \end{enumerate}
\end{proof}

\section{Linked Orbits and Their Properties}

In this section, we prove Theorem \ref{T: intro link preservation}. First, we need to state it formally -- meaning we need to clarify what it means for the image of an orbit under $\varphi$ to have `non-negligible overlap' with another orbit. This is our most crucial use of large and small sets from the previous section.

\subsection{The Definition}

\begin{definition}\label{D: linked} For each $i=1,2$, let $O_i$ be a uniquely $k$-sweeping orbit in $X_i$. We say that $O_1,O_2$ are \textit{linked} if for large many $a\in X_1^k$ we have $$\varphi(O_1(a))=O_2(\varphi(a)).$$
\end{definition}

\begin{remark} Note that the equation $\varphi(O_1(a))=O_2(\varphi(a))$ above implicitly assumes that both sides are defined -- equivalently, that $a$ is $(A_1,A_2)$-generic where $O_i\in\operatorname{Orb}(A_i)$.
\end{remark}

As with many other notions we have defined, we should immediately note:

\begin{lemma}\label{L: linked invariant}
    Linkedness is parallelism invariant. That is, suppose $O_i$ are uniquely $k$-sweeping orbits over $A_i$, and $B_i\supset A_i$. Then $O_1,O_2$ are linked if and only if $(O_1)_{B_1},(O_2)_{B_2}$ are.
\end{lemma}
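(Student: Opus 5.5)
The plan is to compare the two linkedness conditions pointwise on a large set, and then use the $\sigma$-ideal properties of small sets (Lemma \ref{L: sigma ideal}) to pass between them.

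First, set $P_i=(O_i)_{B_i}$. By Lemma \ref{L: sweeping invariant}, each $P_i$ is again uniquely $k$-sweeping. By the parallelism invariance of $a\mapsto O(a)$ noted after the definition of $O(a)$, we have $P_i(x)=O_i(x)$ whenever $x\in X_i^k$ is $B_i$-generic.

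The key step is the following. By Corollary \ref{C: non-generics are small}(1), the set $N$ of $a\in X_1^k$ that are not $(B_1,B_2)$-generic is small. For $a\notin N$, we know $a$ is $B_1$-generic and $\varphi(a)$ is $B_2$-generic. Since $A_i\subset B_i$, the tuple $a$ is also $(A_1,A_2)$-generic. Hence all four of $O_1(a)$, $P_1(a)$, $O_2(\varphi(a))$, $P_2(\varphi(a))$ are defined, and $O_1(a)=P_1(a)$ and $O_2(\varphi(a))=P_2(\varphi(a))$. Therefore, outside the small set $N$, the equation $\varphi(O_1(a))=O_2(\varphi(a))$ holds if and only if $\varphi(P_1(a))=P_2(\varphi(a))$ does.

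To conclude, let $S_O$ and $S_P$ be the sets of $a$ at which the respective equations hold and are defined. The previous paragraph gives $S_O\setminus N=S_P\setminus N$. If $S_O$ is large, then $S_O\setminus N$ is large, since otherwise $S_O\subset (S_O\setminus N)\cup N$ would be a union of two small sets and hence small by Lemma \ref{L: sigma ideal}. It follows that $S_P\supseteq S_O\setminus N$ is large. The converse is symmetric.

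The only point needing care is the identification $P_i(x)=O_i(x)$ for $B_i$-generic $x$. For this, note that the unique member of $O_i$ through $x$ is then the unique member of $(O_i)_{B_i}$ through $x$. The reason is that the member of $O_i$ through a $B_i$-generic tuple has canonical base generic over $B_i$ (the dimension count in Lemma \ref{L: dim of sweeping orbit}), so it lies in the generic sub-orbit $(O_i)_{B_i}$. With this in hand, the rest is formal.
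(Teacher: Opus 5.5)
Your proof is correct and follows essentially the paper's argument. Both compare the sets where the two linking equations hold, note that these sets agree on $(B_1,B_2)$-generic points (where $O_i(a)=(O_i)_{B_i}(a)$), and use that the non-$(B_1,B_2)$-generic points form a small set (Corollary \ref{C: non-generics are small}). The differences are cosmetic: the paper gets one direction from the inclusion $S_B\subset S_A$, whereas you use the symmetric identity $S_O\setminus N=S_P\setminus N$, and you also spell out the dimension count behind $O_i(a)=(O_i)_{B_i}(a)$, which the paper leaves implicit.
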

\begin{proof}
    Let $S_A$ be the set of $(A_1,A_2)$-generics $a\in X_1$ satisfying $\varphi(O_1(a))=O_2(\varphi(a))$, and let $S_B$ be defined analogously for $(O_1)_{B_1}$ and $(O_2)_{B_2}$. We want to show that $S_A$ is large if and only if $S_B$ is.

    By definition we have $S_B\subset S_A$ (i.e. generic points remain generic over smaller parameters), so if $S_B$ is large then so is $S_A$. Conversely, suppose $S_A$ is large. Then by Corollary \ref{C: non-generics are small}(2), $S_A$ contains a large set of $(B_1,B_2)$-generic points, and all such points belong to $S_B$. So $S_B$ is also large.
\end{proof}

In particular, we obtain the following, which we use frequently:

\begin{lemma}\label{L: weak linked}
    Let $O_1,O_2$ be linked, uniquely $k$-sweeping orbits over $A_1,A_2$ respectively. For any $B_i\supset A_i$, there are elements $Y_i\in(O_i)_{B_i}$ with $\varphi(Y_1)=Y_2$.
\end{lemma}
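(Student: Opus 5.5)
The plan is to reduce to the definition of linkedness after passing to the larger parameter sets, and then read off the required members directly from a single generic tuple. First, by Lemma \ref{L: linked invariant}, the orbits $(O_1)_{B_1}$ and $(O_2)_{B_2}$ are again linked. Here we use that irreducible orbits have unique generic extensions, so $(O_i)_{B_i}$ makes sense. By Lemma \ref{L: sweeping invariant} these extensions are still uniquely $k$-sweeping. So, replacing $O_i$ by $(O_i)_{B_i}$ and $A_i$ by $B_i$, it suffices to prove the statement with $B_i=A_i$: there are $Y_i\in O_i$ with $\varphi(Y_1)=Y_2$.

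Next, by linkedness, the set $S$ of $a\in X_1^k$ with $\varphi(O_1(a))=O_2(\varphi(a))$ is large. By the Remark following Definition \ref{D: linked}, every element of $S$ is implicitly $(A_1,A_2)$-generic. In any case, Corollary \ref{C: non-generics are small}(2) lets us choose $a\in S$ that is $(A_1,A_2)$-generic in $X_1^k$, since large sets are nonempty by Lemma \ref{L: sigma ideal}(1).

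For such an $a$, the point $a$ is $A_1$-generic in $X_1^k$ and $\varphi(a)$ is $A_2$-generic in $X_2^k$. Hence unique $k$-sweeping makes $O_1(a)\in O_1$ and $O_2(\varphi(a))\in O_2$ well defined. Set $Y_1=O_1(a)$ and $Y_2=O_2(\varphi(a))$; then $\varphi(Y_1)=Y_2$ holds because $a\in S$.

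There is no real obstacle. The only point needing care is the first step: checking that the parameter change is legitimate, meaning that linkedness and unique sweeping both transfer to the generic sub-orbits over $B_i$. This is exactly what Lemmas \ref{L: linked invariant} and \ref{L: sweeping invariant} provide. With that in hand, the existence of the $Y_i$ is just the fact that large sets contain (doubly generic) points.
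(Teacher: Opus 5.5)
Your proposal is correct and is the same argument as the paper's: the paper applies Lemma \ref{L: linked invariant} to see that $(O_1)_{B_1},(O_2)_{B_2}$ are linked and says this is ``more than enough'', and you have simply unpacked that last step by taking a point of the large set witnessing linkedness and setting $Y_1=O_1(a)$, $Y_2=O_2(\varphi(a))$. One small citation fix: nonemptiness of large sets follows directly from the definition of small (the empty set is small), not from Lemma \ref{L: sigma ideal}(1); alternatively, Lemma \ref{L: generics in large set} hands you the $(B_1,B_2)$-generic point directly.
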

\begin{proof}
     By Lemma \ref{L: linked invariant}, $(O_1)_{B_1}$ and $(O_2)_{B_2}$ are linked, which is more than enough.
\end{proof}

We also note the following, which will enable inductive arguments in proofs about linked families:

\begin{lemma}\label{L: linked restriction}
    Suppose $k>1$ and $O_i\in\operatorname{Orb}(A_i)$ are uniquely $k$-sweeping and linked. Then there is a large set $S\subset X_1$ consisting of $(A_1,A_2)$-generic elements $a\in X_1$ so that $O_1\restriction a$ and $O_2\restriction\varphi(a)$ are linked. 
\end{lemma}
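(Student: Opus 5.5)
The plan is to unwind the definition of linkedness and use the recursive structure of large sets. Let $T\subset X_1^k$ be the set of $(A_1,A_2)$-generic $x\in X_1^k$ with $\varphi(O_1(x))=O_2(\varphi(x))$. By linkedness and Corollary \ref{C: non-generics are small}(2), $T$ is large. By the definition of large subsets of $X_1^{k}=X_1\times X_1^{k-1}$ (leftmost projection), there is a large $S_0\subset X_1$ such that for every $a\in S_0$ the fiber $T_a\subset X_1^{k-1}$ is large. Intersecting $S_0$ with the $(A_1,A_2)$-generics of $X_1$ (again by Corollary \ref{C: non-generics are small}(2)), we obtain a large set $S\subset X_1$ of $(A_1,A_2)$-generic points with $T_a$ large for all $a\in S$. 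I claim this $S$ works.

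Fix $a\in S$ and set $a_2=\varphi(a)$. Then $O_1\restriction a$ is an orbit over $A_1a$ and $O_2\restriction a_2$ is an orbit over $A_2a_2$, and both are uniquely $(k-1)$-sweeping by the sub-orbit-through-a-tuple example (here we use that $a$ is $A_1$-generic and $a_2$ is $A_2$-generic, which is exactly $(A_1,A_2)$-genericity of $a$). To check linkedness, we must show that for large many $b\in X_1^{k-1}$, $b$ is $(A_1a,A_2a_2)$-generic and $\varphi((O_1\restriction a)(b))=(O_2\restriction a_2)(\varphi(b))$. By Corollary \ref{C: non-generics are small}(2) applied to $T_a$ with parameters $(A_1a,A_2a_2)$, the set $T'_a$ of $(A_1a,A_2a_2)$-generic elements of $T_a$ is large. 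For $b\in T'_a$, the tuple $(a,b)$ is $A_1$-generic in $X_1^k$ (since $a$ is $A_1$-generic and $b$ is $A_1a$-generic), and similarly $(a_2,\varphi(b))$ is $A_2$-generic. By definition of the restricted orbit, $(O_1\restriction a)(b)=O_1(a,b)$, and likewise $(O_2\restriction a_2)(\varphi(b))=O_2(a_2,\varphi(b))$. Since $(a,b)\in T$, we get $\varphi(O_1(a,b))=O_2(\varphi(a,b))$, which is exactly the required equation. Hence $T'_a$ witnesses that the restrictions are linked.

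The main point needing care is the bookkeeping about genericity. The first step is verifying that $(O_1\restriction a)(b)$ is really equal to $O_1(a,b)$. This requires $b$ to be $A_1a$-generic, so that $O_1(a,b)$ is a member of the orbit $O_1\restriction a$ (the $\operatorname{Gal}(K/A_1a)$-orbit of members through $a$ and a generic $(k-1)$-tuple). Uniqueness of sweeping then identifies it. The second step is making sure the choice of $S$ does not depend on $a$ beyond the fiber condition; this is automatic from the recursive definition of largeness. Nothing beyond Lemma \ref{L: sigma ideal} and Corollary \ref{C: non-generics are small} should be needed.
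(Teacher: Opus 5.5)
Your proof is correct and follows the same route as the paper: take the large set of good tuples in $X_1^k$, use the recursive definition of largeness to get a large set of first coordinates with large fibers, and verify $\varphi((O_1\restriction a)(b))=\varphi(O_1(a,b))=O_2(\varphi(a),\varphi(b))=(O_2\restriction\varphi(a))(\varphi(b))$ on those fibers. Your additional bookkeeping only makes explicit what the paper leaves implicit: restricting each fiber to its $(A_1a,A_2\varphi(a))$-generic points via Corollary \ref{C: non-generics are small}(2), and using uniqueness of sweeping to identify $(O_1\restriction a)(b)$ with $O_1(a,b)$.
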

\begin{proof}
    Since $O_1,O_2$ are linked, there is a large set $S\subset X_1^k$ consisting of $(A_1,A_2)$-generic elements $b$ so that that $\varphi(O_1(b))=O_2(\varphi(b))$. By definition, there is a large set $S'\subset X_1$ so that for all $a\in S'$, the fiber $S_a=\{x:(a,x)\in S\}$ is large. This shows that $O_1\restriction a$ and $O_2\restriction\varphi(a)$ are linked for each $a\in S'$. Indeed, for $a\in S'$ and $x\in S_a$, we get $$\varphi((O_1\restriction a)(x))=\varphi(O_1(a,x))=O_2((\varphi(a),\varphi(x))=(O_2)_{\varphi(a)}(\varphi(x)).$$
\end{proof}

\subsection{Statement of Preservation Theorem} We now give the formal version of Theorem \ref{T: intro link preservation}. Informally, we want to say that linked orbits sweep the same subvarieties, with few exceptions. As described in the introduction, it is more convenient to prove a more complicated statement by induction.

\begin{theorem}[= Theorem \ref{T: intro link preservation}]\label{T: preservation}
    Let $O_1,O_2$ be linked, where each $O_i$ is a uniquely $k$-sweeping $A_i$-orbit in $X_i$. Suppose $\varphi(Z_1^j)=Z_2^j$ for $j=1,...,k$, where each $Z_i^j$ is $O_i$-good. Then:
    \begin{enumerate}
        \item $O_1$ sweeps $(Z_1^1,...,Z_1^k)$ if and only if $O_2$ sweeps $(Z_2^1,...,Z_2^k)$.
        \item Suppose each $O_i$ $k$-sweeps $(Z_i^1,...,Z_i^k)$, and each $Z_i^j$ is $A_i$-definable. Then for any $(A_1,A_2)$-generic $(a_1,a_2)\in(Z_1^1\times...\times Z_1^k,Z_2^1\times...\times Z_2^k)$ we have $\varphi(O_1(a_1))=O_2(a_2)$.
    \end{enumerate}
\end{theorem}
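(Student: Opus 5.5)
We will prove Theorem \ref{T: preservation} by induction on $k$, proving (1) and (2) simultaneously. Throughout we first enlarge the parameters: by Lemma \ref{L: sweeping invariant} and Lemma \ref{L: linked invariant}, we may replace $A_i$ by larger sets without changing sweeping or linkedness. We therefore assume each $Z_i^j$ is $A_i$-definable, and that $(A_1,A_2)$ is coherent (Lemma \ref{L: existence of coherent sets}). Goodness is parallelism invariant, so it survives this enlargement.

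\textbf{Base case $k=1$.} Here $O_i$ is uniquely $1$-sweeping, so a generic point of $X_i$ lies on exactly one member $O_i(x)$. Suppose $O_1$ sweeps $Z_1:=Z_1^1$.
\begin{enumerate}
\item By Lemma \ref{L: generic sweeping witness}, take a generic witness $(Y_1,a_1)$, so that $a_1$ is $A_1c_1$-generic in $Y_1\cap Z_1$, a codimension~$1$ subvariety of $Z_1$.
\item Note that $\{a\in X_1:\varphi(O_1(a))=O_2(\varphi(a))\}$ is large, so its complement is small. We want some $Y_1\in O_1$ with $\varphi(Y_1)\in O_2$ meeting $Z_1$ in a component whose generic points are generic in $Z_1$ and map to generic points of $Z_2$.
\item First try to arrange this by choosing $Y_1$ generic over a large extension $B_1$ via Lemma \ref{L: weak linked}, which gives $\varphi(Y_1)=Y_2\in (O_2)_{B_2}$. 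Then use coherence of a suitable pair $(A_1c_1,A_2c_2)$ to transfer genericity: a component $C_1$ of $Y_1\cap Z_1$ maps to a component of $Y_2\cap Z_2$, and dimension is preserved under coherent pairs.
\item The Lemma \ref{L: codim 1 definable over acl} style computation shows that a generic point of $C_2$ is $A_2$-generic in $Z_2$, unless $C_2$ is $\acl(A_2)$-definable. The coherent pair then transfers this exception back to $C_1$, contradicting the genericity of $C_1$ in $Z_1$.
\end{enumerate}
This gives (1) for $k=1$. For (2), by Lemma \ref{L: relativized sweeping is unique} sweeping is unique on good $Z_i$. So for $(A_1,A_2)$-generic $a_1\in Z_1$, the sets $O_1(a_1)$ and $\varphi^{-1}(O_2(a_2))$ are both closed irreducible codimension~$1$ sets through $a_1$, and the argument above identifies them.

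\textbf{Inductive step.} Write $Z_i=(Z_i^1,\dots,Z_i^k)$. Fix the first coordinate and apply Lemma \ref{L: linked restriction}: for large many $(A_1,A_2)$-generic $b\in Z_1^1$, the orbits $O_1\restriction b$ and $O_2\restriction\varphi(b)$ are linked and uniquely $(k-1)$-sweeping. One point needs care here: Lemma \ref{L: linked restriction} gives largeness in $X_1$, whereas we need points generic in $Z_1^1$. We will rerun that lemma with the large set intersected appropriately, using the generic witness. By Lemma \ref{L: good preservation}(2), $Z^2,\dots,Z^k$ remain good for the restricted orbits.

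Now:
\begin{enumerate}
\item $O_1$ sweeps $Z_1$ if and only if, for generic $b$, $O_1\restriction b$ sweeps $(Z_1^2,\dots,Z_1^k)$.
\item By induction this is equivalent to the same statement on side~$2$, which is (1).
\item Part (2) of the inductive hypothesis gives $\varphi((O_1\restriction b)(x))=(O_2\restriction\varphi(b))(\varphi(x))$, that is, $\varphi(O_1(b,x))=O_2(\varphi(b),\varphi(x))$.
\item This holds for generic pairs, provided $(b,x)$ is $(A_1,A_2)$-generic. Using uniqueness from Lemma \ref{L: relativized sweeping is unique}, we then extend it from the large set of good $b$ to all $(A_1,A_2)$-generic tuples: both sides are determined by any generic subtuple, so we can pass through an auxiliary tuple of the good type.
\end{enumerate}

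\textbf{Main obstacle.} We expect the hardest step to be the base case: showing that linkedness, which is only a statement about generic points of $X_1$, says anything about points of the proper subvariety $Z_1$, where generic points of $X$ never lie. The bridge is that members $Y\in O_1$ are codimension~$1$ and meet $Z_1$ in codimension~$1$ generic slices, and that $\varphi$ preserves irreducible closed sets and their components. Transferring genericity of points of $Y_1\cap Z_1$ across $\varphi$ will require coherent parameter sets over $c_i$. We would try to obtain this by combining Lemma \ref{L: codim 1 definable over acl} on both sides.
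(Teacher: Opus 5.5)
\textbf{There is a genuine gap, in two places: the inductive step and the base case (2).}

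\textbf{Inductive step.} The step does not go through as set up. You restrict at a generic $b\in Z_1^1$ and need $O_1\restriction b$ and $O_2\restriction\varphi(b)$ to be linked. Lemma \ref{L: linked restriction} only gives this for $b$ in some large subset of $X_1$. But $Z_1^1$ is a proper closed subvariety, hence small, so that large set may miss it entirely; there is nothing to ``intersect appropriately''. Linkedness of the restrictions at points of $Z_1^1$ is really clause (2) for the $k$-tuple $(Z^1,X,\dots,X)$, and induction on $k$ does not supply it. This is exactly the obstacle you name at the end, and it is left unresolved.

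The paper avoids it by restricting the first coordinate at a generic point $a$ of $X_1$, where Lemma \ref{L: linked restriction} does apply.
\begin{enumerate}
\item The inductive (1) for $O_i\restriction a$ shows that each $O_i$ sweeps the intermediate tuple $(X_i,Z_i^2,\dots,Z_i^k)$, hence uniquely sweeps it by Lemma \ref{L: relativized sweeping is unique}.
\item For $(A_1,A_2)$-generic $b_i\in Z_i^2\times\dots\times Z_i^k$, one fixes the \emph{last} $k-1$ coordinates to get uniquely $1$-sweeping orbits $O_i\restriction b_i$.
\item These are shown to be linked using the inductive (2) for $O_i\restriction a_i$, with $a_1$ ranging over a large set.
\item The coordinate $Z^1$ is then handled by the base case applied to $O_i\restriction b_i$.
\end{enumerate}
Your closing move, extending from a large set of good $b$ to all generic tuples via ``auxiliary tuples'', is also unsupported. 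Since $\varphi$ is not definable, you cannot move between generic tuples by automorphisms. In the paper, the ``for all generic tuples'' in (2) is inherited from the fact that the base case (2) already holds at \emph{every} $(A_1,A_2)$-generic point.

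\textbf{Base case (2).} This makes the base case (2) the crux, and ``the argument above identifies them'' does not prove it. The argument for (1) only produces \emph{some} $Y_1\in(O_1)_{B_1}$ with $\varphi(Y_1)\in(O_2)_{B_2}$. For a prescribed generic $a_1\in Z_1$, you do not even know a priori that $\varphi(O_1(a_1))$ belongs to $O_2$. Indeed $O_1(a_1)$ is a single small subvariety that the linked large set need not meet, and uniqueness of sweeping on each side does not relate the two sides.

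The missing idea is a countability argument.
\begin{enumerate}
\item Choose a coherent $(B_1,B_2)$ over which $Y_1=O_1(a_1)$ and $Y_2=\varphi(Y_1)$ are defined. Take a $(B_1,B_2)$-generic $b_1$ in the linked set, and set $W_i=O_i(b_i)$, so $\varphi(W_1)=W_2$.
\item Since $Y_1\neq W_1$, unique $1$-sweeping shows that no $B_1$-generic point of $Y_1$ lies on $W_1$. This uses that the $B_1$-generic points of $Y_1$ are $A_1$-generic in $X_1$, by goodness.
\item Coherence transfers this to $Y_2$ and $W_2$. Conjugating over $B_2$ then shows that no $B_2$-generic point of $Y_2$ lies on any member of $(O_2)_{B_2}$.
\item On the other hand, goodness of $Z_2$ and Lemma \ref{L: codim 1 definable over acl} show these points are $A_2$-generic in $X_2$, so they do lie on members of $O_2$.
\item Members of $O_2\setminus(O_2)_{B_2}$ are $\acl(B_2)$-definable, so there are only countably many of them. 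Hence $Y_2$ equals one of them, which must be $O_2(a_2)$.
\end{enumerate}

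\textbf{Two smaller points.} First, in your (1), a coherent pair containing $c_i$ is useless, since it makes $Y_i$ definable over it. What works is a coherent $(B_1,B_2)\supseteq(A_1,A_2)$, a pair from Lemma \ref{L: weak linked} conjugated over $B_1$ so that $Y_1$ passes through a $B_1$-generic point of $Z_1$, and coherence applied to that single point. Second, assuming $(A_1,A_2)$ itself coherent is harmless for (1) but not for (2), because (2) quantifies over generic tuples for the original $A_i$.
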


Clause (1) is the main point we need. Clause (2) says informally that linked orbits are `almost everywhere linked', and is a crucial part of both the inductive step and the proof of Theorem \ref{T: intro link existence} (in fact, this clause is why we prove Theorem \ref{T: intro link preservation} before Theorem \ref{T: intro link existence}).

The base case and inductive steps of Theorem \ref{T: preservation} are both rather involved. Because of this, we give them separate subsections below.

\subsection{The Base Case:}

\begin{proposition}\label{P: base case}
    Theorem \ref{T: preservation} holds if $k=1$.
\end{proposition}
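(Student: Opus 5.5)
The plan is to reduce everything to a single well-chosen pair of parameter sets, and then to use the fact that a uniquely $1$-sweeping orbit is one-dimensional (Lemma \ref{L: dim of sweeping orbit}). Such an orbit is like a pencil, and the question of whether it sweeps a subvariety can be read off from how a \emph{single} generic member meets that subvariety. By Lemmas \ref{L: sweeping invariant} and \ref{L: linked invariant}, and the parallelism invariance of goodness, clause (1) is unchanged if we enlarge $A_i$. So for (1) I would use Lemma \ref{L: existence of coherent sets}, interleaved with taking algebraic closures, to produce a coherent pair $(B_1,B_2)$ with $B_i\supset A_i$, each $B_i$ algebraically closed, and $Z_i$ defined over $B_i$. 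Since $\varphi$ is a homeomorphism, it preserves irreducible components. Together with algebraic closedness this gives the following: an irreducible closed $C_1\subset X_1$ is $B_1$-definable if and only if $\varphi(C_1)$ is $B_2$-definable.

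\textbf{Step 1 (a criterion for sweeping).} Let $Y\in O_{B}$ with canonical base $c$, and let $Z$ be $B$-definable and $O$-good. I claim that $O$ sweeps $Z$ if and only if $Y\cap Z$ has an irreducible component of codimension $1$ in $Z$ that is \emph{not} $B$-definable.
\begin{itemize}
\item Forward direction: Lemma \ref{L: generic sweeping witness} gives a member through a $B$-generic $z\in Z$, with a codimension-$1$ component of $Y\cap Z$ through $z$. That component cannot be $B$-definable, since $z$ is $B$-generic in $Z$. By automorphism invariance over $B$ this then holds for every $Y\in O_B$.
\item Converse: let $C$ be a codimension-$1$ component of $Y\cap Z$ that is not $B$-definable, and apply Lemma \ref{L: codim 1 definable over acl} with $W=Z$, $V=C$ and parameters $Bc$. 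Since $C$ is not $\acl(B)=B$-definable, a $Bc$-generic point of $C$ is $B$-generic in $Z$ and lies on $Y$. Hence $O$ sweeps $Z$.
\end{itemize}
Goodness guarantees that $Y\cap Z$ is a proper subset of $Z$ and has pure codimension $1$ at the relevant points, via Fact \ref{F: dimension theorem} on the smooth locus.

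\textbf{Step 2 (transfer, giving (1)).} By Lemma \ref{L: weak linked}, pick $Y_i\in(O_i)_{B_i}$ with $\varphi(Y_1)=Y_2$. Then $\varphi$ maps the components of $Y_1\cap Z_1$ bijectively onto those of $Y_2\cap Z_2$, preserving dimension. By the coherence property above, it also preserves (non-)$B_i$-definability. Step 1 applied on both sides then gives (1). The same argument yields a weak form of (2). Take any $B_1$-generic point $x_1$ of a non-$B_1$-definable component $C_1$ of $Y_1\cap Z_1$, over $B_1c_1$. Then $x_1$ is $B_1$-generic in $Z_1$, so $O_1(x_1)=Y_1$ by uniqueness (Lemma \ref{L: relativized sweeping is unique}). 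By coherence, $x_2=\varphi(x_1)$ is $B_2$-generic in $Z_2$ and lies on $Y_2$, so $O_2(x_2)=Y_2=\varphi(O_1(x_1))$.

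\textbf{Step 3 (the full clause (2)), which I expect to be the main obstacle.} Clause (2) demands $\varphi(O_1(a_1))=O_2(a_2)$ for \emph{every} $(A_1,A_2)$-generic pair, with $A_i$ fixed. We cannot simply pass to a coherent pair, because $a_i$ need not remain generic over it. My plan is as follows.
\begin{enumerate}
\item Set $Y_1=O_1(a_1)$, with canonical base $c_1$, and $Y_2'=\varphi(Y_1)$. Choose a coherent, algebraically closed pair $(B_1,B_2)$ containing $A_1a_1c_1$ and $A_2a_2\operatorname{Cb}(Y_2')$.
\item Take $b_1$ that is $B_1$-generic in $Y_1\cap Z_1$, on a component through $a_1$ that is not $\acl(A_1)$-definable. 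Lemma \ref{L: codim 1 definable over acl} over $A_1$ together with $\dim(c_1/A_1)=1$ shows that $b_1$ is $A_1$-generic in $Z_1$ and $O_1(b_1)=Y_1$.
\item By coherence, $b_2=\varphi(b_1)$ is $B_2$-generic on the corresponding component of $Y_2'\cap Z_2$.
\end{enumerate}
The remaining point is that $b_2$ is $A_2$-generic in $Z_2$, i.e. that this component is not $\acl(A_2)$-definable. I would try to get this by comparing with $(A_1,A_2)$ itself: the component contains $a_2$, which is $A_2$-generic in $Z_2$, so an $\acl(A_2)$-definable codimension-$1$ component through $a_2$ is impossible. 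Once this is established, $O_2(b_2)$ and $O_2(a_2)$ are both defined. It then remains to identify both with $Y_2'$. For this I would use the Step 2 identity applied at $b_1$, combined with uniqueness of the member through two points lying on a common member. If the direct argument stalls, the fallback is to show that the failure set of (2) is contained in countably many proper closed subsets, and so contradicts largeness via Corollary \ref{C: non-generics are small}.
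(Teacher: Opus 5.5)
Your argument for clause (1) is fine. Steps 1--2 repackage the paper's proof: coherence sends $B_1$-generic points of $Z_1$ lying on a linked pair $Y_1\in(O_1)_{B_1}$, $\varphi(Y_1)\in(O_2)_{B_2}$ to $B_2$-generic points of $Z_2$. The component criterion is correct, if roundabout, since coherence alone already transports $B_i$-definability of arbitrary definable sets. Items 1--3 of Step 3 and your ``remaining point'' are also fine; the latter is immediate, because $\varphi(C_1)$ contains $a_2$, which is $A_2$-generic in $Z_2$.

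The gap is the final sentence of Step 3, which is in fact the whole content of clause (2). You must show that $Y_2'=\varphi(O_1(a_1))$ is a member of $O_2$ at all, and nothing you have proved addresses this. The Step 2 identity only applies to pairs produced by Lemma \ref{L: weak linked}, i.e.\ members generic over the coherent pair whose images are already known to lie in $O_2$. Your $Y_1$ is $B_1$-definable, and over any coherent pair not containing $c_1$ you would need the unknown fact $\varphi(Y_1)\in O_2$ just to invoke Step 2. Linkedness controls $O_1(b)$ only for $b$ in a large set, while the $b$ with $O_1(b)=Y_1$ all lie in the proper closed set $Y_1$, so no particular member is reached. ``Uniqueness through two points'' presupposes $Y_2'\in O_2$. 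The fallback fails too: (2) concerns every $(A_1,A_2)$-generic pair, and since $\varphi$ is not definable, a failure at one pair cannot be moved to others by automorphisms. So showing the failure set is small proves nothing.

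The missing idea is to compare $Y_2$ with a \emph{generic} linked member; this is how the paper proceeds.
\begin{enumerate}
\item Take $(B_1,B_2)$ coherent with $Y_1$ and $Y_2:=\varphi(Y_1)$ defined over $B_1,B_2$. Pick $b_1\in S$ that is $(B_1,B_2)$-generic in $X_1$, so that $W_i=O_i(b_i)$ satisfy $\varphi(W_1)=W_2$.
\item Since $Y_1\neq W_1$ and $B_1$-generic points of $Y_1$ are $A_1$-generic in $X_1$, unique $1$-sweeping shows that no such point lies on $W_1$. By coherence and automorphisms over $B_2$, no $B_2$-generic point of $Y_2$ lies on any member of $(O_2)_{B_2}$.
\item Yet these points do lie on members of $O_2$. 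Otherwise Lemma \ref{L: codim 1 definable over acl} makes $Y_2$ $\acl(A_2)$-definable, so $Z_2\subset Y_2$ via $a_2$, and $Z_1$ would be $O_1$-common.
\item Members of $O_2\setminus(O_2)_{B_2}$ are $\acl(B_2)$-definable, hence countably many. By saturation and irreducibility, $Y_2$ equals one of them, and then $Y_2=O_2(a_2)$ by uniqueness.
\end{enumerate}
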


\begin{proof}
    Assume the setup of Theorem \ref{T: preservation}, and moreover assume $k=1$. For simplicity we write $Z_i=Z_i^1$ for each $i$. We may assume each $Z_i$ is $A_i$-definable: otherwise, it is definable over a set $B_i\supset A_i$; and then we can replace $A_i$ with $B_i$ and $O_i$ with $(O_i)_{B_i}$ (as by Lemmas \ref{L: linked invariant} and \ref{L: sweeping invariant}, this does not change whether the $O_i$ are linked and whether the $Z_i$ are swept). So assume $Z_i$ is $A_i$-definable from now on.

    Since $O_1,O_2$ are linked, there is a large set $S\subset X_1$ so that for $b\in S$ we have $\varphi(O_1(b))=O_2(\varphi(b))$.
    
    First we show (1). So let us assume $O_1$ sweeps $Z_1$ and show that $O_2$ sweeps $Z_2$ (the other direction is similar).

     By Lemma \ref{L: existence of coherent sets}, let $(B_1,B_2)$ be coherent with each $B_i\supset A_i$; then by Lemma \ref{L: generics in large set}, let $(b_1,b_2)$ be $(B_1,B_2)$-generic in $(X_1,X_2)$ with $b_1\in S$. So $\varphi(O_1(b_1))=O_2(b_2)$. Since $O_1$ sweeps $Z_1$, Lemma \ref{L: generic sweeping witness} gives a $B_1$-generic $a_1\in Z_1$ with $a_1\in O_1(b_1)$. Then $a_2=\varphi(a_1)\in O_2(b_2)$, and by coherence, $a_2$ is $B_2$-generic in $Z_2$. So $a_2\in O_2(b_2)$, which belongs to $(O_2)_{B_2}$. This shows $(O_2)_{B_2}$ sweeps $Z_2$, and thus so does $O_2$.

    Now we show (2). So assume each $O_i$ sweeps $Z_i$, and let $(a_1,a_2)\in(Z_1,Z_2)$ be $(A_1,A_2)$-generic. Let $Y_1=O_1(a_1)$ and $Y_2=\varphi(Y_2)$. Again by Lemma \ref{L: existence of coherent sets}, let $(B_1,B_2)$ be coherent with $B_i\supset A_i$; we may assume that each $Y_i$ is $B_i$-definable (by adding a parameter defining $Y_i$ to $A_i$ before forming $B_i$). Then again let $(b_1,b_2)$ be $(B_1,B_2)$-generic in $(X_1,X_2)$ with $b_1\in S$, and let $W_i=O_i(b_i)$; so $\varphi(W_1)=W_2$.

    Note that $b_1\notin Y_1$ since it is $B_1$-generic; thus $Y_1\neq W_1$. In particular, since $O_1$ is uniquely 1-sweeping, no $B_1$-generic element of $Y_1$ can belong to $W_1$ (otherwise such a point would belong to two members of $O_1$). By coherence, no $B_2$-generic element of $Y_2$ belongs to $W_2$. By automorphism invariance, no $B_2$-generic element of $Y_2$ can belong to a member of $(O_2)_{B_2}$. On the other hand:

        \begin{claim} Every $B_2$-generic element of $Y_2$ is contained in a member of $O_2$.
    \end{claim}
    \begin{proof}
        If every $B_2$-generic element $y\in Y_2$ is $A_2$-generic in $X_2$, then we satisfy the claim automatically since $y\in O_2(y)$.
        
        Otherwise, assume every $B_2$-generic element of $Y_2$ is not $A_2$-generic in $X_2$. Then by Lemma \ref{L: codim 1 definable over acl}, $Y_2$ is $\acl(A_2)$-definable. But $a_2$ is $A_2$-generic in $Z_2$ and belongs to $Y_2$, so if $Y_2$ is $\operatorname{acl}(A_2)$-definable, this gives $Z_2\subset Y_2$. Thus $Z_1\subset Y_1$, and thus $Z_1$ is $O_1$-common, contradicting that $Z_1$ is $O_1$-good.
    \end{proof}

    So by the claim and the paragraph before it, every $B_2$-generic member of $Y_2$ belongs to some member of $O_2-(O_2)_{B_2}$. On the other hand:
    
    \begin{claim}
        There are only countably many members of $O_2-(O_2)_{B_2}$.
    \end{claim}
    \begin{proof} Let $U_2\in O_2-(O_2)_{B_2}$ with canonical base $e_2$. If $u_2\in U_2$ is $A_2e_2$-generic, then $u_2$ is not $B_2$-generic in $X_2$ (otherwise $U_2=O_2(u_2)\in(O_2)_{B_2}$). So by Lemma \ref{L: codim 1 definable over acl}, $U_2$ is $\operatorname{acl}(B_2)$-definable. Then the claim follows since there are only countably many $\acl(B_2)$-definable sets.
    \end{proof}

    Finally, by the two claims, compactness, and irreducibility, $Y_2$ is contained in a single member $U_2\in O_2-(O_2)_{B_2}$. By dimension considerations and irreducibility, $Y_2=U_2$, and thus $U_2=O_2(a_2)$ (since $a_2\in Y_2$). So $$\varphi(O_1(a_1))=\varphi(Y_1)=Y_2=U_2=O_2(a_2),$$ which proves (2) and thus finishes the proof of Proposition \ref{P: base case}.
    \end{proof}


\subsection{The Inductive Step}

\begin{proposition}\label{P: preservation inductive step}
    Let $k,O_i,A_i,Z_i^j$ be as in Theorem \ref{T: preservation}. Assume that $k\geq 2$ and Theorem \ref{T: preservation} holds for $k'<k$. Then Theorem \ref{T: preservation} holds for $k,O_i,A_i,Z_i^j$.
\end{proposition}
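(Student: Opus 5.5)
Throughout, we may first assume each $Z_i^j$ is $A_i$-definable, after passing to generic sub-orbits over larger parameter sets. By Lemmas \ref{L: linked invariant}, \ref{L: sweeping invariant} and parallelism-invariance of goodness, this changes neither linkedness, sweeping, nor goodness. The strategy is to restrict along the first coordinate and invoke the inductive hypothesis for $k-1$. By Lemma \ref{L: linked restriction}, there is a large set $S\subset X_1$ of $(A_1,A_2)$-generic points $b_1$ such that $O_1\restriction b_1$ and $O_2\restriction b_2$ are linked, where $b_2=\varphi(b_1)$. These restrictions are uniquely $(k-1)$-sweeping, and they are orbits over $A_ib_i$. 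By Lemma \ref{L: good preservation}(2) applied with $Z_1=X$, each $Z_i^j$ remains $O_i\restriction b_i$-good.

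For clause (1), assume $O_1$ sweeps $(Z_1^1,\dots,Z_1^k)$. The plan is to pass through the intermediate tuple $(X,Z^2,\dots,Z^k)$ in two stages.

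\textbf{Step 1.} Show that $O_1$ sweeping $(Z_1^1,\dots,Z_1^k)$ implies that, for suitably generic $b_1\in S$, the orbit $O_1\restriction b_1$ sweeps $(Z_1^2,\dots,Z_1^k)$. Concretely, take a generic witness $Y,a$ from Lemma \ref{L: generic sweeping witness}. Then the tuple $(b_1,a_2,\dots,a_k)$, with $b_1$ generic on $Y$ over $Aca$, should still lie in $Y^k$. Since $O_1$ also uniquely sweeps $(X,Z^2,\dots,Z^k)$, the $(k-1)$-tuple $(a_2,\dots,a_k)$ lies in $O_1(b_1,\cdot)$. The point needing care is that $b_1$ is then generic in $X$ over $A\,a_2\cdots a_k$; this I would check with a dimension count using $\dim(c/A)=k$ (Lemma \ref{L: dim of sweeping orbit}).

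\textbf{Step 2.} Transfer across $\varphi$ via the inductive hypothesis (1) for $O_1\restriction b_1$ and $O_2\restriction b_2$. This gives that $O_2\restriction b_2$ sweeps $(Z_2^2,\dots,Z_2^k)$. Here $b_1$ must be chosen in $S$ and $(A_1',A_2')$-generic for the relevant parameters, which Lemma \ref{L: generics in large set} allows.

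\textbf{Step 3.} Recover that $O_2$ sweeps $(Z_2^1,\dots,Z_2^k)$ by reversing the roles of the coordinates. The first coordinate is currently $X$ rather than $Z^1$, so Steps 1--2 only show that $O_2$ sweeps $(X,Z_2^2,\dots,Z_2^k)$. I would then run the same argument restricting along the last $k-1$ coordinates, using the dual restriction $O\restriction b$ with $b\in Z^2\times\dots\times Z^k$. These restrictions are uniquely $1$-sweeping, and the base case Proposition \ref{P: base case} applies to them.

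Making Step 3 work needs linkedness of $O_1\restriction b$ and $O_2\restriction\varphi(b)$ for $(A_1,A_2)$-generic $b$ in $Z_1^2\times\dots\times Z_1^k$. This is exactly what clause (2) of the inductive hypothesis gives. That clause, applied to the linked pair $O_i\restriction b_i$ on $(Z^2,\dots,Z^k)$, yields $\varphi(O_1(b_1,x_1))=O_2(b_2,x_2)$ for all $(A_1b_1,A_2b_2)$-generic $x$ in the product. Combining this with largeness of $S$ and Fubini-style reasoning with large sets shows that $\varphi(O_1(y,x))=O_2(\varphi(y),\varphi(x))$ for large many $y\in X_1$. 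That is, $O_1\restriction x$ and $O_2\restriction\varphi(x)$ are linked. The base case (1), applied with $Z^1$, then finishes clause (1).

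For clause (2), the base case (2) applied to $O_i\restriction x_i$ with $Z^1$ gives $\varphi(O_1(a_1))=O_2(a_2)$. Here $a_1=(y_1,x_1)$ should be $(A_1,A_2)$-generic with $x$ first chosen generic. For an arbitrary $(A_1,A_2)$-generic $a$, the same argument applies directly: for such $a$, the tail $x$ is $(A_1,A_2)$-generic and $y$ is generic over $x$.

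\textbf{Main obstacle.} I expect the hardest part to be verifying that linkedness of $O_1\restriction x$ and $O_2\restriction\varphi(x)$ holds for \emph{every} $(A_1,A_2)$-generic $x$, not merely for large many. The orbits are defined only over the parameters $A_ix_i$, and the large set produced by the induction depends on $b$. My first attempt would reverse quantifiers using the inductive clause (2), which holds for all generic tuples. Concretely, given $x$, choose $y$ generic over $x$ on both sides, and regard $x$ as generic over $y$ for the restriction $O\restriction y$. Since genericity of pairs is symmetric under additivity, one can then apply clause (2) to $O\restriction y$, which is valid for all $y$ in the large set $S$.
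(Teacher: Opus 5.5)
Your proposal is correct and follows the paper's proof essentially step for step. Steps 1--2 and the start of Step 3 are the paper's Claims 1--3. Your resolution of the ``main obstacle'' is exactly the paper's Claim 4: choose $y\in S$ that is $(A_1x_1,A_2x_2)$-generic, swap genericity by additivity, and apply inductive clause (2) to $O_i\restriction y_i$ to get linkedness of $O_1\restriction x_1$ and $O_2\restriction x_2$ before invoking the base case.

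Two checks are left implicit. First, the genericity claim in Step 1 needs Lemma \ref{L: codim 1 definable over acl} on top of the count $\dim(c/Aa_2\cdots a_k)=1$; a pure dimension count does not suffice. Second, before applying the base case you must verify that $O_1\restriction x_1$ sweeps $Z_1^1$ (the paper's Claim 5), which uses the same Lemma \ref{L: codim 1 definable over acl} together with the fact that $Z_1^1$ is not $O_1$-common.
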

\begin{proof}
    As in the base case, we assume throughout that each $Z_i^j$ is $A_i$-definable. Throughout the proof, fix a large set $S\subset X_1$ so that for all $a\in S$ the orbits $O_1\restriction a$ and $O_2\restriction\varphi(a)$ are linked (which exists by Lemma \ref{L: linked restriction}). Throughout the proof, we will apply the inductive hypothesis to different restrictions of the $O_i$ at generic points. Each time, we are using that the $Z_i^j$ remain good after generic restrictions (i.e. Lemma \ref{L: good preservation}(2)).
    
    Assume that $O_1$ sweeps $(Z_1^1,...,Z_1^k)$. We will show that $O_2$ sweeps $(Z_2^1,...,Z_2^k)$, and that for all $(A_1,A_2)$-generic elements $(a_1,a_2)\in(Z_1^1\times...\times Z_1^k,Z_2^1\times...\times Z_2^k)$ we have $\varphi(O_1(a_1))=O_2(a_2)$. By symmetry, this is enough to prove the whole proposition.

    We proceed with a sequence of claims.

    \begin{claim}\label{Cl: preservation inductive step claim 1} For generic $a\in X_1$, $O_1\restriction a_1$ sweeps $(Z_1^2,...,Z_1^k)$.
    \end{claim}
    \begin{proof}
        Let $z_1=(z_1^1,...,z_1^k)\in Z_1^1\times...\times Z_1^k$ be $A_1$-generic, and let $Y_1=O_1(z_1)$. If some generic element $y_1\in Y_1$ is $A_1z_1^2...z_1^k$-generic in $X_1$, then $Y_1$ belongs to $O(y_1)$ and contains the $A_1z_1^2...z_1^k$-generic point $(z_1^2,...,z_1^k)\in Z_1^2\times...\times Z_1^k$, which implies the claim. Otherwise, assume every generic element of $Y_1$ is not $A_1z_1^2...z_1^k$-generic in $X_1$. Then by Lemma \ref{L: codim 1 definable over acl}, $Y_1$ is $\operatorname{acl}(A_1z_1^2...z_1^k)$-definable. But then $$\dim(z_1^1/A_1z_1^2...z_1^k)\leq\dim(Y_1\cap Z_1^1)<\dim(Z_1^1),$$ contradicting the genericity of $z$ in $Z_1^1\times...\times Z_1^k$. (Note that the inequality $\dim(Y_1\cap Z_1^1)<\dim(Z_1^1)$ is because $Z_1^1$ is $O_1$-good, thus not $O_1$-common). 
    \end{proof}

    \begin{claim}\label{Cl: claim 2 inductive step}
        For generic $a_2\in X_2$, $O_2\restriction a_2$ sweeps $(Z_2^2,...,Z_2^k)$.
    \end{claim}
    \begin{proof}
        Let $a_1\in S$ be $(A_1,A_2)$-generic. By Claim \ref{Cl: preservation inductive step claim 1}, $O_1\restriction a_1$ sweeps $(Z_1^2,...,Z_1^k)$. So by the inductive hypothesis, $O_2\restriction a_2$ sweeps $(Z_2^2,...,Z_2^k)$. By automorphism invariance, the same holds for all $A_2$-generic elements of $X_2$.
    \end{proof}

    \begin{claim}\label{Cl: claim 3 inductive step}
    Each $O_i$ sweeps $(X_i,Z_i^2,...,Z_i^k)$.
    \end{claim}
    \begin{proof}
        Clear be the previous two claims. That is, let $a\in X_i$ be $A_i$-generic. We showed above that $O_i\restriction a$ sweeps $(Z_i^2,...,Z_i^k)$, so there are $Y\in O_i\restriction a$ and an $A_ia$-generic $b\in Z_i^2\times...\times Z_i^k$ with $b\in Y^{k-1}$. The $(a,b)$ is $A_i$-generic in $X_i\times Z_i^2\times...\times Z_i^k$ and contained in $Y\in O_i$, which gives the claim.
    \end{proof}

    By Claim \ref{Cl: claim 3 inductive step}, Lemma \ref{L: good preservation}, and Lemma \ref{L: relativized sweeping is unique}, each $O_i$ uniquely sweeps $(X_i,Z_i^2,...,Z_i^k)$. Thus, if $a\in X_i$ and $b\in Z_i^2\times...\times Z_i^k$ are independent generics over $A_i$, then $O_i(a,b)\in O_i$ is well-defined. Now recall:

    \begin{notation} For $A_i$-generic $b\in Z_i^2\times...\times Z_i^k$, $O_i\restriction b$ denotes the $A_ib$-orbit consisting of all members of $O_i$ of the form $O_i(a,b)$ for $A_ib$-generic $a\in X_i$.
    \end{notation}

    Since $O_i$ uniquely sweeps $(X_i,Z_i^2,...,Z_i^k)$, it is clear that for $A_i$-generic $b\in Z_i^2\times...\times Z_i^k$ the orbit $O_i\restriction b$ is uniquely 1-sweeping.

    \begin{claim}\label{Cl: claim 4 inductive step}
        Let $(b_1,b_2)\in(Z_1^2\times...\times Z_1^k,Z_2^2\times...\times Z_2^k)$ be $(A_1,A_2)$-generic. Then $O_1\restriction b_1$ and $O_2\restriction b_2$ are linked.
    \end{claim}
    \begin{proof}
        Recall that $S\subset X_1$ is a large set so that $O_1\restriction a$ and $O_2\restriction\varphi(a)$ are linked for all $a\in S$. By Corollary \ref{C: non-generics are small}(2), there is a large set $S'\subset S$ whose elements are all $(A_1b_1,A_2b_2)$-generic.

        We show that for all $(a_1,a_2)$ with $a_1\in S'$ and $\varphi(a_1)=a_2$ we have $(O_1\restriction b_1)(a_1)=(O_2\restriction b_2)(a_2)$. Since $S'$ is large, this gives the claim. So, let $\varphi(a_1)=a_2$ with $a_1\in S'$. Note for each $i$ that $$(O_i\restriction b_i)(a_i)=O_i(a_i,b_i)=(O_i\restriction a_i)(b_i).$$ So we can equivalently show that $\varphi$ sends $(O_1\restriction a_1)(b_1)$ to $(O_2\restriction a_2)(b_2)$. But this is automatic by the inductive hypothesis of Theorem \ref{T: preservation} (applied to the orbits $O_i\restriction a_i$, which sweep $(Z_i^2,...,Z_i^k)$).
    \end{proof}

    \begin{claim}\label{Cl: claim 5 inductive step}
    Let $(b_1,b_2)\in(Z_1^2\times...\times Z_1^k,Z_2^2\times...\times Z_2^k)$ be $(A_1,A_2)$-generic. Then each $O_i\restriction b_i$ sweeps $Z_i^1$.
    \end{claim}
    \begin{proof}
        By Claim \ref{Cl: claim 4 inductive step} and the base case (applied to $O_i\restriction b_i$), it suffices to show this for $i=1$. But the case $i=1$ follows since $O_1$ sweeps $(Z_1^1,...,Z_1^k)$. Namely, let $z\in Z_1^1$ be $A_1b_1$-generic. Then by Lemma \ref{L: generic sweeping witness}, there is $Y\in O_1$ so that $(z,b_1)\in Y^k$ is a generic witness to $O_1$ sweeping $(Z_1^1,...,Z_1^k)$. Let $a\in Y$ be generic over $A_1zb_1$. If $a$ is not $A_1b_1$-generic in $X_1$, then by Lemma \ref{L: codim 1 definable over acl}, $Y$ is $\acl(A_1b_1)$-definable; but then since $z$ is $A_1b_1$-generic in $Z_1^1$, we get $Z_1^1\subset Y$, contradicting that $Z_1^1$ is not $O_1$-common. So $a$ must be $A_1b_1$-generic in $X_1$. But then $(a,b_1)\in X_1\times Z_1^2\times...\times Z_1^k$ is $A_1$-generic, so that $Y=O_1(a,b_1)$, and thus $Y\in O_1\restriction b_1$. So there is a member of $O_1\restriction b_1$ containing the $A_1b_1$-generic element $z\in Z_1^1$, which gives the claim. 
    \end{proof} 

    Finally, we are ready to prove the two clauses of Theorem \ref{T: preservation}.

    \begin{claim}
        $O_2$ sweeps $(Z_2^1,...,Z_2^k)$.
    \end{claim}
    \begin{proof}
       Let $(b_1,b_2)\in(Z_1^2\times...\times Z_1^k,Z_2^2\times...\times Z_2^k)$ be $(A_1,A_2)$-generic, and let $z\in Z_2^1$ be $A_2b_2$-generic. By Claim \ref{Cl: claim 5 inductive step}, there is $Y\in O_2\restriction b_2$ with $z\in Y$. Then $(z,b_2)\in Z_2^1\times...\times Z_2^k$ is $A_2$-generic and contained in $Y\in O_2$, as desired.
    \end{proof}

    \begin{claim} Let $(x_1,x_2)\in(Z_1^1\times...\times Z_1^k,Z_2^1\times...\times Z_2^k)$ be $(A_1,A_2)$-generic. Then $\varphi(O_1(x_1))=O_2(x_2)$.
    \end{claim}
    \begin{proof}
        Write $x_i=(z_i,b_i)$ with $z_i\in Z_i^1$ and $b_i\in Z_i^2\times...\times Z_i^k$. Then $$\varphi(O_1(x_1))=\varphi((O_1\restriction b_1)(z_1))=(O_2\restriction b_2)(z_2)=O_2(x_2),$$ where the middle equality is by the inductive hypothesis.
    \end{proof}

    Finally, by the two most recent claims, the proof of Theorem \ref{T: preservation} is complete.
\end{proof}

\section{Existence of Linked Orbits}

In this section, we prove Theorem \ref{T: intro link existence}, which says that there \textit{must} be an orbit in $X_2$ linked to the generic hyperplane orbit in $X_1$. As with Theorem \ref{T: intro link preservation}, it is more convenient to prove a more general statement by induction. Recall that for a parameter set $A_1$, and an $A_1$-definable linear subspace $L\leq\mathbb P^n(K_1)$, the notation $H_{A_1}^L(X_1)$ denotes the orbit of hyperplane sections $X\cap H$ where $H$ runs over all $A_1$-generic hyperplanes containing $L$.

\begin{theorem}\label{T: existence of linked orbits}
    Let $O_1=H_{A_1}^L(X_1)$, where $L$ is a generic linear space of codimension $d\geq 2$ which is definable over the set $A_1$. Let $k:=d-1$, so that $O_1$ is uniquely $k$-sweeping. Then there are a set $A_2$ and a uniquely $k$-sweeping orbit $O_2$ over $A_2$, so that $O_1$ and $O_2$ are linked.
\end{theorem}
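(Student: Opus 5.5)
Induct on $k=d-1$, building the linked orbit $O_2$ from lower-dimensional pieces. The base case is $k=1$, i.e. $d=2$. Here $L$ has codimension 2 and $O_1$ is the orbit of generic members of a pencil of hyperplane sections. So $O_1$ is uniquely $1$-sweeping, and its members partition a generic part of $X_1$ into codimension 1 subvarieties, in the spirit of the KLOS topological pencils.

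For the base case, index the members $Y_t=H_t\cap X_1$ by $t\in\mathbb P^1(K_1)$ generic over $A_1$. Each $\varphi(Y_t)$ is a closed irreducible codimension 1 subvariety of $X_2$, since homeomorphisms preserve irreducible closed sets and the Krull-dimension chain length. By Fact \ref{F: omega stable}, for any countable $A_2$ there are only countably many $A_2$-orbits. The uncountably many $\varphi(Y_t)$ must therefore be distributed among countably many orbits, and some orbit $O_2$ contains $\varphi(Y_t)$ for a large (non-meager) set of $t$. Choosing a coherent pair $(A_1,A_2)$ via Lemma \ref{L: existence of coherent sets}, with $A_2$ containing whatever finitely many parameters are needed, I would argue as follows. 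For large many $a\in X_1$, the member $O_1(a)$ is one of these $Y_t$, and $\varphi(O_1(a))\in O_2$ contains $\varphi(a)$.

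It then remains to show that $O_2$ is uniquely $1$-sweeping, and hence that $\varphi(O_1(a))=O_2(\varphi(a))$. Sweeping follows from Lemma \ref{L: generics in large set}: take $a$ to be $(A_1,A_2)$-generic in the large set. For uniqueness I would use Proposition \ref{P: generic witness to non sweeping}. If $O_2$ were not uniquely sweeping, there would be two members $Y,Z$ meeting in dimension $\hat d-2$ through a generic point, with independent canonical bases. I would pull these back: many of their conjugates are images of pencil members, which are disjoint off the base locus $L\cap X_1$ (of dimension $\hat d-2$). A dimension count with independence then contradicts the requirement that $a$ be $Acd$-generic in $Y\cap Z$ while $\varphi^{-1}(a)$ avoids the pencil's base locus generically. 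I expect this transfer to be delicate: the pencil members form a 1-dimensional orbit, whereas $O_2$ a priori need not be 1-dimensional. It may be necessary to first pass to an orbit $O_2$ over a countable parameter set containing the codes of countably many of the $\varphi(Y_t)$, so that the dimension of $O_2$ can be compared.

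For the inductive step $k\ge2$, fix an $A_1$-generic point $p\in X_1$. Then $O_1\restriction p=H^{L\langle p\rangle}_{A_1p}(X_1)$ has codimension $d-1$ and is uniquely $(k-1)$-sweeping, so by induction it is linked to some uniquely $(k-1)$-sweeping $O_2^p$ over some $A_2^p$. Take $\varphi(p)$ generic and do this for independent generics $p,p'$. The candidate for $O_2$ is the orbit of members $\varphi(O_1(a))$ for $a$ generic. I would show these lie, for large many $a$, in a single countable-parameter orbit, again using countability of $\operatorname{Orb}(A_2)$ together with the $\sigma$-ideal property (Lemma \ref{L: sigma ideal}). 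The defining property would be that through large many $\varphi(p)$ the restrictions agree with the already-linked $O_2^p$. Theorem \ref{T: preservation}(2) applied to $O_1\restriction p$ and $O_2^p$ gives linking almost everywhere, and this is what lets the restrictions at different $p$ be glued consistently. Unique $k$-sweeping then follows from uniqueness of the restrictions plus Lemma \ref{L: dim of sweeping orbit}-style dimension counting.

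The main obstacle is the base case, namely producing a single countable-parameter orbit that captures non-negligibly many images and proving that it is uniquely sweeping. The topological content, that images of a pencil behave like a pencil, must enter there. My first attempt would be the pigeonhole-over-countably-many-orbits argument combined with Proposition \ref{P: generic witness to non sweeping}.
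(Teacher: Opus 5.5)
Your frame (pigeonhole an orbit $O_2$ with large preimage, then rule out non-unique sweeping via Proposition~\ref{P: generic witness to non sweeping}) is the paper's. However, the step you flag as delicate is a genuine gap, and it is the crux of the argument.

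\textbf{The base case.} Pigeonholing gives no control of $\dim(O_2)$. So you only know that \emph{individual} members of $O_2$ are conjugate to images of members of $O_1$. You cannot conclude that the independent pair $Y,Z$ from Proposition~\ref{P: generic witness to non sweeping} is conjugate to some $(\varphi(Y_1),\varphi(Z_1))$ with $Y_1,Z_1\in O_1$.

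For example, take $\varphi=\mathrm{id}$ on $\mathbb P^2$, let $O_1$ be the lines through a point coded in $A_1$, and let $A_2=\emptyset$. Then every $O_1(a)$ lies in the $2$-dimensional orbit of generic lines, which has full preimage and is not uniquely $1$-sweeping. Coherence excludes this particular instance, but you give no general argument. After enlarging to a coherent pair containing $\operatorname{Cb}(Y_1),\operatorname{Cb}(Y_2)$, nothing in your argument prevents the sub-orbit with large preimage from being a non-generic, lower-dimensional sub-orbit. Adding codes of some $\varphi(Y_t)$ to $A_2$ does not help either: it only makes those members algebraic.

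The missing idea is to choose $(A_2,O_2)$ with large preimage and \emph{minimal} $\dim(O_2)$, and then pass to an irreducible component. Minimality forces, for every $B_2\supseteq A_2$, the $B_2$-orbit containing $\varphi(O_1(a))$ for large many $a$ to be the generic extension $(O_2)_{B_2}$. This gives Lemma~\ref{L: weakly linked}: over any $(B_1,B_2)$ there are $Y_i\in(O_i)_{B_i}$ with $\varphi(Y_1)=Y_2$. Applying this twice shows that independent pairs of $O_2$ are, up to $\operatorname{Gal}(K_2/A_2)$, images of independent pairs of $O_1$. With that plus coherence, your base case closes as you sketch: $\varphi^{-1}(a)$ is $A_1$-generic and lies on two distinct members of $O_1$.

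\textbf{The inductive step.} Restricting through one generic point $p$ is a different route from the paper's, and as written it does not go through.
\begin{itemize}
\item It inherits the same dimension problem.
\item The gluing needs the generic members of $O_2$ through $\varphi(p)$ to form a single orbit parallel to $O_2^p$, and this is never established. In the $\mathbb P^3$ analogue, with $O_1$ the planes through a point and $A_2=\emptyset$, the members of $O_2$ through $p$ form a $2$-dimensional family, while $O_2^p$ is a pencil.
\item The sets $A_2^p$ vary with $p$ in an uncontrolled way.
\item Dimension counting cannot substitute for the gluing. A $k$-dimensional $k$-sweeping orbit need not be uniquely $k$-sweeping: unit circles in $\mathbb A^2$ (characteristic $\neq 2$) give two circles through two generic points.
\item Because coherence fails for tuples, the witness $a\in X_2^k$ gives no genericity of $\varphi^{-1}(a)$.
\end{itemize}

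The paper instead does the following.
\begin{itemize}
\item It takes independent linear spaces $I,J\supset L$ of one dimension more, and by induction gets $P_2,Q_2$ linked to $P_1=H^I_{B_1}(X_1)$ and $Q_1=H^J_{B_1}(X_1)$.
\item It uses Theorem~\ref{T: preservation}(2) to show that the twisted span $\operatorname{Span}_2(x)=P_2(x)\cap Q_2(x)$ is $B_2x$-definable for generic $x\in X_2^{k-1}$.
\item Weak linkedness then shows that members of $O_2$ contain the twisted spans of their generic $(k-1)$-tuples. It also shows that two independent members of $O_2$ meet in a set of the same format as a twisted span.
\item Hence $Y\cap Z=\operatorname{Span}_2(y)$ is $B_2y$-definable, contradicting the genericity of $z$ in $a=(y,z)$.
\end{itemize}
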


The rest of this section will occupy the proof of Theorem \ref{T: existence of linked orbits}. We work by induction on $k\geq 1$.

\begin{assumption} \textbf{Throughout this section, we fix $A_1,L,O_1,d,k$ as in Theorem \ref{T: existence of linked orbits}, and we moreover assume Theorem \ref{T: existence of linked orbits} holds whenever $k'<k$.}
\end{assumption}

\subsection{Pigeonhole Steps}

    We begin with a `pigeonhole' argument, using that the $K_i$ are uncountable but the set of orbits is always countable (Lemma \ref{F: omega stable}). Let $G$ be the set of generic points of $X_1^k$ over $A_1$. Recall that $G$ is large (see Corollary \ref{C: non-generics are small}).
    
    For any parameter set $B_2$ from $K_2$, we have a map $f_{B_2}:G\rightarrow\operatorname{Orb}(B_2)$, sending $a\in G$ to the $B_2$-orbit of $\varphi(O_1(a))$. By Fact \ref{F: omega stable}, the set $\operatorname{Orb}(B_2)$ is countable, and thus some element of $\operatorname{Orb}(B_2)$ has large preimage.
    
    Now out of all such sets $B_2$, and all such orbits over $B_2$ with large preimage under $f_{B_2}$, let us fix one of smallest dimension. That is, we fix $A_2$ and $O_2\in\operatorname{Orb}(A_2)$ so that (1) $f_{A_2}^{-1}(O_2)$ is large, and (2) $\dim(O_2)$ is minimized with respect to these requirements. Note that we may assume $O_2$ is irreducible: indeed, $O_2$ must decompose into finitely many irreducible orbits of the same dimension over $\operatorname{acl}(A_2)$, and one of these must also have a large preimage under $f_{\operatorname{acl}(A_2)}$. Thus, we assume throughout the proof that $O_2$ is irreducible. 

    We will show that $O_1$ and $O_2$ are linked. To start, we immediately have the following weaker approximations (Lemma \ref{L: O_2 k-sweeping} and \ref{L: weakly linked} below):
    
    \begin{lemma}\label{L: O_2 k-sweeping}
        $O_2$ is $k$-sweeping.
    \end{lemma}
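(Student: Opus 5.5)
We need to show: for $A_2$-generic $a\in X_2^k$, some member of $O_2$ contains $a$ in its $k$-th power. The plan is to transfer generic points through $\varphi$ using the large preimage $S:=f_{A_2}^{-1}(O_2)\subset G$, and then use automorphism invariance.

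First, by Corollary \ref{C: non-generics are small}(2), the set of $(A_1,A_2)$-generic elements of $X_1^k$ lying in $S$ is large, hence nonempty by Lemma \ref{L: sigma ideal}(1). So fix $a_1\in S$ which is $(A_1,A_2)$-generic in $X_1^k$, and set $a_2=\varphi(a_1)$. Then $a_2$ is $A_2$-generic in $X_2^k$.

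Next, since $a_1\in G$ and $O_1$ is uniquely $k$-sweeping, the member $Y_1:=O_1(a_1)$ is defined and satisfies $a_1\in Y_1^k$. Because $a_1\in S=f_{A_2}^{-1}(O_2)$, the set $Y_2:=\varphi(Y_1)$ lies in $O_2$. This uses the fact that $\varphi$ of a closed irreducible codimension 1 subvariety is again closed irreducible of codimension 1, which holds since $\varphi$ is a homeomorphism and dimension is topological (Krull dimension of the space). This is exactly what makes $f_{A_2}$ well defined. Applying $\varphi$ coordinatewise, $a_2=\varphi(a_1)\in Y_2^k$.

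Finally, every $A_2$-generic $a\in X_2^k$ realizes the same type over $A_2$ as $a_2$. Indeed, $X_2$ is $\emptyset$-definable and irreducible, so $X_2^k$ is irreducible, and its generic type over $A_2$ is unique, since the generic type of an irreducible variety is stationary. So there is $\tau\in\operatorname{Gal}(K_2/A_2)$ with $\tau(a_2)=a$. Then $\tau(Y_2)\in O_2$, because $O_2$ is a $\operatorname{Gal}(K_2/A_2)$-orbit, and $a\in\tau(Y_2)^k$. Hence $O_2$ is $k$-sweeping.

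The only mildly delicate point is the uniqueness of the generic type over the possibly non-algebraically-closed set $A_2$. This is fine because $X_2^k$ is absolutely irreducible and defined over $\emptyset$. There is also a subtlety if $O_2$ was replaced by an irreducible component over $\operatorname{acl}(A_2)$. In that case we work over $\operatorname{acl}(A_2)$ throughout, and the same argument applies verbatim with $\operatorname{acl}(A_2)$ in place of $A_2$.
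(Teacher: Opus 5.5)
Your proposal is correct and follows the paper's argument: choose an $(A_1,A_2)$-generic $a_1$ in the large set $f_{A_2}^{-1}(O_2)$ (the paper cites Lemma~\ref{L: generics in large set} directly). Then $\varphi(a_1)$ is $A_2$-generic in $X_2^k$ and lies in $\varphi(O_1(a_1))^k$, where $\varphi(O_1(a_1))\in O_2$. Your final step, passing to every $A_2$-generic tuple via uniqueness of the generic type and an automorphism, just makes explicit the automorphism invariance that the paper leaves implicit in the definition of $k$-sweeping.
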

    \begin{proof}
        By Lemma \ref{L: generics in large set} and the choice of $O_2$, there is an $(A_1,A_2)$-generic $a\in X_1^k$ with $f_{A_2}(a)=O_2$. So $\varphi(a)$ is $A_2$-generic in $X_2^k$ and is contained in a member of $O_2$, which means $O_2$ is $k$-sweeping.
    \end{proof}

   Below, given $B_i\supset A_i$, let us say that two members $Y_i,Z_i\in(O_i)_{B_i}$ are \textit{independent} if $\operatorname{Cb}(Y_i)$ and $\operatorname{Cb}(Z_i)$ are independent over $B_i$ (equivalently, if $Z_i\in(O_i)_{C_i}$ where $C_i$ is $B_i$ together with $\operatorname{Cb}(Y_i)$). Clearly, the independent pairs of $(O_i)_{B_i}$ themselves form an automorphism orbit of pairs over $B_i$.

    \begin{lemma}\label{L: weakly linked}
        Let $B_i\supset A_i$.
        \begin{enumerate}
            \item There are $Y_i\in(O_i)_{B_i}$ with $\varphi(Y_1)=Y_2$.
            \item In fact, there are independent $Y_i,Z_i\in O_{B_i}$ with $\varphi(Y_1)=Y_2$ and $\varphi(Z_1)=Z_2$.
        \end{enumerate}
    \end{lemma}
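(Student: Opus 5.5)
We prove both clauses by a Baire-category argument on the large set $P:=f_{A_2}^{-1}(O_2)\subset X_1^k$, which is large by the choice of $O_2$. By definition, for $a\in P$ the set $\varphi(O_1(a))$ lies in $O_2$. The task is therefore to show that we can choose such $a$ generic enough that $O_1(a)$ lies in $(O_1)_{B_1}$ and $\varphi(O_1(a))$ lies in $(O_2)_{B_2}$.

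For (1), first enlarge $B_i$ if necessary; this is harmless, since a generic member over a larger set is also generic over a smaller one. By Corollary \ref{C: non-generics are small}(2), choose $a\in P$ which is $(B_1',B_2)$-generic in $X_1^k$, where $B_1'\supset B_1\cup A_1$. Since $a$ is $B_1$-generic and $O_1$ is uniquely $k$-sweeping, $O_1(a)\in(O_1)_{B_1}$: its canonical base $c_1$ satisfies $\dim(c_1/B_1)=k=\dim(O_1)$ by the dimension count in Lemma \ref{L: dim of sweeping orbit}.

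On the $X_2$ side, let $Y_2=\varphi(O_1(a))\in O_2$ with canonical base $c_2$. We need $\dim(c_2/B_2)=\dim(O_2)$. This is where the minimality of $\dim(O_2)$ enters. Suppose that for large many $a\in P$ (equivalently, by the $\sigma$-ideal property, after throwing away small sets) the orbit $[\varphi(O_1(a))]_{B_2}$ is a non-generic sub-orbit of $O_2$. Since $\operatorname{Orb}(B_2)$ is countable, pigeonholing (Fact \ref{F: omega stable}) gives a single $B_2$-orbit of dimension $<\dim(O_2)$ with large preimage under $f_{B_2}$. This contradicts the minimal choice of $O_2$. Hence the set of $a\in P$ with $\varphi(O_1(a))\in(O_2)_{B_2}$ is large. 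Intersecting with the large set of $(B_1,B_2)$-generic points (the intersection of two large sets is large, as the complement is the union of two small sets) gives the desired $Y_1=O_1(a)$ and $Y_2$.

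For (2), apply (1) twice. First get $Y_1,Y_2$ over $B_i$ as in (1). Then set $C_i=B_i\cup\{\operatorname{Cb}(Y_i)\}$ and apply (1) again over $(C_1,C_2)$. This produces $Z_i\in(O_i)_{C_i}$ with $\varphi(Z_1)=Z_2$. By the remark preceding the lemma, $Z_i\in(O_i)_{C_i}$ means exactly that $Y_i,Z_i$ are independent members of $(O_i)_{B_i}$. The pair also lies in $(O_i)_{B_i}$, since a generic sub-orbit of $(O_i)_{B_i}$ is contained in $(O_i)_{B_i}$.

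The main obstacle is the minimality step: the dimension must be measured consistently. One must check that the $f_{B_2}$-images really are sub-orbits of $O_2$. This holds since $B_2\supset A_2$, after replacing $B_2$ by $B_2\cup A_2$. One must also check that such a sub-orbit is generic exactly when its dimension equals $\dim(O_2)$, so that non-generic sub-orbits are strictly smaller and minimality applies.
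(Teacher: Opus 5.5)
Your proof is correct and follows essentially the paper's argument: pigeonholing over the countable set $\operatorname{Orb}(B_2)$, together with minimality of $\dim(O_2)$ and irreducibility of $O_2$, forces the $B_2$-orbit of $\varphi(O_1(a))$ to be $(O_2)_{B_2}$ on a large set; you then pick a $B_1$-generic point there, and (2) is (1) applied twice. One small fix: in a $\sigma$-ideal the intersection of two large sets need not be large, so your parenthetical justification is wrong as stated. What you actually need is Corollary \ref{C: non-generics are small}(2): removing the small set of non-$(B_1,B_2)$-generic points from a large set leaves a large set.
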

    \begin{proof} First we prove (1). Let $S$ be the (large by assumption) set of points $a\in G$ so that $f_{A_2}(a)=O_2$. By a similar argument to above, the map $f_{B_2}$ is constant on a large set $S'\subset S$. Let $O'_2\in\operatorname{Orb}(B_2)$ be the value attained by $f_{B_2}$ on $S'$. Note that $O_2'$ is a sub-orbit of $O_2$ over $B_2$. Then by the minimality of $\dim(O_2)$, it follows that $O_2'=(O_2)_{B_2}$.
    
    By Lemma \ref{L: generics in large set}, let $a\in S'$ be $B_1$-generic in $X_1^k$, and let $Y_1=O_1(a)$ and $Y_2=\varphi(Y_1)$. Since $a$ is $B_1$-generic, $Y_1\in(O_1)_{B_1}$. Moreover, by definition, $$Y_2\in f_{B_2}(a)=O_2'=(O_2)_{B_2},$$ as desired. Thus we have shown (1).

    For (2), simply apply (1) twice: first over $(B_1,B_2)$ to get $(Y_1,Y_2)$, and then over $(C_1,C_2)$ to get $(Z_1,Z_2)$, where (as above) $C_i$ is $B_i$ together with $\operatorname{Cb}(Y_i)$.
    \end{proof}

    \subsection{Outline of the Rest of the Argument}

    Our main goal will be to show that $O_2$ is uniquely $k$-sweeping. Let us check that this will be enough:

    \begin{lemma}\label{L: weakly linked implies linked}
        Suppose $O_2$ is uniquely $k$-sweeping. Then $O_1$ and $O_2$ are linked.
    \end{lemma}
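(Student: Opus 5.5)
The plan is to use the weak linkage from Lemma \ref{L: weakly linked}(1) to produce many points where the linking equation holds, and then upgrade this to a large set using the uniqueness of sweeping on both sides. Let $S\subset G$ be the large set of $a\in G$ with $f_{A_2}(a)=O_2$, that is, with $\varphi(O_1(a))\in O_2$. I aim to show that for large many $a\in S$ we have $\varphi(O_1(a))=O_2(\varphi(a))$.

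First, by Corollary \ref{C: non-generics are small}(2), the set $S'\subset S$ of $(A_1,A_2)$-generic points is still large. Fix $a\in S'$ and put $Y_2:=\varphi(O_1(a))$. Then $Y_2\in O_2$ by the definition of $S$. Moreover $\varphi(a)\in Y_2^k$, since $a\in O_1(a)^k$. Finally, $\varphi(a)$ is $A_2$-generic in $X_2^k$, because $a$ is $(A_1,A_2)$-generic.

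Now use the hypothesis that $O_2$ is uniquely $k$-sweeping. Since $\varphi(a)$ is $A_2$-generic, there is exactly one member of $O_2$ whose $k$-th power contains $\varphi(a)$, and this member is $O_2(\varphi(a))$. The set $Y_2$ is a member of $O_2$ with $\varphi(a)\in Y_2^k$, so uniqueness forces $Y_2=O_2(\varphi(a))$. Hence $\varphi(O_1(a))=O_2(\varphi(a))$ for every $a$ in the large set $S'$. This is exactly Definition \ref{D: linked}, since both orbits are uniquely $k$-sweeping: $O_1$ by hypothesis of Theorem \ref{T: existence of linked orbits}, and $O_2$ by assumption.

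I do not expect a real obstacle here. The only care needed is bookkeeping: $O_2$ should be the irreducible orbit fixed above, so that $O_2(\cdot)$ is defined, and the points used must be $(A_1,A_2)$-generic so that both sides of the equation make sense. Corollary \ref{C: non-generics are small}(2) handles the second point.
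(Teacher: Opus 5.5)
Your proof is correct and follows the paper's argument: you restrict the large preimage $f_{A_2}^{-1}(O_2)$ to its $(A_1,A_2)$-generic points via Corollary \ref{C: non-generics are small}(2), and then use that $O_2$ uniquely $k$-sweeps at the $A_2$-generic point $\varphi(a)$ to force $\varphi(O_1(a))=O_2(\varphi(a))$. The appeal to Lemma \ref{L: weakly linked}(1) in your opening sentence plays no role in the argument and can be dropped.
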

    \begin{proof}
        Let $S\subset G$ be a large set on which $f_{A_2}$ is constant. Let $S'\subset S$ contain those elements of $S$ which are $(A_1,A_2)$-generic in $X_1^k$. By Corollary \ref{C: non-generics are small}, $S'$ is still large. We claim that for all $a\in S'$ we have $$\varphi(O_1(a))=O_2(\varphi(a)),$$ which shows that $O_1$ and $O_2$ are linked. Indeed, let $a\in S'$. Since $a\in S$, $f_{A_2}(a)=O_2$. Since $\varphi(a)$ is $A_2$-generic, and $O_2$ is uniquely $k$-sweeping, there is exactly one member of $O_2$ containing $\varphi(a)$. Since $f_{A_2}(a)=O_2$, it follows that $\varphi(O_1(a))$ is one member of $O_2$ containing $\varphi(a)$ -- and thus by uniqueness it must equal $O_2(\varphi(a))$.
    \end{proof}

    We move now toward proving that $O_2$ is uniquely $k$-sweeping. As a warmup, we start with the base case:

    \begin{lemma}\label{L: link existence base case}
        If $k=1$ then $O_2$ is uniquely 1-sweeping.
    \end{lemma}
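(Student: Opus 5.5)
We argue by contradiction: assume $k=1$ and $O_2$ is not uniquely $1$-sweeping. The strategy is to compare how two distinct members meet on each side of $\varphi$. When $k=1$, $L$ has codimension $2$, so $O_1=H^L_{A_1}(X_1)$ is a pencil. Any two distinct members of $O_1$ are $H\cap X_1$ and $H'\cap X_1$ with $H\neq H'$ hyperplanes containing $L$. They therefore meet inside the fixed base locus $L\cap X_1$, since $H\cap H'=L$. On the other side, Proposition \ref{P: generic witness to non sweeping} says that two independent members of $O_2$ meet in a set containing a point that is generic in all of $X_2$. A homeomorphism cannot send a subset of a fixed proper closed set onto something containing such a generic point, which gives the contradiction.

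First, choose parameters. Let $B_1\supset A_1$ be arbitrary, for instance $B_1=A_1$. The set $\varphi(L\cap X_1)$ is a proper closed subset of $X_2$: it is the homeomorphic image of a proper closed subset, and $X_2$ is irreducible. In particular it is constructible, hence definable, so we may choose a parameter set $B_2\supset A_2$ over which it is definable. Then no $B_2$-generic point of $X_2$ lies in $\varphi(L\cap X_1)$.

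Next, apply Lemma \ref{L: weakly linked}(2) over $(B_1,B_2)$. This gives independent $Y_1,Z_1\in(O_1)_{B_1}$ and independent $Y_2,Z_2\in(O_2)_{B_2}$ with $\varphi(Y_1)=Y_2$ and $\varphi(Z_1)=Z_2$. Since $\dim(O_1)=1>0$ and the canonical bases of $Y_1,Z_1$ are independent over $B_1$, we have $Y_1\neq Z_1$. Hence $Y_1\cap Z_1\subset L\cap X_1$, and so
\[
Y_2\cap Z_2=\varphi(Y_1\cap Z_1)\subset\varphi(L\cap X_1).
\]

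On the other hand, we produce a generic point in $Y_2\cap Z_2$. By Lemma \ref{L: sweeping invariant}, $(O_2)_{B_2}$ is still $1$-sweeping but not uniquely $1$-sweeping. So Proposition \ref{P: generic witness to non sweeping}, applied to $(O_2)_{B_2}$ over $B_2$, gives members $Y,Z\in(O_2)_{B_2}$ and a point $a\in Y\cap Z$ with the following properties: the canonical bases of $Y$ and $Z$ are independent over $B_2$, and $a$ is $B_2$-generic in $X_2$. The independent pairs of $(O_2)_{B_2}$ form a single automorphism orbit over $B_2$. Hence some $\sigma\in\operatorname{Gal}(K_2/B_2)$ sends $(Y,Z)$ to $(Y_2,Z_2)$. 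Then $\sigma(a)\in Y_2\cap Z_2$ is $B_2$-generic in $X_2$, while also lying in $\varphi(L\cap X_1)$, which contradicts the choice of $B_2$.

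The delicate step is this transfer: the independent pair obtained from the pigeonhole lemma must be identified with the generic witness pair from Proposition \ref{P: generic witness to non sweeping}. This needs three ingredients. The witness must be taken over $B_2$ rather than $A_2$, which is why we pass to $(O_2)_{B_2}$ using Lemma \ref{L: sweeping invariant}. We use the irreducibility of $O_2$, arranged in the pigeonhole step, to guarantee that independent pairs form a single automorphism orbit over $B_2$. Finally, $B_2$ must be large enough to define $\varphi(L\cap X_1)$; this is possible because that set is closed, hence definable over countably many parameters.
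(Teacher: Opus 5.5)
Your proof is correct, but it takes a different route from the paper.

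\textbf{The paper's route.} The paper first enlarges $(A_1,A_2)$ to a coherent pair using Lemma \ref{L: existence of coherent sets}, so that $\varphi$ preserves genericity of single points. It then pulls the $A_2$-generic witness point $a_2\in Y_2\cap Z_2$ from Proposition \ref{P: generic witness to non sweeping} back along $\varphi$. This gives an $A_1$-generic point lying on two distinct members of $O_1$, contradicting that $O_1$ is uniquely $1$-sweeping.

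\textbf{Your route.} You avoid coherence and use the pencil structure of $O_1=H^L_{A_1}(X_1)$ instead. Any two distinct members of the pencil meet exactly in the base locus $L\cap X_1$. So only the single set $\varphi(L\cap X_1)$ needs to be definable over $B_2$, and that is automatic because it is closed. Your automorphism transfer is also sound: it uses that independent pairs in $(O_2)_{B_2}$ form one $\operatorname{Gal}(K_2/B_2)$-orbit, which needs the irreducibility of $O_2$, exactly as you say.

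\textbf{Relation to the inductive step.} Your argument is essentially the $k=1$ instance of the paper's own inductive step, namely Claim \ref{Cl: intersection is span of point} and the contradiction that follows it. There, the intersection of two independent members of $O_2$ is a twisted span, and its definability over $B_2y$ contradicts the genericity of $z$. In your case the relevant span is $\operatorname{Span}_1(\emptyset)=L\cap X_1$. Its twisted span is definable by topology, not by Lemma \ref{L: definability of span} and the inductive hypothesis. You also need only the containment $Y_2\cap Z_2\subset\varphi(L\cap X_1)$, not the format comparison. So your version makes the base case uniform with the rest of the induction and removes the need for coherent parameter sets.

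\textbf{What the paper's version buys.} It is more general: it uses nothing about $O_1$ beyond unique $1$-sweeping. Yours depends on $O_1$ being a hyperplane pencil whose pairwise intersections all lie in one fixed proper closed set. For Theorem \ref{T: existence of linked orbits} this is always the case, so nothing is lost.
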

    \begin{proof}
        By Lemma \ref{L: existence of coherent sets}, we may assume $(A_1,A_2)$ is coherent (since replacing with a larger coherent pair does not affect the relevant sweeping data). So we assume coherence throughout.
        
        Since $O_2$ is 1-sweeping (Lemma \ref{L: weakly linked}), Proposition \ref{P: generic witness to non sweeping} applies. In particular, assuming $O_2$ is not uniquely 1-sweeping, we can find distinct independent members $Y_2,Z_2\in O_2$, and an $A_2$-generic $a_2\in X_2$ with $a_2\in Y_2\cap Z_2$. By Lemma \ref{L: weakly linked}(2), after applying an automorphism if necessary, we may assume there are independent members $Y_1,Z_1\in O_1$ with $\varphi(Y_1)=Y_2$ and $\varphi(Z_1)=Z_2$. Let $a_1=\varphi^{-1}(a_2)$. By coherence, $a_1$ is $A_1$-generic in $X_1$; and moreover $Y_1$ and $Z_1$ are distinct and both contain $a_1$. This contradicts that $O_1$ is uniquely 1-sweeping.
    \end{proof}

   For $k\geq 2$, the proof that $O_2$ is uniquely $k$-sweeping is really a longer analog of the proof in Lemma \ref{L: link existence base case}. The only problem is that coherent sets are unavailable in higher arities, so we can't force the genericity of $a_1$. Instead, the proof will use a more elaborate geometric idea. Let us describe this idea informally.
   
    First consider the case $X_1=\mathbb P^2(K_1)$, $A_1=\emptyset$, and $O_1$ the orbit of (generic) straight lines (so $k=2$). We want to show that, generically, a curve in $O_2$ is determined by two points. This can be done by counting intersections. Namely, let $Y_1,Y_2,Z_1,Z_2$ be as in Lemma \ref{L: weakly linked}(2). Since $Y_1\cap Z_1$ is a single point (the intersection of two lines), so is $Y_2\cap Z_2$. Thus, a generic pair of curves from $O_2$ will intersect only once -- and this suggests (and implies, with a bit of work) that a generic pair of points in $X_2$ will indeed line on exactly one curve in $O_2$.

Now to generalize slightly, let $X_1$ be any two-dimensional quasi-projective variety, and $O_1=H_{A_1}^L(X_1)$ as in Theorem \ref{T: existence of linked orbits}, where we assume $L$ has codimension 3 (so again, $k=2$). By dimension considerations, $L$ is disjoint from $X_1$. Now, a generic intersection of curves in $O_1$ might not be a unique point. But something similar still holds: namely, a generic intersection consists of exactly one \textit{equivalence class} under the `linear span over $L$' relation $E_1$. Here, the equivalence classes are finite sets obtained by intersecting $X_1$ with a codimension 2 linear subspace of $\mathbb P^n(K_1)$ containing $L$ (in other words, a linear space of dimension 1 over $L$). The fact that these sets induce an equivalence relation follows since $L$ is disjoint from $X_1$.

Now in a similar fashion, we can show that since a generic intersection of two curves in $O_1$ contains exactly one $E_1$-class, a correspondingly `linked' generic intersection of curves in $O_2$ will contain exactly one $E_2:=\varphi(E_1)$-class. We would like to conclude the same holds for \textit{all} generic intersections of $O_2$ curves (not just those linked to $O_1$). However, the typical argument (via automorphisms) only works if $E_2$ is definable in $K_2$.

The trick: we can (at least approximately) define $E_2$ using the inductive hypothesis. Namely, fix two independent generics $a_1,b_1\in X_1$. The families $P_1=O_1(a_1),Q_1=O_1(b_1)$ are of smaller dimension, so satisfy the inductive hypothesis, and are thus linked to orbits $P_2,Q_2$ in $X_2$. Using the strong conditions guaranteed by Theorem \ref{T: preservation}, it follows that $P_2,Q_2$ are very close to the precise images of $P_1,Q_1$ under $\varphi$. For simplicity, let us completely identify them by assuming $P_2=\varphi(P_1)$ and $Q_2=\varphi(Q_1)$.

Now the family of all intersections of curves from $P_1$ and $Q_1$ precisely outputs the (generic) $E_1$-equivalence classes (i.e. a linear space $S$ can always be obtained as the intersection $S\langle a_1\rangle\cap S\langle b_1\rangle$). It follows that we can equivalently generate (most of) the $E_2$-classes by intersecting curves from $P_2$ and $Q_2$. Thus $E_2$ is (mostly) definable in $K_2$, as the set of such intersections. As before, it now follows that a curve in $O_2$ is determined by a single generic $E_2$-class, and thus two independent generics of $X_2$ (being non-$E_2$-equivalent) determine a unique generic $O_2$ curve as desired. This is a bit imprecise, but can be formalized as in Lemma \ref{L: link existence base case} using Proposition \ref{P: generic witness to non sweeping}.  

The argument presented below is a generalization of the sketch presented above. Namely, in the general case, we aim to show that the `linear dependence over $L$' relation on $X_1^k$ maps (approximately) to a definable relation on $X_2^k$ (which we might call `twisted linear dependence'). As sketched above, we do this by writing the linear span of a generic tuple in $X_1^{k-1}$ as an intersection of linear spaces from smaller orbits where the inductive hypothesis holds.

So by induction, we obtain a definable `twisted span' for tuples in $X_2^{k-1}$. We then use this to characterize what we call the `generic intersection format' of $O_2$, showing that it coincides with the `format' of the twisted span of a generic $(k-1)$-tuple (here the `format' of a variety is an ad hoc term for the list of dimensions of its irreducible components; this acts as a higher-dimensional analog of the cardinality of an $E_2$-class in the two-dimensional case). Finally, we show that this sets up an application of Proposition \ref{P: generic witness to non sweeping} to show $O_2$ is uniquely $k$-sweeping. 

\subsection{The Setup}

From now on, we assume $k\geq 2$. Let $I,J\leq\mathbb P^n(K)$ be two independent generic (over $A_1$) linear spaces of dimension one over $L$, each defined over $B_1\supset A_1$ -- so $L\leq I,J$, $\dim(I)=\dim(J)=\dim(L)+1$, and $I\cap J=L$. Let $P_1=H_{B_1}^I(X_1)$ and $Q_1=H_{B_1}^J(X_1)$ be the hyperplane families over $I$ and $J$, respectively. So each of $P_1,Q_1$ is uniquely $(k-1)$-sweeping, and thus the inductive hypothesis can be applied to them. Thus there are a set $B_2$ and uniquely $(k-1)$-sweeping orbits $P_2,Q_2$ over $B_2$ which are linked to $P_1,Q_1$ respectively (we can assume $P_2,Q_2$ are over the same set because being linked is parallelism invariant (Lemma \ref{L: linked invariant})).

    \subsection{Twisted Span}

    \begin{notation}
        Given a set $A\subset X_1$, we define the \textit{span of $A$}, denoted $\operatorname{Span}_1(A)$, to be $X\cap S$, where $S$ is the smallest linear subspace of $\mathbb P^n(K_1)$ containing $L\cup A$.
        
        Now given a set $B\subset X_2$, we define the \textit{twisted span of $B$}, denoted $\operatorname{Span}_2(B)$, to be $\varphi(\operatorname{Span}_1(\varphi^{-1}(B))$.
    \end{notation}

    \begin{lemma}[Definability of twisted span]\label{L: definability of span} Let $x_2\in X_2^{k-1}$ be $(B_1,B_2)$-generic.
    
    \begin{enumerate}
        \item $\operatorname{Span}_2(x_2)$ is $\operatorname{Gal}(K_2/B_2x_2)$-invariant, and therefore definable in $K_2$ over $B_2x_2$.
        \item If $y_2\in X_2^{k-1}$ is another $(B_1,B_2)$-generic point, and $\sigma\in\operatorname{Gal}(K_2/B_2)$ with $\sigma(x_2)=y_2$, then $$\sigma(\operatorname{Span}_2(x_2))=\operatorname{Span}_2(y_2).$$ \end{enumerate}
        \end{lemma}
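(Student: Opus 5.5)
Our approach is to express $\operatorname{Span}_2(x_2)$ as an intersection of members of the orbits $P_2$ and $Q_2$, using Theorem \ref{T: preservation}(2) to move from $K_1$ to $K_2$. Write $x_1=\varphi^{-1}(x_2)$, so $x_1$ is $B_1$-generic in $X_1^{k-1}$. Let $S$ be the linear span of $L\cup x_1$. Then $S\langle I\rangle$ and $S\langle J\rangle$ are the spans of $I\cup x_1$ and $J\cup x_1$. Since $x_1$ is generic and $I,J$ are independent generic extensions of $L$ by one dimension, these are hyperplanes, and $S\langle I\rangle\cap S\langle J\rangle=S$. Hence
\[
\operatorname{Span}_1(x_1)=P_1(x_1)\cap Q_1(x_1),
\]
where $P_1(x_1)=X_1\cap S\langle I\rangle$ and $Q_1(x_1)=X_1\cap S\langle J\rangle$ are the unique members of $P_1$ and $Q_1$ through $x_1$. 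These are well defined because $P_1,Q_1$ are uniquely $(k-1)$-sweeping and $x_1$ is $B_1$-generic in $X_1^{k-1}$. The main geometric check is the identity $S\langle I\rangle\cap S\langle J\rangle=S$. It is a dimension count: $\dim S=\dim L+k-1$, which is at most $n-2$ since $\operatorname{codim} L=k+1$, and genericity of $I,J$ over $A_1x_1$ puts them in general position relative to $S$.

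Next we apply $\varphi$. Since $\varphi$ is a bijection, it commutes with intersections, so
\[
\operatorname{Span}_2(x_2)=\varphi(P_1(x_1))\cap\varphi(Q_1(x_1)).
\]
We now invoke Theorem \ref{T: preservation}(2) with $k-1$ in place of $k$, the linked pairs $(P_1,P_2)$ and $(Q_1,Q_2)$, and $Z_i^j=X_i$. This is legitimate because $X_i$ is good for any uniquely sweeping orbit by Lemma \ref{L: good preservation}(1), and $P_i,Q_i$ trivially $(k-1)$-sweep $(X_i,\dots,X_i)$. The pair $(x_1,x_2)$ is $(B_1,B_2)$-generic by hypothesis, so the theorem gives $\varphi(P_1(x_1))=P_2(x_2)$ and $\varphi(Q_1(x_1))=Q_2(x_2)$. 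Therefore
\[
\operatorname{Span}_2(x_2)=P_2(x_2)\cap Q_2(x_2).
\]

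Clause (1) now follows. Any $\tau\in\operatorname{Gal}(K_2/B_2x_2)$ fixes the orbits $P_2,Q_2$ setwise and fixes $x_2$. It must therefore fix the unique member of each orbit containing $x_2$, and hence fixes their intersection. Invariance under $\operatorname{Gal}(K_2/B_2x_2)$ of a definable set gives definability over $B_2x_2$, since $B_2x_2$ is countable and $K_2$ is saturated. For clause (2), $\sigma\in\operatorname{Gal}(K_2/B_2)$ preserves $P_2$ and $Q_2$. It sends the unique member of $P_2$ through $x_2$ to a member of $P_2$ through $y_2$, and by uniqueness at the generic point $y_2$ this is $P_2(y_2)$; likewise for $Q_2$. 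Applying the identity above at $y_2$, which is also $(B_1,B_2)$-generic, yields $\sigma(\operatorname{Span}_2(x_2))=\operatorname{Span}_2(y_2)$.

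The step most likely to need care is the first: verifying that $x_1$ is generic enough over $B_1$ for $S\langle I\rangle\cap S\langle J\rangle=S$ to hold exactly, and that these hyperplane sections really are the $P_1$- and $Q_1$-members through $x_1$ in the sense of the orbit notation. The transfer through $\varphi$ is then immediate from Theorem \ref{T: preservation}(2).
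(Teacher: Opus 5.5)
Your proposal is correct and follows the paper's proof essentially verbatim. Both arguments use $\operatorname{Span}_1(x_1)=P_1(x_1)\cap Q_1(x_1)$ and Theorem \ref{T: preservation}(2), applied with $Z_i^j=X_i$ (good by Lemma \ref{L: good preservation}), to get $\operatorname{Span}_2(x_2)=P_2(x_2)\cap Q_2(x_2)$; both clauses then follow because $P_2,Q_2$ are uniquely sweeping $B_2$-orbits. The only difference is that you spell out the linear-algebra identity the paper states in one line: $S$ has codimension exactly $2$ for generic $x_1$, and $S\langle I\rangle\neq S\langle J\rangle$.
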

    \begin{proof}
        Both clauses follow since for $(B_1,B_2)$-generic $z_2\in X_2^{k-1}$ we have $$\operatorname{Span}_2(z_2)=P_2(z_2)\cap Q_2(z_2).$$ In more details: let $x_1=\varphi^{-1}(x_2)$, so that $(x_1,x_2)$ is $(B_1,B_2)$-generic in $(X_1^{k-1},X_2^{k-1})$. By Theorem \ref{T: preservation}(2), we have $\varphi(P_1(x_1))=P_2(x_2)$ and $\varphi(Q_1(x_1))=Q_2(x_2)$ (the theorem applies because we are only using it in the case $Z_i=X_i$, which is $O_i$-good by Lemma \ref{L: good preservation}). But note that $$\operatorname{Span_1}(x_1)=P_1(x_1)\cap Q_1(x_1)$$ (as the intersection of two subspaces of dimension 1 over it), and thus $$\operatorname{Span}_2(x_2)=\varphi(\operatorname{Span}_1(x_1))=\varphi(P_1(x_1))\cap\varphi(Q_1(x_1))=P_2(x_2)\cap Q_2(x_2).$$ Since $P_2,Q_2$ are $B_2$-orbits, it is clear that $P_2(x_2)$ and $Q_2(x_2)$ are $B_2x_2$-invariant, and thus so is their intersection. This proves (1).
        
        Now for (2), since $y_2$ is also $(B_1,B_2)$-generic, we similarly have $\operatorname{Span}_2(y_2)=P_2(y_2)\cap Q_2(y_2)$. But since $P_2,Q_2$ are orbits over $B_2$, and $\sigma(x_2)=y_2$, it follows that also $\sigma(P_2(x_2))=P_2(y_2)$ and $\sigma(Q_2(x_2))=Q_2(y_2)$. Thus $$\sigma(\operatorname{Span}_2(x_2))=\sigma(P_2(x_2))\cap \sigma(Q_2(x_2))=P_2(y_2)\cap Q_2(y_2)=\operatorname{Span}_2(y_2).$$ 

    \end{proof}

    \begin{definition}
        Suppose $Y_2\in(O_2)_{B_2}$ with $c_2=\operatorname{Cb}(Y_2)$, and suppose $y_2\in Y_2^{j}$ for some $j$. Say that $Y_2$ \textit{generically contains $y_2$} if $y_2$ is $B_2c_2$-generic in $Y_2^j$.
    \end{definition}

    Using Lemma \ref{L: definability of span}, we get that members of $O_2$ are `almost $\operatorname{Span}_2$-closed':

    \begin{lemma}[$O_2$ respects $\operatorname{Span}_2$]\label{L: O_2 is affine closed}
        Let $y_2\in X_2^{k-1}$ be $(B_1,B_2)$-generic. If $Y_2\in(O_2)_{B_2}$ generically contains $y_2$, then $\operatorname{Span}_2(y_2)\subset Y_2$.
    \end{lemma}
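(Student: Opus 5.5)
The plan is to transfer the corresponding statement from $X_1$, where it is a triviality of linear algebra, and then spread it to arbitrary generic situations in $X_2$ by automorphism invariance, using Lemma \ref{L: definability of span}(2).

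On the $X_1$ side the statement is immediate. If $Y_1=H\cap X_1$ with $H\supset L$ a hyperplane and $y_1\in Y_1^{k-1}$, then $\operatorname{Span}_1(y_1)\subset H\cap X_1=Y_1$, because the linear span of $L$ and the coordinates of $y_1$ lies in $H$.

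To produce one good witness configuration in $X_2$, I would apply Lemma \ref{L: weakly linked}(1) over $(B_1,B_2)$. This gives $Y_1\in(O_1)_{B_1}$ and $Y_2'\in(O_2)_{B_2}$ with $\varphi(Y_1)=Y_2'$. Let $c_i=\operatorname{Cb}(Y_i)$, with $Y_2'$ in the role of $Y_2$ on the second side.

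Next I need a tuple $y_1'\in Y_1^{k-1}$ with three properties:
\begin{enumerate}
\item it is $B_1c_1$-generic in $Y_1^{k-1}$;
\item it is $B_1$-generic in $X_1^{k-1}$, which is automatic when $k-1<k$ because generic points of $Y_1^{k-1}$ over the parameters are generic in $X_1^{k-1}$, as in the sweeping dimension count;
\item its image $y_2'=\varphi(y_1')$ is $B_2c_2$-generic in $Y_2'^{\,k-1}$ and $B_2$-generic in $X_2^{k-1}$.
\end{enumerate}
I would obtain this from Lemma \ref{L: generics exist}, applied to the homeomorphism $\varphi|_{Y_1}:Y_1\to Y_2'$ over the pair $(B_1c_1,B_2c_2)$. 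The genericity in $X_2^{k-1}$ should follow from a dimension count of the same kind as in Lemma \ref{L: dim of sweeping orbit}, since $\dim(O_2)$ has been minimized. By the $X_1$ observation, $\operatorname{Span}_2(y_2')=\varphi(\operatorname{Span}_1(y_1'))\subset\varphi(Y_1)=Y_2'$.

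Finally I transport this to the given data. Suppose $Y_2\in(O_2)_{B_2}$ generically contains the $(B_1,B_2)$-generic tuple $y_2$. Since $O_2$ is irreducible, the pairs $(c_2,y_2)$ with $c_2$ a canonical base of a member of $(O_2)_{B_2}$ and $y_2$ generic in that member over $B_2c_2$ form a single type over $B_2$. So there is $\sigma\in\operatorname{Gal}(K_2/B_2)$ with $\sigma(Y_2')=Y_2$ and $\sigma(y_2')=y_2$. Both $y_2'$ and $y_2$ are $(B_1,B_2)$-generic, so Lemma \ref{L: definability of span}(2) gives
\[
\operatorname{Span}_2(y_2)=\sigma(\operatorname{Span}_2(y_2'))\subset\sigma(Y_2')=Y_2 .
\]

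The main obstacle is the second step. One has to ensure simultaneously that $y_2'$ is generic in $Y_2'^{\,k-1}$ over $B_2c_2$ and $(B_1,B_2)$-generic in $X^{k-1}$. Lemma \ref{L: generics exist} only handles products of unary sets and fixed parameter pairs, so I would apply it inside $Y_1$, $Y_2'$ over $(B_1c_1,B_2c_2)$. I would then argue genericity in $X_i^{k-1}$ over $B_i$ by a dimension computation. If that fails on the $X_2$ side, I would fall back on choosing $y_1'$ inside a large set of $(B_1,B_2)$-generics, via Corollary \ref{C: non-generics are small}, intersected with the relevant fibers.
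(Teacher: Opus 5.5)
Your proposal is the paper's proof. It uses Lemma \ref{L: weakly linked}(1) to get matched members $\varphi(Y_1)=Y_2'$, Lemma \ref{L: generics exist} over $(B_1c_1,B_2c_2)$, the containment $\operatorname{Span}_1(y_1')\subset Y_1$ on the hyperplane side, and transport by some $\sigma\in\operatorname{Gal}(K_2/B_2)$ via Lemma \ref{L: definability of span}(2).

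The step you flagged (that $y_2'$ is $B_2$-generic in $X_2^{k-1}$) does not follow from minimality of $\dim(O_2)$ or from a dimension count; it follows from $O_2$ being $k$-sweeping (Lemma \ref{L: O_2 k-sweeping}). The irreducibility argument in Lemma \ref{L: generic sweeping witness} shows that some $B_2c_2$-generic $k$-tuple of a member of $(O_2)_{B_2}$ is $B_2$-generic in $X_2^k$. By conjugacy over $B_2$, every such tuple is, and $y_2'$ extends to one.
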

    \begin{proof}
        By Lemma \ref{L: weakly linked}(1), there are $Z_i\in(O_i)_{B_i}$ with $\varphi(Z_1)=Z_2$. Let $d_i=\operatorname{Cb}(Z_i)$, and (by Lemma \ref{L: generics exist}) let $(z_1,z_2)\in(Z_1^{k-1},Z_2^{k-1})$ be generic over $(B_1d_1,B_2d_2)$. Since each $O_i$ is $k$-sweeping, $(z_1,z_2)$ is also $(B_1,B_2)$-generic in $(X_1^{k-1},X_2^{k-1})$.

        Since $Z_1$ is a hyperplane section, $\operatorname{Span}_1(z_1)\subset Z_1$. Thus $\operatorname{Span}_2(z_2)\subset Z_2$. Now since $Y_2,Z_2$ belong to the same $B_2$-orbit, and $y_2,z_2$ are generic tuples from them, we can find $\sigma\in\operatorname{Gal}(K_2/B_2)$ with $\sigma(Z_2)=Y_2$ and $\sigma(z_2)=y_2$. Since $z_2,y_2$ are $(B_1,B_2)$-generic, Lemma \ref{L: definability of span} moreover gives that $\sigma(\operatorname{Span}_2(z_2))=\operatorname{Span}_2(y_2)$. Thus $$\operatorname{Span}_2(y_2)=\sigma(\operatorname{Span}_2(z_2))\subset\sigma(Z_2)=Y_2.$$
    \end{proof}

    \subsection{Generic Intersection Format}

    \begin{definition}
        Let $Y\subset X_i$ be a closed subvariety. We define the \textit{format} of $Y$, denoted $\operatorname{Form}(Y)$, to be the multiset of dimensions of irreducible components of $Y$.
    \end{definition}

    Clearly, format is a homeomorphism invariant. Also, if $Y\subset Z$ are closed and $\operatorname{Form}(Y)=\operatorname{Form}(Z)$, then $Y=Z$.

    \begin{definition}
        The \textit{generic intersection format} of $O_i$, denoted $\operatorname{GIF}(O_i)$, is the format of $Y\cap Z$ whenever $Y,Z\in O_i$ are independent (this is well-defined by automorphism invariance).
    \end{definition}
    
    By Lemma \ref{L: definability of span}, the orbit of $\operatorname{Span}_2(x_2)$ under $\operatorname{Gal}(K_2/B_2)$ is constant for $(B_1,B_2)$-generic $x_2\in X^{k-1}$, and thus so is the format $\operatorname{Form}(\operatorname{Span}_2(x_2))$.

    \begin{notation} Let $e:=\operatorname{Form}(\operatorname{Span}_2(x_2))$ for any $(B_1,B_2)$-generic $x_2\in X_2^{k-1}$.
    \end{notation}


    The main point of this subsection is to show that $e$ is also equal to $\operatorname{GIF}(O_i)$ for each $i$:

    \begin{lemma}\label{L: degree of intersection}
    \begin{enumerate}
        \item $\operatorname{GIF}(O_1)=\operatorname{GIF}(O_2)$.
        \item $\operatorname{GIF}(O_1)=e$.
        \item In particular, $\operatorname{GIF}(O_2)=e$.
    \end{enumerate}
    \end{lemma}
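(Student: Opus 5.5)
The plan is to prove (1) directly from the weak linkage of $O_1$ and $O_2$, to prove (2) by identifying both sides with the format of a generic codimension $2$ linear section through $L$, and then to read off (3) from (1) and (2). Throughout I use that format is a homeomorphism invariant and is invariant under field automorphisms.

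For (1), apply Lemma \ref{L: weakly linked}(2) with $B_i=A_i$. This gives independent members $Y_i,Z_i\in O_i$ with $\varphi(Y_1)=Y_2$ and $\varphi(Z_1)=Z_2$. Since $\varphi$ is a bijective homeomorphism, $\varphi(Y_1\cap Z_1)=Y_2\cap Z_2$. Hence $\operatorname{Form}(Y_1\cap Z_1)=\operatorname{Form}(Y_2\cap Z_2)$. Because each pair is independent, these formats are $\operatorname{GIF}(O_1)$ and $\operatorname{GIF}(O_2)$ respectively, which proves (1).

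For (2), take $x_2\in X_2^{k-1}$ to be $(B_1,B_2)$-generic and put $x_1=\varphi^{-1}(x_2)$. Then $x_1$ is $B_1$-generic, hence $A_1$-generic, in $X_1^{k-1}$, and $\operatorname{Form}(\operatorname{Span}_1(x_1))=\operatorname{Form}(\operatorname{Span}_2(x_2))=e$. It therefore suffices to show that $\operatorname{Span}_1(x_1)$ has the same format as $Y\cap Z$ for independent $Y,Z\in O_1$. Both are of the form $X_1\cap M$, where $M\supseteq L$ is a codimension $2$ linear subspace, i.e. of dimension $k-1$ over $L$. I claim that in both cases $M$ realizes the generic type over $A_1$ of the ($A_1$-definable, irreducible) Grassmannian $G_L$ of such subspaces. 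Given this, an automorphism in $\operatorname{Gal}(K_1/A_1)$ carries one $M$ to the other and preserves $X_1$, so the two formats agree.

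For $Y\cap Z$, write $Y=H\cap X_1$ and $Z=H'\cap X_1$ with $H,H'$ independent $A_1$-generic hyperplanes through $L$. Then $H\cap H'$ corresponds to the line through two independent generic points of the dual projective space of hyperplanes containing $L$. Such a line is generic in the corresponding Grassmannian, so $H\cap H'$ is generic in $G_L$. Moreover $Y\cap Z=X_1\cap(H\cap H')$.

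For $\operatorname{Span}_1(x_1)$, I expect the main obstacle: one must check that the span $M$ of $L$ and a generic $x_1$ is generic in $G_L$ and has dimension exactly $k-1$ over $L$. I would argue by dimension counting. Non-degeneracy of $X_1$ and genericity of $x_1$ force the $k-1$ points to be linearly independent over $L$, so $\dim M$ is as claimed. For genericity, take $M'$ generic in $G_L$ over $A_1$. Since $\dim X_1\geq 2$ and $M'$ has codimension $2$, the set $M'\cap X_1$ is nonempty of dimension $\hat d-2$. Choose $x'\in (M'\cap X_1)^{k-1}$ generic over $A_1\lceil M'\rceil$; as in the proof of Theorem \ref{L: hyperplane sweeping}, these points together with $L$ span $M'$. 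Now compare dimensions: $\dim(x'/A_1)\geq\dim(M'/A_1)+(k-1)(\hat d-2)$, and $\dim G_L=2(k-1)$. This forces $\dim(x'/A_1)=(k-1)\hat d$, so $x'$ is $A_1$-generic in $X_1^{k-1}$. Hence $x'$ and $x_1$ have the same type over $A_1$, and $M=\operatorname{Span}$ of $L$ and $x_1$ is conjugate to $M'$, hence generic.

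With the claim established, (2) follows, and (3) is immediate from (1) and (2).
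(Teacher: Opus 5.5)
Your part (1) is the paper's argument. For (2) you take a different route from the paper. The paper picks $y,z$ independent generics over $B_1x_1$, sets $Y=O_1(x_1y)$ and $Z=O_1(x_1z)$, and observes $Y\cap Z=\operatorname{Span}_1(x_1)$ directly. It then asserts that $Y,Z$ are independent, and that assertion rests on exactly the fact you flag as the main obstacle: $M=\operatorname{span}(L\cup x_1)$ is generic in $G_L$.

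\textbf{The gap.} Your proof of that fact is not complete. The step ``choose $x'\in(M'\cap X_1)^{k-1}$ generic over $A_1\lceil M'\rceil$; these points together with $L$ span $M'$'' is not what the proof of Theorem \ref{L: hyperplane sweeping} provides. There the set in question is irreducible and non-degenerate. Here you would need that no top-dimensional component of $M'\cap X_1$ lies in a proper subspace of $M'$ containing $L$. That is a Bertini plus non-degeneracy statement for generic codimension-$2$ sections, and nothing in the paper establishes it. When $\hat d=2$ the claim is false as stated. In that case $M'\cap X_1$ is finite, so every tuple from it is generic over $A_1\lceil M'\rceil$, and a tuple with repeated entries does not span $M'$ once $k\geq 3$. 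Also, nonemptiness of $M'\cap X_1$ for quasi-projective $X_1$ needs genericity of $M'$, not just codimension $2$.

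\textbf{A repair.} You can bypass $M'$ entirely.
\begin{enumerate}
\item Take $y\in X_1$ generic over $A_1x_1$. By non-degeneracy, $M\cap X_1$ is a proper closed subset of $X_1$ defined over $A_1x_1$, so $y\notin M$.
\item Hence $H=\operatorname{span}(M\cup\{y\})$ is the unique hyperplane through $L$ and $(x_1,y)$. Since $(x_1,y)$ is $A_1$-generic in $X_1^k$, we get $O_1(x_1y)=H\cap X_1$, which is irreducible of dimension $\hat d-1$.
\item Suppose $M\cap X_1$ had a component $D$ of dimension $\hat d-1$. Then $D\subseteq O_1(x_1y)$ forces $D=O_1(x_1y)\ni y$. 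This is impossible, because $D$ is a proper closed subset of $X_1$ defined over $\acl(A_1x_1)$.
\item So every component of $M\cap X_1$ has dimension at most $\hat d-2$. Therefore $\dim(x_1/A_1\lceil M\rceil)\le(k-1)(\hat d-2)$, and additivity gives $\dim(\lceil M\rceil/A_1)\ge 2(k-1)=\dim G_L$. Thus $M$ is generic in $G_L$.
\end{enumerate}
With this in place your conjugacy argument goes through.

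\textbf{Comparison with the paper.} The same bound, computed over $B_1$, underlies the paper's claim that $Y$ and $Z$ are independent. Given that, the paper's route needs no conjugacy step, since it produces $\operatorname{Span}_1(x_1)$ itself as a generic intersection. Your route instead compares two different codimension-$2$ spaces through $L$, which makes the hidden genericity fact explicit but costs the extra Galois-conjugacy argument.
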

    \begin{proof}
        \begin{enumerate}
            \item Let $Y_1,Y_2,Z_1,Z_2$ be as in Lemma \ref{L: weakly linked}(2) (over any parameter sets). Then $$\operatorname{GIF}(O_1)=\operatorname{Form}(Y_1\cap Z_1)=\operatorname{Form}(Y_2\cap Z_2)=\operatorname{GIF}(O_2).$$
            \item Let $(x_1,x_2)$ be $(B_1,B_2)$-generic in $(X_1^{k-1},X_2^{k-1})$, and let $y,z$ be independent generics in $X_1$ over $B_1x_1$. Let $Y=O_1(x_1y)$ and $Z=O_1(x_1z)$. It is easy to see that $Y,Z$ are independent members of $O_1$, and that $Y\cap Z=\operatorname{Span}_1(x_1)$. Thus:
            
            $$\operatorname{GIF}(O_1)=\operatorname{Form}(Y\cap Z)=\operatorname{Form}(\operatorname{Span}_1(x_1))=\operatorname{Form}(\operatorname{Span}_2(x_2))=e.$$
            \item By (1) and (2).
        \end{enumerate}
    \end{proof}

    \subsection{Concluding}

    We now finish the proof that $O_2$ is uniquely $k$-sweeping;
    
    \begin{proposition}
        $O_2$ is uniquely $k$-sweeping.
    \end{proposition}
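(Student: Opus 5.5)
We argue by contradiction, following the template of Lemma \ref{L: link existence base case} but replacing coherence with the format computation of Lemma \ref{L: degree of intersection}. Suppose $O_2$ is not uniquely $k$-sweeping. By Lemma \ref{L: O_2 k-sweeping}, $O_2$ is $k$-sweeping, and by Lemma \ref{L: sweeping invariant} we may pass to $(O_2)_{B_2}$. Proposition \ref{P: generic witness to non sweeping} then gives independent members $Y,Z\in(O_2)_{B_2}$ with canonical bases $c,d$, and a $B_2$-generic $a\in X_2^k$, such that $a$ is $B_2c$-generic in $Y^k$, $B_2d$-generic in $Z^k$, and $B_2cd$-generic in $(Y\cap Z)^k$. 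We will derive a contradiction by showing that $Y\cap Z$ is forced to be as small as a single twisted span, which cannot contain a generic $k$-tuple of its own.

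First, we need the first $k-1$ coordinates to be usable in Lemma \ref{L: definability of span}. Write $a=(y,w)$ with $y\in X_2^{k-1}$. We want $y$ to be $(B_1,B_2)$-generic, i.e.\ $\varphi^{-1}(y)$ should be $B_1$-generic. Genericity on the $X_2$ side is given, but genericity of $\varphi^{-1}(y)$ is the crux. To arrange it, we would enlarge $B_2$ at the start and use Corollary \ref{C: non-generics are small}. Concretely, we would choose the pair $(Y,Z)$, and then the tuple, so that the configuration is realized by a $(B_1,B_2)$-generic point: by automorphism invariance over $B_2$, the type of $(c,d,a)$ is determined. The set of realizations of $\tp(y/B_2cd)$ within $X_2^{k-1}$, pulled back, should be large, since it is a generic fibre-type condition. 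Lemma \ref{L: generics in large set} then supplies a realization that is also $B_1$-generic after applying $\varphi^{-1}$. \textbf{This is the step I expect to be the main obstacle}. If the direct argument fails, the first fallback is to choose $c,d$ via Lemma \ref{L: weakly linked}(2), so that $Y=\varphi(Y_1)$ and $Z=\varphi(Z_1)$. Then $y$ can be taken to be the image of an appropriate generic tuple of $Y_1\cap Z_1$ over $B_1$, with $B_2$-genericity obtained from Lemma \ref{L: generics exist}.

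Granting this, $Y$ and $Z$ each generically contain $y$, because $a$ is generic in $Y^k$ and in $Z^k$ over the respective bases. Lemma \ref{L: O_2 is affine closed} therefore gives $\operatorname{Span}_2(y)\subset Y\cap Z$. By Lemma \ref{L: degree of intersection}(3), $\operatorname{Form}(Y\cap Z)=\operatorname{GIF}(O_2)=e=\operatorname{Form}(\operatorname{Span}_2(y))$. Since a closed subset with the same format as the closed set containing it equals that set, we get $Y\cap Z=\operatorname{Span}_2(y)$.

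Now $\operatorname{Span}_2(y)$ is $B_2y$-definable by Lemma \ref{L: definability of span}(1), so $w$, the last coordinate of $a$, lies in a $B_2y$-definable set of dimension $\hat d-2$. This gives
\[
\dim(w/B_2y)\le \hat d-2<\hat d,
\]
which contradicts the $B_2$-genericity of $a$ in $X_2^k$: that genericity requires $w$ to be $B_2y$-generic in $X_2$. Hence $O_2$ is uniquely $k$-sweeping, and Lemma \ref{L: weakly linked implies linked} finishes the proof of Theorem \ref{T: existence of linked orbits}.
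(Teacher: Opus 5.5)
The skeleton of your proposal is the paper's proof. The one step you flagged as the main obstacle is not handled by either of your routes, so as written there is a gap there.

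\textbf{Why the first route fails.} You hold $c,d$ fixed and look at realizations of $\tp(y/B_2cd)$. All of these lie in $(Y\cap Z)^{k-1}$. Their $\varphi$-preimages therefore lie in $\varphi^{-1}(Y\cap Z)^{k-1}$, a product of proper closed subsets of $X_1$. Such a set is \emph{small}, not large, so Lemma \ref{L: generics in large set} gives you nothing. (Proposition \ref{P: generic witness to non sweeping} also does not determine the type of $(c,d,a)$ over $B_2$.)

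\textbf{Why the fallback fails.} It discards the only place where the failure hypothesis enters. A linked pair $Y=\varphi(Y_1)$, $Z=\varphi(Z_1)$ from Lemma \ref{L: weakly linked}(2) has no reason to satisfy the conclusions of Proposition \ref{P: generic witness to non sweeping}. For such a pair, $\varphi^{-1}(a)\in(Y_1\cap Z_1)^k$ is never $B_1$-generic, because $O_1$ is uniquely $k$-sweeping. Lemma \ref{L: generics exist} only makes $a$ generic in $(Y\cap Z)^k$ over $B_2cd$. It does not make $a$ generic in $X_2^k$ over $B_2$, which is exactly what your final dimension count needs. Indeed, since the proposition is true, a tuple lying on two distinct members of $(O_2)_{B_2}$ is never $B_2$-generic.

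\textbf{The missing idea.} The paper uses a one-line automorphism argument.
\begin{enumerate}
\item Only $\tp(a/B_2)$ matters, and since $X_2^k$ is irreducible, this is its unique generic type over $B_2$.
\item By Lemma \ref{L: generics exist} (with every $Y_i^j=X_i$) there is a $(B_1,B_2)$-generic $a'\in X_2^k$. Hence there is $\sigma\in\operatorname{Gal}(K_2/B_2)$ with $\sigma(a)=a'$.
\item Replace $(Y,Z,c,d,a)$ by $(\sigma(Y),\sigma(Z),\sigma(c),\sigma(d),a')$. The new members still lie in $(O_2)_{B_2}$. Conditions (1)--(3) of Proposition \ref{P: generic witness to non sweeping} are preserved, because they are $\operatorname{Gal}(K_2/B_2)$-invariant.
\end{enumerate}
Now $a$, and hence $y$, is $(B_1,B_2)$-generic. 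The rest of your argument then goes through verbatim: Lemma \ref{L: O_2 is affine closed}, the format comparison via Lemma \ref{L: degree of intersection}, and the $B_2y$-definability of $\operatorname{Span}_2(y)$ from Lemma \ref{L: definability of span}.
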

    \begin{proof}
    Assume not. Recall that $O_2$ is already known to be $k$-sweeping (Lemma \ref{L: O_2 k-sweeping}). So we may use Proposition \ref{P: generic witness to non sweeping}. In particular, let us fix $Y,Z\in(O_2)_{B_2}$ with canonical bases $c,d$ respectively, and a tuple $a\in X_2^k$, so that (1)-(3) of Proposition \ref{P: generic witness to non sweeping} hold for $Y,Z,c,d,a$. Moreover, we write $a=(y,z)$ with $y\in X_2^{k-1}$ and $z\in X_2$.

    So $a\in X_2^k$ is $B_2$-generic. Then after applying an automorphism in $\operatorname{Gal}(K_2/B_2)$, we may replace $a$ with a $(B_1,B_2)$-generic element. Thus, moving forward we assume that $a$ is $(B_1,B_2)$-generic in $X_2^k$.

    We now finish the proof by finding a contradiction. First we use our previous work to observe, as described earlier in our informal sketch, that our generic $O_2$ intersection is determined by $\operatorname{Span}_2$ of a single point:

    \begin{claim}\label{Cl: intersection is span of point}
        $Y\cap Z=\operatorname{Span}_2(y)$.
    \end{claim}
    \begin{proof}
        By Lemma \ref{L: O_2 is affine closed}, $\operatorname{Span}_2(y)\subset Y\cap Z$. By Lemma \ref{L: degree of intersection}, $\operatorname{Form}(\operatorname{Span}_2(y))=\operatorname{Form}(Y\cap Z)=e$. Thus, $\operatorname{Span}_2(y)=Y\cap Z$.
    \end{proof}

    Finally, we have a contradiction. Indeed, by Lemma \ref{L: definability of span}, $\operatorname{Span}_2(y)=Y\cap Z$ is $B_2y$-definable, so that $\dim(z/By)\leq\dim(Y\cap Z)\leq\hat\dim(X_2)-2$. This contradicts that $a=(y,z)$ is $B_2$-generic in $X_2^k$.
    \end{proof}

    Finally, we have now completed the inductive step, and therefore the proof of Theorem \ref{T: existence of linked orbits}.

\section{Ind-Definability of Hyperplane Images}

In this section, we use the main theorems of the previous sections (i.e. Theorems \ref{T: intro link existence}, \ref{T: intro link preservation}, and \ref{T: intro hyperplane sweeping}) to conclude that the image of the family of (irreducible components of) hyperplane sections in $X_1$ under $\varphi$ is a countable union of definable families in $X_2$. Precisely, the following is the most convenient formulation of what we will prove:

\begin{notation}
     Let $CH_1\subset X_1^{n+1}$ (for `Co-Hyperplanarity') be the set of tuples $(x_1^1,...,x_{n+1}^`)$ so that $x_1^1,...,x_{n+1}^1$ belong to a common irreducible component of $X_1\cap H$ for some hyperplane $H$. Let $CH_2=\varphi(CH_1)\subset X_2^{n+1}$.
\end{notation}

It follows from the definability of irreducibility in algebraically closed fields (Will Johnson) that $CH_1$ is $K_1$-definable. Now we will show:

\begin{theorem}\label{t: ind-definable} $CH_2$ is the union of countably many $K_2$-definable sets.
\end{theorem}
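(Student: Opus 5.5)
We combine Theorems \ref{T: existence of linked orbits}, \ref{T: preservation} and \ref{L: hyperplane sweeping} to characterize $\varphi(\mathcal H)$ by a countable, definable condition in $K_2$. Let $O_1=H^\emptyset_\emptyset(X_1)$ be the orbit of generic hyperplane sections; it is uniquely $n$-sweeping. It arises as $H^L_{A_1}(X_1)$ for a generic $L$ of codimension $n+1$ (namely $L=\emptyset$); if the induction in Theorem \ref{T: existence of linked orbits} formally requires a nonempty $L$, we adjust it by parallelism. Theorem \ref{T: existence of linked orbits} then gives a countable $A_2$ and a uniquely $n$-sweeping $O_2\in\operatorname{Orb}(A_2)$ linked to $O_1$. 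Let $\mathcal B_1$ be the finite set of $O_1$-bad codimension-1 subvarieties of $X_1$ and $\mathcal B_2$ the finite set of $O_2$-bad ones in $X_2$ (Lemma \ref{L: good cofinite}).

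For a closed irreducible codimension-1 $Y_1$ with $Y_1\notin\mathcal B_1$ and $\varphi(Y_1)\notin\mathcal B_2$, apply Theorem \ref{T: preservation}(1) with all $Z^j_i$ equal to $Y_i$. Combined with Theorem \ref{L: hyperplane sweeping}, this gives $Y_1\in\mathcal H$ if and only if $O_2$ does not $n$-sweep $\varphi(Y_1)$. Hence $\varphi(\mathcal H)$ agrees, up to finitely many members, with the family $\mathcal N_2$ of closed irreducible codimension-1 $Y_2\subset X_2$ not $n$-swept by $O_2$.

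The next step is to show that $\mathcal N_2$ is a countable union of definable families. Every codimension-1 irreducible $Y_2$ is a fiber of one of countably many $\emptyset$-definable families. Fix such a family $\{Y_t\}_{t\in T}$. Whether $O_2$ $n$-sweeps $Y_t$ depends only on $\tp(t/A_2)$, by automorphism invariance together with Lemma \ref{L: sweeping invariant}. By $\omega$-stability there are only countably many such types.

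The main obstacle is that a type is not a definable set. I would try to show that the sweeping condition is constructible in $t$ over $A_2$. Realize $O_2$ as generic fibers of an $A_2$-definable family $\mathcal Y\to T'$, as in the proof of Lemma \ref{L: relativized sweeping is unique}. Then "$O_2$ sweeps $Y_t$" means that the incidence set $\{(a,s): a\in Y_t^n\cap\mathcal Y_s^n,\ s \text{ generic in } T'\}$ dominates $Y_t^n$. By Lemma \ref{L: generic sweeping witness} this is a dimension condition on the incidence variety, and fiber dimensions are definable in ACF. The genericity of $s$ is only over countable data, so I would restrict to a dense open definable piece of $T'$. If this restriction fails, the fallback is to handle each type separately: its realizations form a countable intersection of definable sets, and stationarity lets us pick a definable set on which membership in $\mathcal N_2$ is constant up to a smaller-dimensional part, which is then handled by induction on $\dim(t/A_2)$.

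Having written $\mathcal N_2$ as a countable union of definable families $\{Y_t\}_{t\in D_m}$, we obtain $CH_2$. It is the union over $m$ of the definable sets $\{x\in X_2^{n+1}:\exists t\in D_m,\ x\in Y_t^{n+1}\}$, corrected by the finitely many exceptional members, each of which contributes a definable set $Y^{n+1}$. This uses that $CH_1=\bigcup_{Y\in\mathcal H}Y^{n+1}$, so that $CH_2=\bigcup_{Y\in\varphi(\mathcal H)}Y^{n+1}$.
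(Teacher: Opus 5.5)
You have the paper's overall architecture: link $O_2$ to $O_1=H^\emptyset_\emptyset(X_1)$ by Theorem \ref{T: existence of linked orbits}, transfer non-sweeping via Theorems \ref{T: preservation} and \ref{L: hyperplane sweeping}, absorb finitely many bad members, then take unions of the sets $Y^{n+1}$. However, the step you flag as the main obstacle is left unresolved, and that is where the real content lies; it is the paper's Lemma \ref{L: definability of sweeping}.

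Your first fix replaces ``$s$ generic in $T'$'' by ``$s$ in a dense open $A_2$-definable $T''\subset T'$''. This already gives the wrong answer in the model case. Suppose $Y_t=\mathcal Y_{s_0}$ for some $s_0\in T''$ that is not generic over $A_2t$, e.g.\ a special hyperplane section on the $X_1$ side. Then $\{(a,s_0):a\in Y_t^n\}$ lies in the incidence set, dominates $Y_t^n$, and has dimension $n\dim Y_t$. A genuine generic witness produces a component of dimension $n+n(\dim Y_t-1)$, which is the same number. So neither dominance nor a dimension count separates swept from non-swept members, and the degenerate members you are trying to detect would be declared swept.

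Your fallback does not close the gap either. Knowing that membership in $\mathcal N_2$ depends only on $\tp(t/A_2)$ makes the index set a union of complete types over a countable set. Such unions need not be ind-definable: the elements of $K_2$ transcendental over the prime field form one type over $\emptyset$, yet they are not a countable union of definable sets. Concretely, take a non-swept type $p$ and $\phi\in p$ of minimal dimension. You would still need a definable subset of $\phi$ of full dimension that avoids all swept points of lower-dimensional types. Stationarity gives no such set, and deleting a countable union of definable sets from $\phi$ yields a type-definable set, not an ind-definable one. Induction on $\dim(t/A_2)$ only helps if you already know the swept locus is definable, which is the point at issue.

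The missing idea is to trade the non-definable genericity of $s$ for a definable condition that forces it.
\begin{enumerate}
\item Restrict to $O_2$-good $Y_t$. Goodness is definable in $t$, and by Lemma \ref{L: good cofinite} only finitely many codimension-$1$ subvarieties are bad.
\item Claim: $O_2$ $n$-sweeps $Y_t$ if and only if, for generic $a\in Y_t^n$, there is $s\in T'$ with $a\in\mathcal Y_s^n$ and $\dim(\mathcal Y_s\cap Y_t)<\dim Y_t$.
\item The forward direction is goodness.
\item For the converse, $\dim(sa/A_2t)\le\dim(s/A_2t)+n(\dim Y_t-1)$ while $\dim(a/A_2t)=n\dim Y_t$. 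Hence $\dim(s/A_2t)\ge n=\dim T'$ by Lemma \ref{L: dim of sweeping orbit}, so $s$ is automatically generic.
\item Since $Y_t^n$ is irreducible, the condition is definable in $t$ by definability of dimension in families. This is what produces your countably many definable families $\{Y_t\}_{t\in D_m}$.
\end{enumerate}
One could instead require an irreducible component of the incidence set that projects dominantly onto both $Y_t^n$ and $T'$.

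Finally, $\mathcal N_2$ and $\varphi(\mathcal H)$ differ in both directions, so your correction must also delete finitely many members from the families, not merely add sets $Y^{n+1}$. This is harmless.
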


The rest of this section comprises the proof of Theorem \ref{t: ind-definable}.

\subsection{Definability of Sweeping}

The main point of Theorem \ref{t: ind-definable} (which model theorists will surely recognize) is that being swept by an orbit is `definable in families'. Here we make this precise:

\begin{lemma}\label{L: definability of sweeping}
    Let $O\in\operatorname{Orb}(A)$ be a uniquely $k$-sweeping orbit in $X_2$, and let $\{Z_t:t\in T\}$ be a $K_2$-definable family of closed irreducible subvarieties of $X_2$. Then:
    
    \begin{enumerate}
        \item The set $\{t\in T:Z_t\textrm{ is }$O$\textrm{-good}\}$ is $K_2$-definable.
        \item Assume that each $Z_t$ is $O$-good. Then for all $k$, the set $\{t\in T:O\textrm{ }$k$\textrm{-sweeps }Z_t\}$ is $K_2$-definable.
    \end{enumerate}
\end{lemma}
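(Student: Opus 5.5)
The plan is to realize $O$ concretely as the generic fibers of an $A$-definable algebraic family, and then express both goodness and sweeping as first-order conditions on $t$ via dimension counts. This works because dimension is definable in families in $\operatorname{ACF}$.

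\textbf{Setup.} As in the proof of Lemma \ref{L: relativized sweeping is unique}, fix an irreducible $A$-definable variety $T_O$ and an $A$-definable family $\mathcal Y\subset X_2\times T_O$, whose $A$-generic fibers are exactly the members of $O$. Shrinking $T_O$, all fibers $\mathcal Y_s$ are irreducible of dimension $\dim(X_2)-1$. Enlarge $A$ to also define the family $\{Z_t\}$. By Lemma \ref{L: sweeping invariant}, this changes neither goodness nor sweeping.

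\textbf{Clause (1).} There are two conditions to handle.
\begin{itemize}
\item Containment of $Z_t$ in the singular locus of $X_2$ is definable, since the singular locus is $\emptyset$-definable.
\item $O$-commonness of $Z_t$ says that $Z_t\subset Y$ for $Y\in O_{\operatorname{Cb}(Z_t)}$, i.e.\ $Z_t\subset\mathcal Y_s$ for $s$ generic in $T_O$ over $A\,\operatorname{Cb}(Z_t)$.
\end{itemize}
For the second condition, I would use that the set $\{s\in T_O: Z_t\subset\mathcal Y_s\}$ is definable uniformly in $t$. It contains a generic point of $T_O$ over $A\,\operatorname{Cb}(Z_t)$ if and only if it has dimension $\dim(T_O)$. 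Because $T_O$ is irreducible, "having full dimension" is exactly "containing a generic". Definability of dimension in families then shows that the set of $t$ for which $Z_t$ is $O$-common is definable, and hence so is the set of $O$-good $t$.

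\textbf{Clause (2).} Fix $t$ with $Z_t$ $O$-good, and put
\[
m=\dim(Z_t),\qquad W_t=\{(z,s)\in Z_t^k\times T_O: z\in\mathcal Y_s^k\}.
\]
$O$ $k$-sweeps $Z_t$ exactly when the projection $W_t\to Z_t^k$, restricted to pairs with $s$ generic in $T_O$ over $A$ (equivalently over $At$), has image of full dimension $km$.

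To avoid having to pin down genericity of $s$, I would use the generic witness of Lemma \ref{L: generic sweeping witness}. If $O$ sweeps $Z_t$, that witness gives $\dim(c/A)=k$ by Lemma \ref{L: dim of sweeping orbit}, and $\dim(z/Ac)=k(m-1)$. Hence the component of $W_t$ through that point has dimension $k+k(m-1)=km$, it dominates $T_O$, and it dominates $Z_t^k$. Conversely, suppose some irreducible component of $W_t$ dominates both $Z_t^k$ and $T_O$. Then a generic point $(z,s)$ of that component over $At$ can be chosen with $z$ generic in $Z_t^k$ and $s$ generic in $T_O$; this gives sweeping.

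So the criterion to express is: \emph{there is a definable subset of $W_t$ whose projections to $Z_t^k$ and to $T_O$ both have full dimension, and at whose points these two dominance conditions hold simultaneously.} This is a dimension condition that is uniform in $t$.

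\textbf{Main obstacle.} The delicate step is making the simultaneous dominance condition definable rather than two separate conditions. Two separate full-dimensional projections could come from different components of $W_t$. The first thing I would try is to consider the open set $T_O^{\circ}$ where the fiber dimension of $\mathcal Y_s^k\cap Z_t^k$ is the generic value $k(m-1)$, which is definable in $t$ by semicontinuity and definability of dimension. Restricting $W_t$ to $T_O^{\circ}$, the criterion becomes simply
\[
\dim\bigl(W_t\cap (Z_t^k\times T_O^{\circ})\bigr)=km
\quad\text{together with}\quad \dim\bigl(\pi_{T_O}(\cdot)\bigr)=\dim(T_O).
\]
Goodness is what guarantees that proper containments $\mathcal Y_s\supset Z_t$ occur only on a lower-dimensional set of $s$, so this count is faithful.
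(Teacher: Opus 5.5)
Clause (1) is fine and matches the paper. The gap is in the final step of clause (2): the test you settle on,
\[
\dim\bigl(W_t\cap(Z_t^k\times T_O^\circ)\bigr)=km \quad\text{and}\quad \dim\pi_{T_O}\bigl(W_t\cap(Z_t^k\times T_O^\circ)\bigr)=\dim(T_O),
\]
is not equivalent to $O$ $k$-sweeping $Z_t$. Every fiber over $T_O^\circ$ has dimension exactly $k(m-1)$. So both conditions only say that $T_O^\circ$ is full-dimensional, i.e.\ that a generic member of $O$ cuts $Z_t$ in codimension one. Nothing forces $W_t^\circ\to Z_t^k$ to be dominant, because its fibers can be positive-dimensional.

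Here is a concrete failure. Let $X_2=\mathbb P^2$, let $O$ be the orbit of generic lines ($k=2$), and let $Z_t=L_0$ be a fixed line ($m=1$), which is $O$-good. Then $W_t^\circ=\{(p,p,L):L\neq L_0,\ p=L\cap L_0\}$ has dimension $2=km$ and dominates $T_O$, so your test says ``sweeps''. But its image in $L_0^2$ is the diagonal, and $O$ does not $2$-sweep $L_0$ (Theorem~\ref{L: hyperplane sweeping}). The same failure occurs for any hyperplane in $\mathbb P^n$, which is exactly the case the lemma is later used to detect. Your closing appeal to goodness controls containments $Z_t\subset\mathcal Y_s$, which are not the issue.

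The condition to keep is dominance onto $Z_t^k$; dominance onto $T_O$ then comes for free. Put $T_O^\circ(t)=\{s\in T_O: Z_t\not\subset\mathcal Y_s\}$. Require that the projection of $W_t\cap(Z_t^k\times T_O^\circ)$ to $Z_t^k$ have dimension $km$, i.e.\ that a generic $z\in Z_t^k$ lies in some $\mathcal Y_s^k$ with $\dim(\mathcal Y_s\cap Z_t)<m$. This is definable in $t$, and for such a pair $(z,s)$ with $z$ generic,
\[
\dim(s/At)\ \ge\ \dim(z/At)-\dim(z/Ats)\ \ge\ km-k(m-1)=k=\dim(T_O).
\]
So $s$ is automatically generic and $\mathcal Y_s\in O_{At}$. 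Conversely, sweeping plus non-commonness produces such pairs. This is the paper's argument, and it is where $\dim(O)=k$ (Lemma~\ref{L: dim of sweeping orbit}) genuinely enters.

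Alternatively, your intermediate criterion ``some irreducible component of $W_t$ dominates both $Z_t^k$ and $T_O$'' is correct, even without goodness. It becomes definable in $t$ once you invoke definability of irreducible components in families. As written, however, you replaced it first by a non-first-order ``there is a definable subset'' condition and then by the faulty count.
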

\begin{proof}
    Write $O$ as the generic members of a family $\{Y_u:u\in U\}$, where $U$ is an irreducible variety of dimension $\dim(O)$ and each $Y_u\subset X$ is a closed irreducible subvariety of codimension 1 (this is exactly like Lemma \ref{L: relativized sweeping is unique}). So $\dim(U)=\dim(O)=k$.
    
    \begin{enumerate}
        \item Simply note that $Z_t$ is $O$-good if and only if (a) $Z_t$ is not contained in the singular locus of $X_2$ and (b) for generic $u\in U$, $Z_t$ is not contained in $Y_u$; this is a definable condition in $t$.

        \item Here is the argument is a bit more involved. The key point is:
        \begin{claim}
            For each $t\in T$, the following are equivalent:
            \begin{enumerate}
                \item[(a)] $O$ $k$-sweeps $Z_t$. 
                \item[(b)] For generic $a\in Z_t^k$, there is $u\in U$ so that $a\in Y_u^k$ and $\dim(Y_u\cap Z_t)<\dim(Z_t)$.
            \end{enumerate}
        \end{claim}
        \begin{proof}
        (a)$\rightarrow$(b): any generic witness to $O$ $k$-sweeping $Z_t$ is an instance of (b). (That is, $k$-sweeping gives some $Y_u$ containing $a$, and $O$-goodness gives that in addition $\dim(Y_u\cap Z_t)<\dim(Z_t)$).

        (b)$\rightarrow$(a): Without loss of generality assume $X$ and the families $\{Z_t\}$ and $\{Y_u\}$ are $\emptyset$-definable. Let $a\in Z_t^k$ be $t$-generic, and let $u\in U$ be as in (b). If $u$ is $t$-generic, then $Y_u\in O_t$, and this is enough to prove $k$-sweeping.

        Now we show $u$ is $t$-generic by a dimension computation: we have $$\dim(ua/t)=\dim(u/t)+k\cdot\dim(Y_u\cap Z_t)\leq\dim(u/t)+k\cdot(\dim(Z_t)-1),$$ while $\dim(a/t)=k\cdot\dim(Z_t)$. So $\dim(u/t)\geq k=\dim(U)$, which shows $u$ is generic and thus proves the claim.
        \end{proof}
        Finally, we have now proven (2) of the lemma, because condition (b) of the claim is a definable condition in $t$.
        \end{enumerate}
\end{proof}

\subsection{Countably many families}

Now we use Lemma \ref{L: definability of sweeping} to write the family of hyperplane images as the union of countably many definable families (i.e. we prove Corollary \ref{C: intro ind definable}). This is the culmination of Theorems \ref{T: intro link existence}, \ref{T: intro link preservation}, and \ref{T: intro hyperplane sweeping}:

\begin{proposition}
     Let $\mathcal H$ be the collection of irreducible components of hyperplane sections in $X_1$. Then $\varphi(\mathcal H)$ is the union of countably many definable families in $K_2$.
\end{proposition}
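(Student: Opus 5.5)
The plan is to assemble Theorems \ref{T: existence of linked orbits}, \ref{T: preservation} and \ref{L: hyperplane sweeping}, and then use Lemma \ref{L: definability of sweeping} to turn the resulting sweeping criterion into countably many definable families.

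First, take $O_1=H^\emptyset_\emptyset(X_1)$, the orbit of generic hyperplane sections. This is $H^L_{A_1}(X_1)$ with $L=\emptyset$, of codimension $d=n+1$, so $k=n$. By Theorem \ref{T: existence of linked orbits} there are a countable $A_2$ and a uniquely $n$-sweeping $O_2\in\operatorname{Orb}(A_2)$ linked to $O_1$. Lemma \ref{L: good cofinite} applies on both sides since $\dim O_i=n>0$ by Lemma \ref{L: dim of sweeping orbit}. Hence all but finitely many codimension 1 closed irreducible $Y\subset X_1$ are $O_1$-good with $\varphi(Y)$ $O_2$-good. For such $Y$, Theorem \ref{T: preservation}(1), applied with all $Z^j_i$ equal to $Y$ or $\varphi(Y)$, gives: $O_1$ $n$-sweeps $Y$ iff $O_2$ $n$-sweeps $\varphi(Y)$. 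By Theorem \ref{L: hyperplane sweeping}, up to a finite exceptional set $E$ of codimension 1 subvarieties of $X_2$, we get
\[
\varphi(\mathcal H)=\{W\subset X_2 \text{ closed irreducible of codim } 1: W \text{ is } O_2\text{-good and not } n\text{-swept by } O_2\}.
\]

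Second, realize the right-hand side as a countable union of definable families. Every closed irreducible codimension 1 subvariety of $X_2$ lies in some $K_2$-definable family $\{Z_t:t\in T\}$ of closed irreducible codimension 1 subvarieties. There are countably many such families over the countable set $A_2$: use countably many formulas, and restrict $T$ to the definable set of parameters giving closed irreducible codimension 1 fibers, which is possible by definability of irreducibility and dimension. Every member of such a family has its canonical base in some $\emptyset$-definable family, so these families cover everything. For each family, Lemma \ref{L: definability of sweeping}(1) cuts out the definable subfamily of $O_2$-good members. Lemma \ref{L: definability of sweeping}(2) then cuts out, within it, the definable subfamily of members not $n$-swept by $O_2$. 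The finite set $E$ is handled by adding finitely many singleton families, or removing them, which is definable since each is a single definable set.

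The main obstacle is the bookkeeping in the first step. The finite exceptions must cut both ways: a member of $\mathcal H$ with $\varphi$-image $O_2$-bad, and a non-member whose image is $O_2$-bad. Both are finitely many by Lemma \ref{L: good cofinite}, so they are added back as finitely many individual definable sets. One must also check that the families in the second step can be chosen uniformly definable, which needs Lemma \ref{L: definability of sweeping} applied to families whose parameters are not over $A_2$. Parallelism invariance (Lemma \ref{L: sweeping invariant}) makes "$O_2$ sweeps $Z_t$" meaningful for arbitrary $t$, so this should be fine.
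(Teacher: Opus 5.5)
Your proposal is correct and follows the same route as the paper. It links $O_1=H^\emptyset_\emptyset(X_1)$ to some $O_2$ by Theorem~\ref{T: existence of linked orbits}, and uses Theorems~\ref{L: hyperplane sweeping} and \ref{T: preservation} to identify $\varphi(\mathcal H)$ with the $O_2$-good, non-$n$-swept codimension~1 subvarieties up to finitely many bad pairs. It then cuts these out of countably many definable families via Lemma~\ref{L: definability of sweeping} and patches the finite exceptions by hand. Your allowance for removals as well as additions is slightly more careful than the paper's write-up, which only adds: an $O_2$-good, non-$n$-swept $\varphi(Y_1)$ with $Y_1$ in the singular locus of $X_1$ need not a priori lie in $\varphi(\mathcal H)$. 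Those are the members that need removing, not ``non-members whose image is $O_2$-bad,'' which never enter your set in the first place.
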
\label{P: ctbly many families}
\begin{proof}
  Let $O_1$ be the uniquely $n$-sweeping orbit of generic hyperplane sections in $X_1$ (i.e. $O_1=H_\emptyset^{\emptyset}(X_1)$). By Theorem \ref{T: existence of linked orbits}, there is a uniquely $n$-sweeping orbit $O_2$ in $X_2$ which is linked to $O_1$. 
  
  Let $\mathcal D$ be the collection of closed irreducible subvarieties of $X_2$ which are $O_2$-good and not $k$-swept by $O_2$. 
  \begin{claim}
    $\mathcal D$ is the union of countably many $K_2$-definable families.
    \end{claim}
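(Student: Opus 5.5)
The plan is to cover all closed irreducible subvarieties of $X_2$ by countably many $K_2$-definable families, and then cut each family down using Lemma \ref{L: definability of sweeping}. First, the relevant subvarieties here are the codimension-1 ones, since $\varphi(\mathcal H)$ only contains such varieties. Fix a parameter set $A_2$ over which $O_2$ is defined, and over which $X_2$ and its singular locus are defined.

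For each formula $\psi(x,y)$ in the language of rings (with parameters from $A_2$, or even $\emptyset$), consider the family $\{\psi(K_2,t)\}_t$. Restrict it to those $t$ for which $\psi(K_2,t)$ is a closed irreducible subvariety of $X_2$ of codimension 1. This restriction is definable, because closedness, dimension, and irreducibility are definable in families over algebraically closed fields (the same Johnson fact used to see $CH_1$ is definable). Call the resulting families $\{Z_t:t\in T_\psi\}$. Since the language is countable and $A_2$ is countable, there are only countably many such families. Every closed irreducible codimension-1 subvariety of $X_2$ is defined by some formula with some parameter, so it appears as some $Z_t$. Hence these families jointly cover all candidates.

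Now apply Lemma \ref{L: definability of sweeping} to each family $\{Z_t:t\in T_\psi\}$. By part (1), the subset $T_\psi'\subset T_\psi$ of parameters with $Z_t$ $O_2$-good is definable, so $\{Z_t:t\in T_\psi'\}$ is again a definable family, now consisting of $O_2$-good varieties. By part (2), applied with the sweeping number equal to $n$ (which is the $k$ intended in the definition of $\mathcal D$), the set of $t\in T'_\psi$ with $O_2$ $n$-sweeping $Z_t$ is definable. Its complement $T''_\psi$ in $T'_\psi$ is therefore definable. Then $\mathcal D$ is exactly $\bigcup_\psi\{Z_t:t\in T''_\psi\}$, which is a countable union of $K_2$-definable families, as claimed.

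The only mildly delicate point is the uniform definability of ``closed irreducible of codimension 1 in $X_2$'' in the parameter $t$. Dimension and relative closedness are standard constructible conditions, and irreducibility is the cited Johnson result, so I expect no real obstacle. One should also note that Lemma \ref{L: definability of sweeping} requires the definable family to be over arbitrary parameters in $K_2$, not necessarily over $A_2$. The lemma is stated for any $K_2$-definable family, so this causes no problem.
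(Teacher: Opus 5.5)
Your proposal is correct and is essentially the paper's argument: enumerate the countably many parameter-free definable families of subsets of $X_2$, use Lemma \ref{L: definability of sweeping} to cut each one down to its closed irreducible codimension-1, $O_2$-good members that are not $n$-swept by $O_2$, and take the union. You are also right that the ``$k$'' in the definition of $\mathcal D$ should be read as $n$, and that only codimension-1 members are intended, which is how the paper's proof treats it.
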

    \begin{proof} There are countably many $\emptyset$-definable families $\mathcal F$ of subsets of $X_2$. For each such $\mathcal F$, form a family $\mathcal D_\mathcal F$ consisting of those members of $\mathcal F$ which are closed, irreducible, have codimension 1 in $X_2$, are $O_2$-good, and are not $k$-swept by $O_2$. By Lemma \ref{L: definability of sweeping}, each $\mathcal D_{\mathcal F}$ is a definable family; and moreover $\mathcal D=\bigcup_{\mathcal F}\mathcal D_{\mathcal F}$.
    \end{proof}
    
    Now let $Y_1\subset X_1$ be a closed irreducible codimension 1 subvariety, and let $Y_2=\varphi(Y_1)$. Then if each $Y_i$ is $O_i$-good, we have the chain of equivalences:

  $$Y_2\in\mathcal\varphi(\mathcal H)\iff Y_1\in\mathcal H\iff O_1\textrm{ does not $n$-sweep }Y_1$$ $$\iff O_2\textrm{ does not $n$-sweep }Y_2\iff Y_2\in\mathcal D.$$

  Indeed, the first equivalence above is by definition; the second is by Theorem \ref{L: hyperplane sweeping}; the fourth is Theorem \ref{T: preservation}; and the fifth is by definition of $\mathcal D$.

  So we are \textit{almost} done, because $\mathcal D$ is a countable union of definable families which \textit{almost} equals $\varphi(\mathcal H)$. To finish, we need to account for pairs $(Y_1,Y_2)$ where some $Y_i$ is not $O_i$-good. But Lemma \ref{L: good cofinite}, there are only finitely such pairs, so we can finish by making finitely many automatically definable edits.
  
  Precisely, let $(Y_1^1,Y_2^1),...,(Y_1^j,Y_2^j)$ be an enumeration of the finitely many pairs $(Y_1,Y_2)$ where:
  \begin{itemize}
      \item $Y_i$ is a closed, irreducible, codimension 1 subvariety of $X_i$, and $\varphi(Y_1)=Y_2$.
      \item Either $Y_1$ is not $O_1$-good or $Y_2$ is not $O_2$-good.
      \item $Y_2\in\mathcal\varphi(\mathcal H)$.
  \end{itemize}
    Then combining $\mathcal D$ with the additional finite family $\{Y_2^1,...,Y_2^j\}$ gives a countable union of definable families exactly enumerating $\varphi(\mathcal H)$. This proves Proposition \ref{P: ctbly many families}.
\end{proof}
\subsection{Concluding}

We now prove Theorem \ref{t: ind-definable}:

\begin{proof}[Proof of Theorem \ref{t: ind-definable}] Using Proposition \ref{P: ctbly many families}, let $\mathcal Y_2^1,...,\mathcal Y_2^j,...$ be countably many definable families in $K_2$ whose members comprise exactly the $\varphi$-images of irreducible components of hyperplane sections in $X_1$. Let $CH_2^j$ be the set of $(x_2^1,...,x_x^{n+1})$ belonging to a common member of $\mathcal Y_2^j$. Then each $CH_2^j$ is definable, and $CH_2$ is the union of the $CH_2^j$. 
\end{proof}

\section{Definable Shapes and the Definability of Hyperplane Images}

\begin{convention}
    Throughout this section, we use the term \textit{ind-definable set} to describe a countable union of definable sets (this is standard terminology in model theory).
\end{convention} 

Theorem \ref{t: ind-definable} says that $CH_2$ is ind-definable in $K_2$. The final step of Part 1 is now to upgrade Theorem \ref{t: ind-definable} to say that $CH_2$ is in fact \textit{definable}. We do this by showing (Theorem \ref{T: intro definably shaped} = Corollary \ref{C: ind-definable implies definable}) that out of the collection of all ind-definable sets, we can recognize the definable ones using purely topological means. This is easy to see inside a single curve: suppose $C$ is an irreducible curve over an uncountable algebraically closed field, and $Y\subset C$ is ind-definable. Then $Y$ is either cofinite or countable, and is definable if and only if it is either cofinite or finite.

In general, we want to do something similar, showing that definability is characterized within ind-definability by a sufficiently strong collection of homeomorphism-invariant finiteness conditions. Such conditions will axiomatize what we call the `definably shaped' sets. Then our results will really show that an ind-definable set is definable if and only if it is definably shaped.

\subsection{The setup}

Throughout this section, we work uniformly with all quasi-projective varieties $X$ over uncountable algebraically closed fields $K$. We will inductively define the following for each $n\geq 1$ (uniformly for all $K$ and $X$): 

\begin{itemize}
    \item A collection $\mathcal{DS}_n(X)$ of `definably shaped' subsets of $X^n$.
    \item A partial order $(\mathcal{DS}_n,<)$ of `definable $n$-shapes' (independent of $K$ and $X$).
    \item A map $\operatorname{Sh}:\mathcal{DS}_n(X)\rightarrow\mathcal{DS}_n$, assigning each definably shaped set to its `shape'.
\end{itemize}

These notions will inductively satisfy the following requirements:

\begin{enumerate}
    \item \textbf{(Definability)} If $D\subset X^n$ is definable, then $D\in\mathcal{DS}_n(X)$. If $\{D_t:t\in T\}$ is an $A$-definable family of subsets of $X^n$, then $\{\operatorname{Sh}(D_t):t\in T\}$ is finite and $\{t\in T:\operatorname{Sh}(D_t)=s\}$ is $A$-definable for each $s\in\mathcal{DS}_n$.
    \item \textbf{(Topological Invariance)} Let $\varphi:X_1\rightarrow X_2$ be a homeomorphism, and $D\subset X_1^n$. If $D\in\mathcal{DS}_n(X_1)$, then $\varphi(D)\in\mathcal{DS}_n(X_2)$ and $\operatorname{Sh}(\varphi(D))=\operatorname{Sh}(D)$ (thus the same holds with $X_1$ and $X_2$ flipped as well).
    \item \textbf{(Order Invariance)} Let $D,E\in\mathcal{DS}_n(X)$ with $D\subset E$. Then $\operatorname{Sh}(D)\leq\operatorname{Sh}(E)$, with equality if and only if $D=E$.
    \item \textbf{(Chain Condition)} Let $D_1\subset D_2\subset D_3...$ with each $D_i\in\mathcal{DS}_n(X)$. If $D:=\bigcup_iD_i\in\mathcal{DS}_n(X)$, then $D=D_i$ for some $i$. 
\end{enumerate}

\begin{remark}\label{R: finitely many shapes automatic}
    In proving the definability condition above, we may ignore the requirement that $\{\operatorname{Sh}(D_t):t\in T\}$ is finite, since it follows from the other clauses. Indeed, if we show that each $T_s=\{t\in T:\operatorname{Sh}(D_t)=s\}$ is $A$-definable, then $\{T_s:s\in S\}$ forms a partition of $X$ into $A$-definable sets, and by compactness, this must be a finite partition.
\end{remark}

As immediate corollaries, we will obtain:

\begin{corollary}\label{C: ind-definable and shaped implies definable}
    Let $D\subset X^n$ be ind-definable. Then $D$ is definable if and only if it is definably shaped.
\end{corollary}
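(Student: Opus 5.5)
One direction is immediate from the Definability requirement: if $D\subset X^n$ is definable, then requirement (1) gives $D\in\mathcal{DS}_n(X)$. So the content of Corollary \ref{C: ind-definable and shaped implies definable} is the converse. Assume $D$ is ind-definable and definably shaped. We will show that $D$ is definable using only the Definability, Order Invariance and Chain Condition requirements. Topological invariance plays no role here; it is only needed afterwards, to transport the conclusion across $\varphi$.

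Write $D=\bigcup_{i\geq 1}E_i$ with each $E_i$ definable. Replace these by the partial unions $D_j:=E_1\cup\dots\cup E_j$. Each $D_j$ is a finite union of definable sets, so it is definable. Hence $D_j\in\mathcal{DS}_n(X)$ by requirement (1). We now have an increasing chain
\[
D_1\subset D_2\subset D_3\subset\cdots
\]
of definably shaped sets whose union is $D$. Since $D$ is assumed definably shaped, the Chain Condition (4) applies directly and gives $D=D_j$ for some $j$. Therefore $D$ is definable.

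There is essentially no obstacle here: the corollary is a formal consequence of requirements (1) and (4). The only point to check is that the chain condition needs the approximating sets to be nested, and this is why we pass to finite partial unions. All the real work lies in constructing $\mathcal{DS}_n$ and $\operatorname{Sh}$ so that (1)--(4) hold. The chain condition is presumably derived from Order Invariance together with well-foundedness of $(\mathcal{DS}_n,<)$, or some similar finiteness property of shapes; that is where I expect the difficulty in the section to lie, not in this corollary.
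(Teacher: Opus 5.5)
Your proof is correct and matches the paper's argument exactly: the forward direction is requirement (1), and for the converse you pass to finite partial unions to get a nested chain of definable (hence definably shaped) sets, then apply the chain condition (4). One correction to your closing speculation, which is not needed for the proof: the chain condition does not come from well-foundedness of shapes (in $\mathbb Z[x]$ one already has $1<2<3<\cdots$). It comes from saturation and compactness in the base case, with order invariance used in the inductive step.
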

\begin{proof}
    The left to right direction is the definability condition (1) above. Now suppose $D$ is definably shaped. Write $D=\bigcup E_i$ with each $E_i$ definable. Replacing each $E_i$ with $\bigcup_{j\leq i}E_j$, we may assume the $E_i$ form a chain, i.e. $E_1\subset E_2\subset...$. By definability, each $E_i\in\mathcal{DS}_n(X)$. Moreover, we have $\bigcup_iE_i=D\in\mathcal{DS}_n(X)$. So by the chain condition, it follows that $D=E_i$ for some $i$, and thus $D$ is definable since $E_i$ is. 
\end{proof}

\begin{corollary}\label{C: ind-definable implies definable}
    Let $\varphi:X_1\rightarrow X_2$ be a homeomorphism of quasi-projective varieties $X_i$ over uncountable algebraically closed fields $K_i$. If $D_1\subset X_1^n$ is definable, and $D_2:=\varphi(D_1)$ is ind-definable, then $D_2$ is definable.
\end{corollary}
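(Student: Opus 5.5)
The proof is a short combination of the properties (1)--(4) that definable shapes are required to satisfy, together with Corollary \ref{C: ind-definable and shaped implies definable}. I take the inductive construction of $\mathcal{DS}_n(X)$, $(\mathcal{DS}_n,<)$ and $\operatorname{Sh}$ as given, with the Definability, Topological Invariance, Order Invariance and Chain Condition requirements in force for all $n$, all uncountable algebraically closed $K$ and all quasi-projective $X$. The corollary is then a formal consequence of these requirements; the real work lies in constructing the shapes.

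First, $D_1\subset X_1^n$ is definable, so the Definability requirement (1), applied in $X_1$ over $K_1$, gives $D_1\in\mathcal{DS}_n(X_1)$. Next, $\varphi:X_1\rightarrow X_2$ is a homeomorphism, so Topological Invariance (2) gives $D_2=\varphi(D_1)\in\mathcal{DS}_n(X_2)$ (and moreover $\operatorname{Sh}(D_2)=\operatorname{Sh}(D_1)$, though this is not needed). Finally, $D_2$ is ind-definable in $K_2$ by hypothesis and definably shaped by the previous step. Corollary \ref{C: ind-definable and shaped implies definable}, applied to $X_2$ over $K_2$, therefore shows that $D_2$ is definable.

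To unwind that last step directly, I would write $D_2=\bigcup_i E_i$ as an increasing chain of $K_2$-definable sets. Each $E_i$ lies in $\mathcal{DS}_n(X_2)$ by (1), and the union $D_2$ is also in $\mathcal{DS}_n(X_2)$. The Chain Condition (4) then forces $D_2=E_i$ for some $i$.

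No step in this deduction is an obstacle. The genuine difficulty, deferred to the construction of definable shapes later in the section, is building $\operatorname{Sh}$ so that Topological Invariance and the Chain Condition hold simultaneously. Topological Invariance requires shapes to be recognizable purely from the Zariski topology on $X$, for instance through the closure, irreducible components and dimensions of fibres. The Chain Condition requires these data to detect an increasing union that is strictly larger than every member of the chain. For that construction I would start from the curve picture described at the beginning of this section, where a definable subset of a curve is finite or cofinite. I would then induct on $n$ using fibres of coordinate projections, recording the generic fibre shape together with the finitely many exceptional shapes.
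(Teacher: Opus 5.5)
Your proposal is correct and is the paper's argument: Definability puts $D_1$ in $\mathcal{DS}_n(X_1)$, and Topological Invariance transfers this to $D_2$. Corollary \ref{C: ind-definable and shaped implies definable} then finishes the proof, and your chain-condition unwinding of that last step matches how the paper proves it.
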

\begin{proof}
    By definability, $D_1$ is definably shaped. So by topological invariance, $D_2$ is definably shaped. Now apply Corollary \ref{C: ind-definable and shaped implies definable}.
\end{proof}

\begin{remark}
    The proofs of the above corollaries did not directly use the order invariance property. Instead, order invariance will be the most crucial property used in the proof of the chain condition. 
\end{remark}

The rest of this section, then, will be a construction of $\mathcal{DS}_n$, $\mathcal{DS}_n(X)$, and $\operatorname{Sh
}$ which satisfy (1)-(4) above.

\textbf{For the rest of this section, fix $X$, a quasi-projective variety over the uncountable algebraically closed field $K$.}

\subsection{1-Shapes}

We begin with the case of definable subsets of $X$. First we give an auxilliary notion. Throughout, we use closure notation $D\mapsto\overline D$ to denote the relative closure in $X$.

\begin{definition}
    Let $D\subset X$ be definable. We let $D^P$ (the \textit{pure part} of $D$) be the union of all $\dim(D)$-dimensional irreducible components of $\overline D$.
\end{definition}

So $D^P$ is not necessarily equal to $D$, but it is \textit{almost equal} in the sense that $\dim(D^P\Delta D)<\dim(D)$.

\begin{definition}\label{D: 1 shape} We define $\mathcal{DS}_1$ and $\mathcal{DS}_1(X)$ as follows:
    \begin{enumerate}
        \item A \textit{definable 1-shape} is a polynomial in one variable with integer coefficients. That is, $\mathcal{DS}_1=\mathbb Z[x]$.
        \item We order $\mathbb Z[x]$ by declaring that $x$ is infinite (i.e. $x>n$ for all $n$; note that this determines a unique ordered ring structure on $\mathbb Z[x]$).
        \item We let $\mathcal{DS}_1(X)$ be the collection of all definable subsets of $X$.
        \item For $X\in\mathcal{DS}_1(X)$, we define $\operatorname{Sh}(X)\in\mathbb Z[x]$ by induction on $\dim(X)$ as follows:
        \begin{enumerate}
            \item If $D=D^P$, then $$\operatorname{Sh}(D)=m\cdot x^{\dim(D)},$$ where $m$ is the number of irreducible components of $D$.
            \item For general $D\in\mathcal{DS}_1(X)$, we set $$\operatorname{Sh}(D)=\operatorname{Sh}(D^P)+\operatorname{Sh}(D-D^P)-\operatorname{Sh}(D^P-D).$$ (This is a definition by induction, since $D-D^P$ and $D^P-D$ have smaller dimension than $D$).
        \end{enumerate}
    \end{enumerate}
\end{definition}

\textbf{Note, in particular, that $\mathcal{DS}_1$ is a \textit{linear} order.} (The other $\mathcal{DS}_n$ will not be linear, but we will use the linearity of $\mathcal{DS}_1$ to construct them). Let us now check that all of the above data satisfies (1)-(4):

\begin{lemma}
    Properties (1)-(4) hold for $n=1$.
\end{lemma}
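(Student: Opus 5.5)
The plan is to first establish a structural description of $\operatorname{Sh}$, then verify (1)--(4) one at a time, each by induction on dimension. The key intermediate claim is a leading-term description of $\operatorname{Sh}$:

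\emph{Leading-term claim.} For nonempty definable $D\subset X$ with $d=\dim(D)$, we have
\[
\operatorname{Sh}(D)=m_D\, x^{d}+(\text{terms of degree}<d),
\]
where $m_D\geq 1$ is the number of $d$-dimensional irreducible components of $\overline D$.

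This follows by induction on $d$ from Definition \ref{D: 1 shape}(b). Indeed, $\operatorname{Sh}(D^P)=m_Dx^d$, while $D-D^P$ and $D^P-D$ have dimension $<d$, so by induction their shapes have degree $<d$. (I would take the empty set to have shape $0$.)

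\textbf{(1) Definability.} Every definable set is in $\mathcal{DS}_1(X)$ by definition. For an $A$-definable family $\{D_t:t\in T\}$, I would use that the following are $A$-definable in $t$: dimension, relative closure, and the irreducible components of top dimension together with their number. The latter is definability of irreducibility in families, the same input used for $CH_1$. Hence the families $\{D_t^P\}$, $\{D_t-D_t^P\}$ and $\{D_t^P-D_t\}$ are again $A$-definable, and the last two have fibers of smaller dimension. Induction on the maximal dimension of the fibers then shows each level set $\{t:\operatorname{Sh}(D_t)=s\}$ is $A$-definable. By Remark \ref{R: finitely many shapes automatic}, finiteness of the set of shapes is automatic.

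\textbf{(2) Topological invariance.} Since definable subsets of $X$ are exactly the constructible ones (finite Boolean combinations of closed sets), a homeomorphism $\varphi$ sends definable sets to definable sets. It also commutes with relative closure and sends irreducible components to irreducible components. It preserves dimension, since the dimension of an irreducible closed set is the length of a maximal chain of irreducible closed subsets. So $\varphi(D^P)=\varphi(D)^P$, and $\varphi$ commutes with the differences appearing in Definition \ref{D: 1 shape}(b). Induction on $\dim D$ then gives $\operatorname{Sh}(\varphi(D))=\operatorname{Sh}(D)$.

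\textbf{(3) Order invariance.} This is the main step. I would prove it by induction on $\dim(E)$; let $D\subset E$.
\begin{itemize}
\item If $\dim D<\dim E$, the leading-term claim gives $\operatorname{Sh}(D)<\operatorname{Sh}(E)$, because $x$ is infinite and $m_E\geq 1$.
\item If $\dim D=\dim E=d$, every $d$-dimensional component of $\overline D$ is a component of $\overline E$, so $m_D\leq m_E$. If $m_D<m_E$ we are again done by comparing leading terms.
\item Otherwise $m_D=m_E$, which forces $D^P=E^P$. Then
\[
\operatorname{Sh}(E)-\operatorname{Sh}(D)=\bigl[\operatorname{Sh}(E-E^P)-\operatorname{Sh}(D-D^P)\bigr]+\bigl[\operatorname{Sh}(E^P-D)-\operatorname{Sh}(E^P-E)\bigr].
\]
Here $D-D^P\subset E-E^P$ and $E^P-E\subset E^P-D$, and all four sets have dimension $<d$.
\end{itemize}
In the last case the inductive hypothesis shows both brackets are $\geq 0$. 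Equality overall therefore forces $D-D^P=E-E^P$ and $E^P-E=E^P-D$. Since $D^P=E^P$, these two equalities together give $D=E$. The bookkeeping here (the two inclusions and the empty-set conventions) is where I expect care is needed.

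\textbf{(4) Chain condition.} I do not expect to use the order at all here; since $\mathbb Z[x]$ is not well-ordered, a pure order argument would be awkward anyway. Instead I would use that $K$ is uncountable, hence $\aleph_1$-saturated. If $D=\bigcup_iD_i$ with $D$ and all $D_i$ definable and the $D_i$ increasing, then the countable partial type expressing "$x\in D$ and $x\notin D_i$ for all $i$" is finitely satisfiable if the chain never stabilizes. Saturation would then realize it, contradicting $D=\bigcup_i D_i$. Hence $D=D_i$ for some $i$.
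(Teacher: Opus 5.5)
Your proposal is correct and matches the paper's proof essentially step for step. Order invariance uses the same three-case analysis: first compare degrees, then compare the number of top-dimensional components, and once $D^P=E^P$ apply induction to $D-D^P\subset E-E^P$ and $E^P-E\subset D^P-D$. Definability and topological invariance rest on the same inductive and purely topological observations, and the chain condition is the same saturation argument, with no use of the order.
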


\begin{proof}
    \begin{itemize}
    \item Definability: by definition, every definable subset of $X$ belongs to $\mathcal{DS}_1(X)$. Now let $\{D_t:t\in T\}$ be an $A$-definable family of subsets of $X$, and set $d=\max\{\dim(D_t):t\in T\}$. It follows by induction on $d$ that only finitely many shapes occur among the $D_t$, and each happens along an $A$-definable subset of $T$. To see this, one repeatedly uses that dimension, closure, and the formation of irreducible components are definable in families in algebraically closed fields (see Johnson's appendix to \cite{JohIrr}). This gives that the families $\{D_t^P\}$, $\{D_t-D_t^P\}$, and $\{D_t^P-D_t\}$ are $A$-definable, and by induction, each satisfies definability (for the first this is definability of irreducible components, and for the second and third it is because those families have a smaller value of $d$). Then one can express the shape map $t\mapsto\operatorname{Sh}(D_t)$ $A$-definably in terms of the shape maps of these three simpler families.
    \item Topological Invariance: everything in Definition \ref{D: 1 shape} is purely topological, so is clearly homeomorphism invariant.
    \item Order invariance: this is again a straightforward exercise. Note by the inductive construction of 1-shapes that $\operatorname{Sh}(X)$ is always a polynomial of degree $\dim(X)$ whose leading term is $\operatorname{Sh}(D^P)$ (in particular, the leading term is always non-negative). Now suppose $D\subset E\subset X$ are definable. If $\dim(D)<\dim(E)$ then $\deg(\operatorname{Sh}(D))<\deg(\operatorname{Sh}(E))$, so as both have non-negative leading terms, the choice of order gives $\operatorname{Sh}(D)<\operatorname{Sh}(E)$. Thus in this case, everything is clear.
    
    Now suppose $\dim(D)=\dim(E)$. Then $D^P\subset E^P$, so every component of $D^P$ is a component of $E^P$. If $E^p$ has more components than $D^P$, then $\operatorname{Sh}(E)$ again has a strictly larger leading term, so $\operatorname{Sh}(D)<\operatorname{Sh}(E)$ and we are again done.
    
    Now assume $D^P=E^P$. Then since $D\subset E$ we have $D-D^P\subset E-E^P$ and $D^P-D\supset E^P-E$, so that by induction we have $\operatorname{Sh}(D-D^P)\leq\operatorname{Sh}(E-E^P)$ and $\operatorname{Sh}(D^P-D)\geq\operatorname{Sh}(E^P-E)$. Adding everything up now gives $\operatorname{Sh}(D)\leq\operatorname{Sh}(E)$. Moreover, if equality holds then we must have $\operatorname{Sh}(D-D^P)=\operatorname{Sh}(E-E^P)$, so by induction $D-D^P=E-E^P$; and by a similar argument, $D^P-D=E^P-E$. These together with $D^P=E^P$ imply $D=E$.
    \item This follows by the compactness theorem (since $K$ is uncountable and therefore saturated). Indeed, if $D\neq D_i$ for any $i$ then there is a consistent partial type $p(x)$ expressing that $x$ belongs to $D$ but not to any $D_i$. By saturation, $p$ has a realization, contradicting that $D=\bigcup_iD_i$. 
\end{itemize}
\end{proof}

\subsection{Higher Arity Shapes} Now we assume we have defined $\mathcal{DS}_n$, $\mathcal{DS}_n(X)$, and $\operatorname{Sh}:\mathcal{DS}_n(X)\rightarrow\mathcal{DS}_n$ for some fixed $n\geq 1$ so that properties (1)-(4) hold. We proceed to do the same for $n+1$.

\begin{definition} We define $\mathcal{DS}_{n+1}$:
\begin{enumerate}
    \item A \textit{definable $(n+1)$-shape} is a function $f:\mathcal{DS}_1\rightarrow\mathcal{DS}_n$. The set of such maps is denoted $\mathcal{DS}_{n+1}$.
    \item We turn $\mathcal{DS}_{n+1}$ into a partial order in the obvious way: $f\leq g$ if for all $s\in\mathcal{DS}_n$ we have $f(s)\leq g(s)$.
\end{enumerate}
\end{definition}

\begin{definition}\label{D: inductive shaped} Let $D\subset X^{n+1}$, and view $X^{n+1}$ as $X^n\times X$. For $a\in X^n$, write $D_a:=\{b\in X:(a,b)\in D\}$.
    \begin{enumerate}
        \item We say that $D$ is \textit{definably shaped} if the following hold:
        \begin{enumerate}
            \item $D_a\in\mathcal{DS}_1(X)$ for all $a\in X^n$.
            \item The set $\{\operatorname{Sh}(D_a):a\in X^n\}$ is finite.
            \item For each $s\in\mathcal{DS}_1$ we have $$D_{\geq s}:=\{a\in X^n:\operatorname{Sh}(D_a)\geq s\}\in\mathcal{DS}_n(X).$$
            \end{enumerate}
        \item Let $\mathcal{DS}_{n+1}(X)$ be the set of definably shaped subsets of $X^{n+1}$.
        \item If $D\in\mathcal{DS}_{n+1}(X)$, then define $\operatorname{Sh}(D)\in\mathcal{DS}_{n+1}$ as the map sending each $s\in\mathcal{DS}_1$ to $\operatorname{Sh}(D_{\geq s})\in\mathcal{DS}_n$.
\end{enumerate}

\end{definition}

From the definition, it is clear that the topological invariance property (2) still holds for $n+1$ (i.e. everything defined above is topological). The rest of this section will prove properties (1), (3), and (4) for $n+1$. The proofs are a bit more complicated, so we give them separate subsections.

\subsection{Definability}

\begin{lemma}\label{L: definably shaped definable case inductive 1}
    Let $D\subset X^{n+1}$ be definable. Then $D\in\mathcal{DS}_{n+1}(X)$.
\end{lemma}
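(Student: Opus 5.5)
We verify the three clauses (a)--(c) of Definition \ref{D: inductive shaped} directly, using the $n=1$ case of property (1) for the fibers and the inductive hypothesis (property (1) for arity $n$) for the sets $D_{\geq s}$. Since (1) for $n=1$ is already established, we may use freely that every definable subset of $X$ is in $\mathcal{DS}_1(X)$, and that in any definable family of subsets of $X$ only finitely many $1$-shapes occur, each along a definable set of parameters.

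For clause (a), each fiber $D_a\subset X$ is definable (over $A$ together with $a$, where $D$ is $A$-definable), so $D_a\in\mathcal{DS}_1(X)$ by definability for $n=1$. For clause (b), we view $\{D_a:a\in X^n\}$ as an $A$-definable family of subsets of $X$ parametrized by $T=X^n$. Definability for $n=1$ then says that $\{\operatorname{Sh}(D_a):a\in X^n\}$ is finite, say equal to $\{s_1,\dots,s_m\}$. It also says that each $T_{s_j}=\{a:\operatorname{Sh}(D_a)=s_j\}$ is $A$-definable. Remark \ref{R: finitely many shapes automatic} already records that finiteness follows from the definability of the level sets.

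For clause (c), fix $s\in\mathcal{DS}_1$. By definition,
\[
D_{\geq s}=\bigcup\{T_{s_j}: s_j\geq s\},
\]
which is a finite union of $A$-definable sets, hence definable. Here we use that $\mathcal{DS}_1$ is linearly ordered, although a partial order would suffice. Every $a$ lies in some $T_{s_j}$, so the union covers exactly the $a$ with $\operatorname{Sh}(D_a)\geq s$. The inductive hypothesis (property (1) for $n$) then gives $D_{\geq s}\in\mathcal{DS}_n(X)$. Since $\{s_j\}$ is finite, only finitely many distinct sets $D_{\geq s}$ arise as $s$ ranges over $\mathcal{DS}_1$. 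So $\operatorname{Sh}(D)$ is a well-defined map $\mathcal{DS}_1\to\mathcal{DS}_n$, and $D\in\mathcal{DS}_{n+1}(X)$.

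There is no serious obstacle here. The only care needed is to treat the fibers uniformly, as a single definable family over the parameters of $D$, rather than pointwise; this is what lets us invoke the family version of (1) for $n=1$ instead of only its pointwise version. The genuinely harder part, showing that $\{t:\operatorname{Sh}(D_t)=f\}$ is definable for families in arity $n+1$, is the separate family clause and is not required by this lemma.
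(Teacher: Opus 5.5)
Your proof is correct and follows the paper's argument essentially verbatim. The fibers $D_a$ are definable and hence lie in $\mathcal{DS}_1(X)$; the base case, applied to the definable family of fibers, gives finitely many shapes with definable level sets; and each $D_{\geq s}$ is a finite union of these level sets, so it is definable and lies in $\mathcal{DS}_n(X)$ by induction. You also correctly place the fibers in $\mathcal{DS}_1(X)$ in clause (a), where the paper's text has a slip and writes $\mathcal{DS}_n(X)$.
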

\begin{proof}
    We check the three parts of the definition:
    \begin{enumerate}
        \item[(a)] Since $D$ is definable, each fiber $D_a\subset X$ is definable. So by the inductive hypothesis, each $D_a\in\mathcal{DS}_n(X)$.
        \item[(b)] The family of fibers $\{D_a:a\in X^n\}$ is a definable family of subsets of $X$, so by the base case, the set $\{\operatorname{Sh}(D_a):a\in X^n\}$ is finite.
        \item[(c)] Similar to (b). Namely, again using the family of fibers $\{D_a:a\in X^n\}$, the base case gives that each $D_s=\{a:\operatorname{Sh}(D_a)=s\}$ is definable. But by (b), each $D_{\geq s}$ is just the union of finitely many $D_s$, so is also definable. Thus each $D_{\geq s}$ belongs to $\mathcal{DS}_n(X)$ by the inductive hypothesis.
        \end{enumerate}
\end{proof}

\begin{lemma}\label{L: definably shaped definable inductive 2}
    Let $\{D_t:t\in T\}$ be an $A$-definable family of subsets of $X^{n+1}$. Then:
    \begin{enumerate}
        \item $\{\operatorname{Sh}(D_t):t\in T\}$ is finite.
        \item $\{t\in T:\operatorname{Sh}(D_t)=f\}$ is $A$-definable for all $f\in\mathcal{DS}_{n+1}$.
    \end{enumerate}
\end{lemma}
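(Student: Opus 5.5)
By Remark \ref{R: finitely many shapes automatic}, it suffices to prove (2). Finiteness (1) then follows by compactness once we know the sets $T_f=\{t\in T:\operatorname{Sh}(D_t)=f\}$ form an $A$-definable partition of $T$. The plan is to reduce everything to the definability property (1) for $n$ (inductive hypothesis) and for $n=1$ (base case), applied to auxiliary definable families built from the $D_t$.

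First, I would consider the combined family of fibers $\{(D_t)_a : (t,a)\in T\times X^n\}$. This is an $A$-definable family of subsets of $X$, parametrized by $T\times X^n$. By the base case, only finitely many 1-shapes $s_1,\dots,s_m$ occur among these fibers. Moreover, each set $\{(t,a):\operatorname{Sh}((D_t)_a)=s_j\}$ is $A$-definable, and hence so is each set $\{(t,a):\operatorname{Sh}((D_t)_a)\geq s\}$, being a finite union of such sets. Note that this set is exactly $\{(t,a): a\in (D_t)_{\geq s}\}$. So for each fixed $s\in\mathcal{DS}_1$, the family $\{(D_t)_{\geq s}:t\in T\}$ is an $A$-definable family of subsets of $X^n$. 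As $s$ varies, only finitely many distinct such families arise: $(D_t)_{\geq s}$ depends only on which $s_j$ satisfy $s_j\geq s$, and $\mathcal{DS}_1$ is linearly ordered, so there are at most $m+1$ cases.

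Next, I would apply the inductive hypothesis (1) for $n$ to each of these finitely many families. For each relevant $s$ and each $g\in\mathcal{DS}_n$, the set $\{t:\operatorname{Sh}((D_t)_{\geq s})=g\}$ is $A$-definable. Now fix $f\in\mathcal{DS}_{n+1}$. By definition, $\operatorname{Sh}(D_t)=f$ if and only if $\operatorname{Sh}((D_t)_{\geq s})=f(s)$ for all $s\in\mathcal{DS}_1$. The main obstacle is that this is an infinite conjunction over $s\in\mathcal{DS}_1$. To handle it, I would choose representatives $r_0,\dots,r_m$, one from each of the finitely many intervals into which $s_1,\dots,s_m$ cut the linear order $\mathcal{DS}_1$; concretely, these are the $s_j$ themselves together with one element above all of them. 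The map $s\mapsto (D_t)_{\geq s}$ is then constant on each interval, uniformly in $t$.

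For $T_f$ to be nonempty, $f$ must be constant on each of these intervals, and in that case $T_f$ is the finite intersection over $i$ of the sets $\{t:\operatorname{Sh}((D_t)_{\geq r_i})=f(r_i)\}$. This is $A$-definable. If $f$ is not constant on some interval, then $T_f=\emptyset$, which is trivially definable. This proves (2), and (1) follows as noted above. One small check is needed: the definable-shapedness of each $D_t$, required for $\operatorname{Sh}(D_t)$ to make sense, is already supplied by Lemma \ref{L: definably shaped definable case inductive 1}.
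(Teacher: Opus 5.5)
Your proof is correct and follows the paper's argument: you apply the base case to the combined fiber family, obtain $A$-definable families $\{(D_t)_{\geq s}:t\in T\}$, invoke the inductive hypothesis, and use the linear order on $\mathcal{DS}_1$ to reduce the conjunction over all $s$ to finitely many representatives. The only difference is cosmetic. The paper fixes a definable $Y$ of shape $f$ and adds the shapes of its fibers to the finite set $\mathcal S$, so that $f=\operatorname{Sh}(Y)$ is automatically constant on the intervals. You instead observe directly that $T_f=\emptyset$ unless $f$ is constant on the intervals cut out by $s_1,\dots,s_m$.
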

\begin{proof}
    As noted in Remark \ref{R: finitely many shapes automatic}, it is enough to prove (2). So fix $f\in\mathcal{DS}_{n+1}$, and let $T_f=\{t\in T:\operatorname{Sh}(D_t)=f\}$; we will show that $T_f$ is $A$-definable. First, it is possible that there are no definable sets at all of shape $f$. In this case, $T_f=\emptyset$, which is $A$-definable. So for the rest of the proof, we assume $f$ is realized by at least one definable set, and fix such a set $Y\subset X^{n+1}$ (note that we do not know whether $Y$ is definable over $A$, but we will not need this).
    
    First, the family of fibers $\{(D_t)_a:t\in T,a\in X^n\}$ is an $A$-definable family of subsets of $X$. By the base case, it follows that only finitely many 1-shapes occur among the $(D_t)_a$. Similarly, by Lemma \ref{L: definably shaped definable case inductive 1}, only finitely many 1-shapes occur among the fibers $Y_a$ for $a\in X^n$. Let us therefore fix a finite set $\mathcal S\subset\mathcal{DS}_1$ containing $\operatorname{Sh}((D_t)_a)$ and $\operatorname{Sh}(Y_a)$ for all $t\in T$ and $a\in X^n$.
    
    For each $s\in\mathcal S$, the base case gives that $$(T\times X^n)_s=\{(t,a)\in T\times X^n:\operatorname{Sh}((D_t)_a)=s\}$$ is $A$-definable. It follows that $$(T\times M^n)_{\geq s}=\{(t,a):\operatorname{Sh}((D_t)_a)\geq s\}$$ is also $A$-definable for $s\in\mathcal S$, because $(T\times X^n)_{\geq s}$ is just the union of $(T\times M^n)_{s'}$ for the finitely many $s'\in\mathcal S$ with $s'\geq s$.
    
    Recall (see Definition \ref{D: inductive shaped}) that given $s\in\mathcal{DS}_1$ and a definable set $D\subset X^{n+1}$ we set $D_{\geq s}=\{a\in M^n:\operatorname{Sh}(D_a)\geq s\}$. Now for a given $s\in\mathcal{DS}_1$, the previous paragraph shows that the family $\{(D_t)_{\geq s}:t\in T\}$ is an $A$-definable family of subsets of $X^n$ (because $(D_t)_{\geq s}$ is just the fiber over $t$ in $(T\times X^n)_{\geq s}$). So by the inductive hypothesis, for any $s\in\mathcal S$ and $g\in\mathcal{DS}_n$, the set $$T_{s,g}=\{t\in T:\operatorname{Sh}((D_t)_{\geq s})=g\}$$ is $A$-definable.
    
    Now recalling that our given $(n+1)$-shape $f$ is a function from $\mathcal{DS}_1$ to $\mathcal{DS}_n$, we let $T_{\mathcal S,f}$ be the intersection of $T_{s,f(s)}$ over all $s\in\mathcal S$. By above, each $T_{s,f(s)}$ is $A$-definable; and since $\mathcal S$ is finite, so is $T_{\mathcal S,f}$.

    Finally, we claim that $T_{\mathcal S,f}=T_f$, which will finish the proof. It is clear that $T_f\subset T_{\mathcal S,f}$ (i.e. $T_{\mathcal S,f}$ is defined by a subset of the requirements needed to have shape $f$).

    Conversely, suppose $t\in T_{\mathcal S,f}$. We need to show that $\operatorname{Sh}(D_t)=f$. For this, fix any $s\in\mathcal{DS}_1$. We want to show that $$\operatorname{Sh}((D_t)_{\geq s})=f(s)=\operatorname{Sh}(Y_{\geq s}).$$

    If $s>s'$ for all $s'\in\mathcal S$, then by definition of $\mathcal S$, both sides of the above are empty, and we are done. Otherwise, since $\mathcal{DS}_1$ is a linear order (and $\mathcal S$ is finite), there is a smallest element of $S$ greater than or equal to $s$. Call this element $s_0\in\mathcal S$. Then by definition of $\mathcal S$, we have $(D_t)_{\geq s}=(D_t)_{\geq s_0}$ and $Y_{\geq s}=Y_{\geq s_0}$. On the other hand, since $t\in T_{\mathcal S,f}$, we in particular have $t\in T_{s_0,f(s_0)}$, and thus $\operatorname{Sh}((D_t)_{\geq s_0})=f(s_0)=\operatorname{Sh}(Y_{\geq s_0})$. So putting everything together gives $$\operatorname{Sh}((D_t)_{\geq s})=\operatorname{Sh}((D_t)_{\geq s_0})=f(s_0)=\operatorname{Sh}(Y_{\geq s_0})=\operatorname{Sh}(Y_{\geq s}),$$ and we are done.
\end{proof}

\subsection{Order Invariance}

\begin{lemma}
    Let $D,E\in\mathcal{DS}_{n+1}(X)$ with $D\subset E$. Then $\operatorname{Sh}(D)\leq\operatorname{Sh}(E)$, with equality if and only if $D=E$.
\end{lemma}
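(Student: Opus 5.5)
The plan is to reduce to the inductive hypothesis (order invariance for $\mathcal{DS}_n$) together with the base case (order invariance for $\mathcal{DS}_1$), comparing the two shape functions one value $s\in\mathcal{DS}_1$ at a time.

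\textbf{Step 1: the inequality.} Suppose $D\subset E$ are both in $\mathcal{DS}_{n+1}(X)$. For every $a\in X^n$ we have $D_a\subset E_a$, and both fibers lie in $\mathcal{DS}_1(X)$. By order invariance for $n=1$, $\operatorname{Sh}(D_a)\leq\operatorname{Sh}(E_a)$. So for each $s\in\mathcal{DS}_1$, if $\operatorname{Sh}(D_a)\geq s$ then $\operatorname{Sh}(E_a)\geq s$, which gives
\[
D_{\geq s}\subset E_{\geq s}.
\]
Both of these sets belong to $\mathcal{DS}_n(X)$ by clause (c) of Definition \ref{D: inductive shaped}. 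Order invariance for $n$ then gives $\operatorname{Sh}(D_{\geq s})\leq\operatorname{Sh}(E_{\geq s})$. Since this holds for all $s$, the pointwise order yields $\operatorname{Sh}(D)\leq\operatorname{Sh}(E)$.

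\textbf{Step 2: the equality case.} If $D=E$, equality of shapes is trivial. Conversely, suppose $\operatorname{Sh}(D)=\operatorname{Sh}(E)$. Then for each $s$ we have $\operatorname{Sh}(D_{\geq s})=\operatorname{Sh}(E_{\geq s})$ together with $D_{\geq s}\subset E_{\geq s}$, so the equality clause at level $n$ gives $D_{\geq s}=E_{\geq s}$ for every $s\in\mathcal{DS}_1$.

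We now recover fiberwise shapes. Fix $a\in X^n$ and set $s=\operatorname{Sh}(E_a)$. Then $a\in E_{\geq s}=D_{\geq s}$, so $\operatorname{Sh}(D_a)\geq s$. Combined with Step 1 this gives $\operatorname{Sh}(D_a)=\operatorname{Sh}(E_a)$. Since $D_a\subset E_a$, the equality clause of base-case order invariance gives $D_a=E_a$. As $a$ was arbitrary, $D=E$.

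\textbf{Main obstacle.} The only delicate point is the last step, where we extract fiberwise equality from the thresholds $D_{\geq s}$. This works because we test at exactly $s=\operatorname{Sh}(E_a)$, and we need no linearity beyond the pointwise inequality already established in Step 1. Everything else is direct bookkeeping from Definition \ref{D: inductive shaped} and the two inductive hypotheses.
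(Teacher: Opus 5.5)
Your proof is correct and is essentially the paper's argument. You obtain $D_{\geq s}\subset E_{\geq s}$ fiberwise from base-case order invariance and apply the level-$n$ hypothesis for the inequality; in the equality case you test at $s=\operatorname{Sh}(E_a)$ to force $\operatorname{Sh}(D_a)=\operatorname{Sh}(E_a)$ and hence $D_a=E_a$, exactly as the paper does (the only cosmetic difference is that you record $D_{\geq s}=E_{\geq s}$ for all $s$ up front rather than only at the relevant $s$ for each point of $E$).
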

\begin{proof}
    To show $\operatorname{Sh}(D)\leq\operatorname{Sh}(E)$, let $s\in\mathcal{DS}_1$. We need to show that $\operatorname{Sh}(D_{\geq s})\leq\operatorname{Sh}(E_{\geq s})$. In fact we will show that $D_{\geq s}\subset E_{\geq s}$, from which we conclude by induction. Now to see $D_{\geq s}\subset E_{\geq s}$, fix $a\in D_{\geq s}$. So $\operatorname{Sh}(D_a)\geq s$. Now by assumption $D_a\subset E_a$, so by the base case $\operatorname{Sh}(E_a)\geq\operatorname{Sh}(D_a)\geq s$, and thus $a\in E_{\geq s}$ as desired.

    Now assume $\operatorname{Sh}(D)=\operatorname{Sh}(E)$. Let $(a,b)\in E$, and let $s=\operatorname{Sh}(E_a)$. As in the previous paragraph, we have $D_{\geq s}\subset E_{\geq s}$. Moreover, by the construction of $\operatorname{Sh}$, we know that $\operatorname{Sh}(D_{\geq s})=\operatorname{Sh}(E_{\geq s})$. It follows by the inductive hypothesis that $D_{\geq s}=E_{\geq s}$. But trivially $a\in E_{\geq s}$, so also $a\in D_{\geq s}$, and thus $\operatorname{Sh}(D_a)\geq\operatorname{Sh}(E_a)$. But cleary $D_a\subset E_a$, so by the base case we conclude $D_a=E_a$, and thus $(a,b)\in D$ as desired.
\end{proof}

\subsection{Chain Condition}

\begin{lemma}
    Let $D_1\subset D_2\subset...$ so that each $D_i\in\mathcal{DS}_{n+1}(X)$. If $D:=\cup_iD_i\in\mathcal{DS}_{n+1}(X)$, then $D=D_i$ for some $i$.
\end{lemma}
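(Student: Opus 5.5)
The plan is to reduce the chain condition in arity $n+1$ to the chain condition in arity $n$ (and the base case $n=1$), applied to the sets $(D_i)_{\geq s}$ and to the fibers. Order invariance in arity $n+1$ has already been proved above, and it will supply the monotonicity we need.

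First we reduce to finitely many relevant 1-shapes. Since $D\in\mathcal{DS}_{n+1}(X)$, the set $\mathcal S=\{\operatorname{Sh}(D_a):a\in X^n\}$ is finite. Fix $s\in\mathcal S$. For each $i$ we have $D_i\subset D_{i+1}\subset D$. By the argument in the first paragraph of the order invariance proof, this gives
\[
(D_i)_{\geq s}\subset (D_{i+1})_{\geq s}\subset D_{\geq s},
\]
and all of these sets lie in $\mathcal{DS}_n(X)$. The key claim is that $D_{\geq s}=\bigcup_i (D_i)_{\geq s}$. One inclusion is clear. For the other, take $a\in D_{\geq s}$. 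The fibers satisfy $(D_i)_a\subset(D_{i+1})_a$, each lies in $\mathcal{DS}_1(X)$, and $\bigcup_i (D_i)_a=D_a\in\mathcal{DS}_1(X)$. The base-case chain condition then gives $(D_i)_a=D_a$ for some $i$, so $\operatorname{Sh}((D_i)_a)=\operatorname{Sh}(D_a)\geq s$ and $a\in (D_i)_{\geq s}$. The inductive chain condition in arity $n$ now yields an index $i_s$ with $(D_{i_s})_{\geq s}=D_{\geq s}$, and by monotonicity the same equality holds for all $i\geq i_s$.

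Next take $N=\max_{s\in\mathcal S} i_s$. I claim $\operatorname{Sh}(D_N)=\operatorname{Sh}(D)$. For $s\in\mathcal S$ this holds by construction. For $s\notin\mathcal S$, either $s$ exceeds every element of $\mathcal S$, or there is a least $s_0\in\mathcal S$ with $s_0\geq s$, using the linearity of $\mathcal{DS}_1$ exactly as in Lemma \ref{L: definably shaped definable inductive 2}.
\begin{itemize}
\item In the first case $D_{\geq s}=\emptyset$, and hence $(D_N)_{\geq s}=\emptyset$ as well.
\item In the second case $D_{\geq s}=D_{\geq s_0}=(D_N)_{\geq s_0}$. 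We also have $(D_N)_{\geq s_0}\subset(D_N)_{\geq s}\subset D_{\geq s}$, so all of these sets are equal.
\end{itemize}
Thus $(D_N)_{\geq s}=D_{\geq s}$ for every $s$, so the shapes of $D_N$ and $D$ agree. Since $D_N\subset D$, order invariance (just proved) gives $D_N=D$.

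The main subtlety is the middle step: the equality $D_{\geq s}=\bigcup_i(D_i)_{\geq s}$ is needed before the inductive chain condition can be applied. It depends on applying the base-case chain condition fiberwise, which uses that the fibers of each $D_i$ and of $D$ are definably shaped. Apart from that, everything is monotonicity bookkeeping. The one remaining point is handling the infinitely many $s\notin\mathcal S$ through the finite set $\mathcal S$.
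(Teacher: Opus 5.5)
Your proposal is correct and is essentially the paper's proof. You use the same key claim $D_{\geq s}=\bigcup_i(D_i)_{\geq s}$, obtained from the fiberwise base-case chain condition, and then apply the arity-$n$ chain condition at the finitely many $s\in\mathcal S$.

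The only difference is the last step. The paper argues pointwise: for $(a,b)\in D$, $a\in(D_N)_{\geq \operatorname{Sh}(D_a)}$ forces $(D_N)_a=D_a$ by base-case order invariance. This avoids your extra treatment of $s\notin\mathcal S$ and the appeal to arity-$(n+1)$ order invariance, though your version is also valid.
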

\begin{proof}
    Consider any $s\in\mathcal{DS}_1$. Note that we have a chain $(D_1)_{\geq s}\subset(D_2)_{\geq s}\subset...$, (this is by order invariance in the base case).
    \begin{claim}
        For any $s\in\mathcal{DS}_1$, we have the following:
        \begin{enumerate}
            \item $$\bigcup_i(D_i)_{\geq s}=D_{\geq s}.$$
            \item In particular, $(D_i)_{\geq s}=D_{\geq s}$ for some $i$.
        \end{enumerate}
    \end{claim}
    \begin{proof}
        By assumption each $(D_i)_{\geq s}$ and $D_{\geq s}$ is definably shaped; thus (1) implies (2) by the inductive hypothesis. Below we prove (1).
        
        Clearly each $(D_i)_{\geq s}\subset D_{\geq s}$ (by order invariance in the base case). Now suppose $a\in D_{\geq s}$. Note that $(D_1)_a\subset(D_2)_a\subset...$ forms a chain whose union is $D_a$. By definition all of these sets belong to $\mathcal{DS}_1(X)$, so by the base case we have $(D_i)_a=D_a$ for some $i$. Then $a\in(D_i)_{\geq s}$.
\end{proof} 
Now by assumption there are only finitely many shapes of the form $s(a):=\operatorname{Sh}(D_a)$ for $a\in X^n$. Let $i$ be so large that $(D_i)_{\geq s(a)}=D_{\geq s(a)}$ for all $a\in X^n$. We claim that $D_i=D$. Indeed, suppose $(a,b)\in D$. By the choice of $i$, $(D_i)_{\geq s(a)}=D_{\geq s(a)}$. But trivially $a\in D_{\geq s(a)}$, so also $a\in(D_i)_{\geq s(a)}$. In other words, we have $\operatorname{Sh}((D_i)_a)\geq\operatorname{Sh}(D_a)$. By order invariance in the base case, this implies $(D_i)_a=D_a$, so in particular $(a,b)\in D_i$. 
\end{proof}

\subsection{Conclusion}

Now that we have constructed all of the relevant data, we are ready to upgrade Theorem \ref{t: ind-definable}. \textbf{We now return to the setting of Convention \ref{Convention homeomorphism}}. As before, we let $CH_1$ be the (definable) set of $(n+1)$-tuples from $X_1$ belonging to a common irreducible component of some hyperplane section in $X_1$, and we let $CH_2=\varphi(CH_1)$. Then we finally have the main theorem of Part 1:

\begin{theorem}\label{T: definability of hyperplane images}
$CH_2$ is definable in $K_2$.
\end{theorem}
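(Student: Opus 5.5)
The plan is to apply Corollary \ref{C: ind-definable implies definable} to the homeomorphism $\varphi:X_1\rightarrow X_2$ with $n+1$ in place of $n$, $D_1=CH_1$ and $D_2=CH_2$. That corollary has two hypotheses, and the proof consists of checking them.

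First, $CH_1$ must be definable in $K_1$. This was noted at the start of Section 7. The hyperplane sections $X_1\cap H$ form a definable family indexed by the dual projective space. Irreducible components and their dimensions are uniformly definable in families, by the appendix of Johnson to \cite{JohIrr}. So ``$x_1,\dots,x_{n+1}$ lie on a common irreducible component of some $X_1\cap H$'' is a first-order condition in the field language, and $CH_1$ is $K_1$-definable.

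Second, $CH_2=\varphi(CH_1)$ must be ind-definable in $K_2$. This is exactly Theorem \ref{t: ind-definable}, which combined Theorems \ref{T: existence of linked orbits}, \ref{T: preservation}, \ref{L: hyperplane sweeping} and Lemma \ref{L: definability of sweeping} to write $CH_2$ as a countable union of the $K_2$-definable sets $CH_2^j$.

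With both hypotheses in hand, the corollary gives the conclusion directly. Topological invariance transfers definable shapedness from $CH_1$ to $CH_2$. Then Corollary \ref{C: ind-definable and shaped implies definable}, via the chain condition, shows that an ind-definable, definably shaped set is definable. Hence $CH_2$ is definable in $K_2$.

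The only point needing care is that the homeomorphism $\varphi$ acts coordinatewise on $X_1^{n+1}$. Topological invariance was formulated for exactly that induced map, and the definition of $\mathcal{DS}_{n+1}$ depends only on fibers and topological data. So no new obstacle arises here; the real work was already done in Theorem \ref{t: ind-definable} and in the construction of definable shapes.
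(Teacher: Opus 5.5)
Your proposal is correct and follows essentially the paper's proof: you take $CH_1$ as definable and $CH_2$ as ind-definable by Theorem \ref{t: ind-definable}, and then apply Corollary \ref{C: ind-definable implies definable} with $n+1$ in place of $n$. Your added justification for the definability of $CH_1$ and your remark that $\varphi$ acts coordinatewise are accurate elaborations of points the paper uses implicitly.
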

\begin{proof}
    By Theorem \ref{t: ind-definable}, $CH_2$ is a countable union of definable sets. So by Corollary \ref{C: ind-definable implies definable}, since $CH_2$ is the image of the definable set $CH_1$, it follows that $CH_2$ is definable.
\end{proof}

This concludes Part 1 of the paper.

\part{The Zilber Trichotomy}

\section{Full Relics}

This section contains the main model-theoretic content of the paper. Our goal is to apply an appropriate version of \textit{Zilber's trichotomy}, which serves as an abstract analog of the fundamental theorem of projective geometry (and replaces the use of the corresponding theorem in \cite{KLOS}). In more precise terms, we will apply Zilber's trichotomy for \textit{ACF-relics} (see Definition \ref{D: relic}), and the related \textit{isomorphism theorem for full relics}, which we discuss more below. The idea is as follows: suppose we are in the setting of Convention \ref{Convention homeomorphism}, so we have a fixed homeomorphism $\varphi:X_1\rightarrow X_2$ where $X_i$ is over $K_i$. In this context, Theorem \ref{T: definability of hyperplane images} lets us view $\varphi$ as an isomorphism of certain $K_i$-relics. Then in the present section, we give a model-theoretic analysis of the $K_i$-relics in question, showing that they are full. Finally, in the next section, we apply the isomorphism theorem for full relics to obtain strong information about $\varphi$.

\textbf{Throughout this section, fix $K$, an uncountable algebraically closed field, and $X$, an irreducible quasi-projective variety over $K$ of dimension at least 2, embedded as a non-degenerate subvariety of $\mathbb P^n(K)$. We also fix a countable algebraically closed subfield $K_0\leq K$ over which $X$ is defined.}

\subsection{Defining the Hyperplane Relic}

\begin{definition}\label{D: hyp relic}
    We define the \textit{hyperplane relic on $X$}, denoted $\mathcal X^{hyp}$, to be the following $K$-relic:

    \begin{itemize}
        \item The universe of $\mathcal X^{hyp}$ is $X$.
        \item The language consists of:
        \begin{enumerate}
            \item A unary predicate for each $K_0$-definable subset of $X$.
            \item An $(n+1)$-ary relation $CH$, interpreted as $(x_1,...,x_{n+1})\in CH$ if and only if $x_1,...,x_{n+1}$ belong to a common irreducible component of some hyperplane section in $X$.
        \end{enumerate}
    \end{itemize}
\end{definition}

 Then $\mathcal X^{hyp}$ is a $K$-relic (see Definition \ref{D: relic}). This just says that all basic relations of $\mathcal X^{hyp}$ are $K$-definable (in particular, we use here that irreducibility of Zariski closed sets is definable in families --- see Johnson's appendix to \cite{JohIrr}). 

 Recall that a $K$-relic $\mathcal M=(M,...)$ is \textit{full} if every $K$-definable subset of every $M^n$ is definable in $\mathcal M$ (again, see Definition \ref{D: relic}). In this language, our main goal is to prove:

\begin{theorem}\label{T: full relic}
    $\mathcal X^{hyp}$ is a full $K$-relic.
\end{theorem}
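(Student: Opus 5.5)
The plan is to invoke the Zilber trichotomy for ACF-relics from \cite{CasHasYe}, which says that a non-locally-modular relic interprets (after naming parameters) a field definably isomorphic to $K$. Fullness then follows from an internality argument: $X$ becomes internal to that field, and the reconstructed field structure recovers all constructible sets. The proof has three steps.

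\textbf{Step 1: non-local modularity.} I will show that $\mathcal X^{hyp}$ is not locally modular, working over a parameter set from $K_0$ together with finitely many generic points. The natural witness is the family of hyperplane components. Fix generic independent $x_1,\dots,x_{n-1}\in X$. The relation $CH$ with these fixed gives a family $\{CH(x_1,\dots,x_{n-1},y,-)\}_y$ of codimension 1 subvarieties. Iterating, after fixing $n-2$ points, we obtain a family of curves of the form $H\cap X$ restricted to a generic surface section $S$. This family has rank at least $2$: two generic points of $S$ determine a unique member, by the sweeping analysis of Theorem \ref{L: hyperplane sweeping} applied within the fixed linear span. A $2$-dimensional family of curves on a $2$-dimensional set, with the member through two generic points unique, contradicts local modularity. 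In stability-theoretic terms, the canonical base of a curve through generic $a,b$ has rank $2$ but is not in $\acl(a)\cup\acl(b)$ in the relic. I must check that the relic dimension of $X$ equals $\dim X$ and that these dimension computations transfer to the relic. Since relic rank is bounded by Zariski dimension, I will show the lower bound using the unary predicates together with $CH$.

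\textbf{Step 2: apply the trichotomy and get internality.} The trichotomy yields a $\mathcal X^{hyp}$-interpretable field $F$ which is $K$-definably isomorphic to $K$, together with a $K$-definable finite-to-one map from a generic piece of $X$ to $F^{\hat d}$. I will upgrade this to full internality using transitivity of the hyperplane structure. Projecting from $L$ realizes generic hyperplane coordinates, and $CH$ moves points of $X$ to points of $F^m$ definably in the relic. Standard results then give that every $K$-constructible subset of $X^m$ is relic-definable up to finite data; the relevant statement is \cite[Section 4]{CasHasVA}, that an ACF-relic internal to a full interpretable field is full. I will then handle the non-generic parts by induction on dimension. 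The $K_0$-definable unary predicates and the $CH$-definable hyperplane components (which include subvarieties through arbitrary points) let me cut $X$ into pieces on each of which the internality map is defined.

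\textbf{Main obstacle.} I expect the hardest step to be extending generic internality to \emph{every} constructible subset of every $X^m$, including those concentrated on special subvarieties such as the singular locus. For this I will first try the criterion that a relic is full once every point of $X$ lies in $\operatorname{dcl}$ of a generic tuple in the relic's structure over $F$, plus a uniform bound. Every point lies on hyperplane sections through generic points, and points can be recovered as intersections of $n$ independent generic hyperplane components. This should give definable coordinatization at every point, not only generic ones.
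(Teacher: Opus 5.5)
Your overall architecture matches the paper's: non-1-basedness from the hyperplane family, then an interpreted field $F$, then fullness via internality of $X$ to $F$. Step 1 is essentially the paper's argument. You do not need to compute relic ranks there. If the relic were 1-based, $\operatorname{Cb}_{\mathcal X}(Y)$ would lie in $\acl_{\mathcal X}(b)$ for a generic point $b$ of a generic hyperplane curve $Y$. Then $Y$ would be $K$-definable over $\acl_K(K_0b)$, which a dimension count in $K$ rules out.

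\textbf{The gap is Step 2.} The trichotomy only produces an interpretable field $F$. It does not produce an $\mathcal X^{hyp}$-definable finite-to-one map from (a generic piece of) $X$ to $F^{\hat d}$. A $K$-definable such map says nothing about internality in the relic. In general a relic can interpret a field without its universe being almost internal to that field, so an argument specific to $CH$ is required. Once Fact \ref{F: trichotomy}(2) is available, proving that $X$ is $\mathcal X^{hyp}$-internal to $F$ is the whole theorem. Phrases like ``$CH$ moves points of $X$ to points of $F^m$'' supply no mechanism linking $X$ to $F$ inside the relic.

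Three steps are missing.
\begin{enumerate}
\item \emph{$X$ is internal to every non-degenerate $K_0$-definable irreducible $Z\subset X$.} By Theorem \ref{L: hyperplane sweeping}, the orbit of generic hyperplane sections uniquely $n$-sweeps $Z$. So each such section is the $CH$-fiber over a generic $n$-tuple from its trace on $Z$, hence is definable over $Z$. Only now does your intersection idea apply: any $x\notin\acl_K(K_0)$ lies on a $K_0$-generic hyperplane, so $x$ is the unique common point of $n$ independent generic sections through it. The countably many points of $X\cap\acl_K(K_0)$ go into the parameter set. Without this first half, intersecting hyperplane sections shows internality to nothing.
\item \emph{Almost internality to $F$.} Take $Z=C$ a curve. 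Then $F$ is interpretable in a structure internal to the strongly minimal set $C$, so $C$ is non-orthogonal to $F$ and hence almost internal to it. Therefore $X$ is almost internal to $F$, via some finite-to-one $f:X\to W$ with $W$ internal to $F$.
\item \emph{Removing the ``almost''.} The paper finds a non-degenerate $Z$ on which $f$ is injective. It does this with a Veronese re-embedding into $\mathbb P^m$, $m>n$, and a $K_0$-generic hyperplane section there. Then $X$ is internal to $Z$, which is internal to $F$.
\end{enumerate}

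Once $X$ is internal to $F$, fullness is immediate from Fact \ref{F: trichotomy}(2). So your planned induction over constructible subsets, and your worry about the singular locus, are aimed at the wrong obstacle.
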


\begin{convention}\label{Con: notation} For the rest of the section, we use subscripts $_K$ and $_{\mathcal X}$ to denote various model-theoretic notions in the two structures. For example, $\dim_{K}$ means dimension in the sense of algebraically closed fields, while $\dim_{\mathcal X}$ means dimension in the sense of $\mathcal X^{hyp}$. Similarly, we use $\acl_{K}$ and $\acl_{\mathcal X}$, and so on.
\end{convention}

Note that all basic relations of $\mathcal X^{hyp}$ are definable in $K$ over $K_0$ (we may express this as `$\mathcal X^{hyp}$ is defined over $K_0$'). In particular, the reader can easily check:

\begin{fact}\label{F: relic acl}
    \begin{enumerate}
        \item Let $Y\subset(\mathcal X^{hyp})^{eq}$. If $Y$ is $A$-definable in $\mathcal X^{hyp}$, then $Y$ is also $K_0A$-definable in $K$.
        \item Let $a\in(\mathcal X^{hyp})^{eq}$ and $B\subset(\mathcal X^{hyp})^{eq}$. If $a\in\acl_{\mathcal X}(B)$, then $a\in\acl_{K}(K_0B)$ (but a priori not necessarily the other way around).
        \item $\mathcal X^{hyp}$ is saturated.
    \end{enumerate}
\end{fact}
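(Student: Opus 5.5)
The plan is to prove all three clauses by a single translation: every formula of the language of $\mathcal X^{hyp}$ becomes a formula of the field language with parameters from $K_0$. Clause (1) is this translation applied to definable sets, and (2) and (3) follow from it quickly.

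For (1), start with the home sort $X$ and argue by induction on the complexity of formulas $\psi(x,b)$ in the relic language, with $b$ a tuple from $X$. Each basic relation is $K_0$-definable in $K$:
\begin{itemize}
\item the unary predicates are by definition $K_0$-definable subsets of $X$;
\item $CH$ is $K_0$-definable because $X$ is defined over $K_0$ and irreducibility of Zariski closed sets is definable in families (Johnson's appendix to \cite{JohIrr}).
\end{itemize}
Boolean combinations clearly preserve $K_0b$-definability in $K$. A quantifier $\exists y\in X$ becomes a field quantifier relativized to the $K_0$-definable set $X$, so it preserves it as well. Hence every subset of $X^m$ that is $b$-definable in $\mathcal X^{hyp}$ is $K_0b$-definable in $K$.

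To pass to $(\mathcal X^{hyp})^{eq}$, note that each sort is a quotient $Z/E$ with $Z,E$ $\emptyset$-definable in $\mathcal X^{hyp}$, hence $K_0$-definable in $K$. So this sort is canonically a $K_0$-interpretable set of $K$, and an imaginary $e=[z]_E$ of the relic is identified with the corresponding imaginary of $K^{eq}$. A formula of $(\mathcal X^{hyp})^{eq}$ with imaginary parameters $A$ can be rewritten on representatives: choose real representatives of the parameters, translate as above, and observe that the resulting field-definable set is invariant under $\operatorname{Aut}(K/K_0A)$. In a stable structure with elimination of imaginaries, such as $K$, this invariance gives $K_0A$-definability. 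Equivalently, one can directly code the parameter classes in $K^{eq}$, where $E$-invariance of the translated formula gives $K_0A$-definability. This bookkeeping with imaginaries is the only mildly delicate step, and I expect it to be the main, though routine, obstacle.

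For (2), suppose $a\in\acl_{\mathcal X}(B)$. Then $a$ lies in a finite set $F$ that is $B$-definable in $(\mathcal X^{hyp})^{eq}$. By (1), $F$ is $K_0B$-definable in $K^{eq}$, and it is still finite, so $a\in\acl_K(K_0B)$. The converse cannot be expected in general, since the relic has a priori fewer definable sets.

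For (3), note that the language of $\mathcal X^{hyp}$ is countable, because $K_0$ is countable and so there are only countably many $K_0$-definable subsets of $X$. Also $|X|=|K|$, and $K$ is saturated, being an uncountable algebraically closed field. Let $p$ be a consistent type of $\mathcal X^{hyp}$ over a set $B$ with $|B|<|K|$. Translating each formula via (1) gives a collection of $K$-formulas over $K_0B$, a set of size less than $|K|$. This collection is finitely satisfiable in $K$: every finite subset of $p$ is realized in $\mathcal X^{hyp}$, which has the same points. By saturation of $K$, some point realizes all of these formulas. It lies in $X$ (or in the relevant sort), since membership in $X$ is among the translated formulas, and it therefore realizes $p$ in $\mathcal X^{hyp}$.
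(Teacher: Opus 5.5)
Your proposal is correct and takes the same approach as the paper, whose entire justification is that every basic relation of $\mathcal X^{hyp}$ is $K_0$-definable in $K$; you fill in the routine translation argument for clauses (1)--(3) properly. One small correction: the step from $\operatorname{Aut}(K/K_0A)$-invariance to $K_0A$-definability rests on the saturation and strong homogeneity of $K$ over the countable set $K_0A$, not on stability or elimination of imaginaries, and your direct alternative (quantifying over representatives of the parameter classes inside $K^{eq}$) avoids this step entirely.
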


\subsection{Model-theoretic definitions and facts}

We now state the model-theoretic machinery that we will use -- namely, the Zilber trichotomy and the associated characterization of fullness. We need two definitions:

\begin{definition}\label{D: 1-based}
    Let $\mathcal M$ be a saturated $\omega$-stable structure (for example, $\mathcal M$ could be $\mathcal X^{hyp}$). Then $\mathcal M$ is \textit{1-based} if for any stationary $A$-interpretable set $X$ in $\mathcal M$, the canonical base $\operatorname{Cb}(X)$ is algebraic over some (equivalently any) single $A$-generic element of $X$. 
\end{definition}

\begin{definition}\footnote{These are not the most standard definitions of internality and almost internality, but they are well-known equivalents which are more suited to our needs.}
    Let $\mathcal M$ be any structure, and let $Y$ and $Z$ be definable in $\mathcal M^{eq}$. 
    \begin{enumerate}
        \item $Y$ is \textit{internal to $Z$} if it embeds definably into $Z^{eq}$ -- that is, if there are a definable equivalence relation $E$ on some $Z^n$, and a definable injection $Y\rightarrow Z^n/E$.
        \item $Y$ is \textit{almost internal} to $Z$ if there is a finite-to-one definable map $Y\rightarrow W$ for some definable set $W$ which is internal to $Z$.
    \end{enumerate}
\end{definition}

Note that internality and almost internality are transitive notions in the obvious way -- we will use this without mention.

\begin{fact}[Zilber Trichotomy]\label{F: trichotomy} Let $\mathcal M$ be a $K$-relic.
\begin{enumerate}
    \item $\mathcal M$ interprets an infinite field if and only if it is not 1-based; in this case, any such interpreted infinite field is $K$-definably isomorphic to $K$.
    \item Suppose $\mathcal M$ interprets a field $F\cong K$. Then $\mathcal M$ is full if and only if $M$ is internal to $F$ (as a definable set in $\mathcal M$).
\end{enumerate}
\end{fact}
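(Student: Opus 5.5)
This statement is cited from the literature rather than proved in the paper: it is the Zilber trichotomy for ACF-relics of \cite{CasHasYe}, combined with standard facts about interpretable fields in algebraically closed fields. If I had to reprove it, I would split it into its two clauses. The non-1-basedness direction of (1) is the deep one, and I expect it to be the main obstacle. Everything else is routine transfer between $\mathcal M$ and $K$.

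For (1), the easy direction is that interpreting an infinite field $F$ rules out 1-basedness. Consider the family of lines $\{y=ax+b\}$ in $F^2$. This is a two-dimensional family of stationary sets, so the canonical base $(a,b)$ of a generic line has dimension $2$ over the parameters. A single generic point of such a line determines only a one-dimensional subfamily, so $(a,b)$ is not algebraic over that point, and $\mathcal M$ is not 1-based. For the final assertion of (1), note that an infinite field interpretable in $\mathcal M$ is interpretable in $K$. By Poizat's theorem, and since $K$ eliminates imaginaries, any infinite field interpretable in an algebraically closed field $K$ is $K$-definably isomorphic to $K$.

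The hard converse is that a non-1-based $K$-relic interprets a field. This is the main theorem of \cite{CasHasYe}, and I would simply quote it. The outline of that proof runs as follows. From non-1-basedness, extract a two-dimensional $\mathcal M$-definable family of plane curves in some $M^2$. Then use the ambient $K$-geometry, via tangency and detection of ramification and multiplicity intersections, to recover a one-dimensional group configuration. Apply Hrushovski's group configuration theorem to obtain an interpretable group. Finally, build a field by finding a second group acting on it, using the non-1-based family.

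For (2), suppose first that $\mathcal M$ is full. The universe $M\subset\mathbb P^n(K)$ is $K$-definably internal to $K$, by splitting into affine charts and taking homogeneous coordinates modulo scaling. Compose this with the $K$-definable isomorphism $K\to F$ from (1). The result is a $K$-definable injection of $M$ into some $F^N/E$. Its graph is a $K$-definable subset of a sort in $\mathcal M^{eq}$, built from $M$-powers. By fullness, and using quantifier elimination to note that such sets in $M$-powers are constructible, this graph is $\mathcal M$-definable, so $M$ is internal to $F$.

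Conversely, suppose $g:M\to F^N/E$ is $\mathcal M$-definable and injective. Any $K$-definable $D\subset M^m$ maps under $g$ to a $K$-definable subset of $(F^N/E)^m$, and its preimage in $F^{Nm}$ is $K$-definable. Transport this set through the $K$-definable field isomorphism $F\cong K$. The transported set is then definable in the pure field $F$ with parameters from $F$. Since the field operations on $F$ are $\mathcal M$-definable, the set is $\mathcal M$-definable, and pulling back along $g$ shows that $D$ is $\mathcal M$-definable. Hence $\mathcal M$ is full.
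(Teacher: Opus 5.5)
Your proposal is correct. For clause (1) you rely on the same sources as the paper: \cite{CasHasYe} for the hard direction and Poizat's theorem (Theorem 4.15 of \cite{PoiGroups}) for the $K$-definable isomorphism $F\cong K$. Your lines-in-$F^2$ argument for the easy direction is the standard one; the paper simply includes both directions in its citation of \cite{CasHasYe}.

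The genuine difference is clause (2). The paper just quotes Proposition 4.12 of \cite{CasHasVA}, whereas you prove it directly by transport of structure. Your argument shows that (2) needs nothing beyond Poizat's theorem and elimination of imaginaries in algebraically closed fields. Because $F$ is $K$-definably isomorphic to $K$ as a field, the structure $K$ induces on $F$ is just the field structure, which $\mathcal M$ already sees. An $\mathcal M$-definable injection $M\to F^N/E$ therefore carries every constructible subset of $M^m$ into $\mathcal M$. Conversely, fullness makes the evident $K$-definable injection $M\hookrightarrow K^{N}\cong F^N$ definable in $\mathcal M$.

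One detail in the forward direction should be made explicit. Fullness is a statement about subsets of powers of $M$, so you must first pull the graph of $M\to F^N$ back along the quotient $D^N\to F^N$, where $F=D/E'$ with $D\subset M^k$, and only then invoke fullness. You gesture at this, and it goes through.

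What each route buys: the paper's citation is brief and places the statement inside the framework of \cite{CasHasVA}, which the paper reuses in Section 13 (very ampleness, non-orthogonality versus internality for definable fields). Your version makes clause (2) self-contained, modulo the two classical results and the main theorem of \cite{CasHasYe}.
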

\begin{proof}
    The first clause of (1) is the main result of \cite{CasHasYe} (Corollary 11.6). The second clause is Theorem 4.15 of \cite{PoiGroups}. Then (2) is Proposition 4.12 of \cite{CasHasVA}.
\end{proof}

So the remainder of this section will set up applications of Fact \ref{F: trichotomy}: first we show $\mathcal X^{hyp}$ is not 1-based; then we get a field $F$ as above, and we proceed to show that $X$ is $\mathcal X^{hyp}$-internal to $F$. 

\subsection{Non-1-basedness}

We begin by showing that $\mathcal X^{hyp}$ is not 1-based. The idea is simply that the family of hyperplane sections violates 1-basedness; but making this precise requires a careful analysis of dependence and canonical bases in $\mathcal X^{hyp}$ (as a priori they may look very different in $\mathcal X^{hyp}$ and in $K$). So below we go through the argument carefully. 

\begin{proposition}\label{P: 1-based}
    $\mathcal X^{hyp}$ is not 1-based.
\end{proposition}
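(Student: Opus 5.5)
We argue by contradiction, using the family of generic hyperplane sections, which in $K$ is an $n$-dimensional family of codimension-$1$ subvarieties. The task is to see this family inside $\mathcal X^{hyp}$ and compare dimensions there. The key point is that the relation $CH$ lets $\mathcal X^{hyp}$ define each hyperplane section component through $n$ of its points. Concretely, for a tuple $b=(b_1,\dots,b_n)\in X^n$, let $Y_b$ be the fiber $\{x:(b,x)\in CH\}$, which is $\emptyset$-definable in $\mathcal X^{hyp}$ over $b$. We first check that for $K$-generic $b$ over $K_0$, the set $Y_b$ is exactly $H\cap X$, where $H$ is the unique hyperplane through $b$. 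This uses Theorem \ref{L: hyperplane sweeping} (unique $n$-sweeping by generic hyperplanes, and Bertini irreducibility). Any other hyperplane-section component through the generic tuple $b$ would give a second member of the orbit, or a special component, and the dimension computations of Theorem \ref{L: hyperplane sweeping} and Lemma \ref{L: dim of sweeping orbit} rule this out.

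Next we compare dimensions in the two structures. Since $\mathcal X^{hyp}$ has the unary $K_0$-definable predicates, and every $\mathcal X^{hyp}$-definable set is $K$-definable over $K_0$ and parameters (Fact \ref{F: relic acl}), we aim to show $\dim_{\mathcal X}=\dim_K$ on the relevant definable sets. The cleanest route is to prove that $\dim_{\mathcal X}(a/A)\geq \dim_K(a/K_0A)$ for tuples in $X$, via Fact \ref{F: relic acl}(2): $\acl_{\mathcal X}$ is contained in $\acl_K$ over $K_0$. Then $\dim_{\mathcal X}$ of a $K$-independent generic tuple is at least its $K$-dimension. In fact we only need relative statements. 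For $c:=\ulcorner Y_b\urcorner$ computed in $(\mathcal X^{hyp})^{eq}$, the code $c$ is interdefinable in $K$ (over $K_0$) with the hyperplane $H$, because $Y_b$ determines $H$ by non-degeneracy of generic sections. Hence $c\notin\acl_K(K_0 y)$ for a single $K_0c$-generic $y\in Y_b$, since $\dim_K(H/K_0y)=n-1\geq 1$. By Fact \ref{F: relic acl}(2), this gives $c\notin\acl_{\mathcal X}(y)$.

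Finally we derive the contradiction with Definition \ref{D: 1-based}. In $\mathcal X^{hyp}$, over the parameter $c$, consider a stationary component of $Y_b$ containing a generic point $y$. A complete $\mathcal X^{hyp}$-type of $y$ over $\acl_{\mathcal X}(c)$ of maximal Morley rank among points of $Y_b$ is the generic type of a stationary $\acl_{\mathcal X}(c)$-definable set $Z\subset Y_b$. Its canonical base $e=\operatorname{Cb}(Z)$ lies in $\acl_{\mathcal X}(c)$. We would show that $c\in\dcl_{\mathcal X}(e)$ up to algebraicity. The reason is that $Z$ is almost equal (in $\mathcal X$-rank, hence Zariski-dense in a top component of $Y_b$, which is irreducible) to $Y_b$, and in $K$ any set Zariski-dense in $H\cap X$ determines $H$. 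So $c\in\acl_K(K_0e)$, and hence $e\notin\acl_K(K_0y)$, so $e\notin\acl_{\mathcal X}(y)$. 1-basedness would instead force $e\in\acl_{\mathcal X}(y)$. The main obstacle is this middle step: we must ensure that the $\mathcal X$-generic point $y$ of $Z$ is also $K$-generic in $Y_b$ over $K_0c$, so that the Zariski-density argument applies. I would handle this by choosing $y$ $K$-generic in $Y_b$ over $K_0c$ first. Then $\dim_{\mathcal X}(y/c)\geq\dim_K(y/K_0c)$ (from Fact \ref{F: relic acl}) forces its $\mathcal X$-type over $c$ to have maximal rank in $Y_b$, so we may take $Z$ to be the set whose generic type is $\tp_{\mathcal X}(y/\acl_{\mathcal X}(c))$.
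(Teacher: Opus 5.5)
Your overall strategy is sound: realize a generic hyperplane section $Y_b$ as a $CH$-fiber, note that its $\mathcal X$-code is $K_0$-interdefinable with $H$, and contradict 1-basedness via Fact \ref{F: relic acl}(2). However, the step that carries your whole third paragraph is not justified. You claim $\dim_{\mathcal X}(a/A)\geq\dim_K(a/K_0A)$ for tuples from $X$, and you derive it from Fact \ref{F: relic acl}(2). It does not follow. The inclusion $\acl_{\mathcal X}(A)\subseteq\acl_K(K_0A)$ only tells you that $\mathcal X$-rank $0$ forces $K$-dimension $0$. Since $X$ is not strongly minimal and a single point of $X$ can have $K$-dimension up to $\dim X$, containment of algebraic closures gives no control over Morley rank in $\mathcal X^{hyp}$. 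For example, the pure set structure on $K^2$ is a $K$-relic with trivial $\acl$, yet its universe has Morley rank $1$ while a generic point has $K$-dimension $2$.

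You genuinely need this comparison, in both directions, on $\mathcal X$-definable sets. To get $c\in\acl_K(K_0e)$ you must know that any $Z'$ which is $\mathcal X$-almost equal to $Z$ differs from it by a set of smaller $K$-dimension. Then $Y_b$, and hence $H$, is the top-dimensional part of $\overline{Z'}$ and is recovered over $K_0e$. Without this, a conjugate $H'\neq H$ over $K_0e$ only produces a common nonforking extension of $\tp_{\mathcal X}(y/\acl_{\mathcal X}(c))$ and its conjugate, concentrated on $H\cap H'\cap X$. That is no contradiction unless you already know this lower-dimensional set has smaller $\mathcal X$-rank than $Y_b$. The same gap affects your claim that $y$ has maximal $\mathcal X$-rank in $Y_b$ over $c$.

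The paper avoids the issue entirely by first reducing to $\dim X=2$. It passes to a $K_0$-definable non-degenerate surface; non-1-basedness transfers because, for relics, it is equivalent to interpreting a field. Then $Y$ is an irreducible curve, strongly minimal in both $K$ and $\mathcal X^{hyp}$. Consequently:
\begin{itemize}
\item $\mathcal X$-genericity in $Y$ is just non-algebraicity;
\item $\mathcal X$-almost equality is finite symmetric difference;
\item $Y$ is definable over $\operatorname{Cb}_{\mathcal X}(Y)$.
\end{itemize}
So 1-basedness would make $Y$ definable over $\acl_K(K_0b)$, contradicting $\dim_K(ab/K_0)=2n+1>n+2$.

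If you prefer to stay in arbitrary dimension, you must actually prove $\dim_{\mathcal X}(D)=\dim_K(D)$ for $\mathcal X$-definable $D\subseteq X^m$. The inequality $\leq$ holds for any reduct. The inequality $\geq$ can be shown by induction on $K$-dimension. Use the unary predicates for $H\cap X$, with $H$ a $K_0$-rational hyperplane, pulled back along a suitable coordinate projection. These cut $D$ into infinitely many disjoint $\mathcal X$-definable pieces, each dense in a subvariety of one lower dimension. This argument uses the unary predicates essentially and is not a consequence of Fact \ref{F: relic acl}. With that lemma in hand, your argument goes through.
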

\begin{proof}
    First, we may assume $\dim(X)=2$. Indeed, otherwise fix any $K_0$-definable non-degenerate closed irreducible surface $S\subset X$, and let $\mathcal S^{hyp}$ be the analogously defined structure on $S$. Then $\mathcal S^{hyp}$ is a relic of $\mathcal X^{hyp}$; and if we can show that $\mathcal S^{hyp}$ is not 1-based, it follows that $\mathcal X^{hyp}$ is not either (here we use that for $K$-relics, 1-based is preserved under interpretations, because by Fact \ref{F: trichotomy} it is equivalent to the non-existence of interpretable infinite fields). So for the rest of the proof of the proposition, let us assume $\dim(X)=2$.

    Now let $O$ be the uniquely $n$-sweeping orbit of $K_0$-generic hyperplane sections in $X$, let $a\in X^n$ be $K_0$-generic (in the sense of $K$), and let $Y\in O(a)$. So $Y$ is $K_0a$-definable in $K$, and $a$-definable in $\mathcal X^{hyp}$ (as the $a$-fiber in $CH$). Note that $Y$ is an irreducible curve -- so in the structure $K$, $Y$ is a \textit{strongly minimal set} (i.e. it is infinite, and every definable subset of it is finite or cofinite; equivalently, $Y$ is one-dimensional and stationary). Clearly, the same property holds of $Y$ in $\mathcal X^{hyp}$ as well.
    
    Now let $b\in Y$ be $K_0a$-generic in the sense of $K$ (i.e. $b$ is an element of $Y$ only belonging to the cofinite $K_0a$-definable sets). Then $b$ is also $a$-generic in $Y$ in the sense of $\mathcal X^{hyp}$. Let $c=\operatorname{Cb}_{\mathcal X}(Y)$ (i.e. the canonical base of $Y$ as computed in $\mathcal X^{hyp}$). Note that $Y$ is automorphism invariant over $c$ in $\mathcal X^{hyp}$, as the unique fiber of $CH$ in the almost equality class corresponding to $c$. Thus, by invariance, it follows that $Y$ is $c$-definable in $\mathcal X^{hyp}$.
    
    Now we will reach a contradiction. Namely, assume $\mathcal X^{hyp}$ is  1-based. Then since $b$ is $\mathcal X^{hyp}$-generic in $Y$, we have $c\in\acl_{\mathcal X}(b)$, and thus $Y$ is $\mathcal X^{hyp}$-definable over $\acl_{\mathcal X}(b)$. But then by Fact \ref{F: relic acl}, $Y$ is also $K$-definable over $\acl_K(K_0b)$. Finally, this is impossible since $O$ is 2-sweeping (since $n\geq\dim(X)=2$). More precisely, since $Y$ is $\acl_K(K_0b)$-definable, we get $\dim_K(a/K_0b)\leq\dim(Y^n)=n$, and thus $$ \dim_K(ab/K_0)\leq\dim(X)+n=n+2.$$ On the other hand, counting the other way gives $\dim_K(ab/K_0)=2n+1$, which is greater than $n+2$ as long as $n\geq 2$.
    \end{proof}

\begin{corollary}\label{C: interpretable field}
$\mathcal{X}^{hyp}$ interprets an algebraically closed field.
\end{corollary}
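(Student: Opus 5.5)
The plan is to combine Proposition \ref{P: 1-based} with the first clause of Fact \ref{F: trichotomy}(1). Since $\mathcal X^{hyp}$ is a $K$-relic by the discussion following Definition \ref{D: hyp relic}, Fact \ref{F: trichotomy}(1) applies to it. That fact says a $K$-relic interprets an infinite field if and only if it is not $1$-based. Proposition \ref{P: 1-based} shows that $\mathcal X^{hyp}$ is not $1$-based. Hence $\mathcal X^{hyp}$ interprets some infinite field $F$.

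It remains to see that $F$ is algebraically closed, and I would get this from the second clause of Fact \ref{F: trichotomy}(1). That clause says any infinite field interpreted in the relic is $K$-definably isomorphic to $K$. Since $K$ is algebraically closed, so is $F$.

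Alternatively, one could argue that $F$ is an infinite field of finite Morley rank, being interpretable in the finite Morley rank structure $\mathcal X^{hyp}$. By Macintyre's theorem, infinite $\omega$-stable fields are algebraically closed.

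I expect no real obstacle: the only points to check are that the hypotheses of Fact \ref{F: trichotomy} are met, which holds since $\mathcal X^{hyp}$ is a $K$-relic, and that non-$1$-basedness has already been established.
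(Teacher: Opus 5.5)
Your proposal is correct and follows the paper's argument: the paper likewise deduces the corollary directly from Proposition \ref{P: 1-based} together with Fact \ref{F: trichotomy}(1). Your explicit use of the second clause of that fact (or Macintyre's theorem) to see that the interpreted field is algebraically closed just spells out a step the paper leaves implicit.
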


\begin{proof}
By Proposition~\ref{P: 1-based} and Fact \ref{F: trichotomy}(1). 
\end{proof}

\begin{convention}\label{Con: interpreted field}
From now on, fix an algebraically closed field $F$ interpreted in $\mathcal{X}^{hyp}$. Since $K_0$ is algebraically closed, it is an elementary substructure of $K$. So since $\mathcal X^{hyp}$ is defined over $K_0$, we may assume that $F$ and its interpretation are also $K$-definable over $K_0$.
\end{convention}

\subsection{Internality to a non-degenerate subvariety}

Our goal now is to show that $X$ is $\mathcal X^{hyp}$-internal to $F$. We will use the following well-known characterization of internality, which follows from a standard compactness argument:

\begin{fact}\label{F: internality char} Let $Y,Z$ be definable sets in $\mathcal X^{hyp}$. Then $Y$ is $\mathcal X^{hyp}$-internal to $Z$ if and only if there is a countable set $S$ so that every element of $Y$ is $\mathcal X^{hyp}$-definable over $S\cup Z$.
 
\end{fact}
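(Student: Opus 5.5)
The plan is to prove both directions of Fact \ref{F: internality char} directly from the definitions, using compactness and the saturation of $\mathcal X^{hyp}$ (Fact \ref{F: relic acl}(3)). Here ``$Y$ is internal to $Z$'' is taken in the sense of the embedding definition above. The implicit parameters of the definable sets are absorbed into $S$.

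For the left-to-right direction, fix a definable equivalence relation $E$ on some $Z^m$ and a definable injection $f:Y\rightarrow Z^m/E$. Both are defined over some finite tuple $s$; let $S$ consist of $s$ together with the parameters defining $Y$ and $Z$. Take $y\in Y$ and pick $z\in Z^m$ whose class is $f(y)$. Then $y$ is the unique $y'\in Y$ with $f(y')=[z]_E$. Hence $y$ is definable over $S\cup\{z\}\subset S\cup Z$.

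For the right-to-left direction, suppose every $y\in Y$ is definable over $S\cup Z$, with $S$ countable. Then for each $y$ there are:
\begin{itemize}
\item a formula $\theta(x,u,w)$ without parameters,
\item a finite tuple $u$ from $S$,
\item a tuple $w$ from $Z$,
\end{itemize}
such that $\theta(x,u,w)$ isolates $y$, meaning $\theta(Y,u,w)=\{y\}$. There are only countably many pairs $(\theta,u)$. Consider the partial type $p(x)$ over $S$ saying that $x\in Y$ and, for every pair $(\theta,u)$, that $x$ is not the unique realization of $\theta(x,u,w)$ for any $w\in Z$. This type is omitted, since every element of $Y$ is isolated by some such formula. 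By saturation (countable parameter set, uncountable saturated model), $p$ must be inconsistent. So by compactness, finitely many pairs $(\theta_1,u_1),\dots,(\theta_r,u_r)$ suffice: every $y\in Y$ is the unique realization of some $\theta_j(x,u_j,w)$ with $w$ from $Z$.

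We then pad the tuples $w$ to a common length $m$ and add a coordinate from a finite definable index set to record $j$. If no finite index set is available, use distinct elements of $Z$ as tags, which is fine since $Z$ is infinite in our applications; the finite case is trivial. This yields a definable partial surjection $g$ from a definable subset $W\subset Z^{m'}$ onto $Y$. The equivalence relation $E$ is defined by $w\,E\,w'$ if and only if $g(w)=g(w')$ or both $w,w'\notin W$. Then $y\mapsto g^{-1}(y)$ is a definable injection $Y\rightarrow Z^{m'}/E$, which is exactly internality.

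The main obstacle is the compactness step: the uniformization from ``each element is definable over $S\cup Z$'' to a single finite family of formulas. This is where the countability of $S$ and the saturation of $\mathcal X^{hyp}$ are used. The remaining bookkeeping (merging finitely many formulas and handling degenerate finite $Z$) is routine.
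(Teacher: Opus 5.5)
Your argument is correct and is exactly the standard compactness argument the paper invokes without giving details: countably many pairs $(\theta,u)$ give an omitted type over a countable set, saturation of $(\mathcal X^{hyp})^{eq}$ cuts this down to finitely many formulas, and these are glued with tags and a quotient into a definable injection $Y\rightarrow Z^{m'}/E$. One small correction: the degenerate finite case is not ``trivial'' but actually false under the paper's definition of internality when $|Z|\le 1<|Y|$, since then $Z^{m'}/E$ has at most one element; so the tagging step genuinely needs $|Z|\ge 2$ (two named elements of $Z$ suffice, encoding $j$ in binary), which is harmless because $Z$ is infinite in every application in the paper.
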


Now our main tool is the following:

\begin{proposition}\label{P: internality to subvariety}
Let $Z\subset X$ be a non-degenerate irreducible quasi-projective subvariety defined over $K_0$. Then $X$ is $\mathcal X^{hyp}$-internal to $Z$.
\end{proposition}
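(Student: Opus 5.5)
We will use the criterion of Fact \ref{F: internality char}: it suffices to find a countable set $S$ such that every point $x\in X$ is $\mathcal X^{hyp}$-definable over $S\cup Z$. The natural source of such definitions is the relation $CH$. A point of $X$ should be cut out by enough hyperplane sections, and each hyperplane section should be named by finitely many points of $Z$. Non-degeneracy of $Z$ is exactly what makes the second step possible: $n$ generic points of $Z$ span a generic hyperplane (compare the proof of (3)$\Rightarrow$(2) in Theorem \ref{L: hyperplane sweeping}).

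\textbf{Naming hyperplane sections by points of $Z$.} Theorem \ref{L: hyperplane sweeping} gives that $O=H^\emptyset_{K_0}(X)$ uniquely $n$-sweeps $Z$. So for $K_0$-generic $z\in Z^n$ there is a unique generic hyperplane $H_z$ through $z$, and $H_z\cap X$ is irreducible by Bertini. This section is then the fiber $CH_z=\{x:(z,x)\in CH\}$, at least up to the irreducible-component subtlety: $CH_z$ is the union of all components of hyperplane sections containing $z$, and for generic $z$ this should be just $H_z\cap X$. Hence $H_z\cap X$ is $\mathcal X^{hyp}$-definable over $z\subset Z$.

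\textbf{Cutting out points.} Take $n+1$ (or more) tuples $z^1,\dots,z^m\in Z^n$, mutually generic. The hyperplanes $H_{z^j}$ are then in general position and $\bigcap_j H_{z^j}\cap X$ is empty or finite. This only handles special points, so instead we work with a fixed point $x\in X$. Choose $z^1,\dots,z^{N}$ each generic over $K_0x$ in the subvariety of $Z^n$ consisting of tuples whose spanned hyperplane passes through $x$. Non-degeneracy should make this set irreducible of the expected dimension, and its generic elements still $K_0$-generic. If it is not, we replace the $n$-tuple by an $(n-1)$-tuple from $Z$ together with $x$. The sections $H_{z^j}\cap X$ are then hyperplane sections through $x$ in general position among those through $x$, and their intersection is exactly $\{x\}$ once $N\geq n$. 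Thus $x$ is $\mathcal X^{hyp}$-definable over $z^1\cdots z^N\subset Z$ with no further parameters, and we may take $S=\emptyset$ (or $K_0$ names).

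\textbf{Main obstacle.} The hard part is justifying the genericity claims. We need that the $z^j$ remain $K_0$-generic in $Z^n$ even though they are chosen over $x$. This holds because $\dim$ of hyperplanes through $x$ is $n-1$, and $Z$ meets a generic such hyperplane properly: $Z$ is irreducible and not contained in it. We also need that $CH_{z^j}$ is exactly the single section through the $z^j$ and contains no extra components. If working over $x$ breaks $K_0$-genericity, the fallback is to use the definability of $H_y\cap X$ for tuples $y$ mixing points of $Z$ with $x$'s own coordinates. Alternatively, we can apply Lemma \ref{L: relativized sweeping is unique} to $O$ restricted through generic points, with goodness of $Z$ supplied by Lemma \ref{L: good cofinite}, to keep uniqueness of the section.
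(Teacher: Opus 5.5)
Your strategy is the paper's: name each generic hyperplane section by a generic $n$-tuple of $Z$ via $CH$, using that $O$ uniquely $n$-sweeps $Z$, and then cut out $x$ as the intersection of $n$ independent such sections through $x$. However, the step you flag as the main obstacle is not resolved, and as stated it is false. You need the hyperplanes through $x$ to be $K_0$-generic, so that their sections lie in $O$, are irreducible by Bertini, and are named by $CH$-fibers over $K_0$-generic tuples of $Z$.

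Your justification only says that the hyperplanes through $x$ form an $(n-1)$-dimensional family and that $Z$ meets a generic one properly. That controls $\dim(z^j/K_0x)$, but a dimension count turns it into $\dim(z^j/K_0)=n\dim(Z)$ only when $x$ is itself $K_0$-generic in $X$. For $x\in X\cap\acl_K(K_0)$ the claim fails outright:
\begin{itemize}
\item A $K_0$-generic hyperplane contains no $K_0$-point, so no section through $x$ lies in $O$, and no tuple of $Z$ spanning a hyperplane through $x$ is $K_0$-generic in $Z^n$.
\item Sections through such $x$ can be reducible. For example, if $x$ is the vertex of a quadric cone in $\mathbb P^3$, every such section is a pair of lines, and then $CH_z=\emptyset$ for every spanning tuple $z$.
\end{itemize}
So your claim that every $x$ is definable over $z^1\cdots z^N$ with no further parameters is unjustified. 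Your fallback of using $x$'s own coordinates is circular, since it uses $x$ to define $x$.

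The missing idea is Lemma \ref{L: codim 1 definable over acl}, applied in the dual projective space. Let $\mathcal H_x$ be the set of hyperplanes through $x$; it is a codimension 1 subvariety of the space $\mathcal H$ of all hyperplanes. If a $K_0x$-generic element of $\mathcal H_x$ were not $K_0$-generic in $\mathcal H$, then $\mathcal H_x$ would be $\acl_K(K_0)$-definable, and hence so would $x=\bigcap\mathcal H_x$.

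So for $x\notin\acl_K(K_0)$, take $n$ independent $K_0x$-generic hyperplanes $H_1,\dots,H_n$ through $x$. Each $H_j\cap X$ is a member of $O$, hence definable over $Z$ by your naming step, and $\bigcap_j(H_j\cap X)=\{x\}$. The exceptional points form the countable set $X\cap\acl_K(K_0)$, which you put into $S$; each is in fact $\emptyset$-definable in $\mathcal X^{hyp}$ via a unary predicate.

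Choosing the hyperplanes first, rather than choosing tuples $z^j$ in the locus of tuples spanning a hyperplane through $x$, also removes the need for your unproven irreducibility and dominance claims about that locus.
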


\begin{proof} Note that $Z$ is $\emptyset$-definable in $\mathcal X^{hyp}$ by construction. We will prove the proposition by proving two claims. As before, fix $O$ to be the orbit of $K_0$-generic hyperplane sections in $X$.

\begin{claim}
Every member of $O$ is $\mathcal X^{hyp}$-definable over $Z$.
\end{claim}

\begin{proof}
By Theorem \ref{L: hyperplane sweeping} (and the non-degeneracy of $Z$), $O$ uniquely $n$-sweeps $Z$. It follows that a member $Y\in O$ is $\mathcal X^{hyp}$-definable as the fiber in $CH$ above any generic $n$-tuple from $Y\cap Z$. 
\end{proof}

\begin{claim}
Let $x \in X$ with $x \notin \acl_K(K_0)$. Then $x$ is $\mathcal X^{hyp}$-definable over $Z$.
\end{claim}

\begin{proof}
Let $\mathcal{H}$ be the space of hyperplanes in $\mathbb{P}^n(K)$, and let $\mathcal{H}_x$ be the subspace of hyperplanes containing $x$ (so $\mathcal H_x$ is a copy of $\mathbb P^{n-1}(K)$ inside a copy of $\mathbb P^n(K)$). We claim that a $K_0x$-generic element of $\mathcal{H}_x$ is $K_0$-generic in $\mathcal{H}$. Otherwise, by Lemma \ref{L: codim 1 definable over acl}, $\mathcal{H}_x$ is defined over $\acl_K(K_0)$, and since $x$ is the intersection of all hyperplanes in $\mathcal{H}_x$, this implies $x \in \acl_K(K_0)$, a contradiction.

Thus a $K_0x$-generic hyperplane through $x$ is $K_0$-generic. Let $H_1,...,H_n$ be independent such hyperplanes, and set $Y_i = X \cap H_i$. Then each $Y_i\in O$, so by the previous claim each $Y_i$ is $\mathcal X^{hyp}$-definable over $Z$; and moreover $x$ is the unique point in $\bigcap_{i=1}^n Y_i$, so $x$ is thus also $\mathcal X^{hyp}$-definable over $Z$.
\end{proof}

Finally, let $S = X \cap \acl_K(K_0)$. Then $S$ is countable, and it follows from the previous claim that every element of $X$ is definable in $\mathcal X^{hyp}$ over $S\cup Z$ (either because it is definable over $Z$ or because it is an element of $S$). So by Fact \ref{F: internality char}, $X$ is $\mathcal X^{hyp}$-internal to $Z$.
\end{proof}

\subsection{Almost internality to the field}

We will show $X$ is $\mathcal X^{hyp}$-internal to $F$ using two applications of Proposition \ref{P: internality to subvariety}. The first application shows almost internality:

\begin{proposition}\label{P: X almost internal to F}
$X$ is $\mathcal X^{hyp}$-almost internal to $F$.
\end{proposition}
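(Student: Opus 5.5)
We aim to find a non-degenerate irreducible subvariety $Z\subset X$, defined over $K_0$, together with a finite-to-one $\mathcal X^{hyp}$-definable map from $Z$ into a set internal to $F$. Proposition \ref{P: internality to subvariety} then makes $X$ internal to $Z$, and composing gives almost internality of $X$ to $F$. The natural choice for $Z$ is a curve. Indeed, $F$ is interpreted in $\mathcal X^{hyp}$ and is $K$-definably isomorphic to $K$, so it has $\dim_{\mathcal X}=1$. In a finite Morley rank structure, an infinite definable field is non-orthogonal to every strongly minimal set it ``sees''. Concretely, $F$ is interpreted on a quotient of some definable subset of a power of $X$, so some $\mathcal X^{hyp}$-definable strongly minimal set is non-orthogonal to $F$.

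The plan is as follows. First, note that $F$ lives in $(\mathcal X^{hyp})^{eq}$ as a quotient of a definable set $D\subset X^m$, so $F$ is non-orthogonal to $X$. By the standard rank and coordinatization argument (pick a generic $f\in F$ and a minimal tuple $x\in X^m$ over which $f$ becomes algebraic), we get a strongly minimal set $C$ with $C$ non-orthogonal to $F$. We will arrange that $C$ is an $\mathcal X^{hyp}$-definable curve inside $X$. The candidates are members of $O$, namely generic hyperplane-section curves obtained as fibers of $CH$ and intersected down to dimension $1$ over generic parameters. These curves are strongly minimal both in $K$ and in $\mathcal X^{hyp}$, as in the proof of Proposition \ref{P: 1-based}. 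Since any two generic such curves are conjugate over $K_0$ in $K$, and this conjugacy can be realized inside $\mathcal X^{hyp}$ using the automorphism-invariance of $CH$, non-orthogonality to $F$ (which is $\emptyset$-definable over $K_0$) holds for one such curve as soon as it holds for all. Second, in $\omega$-stable theories, non-orthogonality of a strongly minimal set $C$ to the field $F$ yields a finite-to-one definable map $C\to F$ over some parameters. This is the classical fact that a strongly minimal set non-orthogonal to $F$ is almost internal to $F$, since $F$ is strongly minimal itself.

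Third, we need $Z$ to be non-degenerate and defined over $K_0$, while $C$ is a curve defined over extra parameters and typically degenerate. To fix this, replace $C$ by a suitable $K_0$-definable non-degenerate curve, or a union of curves built from $C$. Alternatively, run Proposition \ref{P: internality to subvariety} relative to parameters: its proof only needs that $O$ uniquely $n$-sweeps $Z$ and that the exceptional points are countable, so adding countably many parameters is harmless. Here I would take $Z$ to be a non-degenerate irreducible curve over $K_0$, which exists because general linear sections of a non-degenerate variety stay non-degenerate. I would then show that $Z$ is non-orthogonal to $F$ by transporting non-orthogonality from the hyperplane-section curves. Because $X$ is internal to $Z$, the hyperplane curves are internal to $Z$, and non-orthogonality of a strongly minimal internal set passes to $Z$. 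Since both $Z$ and $F$ are strongly minimal, $Z$ is then almost internal to $F$, and hence so is $X$.

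The main obstacle is the first step: producing, within $\mathcal X^{hyp}$ rather than $K$, a strongly minimal set inside $X$ that is non-orthogonal to $F$. This requires the same care as Proposition \ref{P: 1-based}, because ranks, canonical bases and generics in $\mathcal X^{hyp}$ may differ from those in $K$. I would first try to use that $\dim_{\mathcal X}(X)$ is finite and argue by induction on the length of a tuple from $X$ coding a generic of $F$. At each stage, hyperplane-section fibers of $CH$ cut down dimension, which eventually lands in a one-dimensional $\mathcal X^{hyp}$-definable subset of $X$, and that subset must be non-orthogonal to $F$.
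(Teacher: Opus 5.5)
Your endgame is exactly the paper's argument. You take a non-degenerate irreducible curve $Z$ over $K_0$, use Proposition \ref{P: internality to subvariety} to make $X$ internal to $Z$, show $Z$ is non-orthogonal to $F$, use strong minimality of $Z$ and $F$ to get $Z$ almost internal to $F$, and compose. The trouble is how you obtain the non-orthogonality. You route it through your first step, which produces a strongly minimal curve non-orthogonal to $F$ by coordinatization and cutting down with fibers of $CH$, and then you transport it. You yourself flag that step as the main obstacle and leave it as a sketch, and as written it is not established. The coordinatization argument only yields some $x_i\in X$ whose $\mathcal X^{hyp}$-type over some parameters is non-orthogonal to $F$. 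That type may have rank greater than $1$, and you do not explain how the hyperplane-section fibers would carry the non-orthogonality down onto a one-dimensional definable subset of $X$. So the proposal has a hole exactly where you said the difficulty was.

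The missing idea is that there is no difficulty there. Internality is inherited by powers, definable subsets and definable quotients. By definition $F$ is a definable quotient of a definable subset of some $X^m$. So once Proposition \ref{P: internality to subvariety} makes $X$ internal to $Z$, $F$ is internal to $Z$. An infinite set internal to the strongly minimal set $Z$ is non-orthogonal to it, so $Z$ and $F$ are non-orthogonal immediately. This makes your first step, the conjugacy argument among hyperplane-section curves, and the transport paragraph all superfluous; the paper's proof is just this observation plus the standard strongly minimal facts.

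Separately, your justification for the existence of $Z$ is wrong as stated. A linear section $X\cap L$ with $L\subsetneq\mathbb P^n$ lies in $L$, hence in a hyperplane, so it is degenerate in $\mathbb P^n$. A non-degenerate irreducible curve over $K_0$ does exist, but it needs a different construction. One option is generic hyperplane sections after a Veronese re-embedding, as in Lemma \ref{L: generically injective restriction}. Another is an irreducible curve through $n+1$ spanning $K_0$-points of $X$, obtained by iterating the construction in Fact \ref{F: curve through tuple}.
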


\begin{proof}
Let $C \subset X$ be a non-degenerate quasi-projective curve defined over $K_0$. Working in the structure $\mathcal X^{hyp}$, Proposition~\ref{P: internality to subvariety} gives that $X$ is internal to $C$. In particular, $C$ is non-orthogonal to $F$. But $C$ is strongly minimal, so non-orthogonality implies that $C$ is almost internal to $F$. Since $X$ is internal to $C$, it follows that $X$ is almost internal to $F$.
\end{proof}

By Proposition~\ref{P: X almost internal to F}, there is a finite-to-one $\mathcal{X}^{hyp}$-definable map $f:X\rightarrow W$ where $W$ is $\mathcal X^{hyp}$-internal to $F$. We fix such a map $f$ for the rest of this section. Since $\mathcal X^{hyp}$ and $F$ are $K$-definable over $K_0$, we may also take $f$ to be $K$-definable over $K_0$ (here again we use that $K_0$ is an elementary substructure of $K$).

\subsection{Internality to the field}

We now finish the proof by showing that $X$ is $\mathcal X^{hyp}$-internal to $F$. First we get internality on a subvariety:

\begin{lemma}\label{L: generically injective restriction}
There is a non-degenerate irreducible quasi-projective subvariety $Z \subset X$ such that $f\restriction Z$ is injective.
\end{lemma}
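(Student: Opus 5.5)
The plan is to find $Z$ inside the locus where $f$ is already injective. Write $f$ as a $K_0$-definable finite-to-one map $X\to W$, with $W$ $K$-definable over $K_0$. Since $f$ is finite-to-one and $K$-definable, its fibres have bounded size. Stratify $X$ by fibre cardinality; by the definability of such stratifications, there is a dense open $K_0$-definable $U\subset X$ over which $f$ has constant degree $m$. The first key step is to compare points through their fibres: for $x\in X$, consider the finite set $f^{-1}(f(x))$, which is $K_0x$-definable.

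The aim is to choose $Z$ so that no two points of $Z$ share a fibre. The most natural choice is to take $Z$ to be a curve, or a low-dimensional subvariety, in general position. The conditions to arrange are these.

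\begin{enumerate}
\item $Z$ is non-degenerate.
\item $Z$ meets each fibre of $f$ in at most one point.
\end{enumerate}

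Consider the $K_0$-definable set
\[
E=\{(x,y)\in X^2: x\neq y,\ f(x)=f(y)\}.
\]
Since $f$ is finite-to-one, $\dim E\le \dim X$. The projection $E\to X$ is finite-to-one, so the closure of the image of $E$ under "forget order" is a correspondence of dimension at most $\hat d=\dim X$. We want $Z$ with $(Z\times Z)\cap E=\emptyset$. The naive dimension count says that a generic subvariety $Z$ of dimension $r$ has $\dim (Z\times Z)\cap E \le 2r+\hat d-2\hat d$. This is negative when $r<\hat d/2$. Concretely, $Z$ should be a generic complete intersection of the $K_0$-definable... but we need $Z$ defined over $K_0$.

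Here is the fix. Since $K_0$ is algebraically closed and an elementary substructure, it suffices to find such a $Z$ over $K$, for instance a generic complete-intersection curve $Z=X\cap H_1\cap\dots\cap H_{\hat d-1}$. Such a $Z$ is non-degenerate, since a generic linear section of a non-degenerate variety of dimension $\geq 1$ is non-degenerate. We then check over generic parameters that $Z\times Z$ avoids $E$: a point $(x,y)\in E$ with $x\ne y$ both on $Z$ imposes $2(\hat d-1)$ conditions on the hyperplanes. These conditions are independent because $x\neq y$ and hyperplanes through two distinct points impose independent conditions. Since $\dim E\le\hat d$, the incidence variety has dimension at most $\hat d+(\text{params})-2(\hat d-1)$, which is less than the parameter dimension when $\hat d\geq 3$.

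The main obstacle is the case $\hat d=2$, where the count is borderline: $E$ is a surface and a generic curve $Z$ may meet finitely many pairs. To handle it, I would use lower-degree-freedom differently. I would take $Z$ to be a generic hyperplane section curve and accept finitely many bad pairs. Removing the finitely many bad points keeps $Z$ a non-degenerate irreducible quasi-projective variety, since it is still dense in the same curve, and non-degeneracy depends only on the closure. This deletion trick works in all dimensions, so I would use it uniformly: after the dimension count shows only a lower-dimensional set of bad points on $Z$, remove its closure. Finally, I would transfer $Z$ to a $K_0$-definable one by elementarity, since the conditions of irreducibility, non-degeneracy, and injectivity of $f\restriction Z$ are all definable in parameters.
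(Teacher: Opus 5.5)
There is a genuine gap in condition (1). In this paper, non-degenerate means not contained in any hyperplane of the ambient $\mathbb P^n$ of the fixed embedding. That is what the lemma is needed for: Proposition \ref{P: internality to subvariety} requires the generic hyperplane orbit to uniquely $n$-sweep $Z$, and by Theorem \ref{L: hyperplane sweeping} this happens exactly when $Z$ lies in no hyperplane.

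Your $Z=X\cap H_1\cap\dots\cap H_{\hat d-1}$ is contained in $H_1$, so it is degenerate. The fact you quote, that a generic linear section of a non-degenerate variety is non-degenerate, holds only relative to the linear space $H_1\cap\dots\cap H_{\hat d-1}$, not in $\mathbb P^n$. The same problem affects your $\hat d=2$ fallback, since a hyperplane-section curve lies in its hyperplane. The final transfer to a $K_0$-definable $Z$ cannot help, because no valid $Z$ over $K$ has been produced. You must cut $X$ by something nonlinear, and non-degeneracy then becomes a statement that needs proof.

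The paper handles this by re-embedding $X$ over $K_0$ via a Veronese map $X\hookrightarrow\mathbb P^m$ with $m>n$. It then takes $Z$ to be a single $K_0$-generic hyperplane section in $\mathbb P^m$, i.e.\ a generic higher-degree hypersurface section of $X$, which is irreducible by Bertini. Non-degeneracy in $\mathbb P^n$ follows from a dimension count. On one side, $c=\operatorname{Cb}(Z)$ satisfies $\dim(c/K_0)=m>n$. On the other, a codimension-one subvariety of $X$ lying in a hyperplane of $\mathbb P^n$ is a component of a hyperplane section, so its canonical base has dimension at most $n$ over $K_0$.

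Injectivity then runs essentially as in your count. Suppose $x,y\in Z$ with $f(x)=f(y)$ and $x$ is $K_0c$-generic in $Z$. Then $x,y$ are interalgebraic over $K_0$, so $\dim(c/K_0xy)=\dim(c/K_0x)=m-1$. But hyperplanes of $\mathbb P^m$ through two distinct points form a family of dimension $m-2$, so $x=y$. Hence the bad locus in $Z$ has lower dimension, and you delete its closure exactly as you proposed.

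Your deletion trick makes the passage to curves unnecessary. Keeping $Z$ of codimension one in $X$ is also what makes the non-degeneracy count easy; an iterated complete-intersection curve would need a separate argument that it spans $\mathbb P^n$.
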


\begin{proof}
Throughout the proof of Lemma \ref{L: generically injective restriction}, we work in the field language (i.e. in the structure $K$). First choose a \textit{Veronese} embedding of $X$ -- that is, an embedding $X\hookrightarrow\mathbb P^m(K)$ for some $m>n$, sending each subvariety of $X$ of positive dimension to a non-degenerate subvariety of $\mathbb P^m(K)$ (for more on such embeddings, see \cite[Lecture 2]{HarrisAG}). As above, since $X$ is defined over $K_0$, we may choose this embedding to be defined over $K_0$.

Now let $Z\subset X$ be a $K_0$-generic hyperplane section seen in $\mathbb P^m(K)$, and let $c=\operatorname{Cb}(Z)$. So $Z$ is irreducible. Moreover, $Z$ is non-degenerate in $\mathbb P^n(K)$, since $\dim(c/K_0)=m>n$ (thus $Z$ cannot be a hyperplane in $n$-space). 

We claim that $f$ is injective on a dense open subset of $Z$, which implies the lemma. By compactness, it suffices to show that if $x,y \in Z$ are each $K_0c$-generic, and $f(x)=f(y)$, then $x=y$.

So take such $x,y$. Since $Z$ is a generic hyperplane section in $\mathbb P^m(K)$, and $x$ is generic over $K_0c$, we get $\dim(c/K_0)=m$ and $\dim(c/K_0x)=m-1$. But since $f$ is finite-to-one and $K_0$-definable, we also get that $x$ and $y$ are intgeralgebraic over $K_0$, so that $\dim(c/K_0xy)=\dim(c/K_0x)=m-1$. Thus codimension 1 many hyperplanes in $\mathbb P^m(K)$ pass through both $x$ and $y$, which implies $x=y$.
\end{proof}

We now fix $Y$ as in the previous lemma -- so $Y\subset X$ is irreducible, quasi-projective, and non-degenerate in $\mathbb P^n(K)$, and $f$ is injective on $Y$. Again, since $X$ and $f$ are defined over $K_0$, we may choose $Y$ to be defined over $K_0$. Finally: 

\begin{proof}[Proof of Theorem~\ref{T: full relic}]

Since $f$ is injective on $Y$, it follows that $Y$ is $\mathcal X^{hyp}$-internal to $F$. Moreover, by Proposition~\ref{P: internality to subvariety}, $X$ is $\mathcal X^{hyp}$-internal to $Y$. Thus $X$ is $\mathcal X^{hyp}$-internal to $F$, which gives fullness by Fact \ref{F: trichotomy}. 
\end{proof}

\section{Reduction to the definable case}

Our goal now is to apply Theorem \ref{T: full relic} to reduce the homeomorphism $\varphi$ from Convention \ref{Convention homeomorphism} to a definable homeomorphism. For this, our main tool is the `isomorphism theorem for full relics':

\begin{fact}\label{F: isomorphism theorem}
Let $K_1, K_2$ be algebraically closed fields, let $\mathcal{X}_1, \mathcal{X}_2$ be $K_i$-relics, and let $\varphi:\mathcal X_1\rightarrow\mathcal X_2$ be an isomorphism. Then:

\begin{enumerate}
    \item $\mathcal X_1$ is full in $K_1$ if and only if $\mathcal X_2$ is full in $K_2$.
    \item If the $X_i$ are full, then $\varphi$ has the form $\sigma \circ f$, where $\sigma$ is induced by a field isomorphism $K_1 \to K_2$ and $f$ is definable in $K_2$.
\end{enumerate}
\end{fact}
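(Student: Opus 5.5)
We prove the two clauses in order, with the main work in (1). Throughout we use that a $K_i$-relic $\mathcal X_i$ is interpretable in $K_i$ over a countable parameter set. We also use the characterization from Fact \ref{F: trichotomy}(2): a relic is full exactly when it interprets a field $F$ to which its universe is internal.

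For (1), suppose $\mathcal X_1$ is full. Then $\mathcal X_1$ defines all constructible sets, so it interprets a copy $F_1$ of $K_1$ (for instance, a $K_1$-definable copy of the field living on some power of $X_1$). It also makes the universe $X_1$ internal to $F_1$. Both facts are expressed by formulas in the language of $\mathcal X_1$. Since $\varphi$ is an isomorphism of structures, transporting these formulas through $\varphi$ gives an interpreted field $F_2$ in $\mathcal X_2$ with $X_2$ internal to $F_2$. The field $F_2$ is infinite, so $\mathcal X_2$ is not 1-based. By Fact \ref{F: trichotomy}(1), $F_2$ is $K_2$-definably isomorphic to $K_2$. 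Fact \ref{F: trichotomy}(2) then gives that $\mathcal X_2$ is full. The converse follows by symmetry, applying the same argument to $\varphi^{-1}$.

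For (2), fix the interpreted field $F_1\subset\mathcal X_1^{eq}$, a $K_1$-definable field isomorphism $\alpha_1:K_1\to F_1$, and an $\mathcal X_1$-definable internality map $g_1:X_1\hookrightarrow F_1^m/E_1$. Let $F_2=\varphi(F_1)$, $g_2=\varphi(g_1)$, and $E_2=\varphi(E_1)$. These are $\mathcal X_2$-definable, hence $K_2$-definable, so $\alpha_2:K_2\to F_2$ exists by Fact \ref{F: trichotomy}(1). Now set $\sigma=\alpha_2^{-1}\circ\varphi\restriction_{F_1}\circ\alpha_1$. This is a field isomorphism $K_1\to K_2$, because $\varphi$ preserves the interpreted field operations. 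We then compare $\varphi$ with $\sigma$ on $X_1$. Since $X_1$ is coded injectively inside $(F_1^m/E_1)$, and $\varphi$ commutes with $g_i$ and acts on $F_1$ as $\alpha_2\sigma\alpha_1^{-1}$, we can write $\varphi=f\circ\sigma$. Here $f$ is the composite of $\sigma(g_1)$, the maps $\sigma(\alpha_1)^{-1}$ and $\alpha_2$ applied coordinatewise, and $g_2^{-1}$, all of which are $K_2$-definable. The same bookkeeping on the remaining basic relations shows that $f$ is an isomorphism $\sigma(\mathcal X_1)\to\mathcal X_2$.

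The main obstacle is checking that $\varphi\restriction_{F_1}$ and $\alpha_2\circ\sigma\circ\alpha_1^{-1}$ agree as maps on the quotient sort, and that $\sigma(\alpha_1)$ is the correct definable map on the $K_2$ side, since $\alpha_1$ uses $K_1$-parameters that $\sigma$ moves. The plan is to absorb this into the definition of $\sigma$, so that $\sigma$ is defined to make the identity hold, and to unravel the remaining identities tautologically. The one genuine point is that $\sigma$ is a ring isomorphism, and this follows from $\varphi$ preserving the interpreted field structure.
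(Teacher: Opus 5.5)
Your proposal is correct and follows essentially the paper's route. For (1), the paper likewise just observes (citing \cite[Remark 4.13]{CasHasVA}) that the characterization of fullness in Fact \ref{F: trichotomy}(2) is intrinsic to the structure and is therefore transported by $\varphi$. For (2), the paper simply cites \cite[Lemma 2.6]{CaHaDC}, and your construction is the standard argument behind that citation: you set $\sigma=\alpha_2^{-1}\circ\varphi\restriction_{F_1}\circ\alpha_1$ and use the coordinatewise map $\alpha_2\circ\sigma(\alpha_1)^{-1}$, which descends to the quotients because it agrees with $\varphi\circ\sigma^{-1}$ on $\sigma(F_1)$. As in the paper's footnote, your argument only needs $\mathcal X_2$ to be a $K_2$-relic.

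One implicit assumption, shared with the statement itself, is that the universes are infinite. A relic on a finite set is trivially full, so in that case (2) fails whenever $K_1\not\cong K_2$.
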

\begin{proof}
    As noted in \cite[Remark 4.13]{CasHasVA}, (1) follows by Fact \ref{F: trichotomy}(2), since the characterization of fullness there is intrinsic to the relic as a structure. Then (2) is \cite[Lemma 2.6]{CaHaDC}.
\end{proof}

We now prove:

\begin{theorem}\label{T: reduction to definable case}
In the setting of Convention~3.1, there are a field isomorphism $\sigma : K_1 \to K_2$ and a $K_2$-definable homeomorphism $f : \sigma(X_1) \to X_2$ such that on $X_1$ we have
\[
\varphi = f \circ \sigma.
\]
\end{theorem}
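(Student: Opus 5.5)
We apply Fact \ref{F: isomorphism theorem} to the hyperplane relics, with $\varphi$ viewed as an isomorphism of structures. Concretely, let $\mathcal X_1^{hyp}$ be the hyperplane relic on $X_1$ from Definition \ref{D: hyp relic}, built from a countable algebraically closed $K_0\leq K_1$ over which $X_1$ is defined. Define $\mathcal X_2^{hyp}$ on the universe $X_2$ by transporting every basic relation through $\varphi$. Then $\varphi:\mathcal X_1^{hyp}\rightarrow\mathcal X_2^{hyp}$ is tautologically an isomorphism of structures in the same language. The two hypotheses of Fact \ref{F: isomorphism theorem} are then supplied by Parts 1 and 2.

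The first step is to check that $\mathcal X_2^{hyp}$ is a $K_2$-relic, i.e. that each of its basic relations is $K_2$-definable.
\begin{itemize}
\item For the relation $CH_2=\varphi(CH_1)$ this is exactly Theorem \ref{T: definability of hyperplane images}.
\item For a unary predicate, we have $P=\varphi(D)$ with $D\subset X_1$ a $K_0$-definable set. Such a $D$ is constructible, i.e. a finite Boolean combination of closed subsets of $X_1$. Since $\varphi$ is a homeomorphism, it preserves closed sets and Boolean operations, so $\varphi(D)$ is a finite Boolean combination of closed subsets of $X_2$. Closed subvarieties of $X_2$ are definable, hence $\varphi(D)$ is $K_2$-definable.
\end{itemize}
(Alternatively, Corollary \ref{C: ind-definable implies definable} in the case $n=1$ gives the unary case, since a definable set is in particular ind-definable.) The universe $X_2$ is a quasi-projective variety, so it is constructible in some projective space. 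Hence $\mathcal X_2^{hyp}$ is a $K_2$-relic.

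The second step is fullness. By Theorem \ref{T: full relic}, $\mathcal X_1^{hyp}$ is a full $K_1$-relic. Fact \ref{F: isomorphism theorem}(1) then gives that $\mathcal X_2^{hyp}$ is full in $K_2$. Now Fact \ref{F: isomorphism theorem}(2) decomposes $\varphi$ as a field isomorphism $\sigma:K_1\rightarrow K_2$ together with a $K_2$-definable map $f$. The form we want is $\varphi=f\circ\sigma$, as in Fact \ref{F: iso thm}; the composition order in Fact \ref{F: isomorphism theorem}(2) is stated loosely, but either way $f:=\varphi\circ\sigma^{-1}$ is the candidate.

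The remaining verification is that this $f$ is a $K_2$-definable homeomorphism $\sigma(X_1)\rightarrow X_2$. A field isomorphism sends $K_1$-Zariski closed sets to $K_2$-Zariski closed sets, so $\sigma$ restricts to a homeomorphism $X_1\rightarrow\sigma(X_1)$. Therefore $f=\varphi\circ\sigma^{-1}$ is a composition of homeomorphisms, hence a homeomorphism, and it is $K_2$-definable by the isomorphism theorem.

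All the real work sits in Theorems \ref{T: definability of hyperplane images} and \ref{T: full relic}, so I do not expect a serious obstacle here. The only delicate points are matching the composition order with the statement, and making sure the transported unary predicates are genuinely definable, so that $\mathcal X_2^{hyp}$ really qualifies as a relic before Fact \ref{F: isomorphism theorem} is invoked.
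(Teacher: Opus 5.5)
Your proposal is correct and follows essentially the same route as the paper. The paper likewise transports $\mathcal X_1^{hyp}$ through $\varphi$, shows that $\mathcal X_2^{hyp}$ is a $K_2$-relic (via Theorem \ref{T: definability of hyperplane images} for $CH_2$, and preservation of constructible sets for the unary predicates), and then applies Fact \ref{F: isomorphism theorem}(1) and (2). Your explicit check that $f=\varphi\circ\sigma^{-1}$ is a homeomorphism, and your remark on the composition order, are reasonable details that the paper leaves implicit.
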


\begin{proof}
Endow $X_1$ with the structure $\mathcal{X}_1^{hyp}$ as in the previous seciton. By Theorem~8.2, $\mathcal X_1^{hyp}$ is a full $K_1$-relic. Let $\mathcal X_2^{hyp}$ be the image of $\mathcal X_1^{hyp}$ under $\varphi$ -- so $\mathcal X_2^{hyp}$ is a structure in the same language as $\mathcal X_1^{hyp}$, obtained by applying $\varphi$ to each basic relation in the language. Then the main point is:

\begin{claim}
    $\mathcal X_2^{hyp}$ is a $K_2$-relic.
\end{claim}
\begin{proof}
    We need to show that the image of each basic relation of $\mathcal X_1^{hyp}$ under $\varphi$ is $K_2$-definable. There are two types of basic relations in $\mathcal X_1^{hyp}$:
    \begin{enumerate}
        \item $CH_1\subset X_1^{n+1}$, whose image is $K_2$-definable by Theorem \ref{T: definability of hyperplane images}.
        \item Unary predicates for definable subsets of $X_1$, whose images are automatically $K_2$-definable since $\varphi$ is a homeomorphism (precisely, homeomorphisms preserve constructible subsets of $X^i$, which are the same as definable sets).
    \end{enumerate}
\end{proof}

Now by Fact \ref{F: isomorphism theorem}(1), it follows that $\mathcal X_2^{hyp}$ is also full in $K_2$. And then we have that $\varphi$ gives an isomorphism of full relics, so that Theorem \ref{T: reduction to definable case} follows from Fact \ref{F: isomorphism theorem}(2).
\end{proof}

\part{Proofs of the Theorems}

\section{Analysis of Generically Rational Definable Homeomorphisms}

Here we continue to work with a homeomorphism $\varphi:X_1\rightarrow X_2$ as in Convention \ref{Convention homeomorphism}. We are now ready to begin classifying the possibilities for $\varphi$. In this language, Theorem \ref{T: reduction to definable case} lets us assume that $K_1=K_2$ and $\varphi$ is definable in the language of fields -- so $\varphi$ is really a definable homeomorphism between varieties over the same field. For the next two sections, we study such maps.

Our first move is to apply quantifier elimination in algebraically closed fields. In particular, quantifier elimination gives that any definable function between irreducible varieties is the composition of a Frobenius power with a generically rational map. In this section, we ignore the Frobenius power and restrict our attention only to the generically rational case.\footnote{The reader may want to simplify the presentation by pointing out that since the Frobenius is a field automorphism $K_2\rightarrow K_2$, we may absorb it into our already set-aside field isomorphism $K_1\rightarrow K_2$, thereby concluding that our original homeomorphism is given by a field isomorphism composed with a generically rational map. This subtly different statement is actually not helpful to us, because eventually we will need to simultaneously analyze both $\varphi$ and $\varphi^{-1}$, which cannot be made  simultaneously generically rational up to field automorphisms.} Since we are working over the same field from now on, it will simplify notation to rewrite $X_1,X_2$ as $X$ and $Y$:

\begin{convention}\label{Con: generically rational setting}
\textbf{Throughout this section, we fix the following data:}
\begin{enumerate}
    \item $K$ is an uncountable algebraically closed field.
    \item $X, Y$ are irreducible quasi-projective varieties over $K$ of dimension $d\geq 2$.
    \item $\varphi : X \to Y$ is a homeomorphism which is definable in the language of fields.
    \item $r:X\dashrightarrow Y$ is a rational map agreeing with $\varphi$ on some dense open subset of $X$.
    \item $\nu_X:\widetilde X\rightarrow X$ and $\nu_Y:\widetilde Y\rightarrow Y$ are fixed normalizations of $X$ and $Y$. 
\end{enumerate}
\textbf{We work throughout in the structure $K$. Absorbing parameters, we assume that all data above is $\emptyset$-definable.}
\end{convention}

Our goal in this section is to prove:

\begin{theorem}\label{T: normalization square}
In the setting of Convention~\ref{Con: generically rational setting}, there is a morphism of $K$-varieties
\[
\tilde{\varphi} : \widetilde{X} \to \widetilde{Y}
\]
such that the following diagram commutes:
\[
\xymatrix{
\widetilde{X} \ar[r]^{\tilde{\varphi}} \ar[d]_{\nu_X} & \widetilde{Y} \ar[d]^{\nu_Y} \\
X \ar[r]_{\varphi} & Y
}
\]
\end{theorem}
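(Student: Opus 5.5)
The plan is to show that the composite $\varphi\circ\nu_X:\widetilde X\to Y$ is actually a morphism. Once that is known, the universal property of normalization finishes the proof. Indeed, $\widetilde X$ is normal and irreducible. The map $\varphi\circ\nu_X$ is dominant, since it is surjective ($\nu_X$ and $\varphi$ are both surjective). So it factors uniquely through $\nu_Y$, and this factorization gives $\tilde\varphi$ and makes the square commute. Thus the whole content lies in showing that $g:=\varphi\circ\nu_X$ agrees everywhere with the rational map $r\circ\nu_X:\widetilde X\dashrightarrow Y$ and that the latter extends to a morphism.

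\textbf{Step 1: a closed-graph argument.} Let $\Gamma\subset\widetilde X\times Y$ be the Zariski closure of the graph of $r\circ\nu_X$ restricted to the dense open set $U$ where $r=\varphi$; it is irreducible of dimension $d$. First I show that the graph of $g$ is closed in $\widetilde X\times Y$. Indeed, the graph of $\varphi$ is closed in $X\times Y$ for the product Zariski topology: it is the preimage of the graph of $\varphi^{-1}$ under a coordinate swap, and both $\varphi$ and $\varphi^{-1}$ are continuous, hence constructible with closed fibers. Some care is needed here, because the Zariski topology on $X\times Y$ is not the product topology. I would argue instead via definability. The graph $G_\varphi$ is constructible, so its closure $\overline{G_\varphi}$ is a closed set of dimension $d$ containing $G_\varphi$. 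It remains to show that $\overline{G_\varphi}\setminus G_\varphi$ is empty. Suppose $(x,y)$ lies in the closure but $y\neq\varphi(x)$. Choose a curve $C\subset X$ through $x$ that meets $U$ and along which the graph approaches $(x,y)$ (curve selection in algebraic geometry). Then $\varphi(C)$ is an irreducible curve, being the homeomorphic image of one, and its closure contains both $y$ and $\varphi(x)$. Continuity of $\varphi$ restricted to $C$ (a homeomorphism of curves, so cofinite-to-cofinite) should then force $y=\varphi(x)$. I expect this step to be the main obstacle, and I would first try to reduce it to the one-dimensional case, where homeomorphisms between irreducible curves are just bijections preserving finite sets.

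\textbf{Step 2: the graph is a morphism.} Granting Step 1, the graph $\Gamma_g$ of $g$ is closed. It contains the graph over $\nu_X^{-1}(U)$, so $\Gamma\subset\Gamma_g$. Since $\Gamma$ and $\Gamma_g$ have the same dimension and $\Gamma_g$ is a graph of a function, we get $\Gamma=\Gamma_g$. Now the projection $\Gamma\to\widetilde X$ is a bijective morphism onto the normal variety $\widetilde X$, and it is birational because it is an isomorphism over $\nu_X^{-1}(U)$. By Zariski's main theorem (in the form of Fact \ref{F: ZMT}, or more precisely its birational, characteristic-free form via \cite[Corollaire 18.12.13]{ega4}), a quasi-finite birational morphism onto a normal variety is an open immersion; being bijective, it is an isomorphism. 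Hence $g$ is the composite $\widetilde X\cong\Gamma\to Y$ and is a morphism.

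\textbf{Step 3: conclusion.} Apply the universal property of normalization to the dominant morphism $g$ from the normal variety $\widetilde X$ to obtain $\tilde\varphi$ with $\nu_Y\circ\tilde\varphi=\varphi\circ\nu_X$.
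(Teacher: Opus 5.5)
Your outer reduction (show $g=\varphi\circ\nu_X$ is a morphism, then apply the universal property of $\nu_Y$) is exactly the paper's final step. However, the proof that $g$ is a morphism, which is the entire content of the theorem, is missing.

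\textbf{Step 1.} This step is only sketched, and the mechanism you propose cannot work. Once you restrict to a curve $C$, you have thrown away all information, because every bijection between irreducible curves is a homeomorphism. In fact the one-dimensional version of your claim is false. The definable homeomorphism of $\mathbb A^1$ that swaps $0$ and $1$ and fixes every other point agrees generically with the identity, yet its graph is not closed: the closure contains $(0,0)$ and $(1,1)$. So no argument using only the continuity of $\varphi|_C$ can force $y=\varphi(x)$. The hypothesis $\dim\geq 2$ and the continuity of $\varphi$ on the whole variety must enter essentially.

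The closedness you want is true a posteriori, since the graph of $\varphi$ is the image of the graph of $\tilde\varphi$ under the finite map $\nu_X\times\nu_Y$. But proving it directly is essentially the whole theorem.

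The paper handles this on $\widetilde X$ with the induced rational map $\rho:\widetilde X\dashrightarrow\overline Y$, in three steps.
\begin{enumerate}
\item At a point $x$ with $\dim(x)\geq d-1$, $\rho$ is defined. If $\rho(x)\neq z=f(x)$, the fiber of $\overline\Gamma\subset\overline{\widetilde X}\times\overline Y$ over $z$ is finite. By Fact~\ref{F: curve through tuple} (this is where $\dim\geq 2$ is used), one takes an irreducible $V$ through $x$ and a generic point that avoids that fiber. Continuity of $f$ gives $z\in\overline{\rho(V\cap U)}$, and properness then produces a point of $V$ in the fiber, a contradiction (Lemma~\ref{L: agreement in codim 1}).
\item Normality extends $\rho$ across codimension $2$ into affine charts $f^{-1}(B)\to B$ (Lemma~\ref{L: rational map defined everywhere}).
\item The same avoidance trick, now using that $f$ is a closed map, gives $\rho=f$ everywhere (Lemma~\ref{L: two maps agree everywhere}).
\end{enumerate}

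\textbf{Step 2.} There is a second, independent gap here, even granting Step 1. The facts that $\Gamma\subseteq\Gamma_g$, $\dim\Gamma=\dim\Gamma_g$, and $\Gamma_g$ is closed and the graph of a function do not imply $\Gamma=\Gamma_g$. The set $\Gamma_g$ may have extra lower-dimensional components lying over points of $\widetilde X$ outside the image of $\Gamma\to\widetilde X$, i.e.\ points where all limits of $r\circ\nu_X$ lie in $\overline Y\setminus Y$. For instance, the map on $\mathbb A^2$ given by $(s,t)\mapsto(s,1/t)$ for $t\neq 0$ and $(s,0)\mapsto(s,0)$ has a closed, reducible graph.

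What you actually need is surjectivity of $\Gamma\to\widetilde X$, and that is precisely the content of Lemmas~\ref{L: rational map defined everywhere} and~\ref{L: two maps agree everywhere}. Once that is known, your Zariski's Main Theorem argument identifying $\Gamma$ with $\widetilde X$ is correct, but it is no longer needed.
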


\subsection{Two maps}

Throughout this section, we fix a projective closure $\overline{Y}$ of $Y$, and we consider two maps from $\widetilde{X}$ to $\overline{Y}$.

First, let
\[
f:= \varphi \circ \nu_X : \widetilde{X} \to Y \hookrightarrow \overline{Y}.
\]

Second, let
\[
\rho : \widetilde{X} \dashrightarrow \overline{Y}
\]
be the rational map induced by $r$.

Since $\widetilde{X}$ is normal and $\overline{Y}$ is projective, the map $\rho$ extends to a morphism on an open subset of $\widetilde{X}$ whose complement has codimension at least $2$ (\cite[Proposition II.6.1]{Hartshorne}).

Our goal is to show that $\rho$ extends to all of $\widetilde X$ and agrees with $f$ everywhere. This will give that $\varphi\circ\nu_
X=f$ is a morphism from $\widetilde X$ to $Y$, which (by normality of $\widetilde X$) must factor through $\tilde Y$, thereby giving Theorem \ref{T: normalization square}. This section is inspired by similar material in \cite[Section 5.1]{KLOS} -- the main difference being that we need to work a bit harder to deal with the possibility that $X,Y$ are not normal.\footnote{The goal at that point of \cite{KLOS} was to prove their Proposition 5.1.1, which is a bit different from our Theorem \ref{T: normalization square} -- but the statements do have a similar structure, and the proofs are very similar.} More precisely, Lemmas \ref{L: agreement in codim 1} and \ref{L: two maps agree everywhere} below were inspired by \cite[Lemma 5.1.3]{KLOS}, and Lemma \ref{L: rational map defined everywhere} was inspired by \cite[Lemma 5.1.4]{KLOS}.

\subsection{Subvariety through a fixed tuple} We begin with an algebraic geometry fact that will be very useful for forcing the equality of $f$ and $\rho$. This is surely well-known, but we include a proof. Fact \ref{F: curve through tuple} is the analog in our argument of \cite[Lemma 5.1.2]{KLOS}:

\begin{fact}\label{F: curve through tuple} Let $W$ be any irreducible quasi-projective variety of dimension at least 2, and let $A,B$ be disjoint finite subsets of $W$. Then there is a closed irreducible subvariety $V\subset W$ passing through each point of $A$ but passing through no points of $B$.
\end{fact}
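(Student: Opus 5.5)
We would prove Fact \ref{F: curve through tuple} by a hypersurface-section argument in a projective embedding, cutting $W$ down one dimension at a time until we reach a curve. Fix a locally closed embedding $W\subset\mathbb P^N(K)$, and let $\overline W$ denote its projective closure. It suffices to produce a closed irreducible curve $C\subset W$ with $A\subset C$ and $C\cap B=\emptyset$. Indeed, if $A=\emptyset$ we may take $V$ to be any single point of $W\setminus B$, so only the case $A\neq\emptyset$ needs an argument.

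The plan is to induct on $\dim W\geq 2$. We choose a hypersurface $H\subset\mathbb P^N$ of large degree $e$ that contains every point of $A$ and no point of $B$. We further require that $H$ meets $\overline W$ properly and that $H\cap W$ is irreducible.

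\begin{itemize}
\item The hypersurfaces of degree $e$ through $A$ form a linear system $\Lambda$. For $e\gg0$, the points of $A\cup B$ impose independent conditions, so for each $b\in B$ the members of $\Lambda$ through $b$ form a proper linear subspace. Hence the generic member of $\Lambda$ avoids $B$, and it does not contain $\overline W$.
\item For irreducibility we apply Bertini's irreducibility theorem (the version cited earlier, \cite[Theorem 6.3(4)]{BertiniIrreducibility}) to the morphism $\operatorname{Bl}_A\overline W\to\mathbb P(\Lambda^\vee)$ induced by $\Lambda$. This is the step where we use $\dim W\geq2$.
\item For $e$ large, the map $\operatorname{Bl}_A\overline W\to\mathbb P(\Lambda^\vee)$ separates points away from $A$ and is generically finite. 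So its image has dimension $\dim W\geq 2$, and Bertini gives that the preimage of a generic hyperplane is irreducible. That preimage is the strict transform of $H\cap\overline W$.
\item It remains to rule out an extra component supported inside the finite set $A$. Any component of the full intersection has dimension $\geq\dim W-1\geq1$ when $H$ is a hypersurface section, so no such component can exist. Thus $H\cap\overline W$ is irreducible of dimension $\dim W-1$, contains $A$, and avoids $B$.
\end{itemize}

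Intersecting with $W$ then gives a closed irreducible subvariety $W'=H\cap W$ of $W$ with $A\subset W'$ and $W'\cap B=\emptyset$. If $\dim W'\geq2$ we apply the inductive step again. The induction stops once we reach dimension $1$ (when $\dim W=2$, one step suffices). Since $W'$ is closed in $W$, a closed subvariety of $W'$ is closed in $W$, so the final output works.

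The main obstacle is the Bertini step: we need irreducibility for a linear system that has base points at $A$. The plan is to handle this via the blow-up, or via the fact that the base locus is finite while the image has dimension at least $2$.

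If that step proves troublesome, there is a fallback, since only the conclusion of the Fact is needed. We would take the union of lines (or of any irreducible curves in $\overline W$) joining points of $A$ pairwise, chosen to avoid the finite set $B$. One would then replace this union by a single irreducible curve using a deformation or smoothing argument. This route is harder to make rigorous, so the hypersurface approach is the primary plan.
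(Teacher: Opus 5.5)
Your proposal is correct and follows essentially the same route as the paper: a generic high-degree hypersurface through $A$ (equivalently, a generic hyperplane through $A$ after a Veronese re-embedding), with independence of conditions ensuring it misses $B$ and Bertini's theorem giving irreducibility of the section. Your extra care with the base locus (resolving the linear system, or restricting to $\overline W\setminus A$, and then using Krull's principal ideal theorem to show no component of $H\cap\overline W$ is an isolated point of $A$) fills in a step the paper passes over. Your induction down to a curve is unnecessary, since a single codimension-one section already gives what the Fact asks for.
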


\begin{proof}
    As in the proof of Theorem \ref{T: full relic}, we use the Veronese embedding. So suppose $W$ is embedded in $\mathbb P^n$, and choose a degree $d$ Veronese embedding of $\mathbb P^n$ into $\mathbb P^m$ (so $m=\binom{n+d}{d}-1$). It is well-known that under this embedding, any $d+1$ distinct points of $\mathbb P^n$ map to a linearly independent set in $\mathbb P^m$ (i.e. the corresponding $d$ points in $\mathbb P^m$ impose independent conditions on hyperplanes in $\mathbb P^m$). Thus, choosing $d$ large enough, we may assume the entire image of $A\cup B$ is independent in $\mathbb P^m$. So, from now on, let us just assume $W$ is already embedded in $\mathbb P^m$, and the points of $A\cup B$ impose independent conditions on hyperplanes in $\mathbb P^m$.

    Now let $H\leq\mathbb P^m$ be generic among all hyperplanes passing through each point of $A$. Since the points of $A\cup B$ impose independent conditions, $H$ does not pass through any point of $B$. Finally, set $V=H\cap W$. By Bertini's irreducibility theorem (applied to the linear system of hyperplanes through $A$), $V$ is irreducible. Thus, $V$ satisfies all of the desired conditions.
\end{proof}

\subsection{Codimension 1 Points}

We now return to the goal of identifying $f$ and $\rho$ everywhere. We first achieve this goal outside a codimension 2 locus. Recall that we set $d=\dim(X)=\dim(Y)$.

\begin{lemma}\label{L: agreement in codim 1}
Let $x \in \widetilde{X}$ be such that
\[
\dim(x) \geq d - 1.
\]
Then $\rho$ is defined at $x$, and
\[
\rho(x) = f(x).
\]
\end{lemma}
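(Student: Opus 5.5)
The plan is to split according to whether $\dim(x)=d$ or $\dim(x)=d-1$. In each case I first show that $x$ lies in the domain of $\rho$, and then I pin down $\rho(x)$ using the continuity of $f=\varphi\circ\nu_X$. The key topological input is that $f$ is continuous, being a morphism followed by a homeomorphism. Hence for every $S\subset\widetilde X$ we have $f(\overline S)\subset\overline{f(S)}$, where the closure on the right is taken in $\overline Y$. Moreover $f$ is closed with finite fibres, because $\nu_X$ is finite and $\varphi$ is a homeomorphism.

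\emph{Definedness.} Let $U'\subset\widetilde X$ be a dense open subset on which $\nu_X$ is an isomorphism onto an open set where $r=\varphi$; on $U'$ we then have $\rho=f$. We may take $U'$ to be $\emptyset$-definable, since all data is. The complement $N$ of the domain of $\rho$ is also $\emptyset$-definable and has codimension $\geq 2$ by \cite[Proposition II.6.1]{Hartshorne}. Hence no point $x$ with $\dim(x)\geq d-1$ lies in $N$, so $\rho$ is defined at $x$. If $\dim(x)=d$, then $x$ avoids the $\emptyset$-definable proper closed set $\widetilde X-U'$, so $x\in U'$ and $\rho(x)=f(x)$ immediately.

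\emph{The codimension 1 case.} Assume $\dim(x)=d-1$, so $x$ is a generic point of an $\acl(\emptyset)$-definable irreducible divisor $D\subset\widetilde X$, and suppose $\rho(x)\neq f(x)$. First I would show the following: if $V\subset\widetilde X$ is closed irreducible with $x\in V$ and $V\cap U'$ dense in $V$, then $\rho(x)\in f(V)$. Indeed, $\rho(x)\in\overline{\rho(V\cap U'\cap\operatorname{dom}\rho)}$ by continuity of the morphism $\rho$. This set equals $\overline{f(V\cap U')}$, which is contained in $f(V)$ because $f$ is closed.

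Now let $B=f^{-1}(\rho(x))$, a finite set not containing $x$. Applying Fact \ref{F: curve through tuple} repeatedly, I would choose closed irreducible subvarieties $V_1,\dots,V_m$ of $\widetilde X$ with the following properties: each contains $x$, none contains a point of $B$, and each is a generic hyperplane section in a Veronese embedding through $x$, so in particular none is contained in $D$ and each meets $U'$ densely. For $V_1$ alone this is exactly Fact \ref{F: curve through tuple} with $A=\{x\}$. To ensure density I would take the hyperplane generic over a parameter set defining $D$, $U'$ and $B$. Then the displayed observation gives $\rho(x)\in f(V_1)$. Since $f(V_1)\cap\{\rho(x)\}$ has preimage in $V_1$ contained in $B\cap V_1=\emptyset$, this is a contradiction. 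So in fact a single $V$ suffices, and $\rho(x)=f(x)$.

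The main obstacle is justifying that a hyperplane section $V$ through $x$ avoiding $B$ can be chosen with $V\cap U'$ dense in $V$. In particular $V$ must not lie inside $\widetilde X-U'$, and it must not be degenerate relative to $D$. I would handle this by taking $V=H\cap\widetilde X$ with $H$ generic among hyperplanes through $x$ in a high-degree Veronese embedding, where the points $\{x\}\cup B$ impose independent conditions as in the proof of Fact \ref{F: curve through tuple}. Irreducibility of $V$ comes from Bertini. Non-containment of $V$ in the proper closed set $\widetilde X-U'$ follows by dimension: both $V$ and every component of $\widetilde X-U'$ have dimension $d-1$, and a generic $H$ through $x$ does not contain any fixed component of $\widetilde X-U'$, since that component is not a point.
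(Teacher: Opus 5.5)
\textbf{There is a genuine gap in your codimension-one step.} You write $\rho(x)\in\overline{f(V\cap U')}\subset f(V)$ ``because $f$ is closed'', with closures taken in $\overline Y$, as you stipulated earlier. But $f$ is closed only as a map $\widetilde X\to Y$. As a map into $\overline Y$ it is not closed: already $f(\widetilde X)=Y$ need not be closed in $\overline Y$. So you only get $\overline{f(V\cap U')}\cap Y\subset f(V)$, and your argument works only when $\rho(x)\in Y$.

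Nothing established at this point rules out $\rho(x)\in\overline Y\setminus Y$. In that case $B=\emptyset$ and no contradiction arises. The fact that $\rho$ lands in $Y$ is Lemma \ref{L: rational map defined everywhere}, which the paper derives \emph{from} the present lemma, so you cannot use it here. Once it is known, your argument is exactly the paper's proof of Lemma \ref{L: two maps agree everywhere}. A warning sign is that your codimension-one argument uses $\dim(x)\geq d-1$ only to get definedness of $\rho$ at $x$.

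\textbf{The fix is to run the limit argument in the opposite direction, using properness of $\rho$'s graph instead of closedness of $f$.}
\begin{itemize}
\item Continuity of $f$ gives $f(x)\in\overline{f(V\cap U')}=\overline{\rho(V\cap U')}$.
\item Let $\overline\Gamma$ be the closure of the graph of $\rho$ in the projective variety $\overline{\widetilde X}\times\overline Y$, and take $V$ closed in $\overline{\widetilde X}$. Then the projection of $\overline\Gamma\cap(V\times\overline Y)$ to $\overline Y$ is closed and contains $\rho(V\cap U')$, hence contains $f(x)$. So some $w\in V$ has $(w,f(x))\in\overline\Gamma$.
\item This is where the hypothesis genuinely enters. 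Since $f$ is $\emptyset$-definable and finite-to-one, $\dim(f(x))=\dim(x)\geq d-1$. Hence the fibre of $\overline\Gamma$ over $f(x)$ is a finite set $\{w_1,\dots,w_k\}$. Indeed, a positive-dimensional fibre would contain a generic point of $\overline\Gamma$, while $\overline\Gamma\to\overline Y$ is generically finite.
\item Also $x\notin\{w_1,\dots,w_k\}$. Over $\operatorname{dom}\rho$, the set $\overline\Gamma$ is just the graph of $\rho$, because $\overline Y$ is separated, and $\rho(x)\neq f(x)$.
\item Choose $V$ via Fact \ref{F: curve through tuple} in $\overline{\widetilde X}$, passing through $x$ and a generic point of $\widetilde X$ and avoiding the $w_i$. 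Then $w\in V$ must be some $w_i$, which is the contradiction.
\end{itemize}

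Your treatment of definedness, of the case $\dim(x)=d$, and of the density of $V\cap U'$ in $V$ is fine.
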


\begin{proof}
That $\rho$ is defined at $x$ is automatic from $\dim(x)\geq d-1$, since the indeterminacy locus of $\rho$ has dimension at most $d-2$. We show that $\rho(x)=f(x)$.

Let $\overline{\widetilde{X}}$ be a projective closure of $\widetilde{X}$, let $\Gamma \subset \widetilde{X} \times \overline{Y}$ be the graph of $\rho$, and let $\overline{\Gamma}$ be the closure of $\Gamma$ in $\overline{\widetilde{X}} \times \overline{Y}$. Then $\overline{\Gamma}$ is an irreducible projective variety of dimension $d$.

Let $y=\rho(x)$ and $z=f(x)$, and suppose toward a contradiction that $y\neq z$. Note that $f$ is finite-to-one (since $\nu_X$ and $\varphi$ are); so $x$ and $z$ are interalgebraic, and thus $\dim(z)\geq d-1$. It now follows by dimension considerations that the fiber of $\overline{\Gamma}\rightarrow\overline Y$ over $z$ is finite (otherwise a generic point of $\overline\Gamma$ would belong to an infinite fiber, a contradiction). So let $w_1,\dots,w_k \in \overline{\widetilde{X}}$ be the points such that $(w_i,z)\in \overline{\Gamma}$. Since $\rho(x)$ is defined and takes value $y$, we have $x\notin\{w_1,\dots,w_k\}$.

Now by Fact \ref{F: curve through tuple}, we can choose an irreducible closed subvariety $V\subset \overline{\widetilde{X}}$ passing through $x$, avoiding $\{w_1,\dots,w_k\}$, and containing a generic point $u\in \widetilde{X}$. Since $u$ is generic, $\rho$ is defined and agrees with $f$ in some open set $U\subset\widetilde X$ containing $u$. So $U$ contains a non-empty open subset of $V$, and since $V$ is irreducible, this means $V=\overline{V\cap U}$. In particular, $x\in\overline{V\cap U}$, and thus by continuity we get $$z=f(x)\in\overline{f(V\cap U)}=\overline{\rho(v\cap U)}.$$ So if we let $\overline{\Gamma}_V=\Gamma\cap(V\times\overline Y)$ and $\pi:\Gamma\rightarrow\overline Y$ the projection, then $z\in\overline{\pi(\overline{\Gamma}_V)}$. But by projectivity, $\pi(\overline{\Gamma}_V)$ is already closed, so in fact $z\in\pi(\overline{\Gamma}_V)$. Thus there is some $w\in V$ with $(w,z)\in\overline\Gamma$. So $w=w_i$ for some $i$, contradicting that each $w_i\notin V$. 
\end{proof}

\subsection{Extending the map} We now show that $\rho$ extends to a morphism, using a similar argument to \cite[Lemma 5.1.4]{KLOS}. Recall that a rational function on a normal variety always extends to a regular function, provided it is defined outside a codimension 2 locus (\cite[Theorem II.6.3A(ii)]{Hartshorne}). In particular, a rational map from a normal variety to an affine variety always extends to a morphism, provided it is defined outside a codimension 2 locus. Using this, we show:

\begin{lemma}\label{L: rational map defined everywhere}
$\rho$ is defined on all of $\widetilde X$, and all values of $\rho$ on $\widetilde X$ lie in $Y$. 
\end{lemma}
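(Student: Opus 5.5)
We work locally on $Y$ and show that $\rho$ extends near each point $x\in\widetilde X$, with value $f(x)$. Fix $x\in\widetilde X$ and set $z=f(x)\in Y$. Choose an affine open neighbourhood $U_z\subset Y$ of $z$; by quasi-projectivity we may take $U_z=Y\setminus H$ for a hypersurface section $H$ of $\overline Y$ not passing through $z$. Since $f$ is continuous, $f^{-1}(U_z)$ is an open neighbourhood $N$ of $x$ in $\widetilde X$, and $N$ is normal. The goal is to show that $\rho|_N$ is a rational map $N\dashrightarrow U_z$ defined outside codimension $2$. Then the quoted extension result (\cite[Theorem II.6.3A(ii)]{Hartshorne}, applied coordinatewise to the affine $U_z$) extends it to a morphism $N\to U_z$.

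\textbf{Step 1: $\rho$ lands in $U_z$ on the relevant locus.} By Lemma \ref{L: agreement in codim 1}, at every point $w\in N$ with $\dim(w)\geq d-1$, the map $\rho$ is defined and $\rho(w)=f(w)\in U_z$. The model-theoretic genericity here is over the countable parameter set, and the lemma is applied after adding parameters for $z$ and $H$. Let $D\subset N$ be the (closed, parameter-definable) locus where $\rho$ is undefined or takes values in $\overline Y\setminus U_z$. Every point of $D$ has dimension at most $d-2$ over the parameters, so $D$ contains no generic point of any closed set of dimension $\geq d-1$. Hence $\dim D\leq d-2$. Thus $\rho|_{N}$, viewed as a map to $U_z$, is defined outside a codimension $2$ subset, and it extends to a morphism $\rho_N:N\to U_z$. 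Since $\rho$ already extends uniquely wherever defined into the separated $\overline Y$, this extension is compatible with $\rho$. In particular $\rho$ is defined at $x$ with value in $Y$.

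\textbf{Step 2: gluing.} The neighbourhoods $N$ cover $\widetilde X$. The extensions agree on overlaps because they agree with $\rho$ on a dense open set and the targets are separated. So $\rho$ is a morphism $\widetilde X\to Y$. The agreement $\rho=f$ everywhere is presumably the subsequent lemma (Lemma \ref{L: two maps agree everywhere}), so it is not needed here.

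\textbf{Main obstacle.} The delicate point is Step 1's passage from the pointwise, genericity-based Lemma \ref{L: agreement in codim 1} to the geometric statement that the bad locus $D$ has codimension $\geq 2$. This requires that $D$ be definable over a countable parameter set $A$ over which Lemma \ref{L: agreement in codim 1} still applies. That holds because the lemma's proof works over any parameters defining the data, so we absorb $z$ and $H$ into $A$. It also uses that a closed $A$-definable set of dimension $\geq d-1$ has points of dimension $\geq d-1$ over $A$. If one prefers to avoid this, an alternative is to take $H$ generic over the parameters and $z$, and to use that the indeterminacy locus already has codimension $2$ by \cite[Proposition II.6.1]{Hartshorne}. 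Then one only needs $\rho^{-1}(H\cap \overline Y)\cap N$ to have no divisorial component, which again follows from Lemma \ref{L: agreement in codim 1} at the generic points of such a divisor.
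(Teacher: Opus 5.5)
Your argument is correct and is essentially the paper's proof: pick an affine open $B\ni f(x)$, observe via Lemma \ref{L: agreement in codim 1} that on the normal open set $f^{-1}(B)$ the map $\rho$ is defined with values in $B$ off a codimension-$2$ locus, and extend by normality. Your extra care in getting $\dim D\le d-2$ from the pointwise genericity statement is fine, and even simpler than you suggest: since $\dim(w/A)\le\dim(w)$, the lemma applies as stated without re-running it over new parameters.

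One small correction: $Y\setminus H$ is affine only if the hypersurface $H$ is chosen to contain $\overline Y\setminus Y$ (e.g.\ $\mathbb P^2$ minus a point, minus a line, is not affine). So your concrete choice of $U_z$ and the ``generic $H$'' alternative need that adjustment, whereas taking an arbitrary affine neighbourhood, as in your first sentence, suffices.
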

\begin{proof}
    Let $x\in\widetilde X$, and let $y=f(x)\in Y$. Let $B\subset Y$ be an affine open set containing $y$, and let $A=f^{-1}(B)$. So $A$ is open and contains $x$. By Lemma \ref{L: agreement in codim 1}, $\rho$ is defined and agrees with $f$ outside a codimension 2 locus in $A$. In particular, outside a codimension 2 locus, $\rho$ is defined and takes in $B$. But $A$ is normal and $B$ is affine, so it now follows that $\rho$ extends to a morphism $A\rightarrow B$. Thus $\rho(x)$ is defined and belongs to $Y$, which proves the lemma. 
\end{proof}

\subsection{Agreement everywhere}

Finally, we show that $f$ and $\rho$ agree everywhere. Now that we know $\rho$ takes values in $Y$ (Lemma \ref{L: rational map defined everywhere}), we no longer need to view $f$ and $\rho$ as maps to $\overline Y$. Thus, from now on, we view $f$ and $\rho$ as maps $\widetilde X\rightarrow Y$. In this setup, it follows that $f$ is a \textit{closed} map: indeed, normalizations are finite (\cite[\href{https://stacks.math.columbia.edu/tag/0BXR}{Lemma 0BXR}]{stacks-project}), thus closed (\cite[\href{https://stacks.math.columbia.edu/tag/01WJ}{Lemma 01WJ}]{stacks-project} and \cite[\href{https://stacks.math.columbia.edu/tag/01WM}{Lemma 01WM}]{stacks-project}), and $\varphi$ is automatically closed; so $f=\varphi\circ\nu_X$ is also closed. Now we show:

\begin{lemma}\label{L: two maps agree everywhere}
    $f$ and $\rho$ agree on all of $\widetilde X$.
\end{lemma}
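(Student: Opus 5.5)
We argue by contradiction, in the spirit of Lemma \ref{L: agreement in codim 1} but now using that both $f$ and $\rho$ are genuine maps $\widetilde X\rightarrow Y$, that $\rho$ is a morphism (Lemma \ref{L: rational map defined everywhere}), and that $f$ is closed. Suppose $x\in\widetilde X$ has $y:=\rho(x)\neq z:=f(x)$. Both maps are finite-to-one: $f$ because $\nu_X$ is finite and $\varphi$ is bijective, and $\rho$ because it agrees with $f$ on a dense open set, which forces it to be generically finite. We will need that $\rho^{-1}(z)$ is finite. This holds if $\rho$ is quasi-finite; otherwise we run the argument below with a subvariety avoiding the finitely many relevant points, shrinking if necessary. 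The main case is when $\rho^{-1}(z)=\{w_1,\dots,w_k\}$ is finite and $x\notin\{w_i\}$.

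By Fact \ref{F: curve through tuple}, applied to $\widetilde X$ (irreducible, of dimension $d\geq 2$), choose a closed irreducible subvariety $V\subset\widetilde X$ that contains $x$, avoids $w_1,\dots,w_k$, and meets the dense open set $U$ on which $\rho=f$. The subvariety produced by that proof is a generic member of a linear system through $x$, so its generic point can be taken generic in $\widetilde X$. Then $V\cap U$ is dense in $V$.

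Now $f$ is continuous, so
\[z=f(x)\in \overline{f(V\cap U)}=\overline{\rho(V\cap U)}.\]
To conclude, we need $\overline{\rho(V\cap U)}\subset\rho(V)$. This would follow if $\rho|_V$ were closed, but a morphism to the quasi-projective $Y$ need not be closed. Here the closedness of $f$ does the work: $f(V)$ is closed and irreducible and contains $f(V\cap U)=\rho(V\cap U)$. Since $\rho(V\cap U)$ is dense in $\rho(V)$ by continuity, we get $\overline{\rho(V)}=\overline{f(V)}=f(V)$.

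This alone does not place $z$ in $\rho(V)$. To get that, we compactify as in Lemma \ref{L: agreement in codim 1}. Let $\overline\Gamma$ be the closure of the graph of $\rho$ in $\overline{\widetilde X}\times\overline Y$. Since $z\in\overline{\rho(V\cap U)}$, properness gives a point $(w,z)\in\overline\Gamma$ with $w\in\overline V$, the closure in $\overline{\widetilde X}$. We must rule out $w$ lying in the boundary $\overline{V}\setminus V$, and this is the main obstacle.

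We plan to use closedness of $f$ as follows. Because $f$ is finite-to-one and closed, $f$ is proper-like: points escaping to the boundary of $\widetilde X$ map to points escaping $Y$. Concretely, set $V^\circ=V\cap\rho^{-1}(B)$ for an affine open $B\ni z$. Then $f^{-1}(B)$ is open, contains $x$, and $\rho=f$ on a dense part of it. We would like to replace $\widetilde X$ by the open set $A=f^{-1}(B)$, as in Lemma \ref{L: rational map defined everywhere}. On $A$ we have $f(A)\subset B$ and $\rho(A\cap U)\subset B$, and restricted to $A$ the map $f$ is finite onto its image, since it is a base change of a finite morphism composed with the bijection $\varphi$.

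For the final step we argue valuatively. Take a curve $C\subset V$ through $x$ meeting $U$. On the normalization $\widetilde C$, the two maps $f|_C$ and $\rho|_C$ into $Y$ agree on a dense open set. The map $\rho|_C$ is a morphism, and $f|_C$ is continuous and closed with finite fibers. For a point $\tilde x\in\widetilde C$ over $x$, the local ring is a DVR, and both $\rho(\tilde x)$ and $f(\tilde x)$ are limits of the same values along the punctured neighborhood. Since $Y$ is separated, limits along a DVR are unique: continuity of $f$ in the Zariski topology forces $f(\tilde x)$ into the closure of $f(C\setminus\{\text{finitely many points}\})$, and that closure is $f(C)$. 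The remaining point to check carefully is that a Zariski-continuous finite map on a curve agreeing with a morphism off finitely many points agrees everywhere. This holds because the closure of each small punctured neighborhood contains only its limit point, since curves have cofinite topology. We expect this to be the delicate point and would verify it first, reducing entirely to curves via Fact \ref{F: curve through tuple}.
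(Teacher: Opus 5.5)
There is a genuine gap, and it comes from orienting the argument the wrong way. You use continuity of $f$ to put $f(x)$ in $\overline{f(V\cap U)}=\overline{\rho(V\cap U)}$. To conclude $f(x)\in\rho(V)$ you would then need $\rho$ to be closed, or proper over a neighborhood of $f(x)$, and nothing available gives this. Your setup also needs $\rho^{-1}(f(x))$ to be finite, which does not follow from $\rho$ being generically finite.

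Neither proposed repair works. The compactification step leaves the boundary points unresolved, as you note yourself. The curve/valuative step is invalid for two reasons:
\begin{itemize}
\item $f$ is only a Zariski-continuous map on closed points, not a morphism, so DVR limits and separatedness of $Y$ say nothing about $f(\tilde x)$.
\item The statement you single out as the delicate point is false. On an irreducible curve the Zariski topology is cofinite, so the closure of any punctured neighborhood is the whole curve. Any bijection of a curve that swaps two points and fixes the rest is therefore a closed, finite-to-one homeomorphism that agrees with the identity morphism off two points.
\end{itemize}
So no argument confined to a single curve through $x$ can work. The hypothesis $d\geq 2$ enters precisely through the freedom to choose $V$ avoiding prescribed points.

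The fix is to swap the roles of $f$ and $\rho$, which is what the paper does. Let $w_1,\dots,w_k$ be the $f$-preimages of $\rho(x)$. There are finitely many because $f$ is finite-to-one, and $x$ is not among them because $f(x)\neq\rho(x)$. Choose $V$ by Fact \ref{F: curve through tuple} through $x$ and a generic $u$, avoiding the $w_i$. Since $\rho$ is a morphism (Lemma \ref{L: rational map defined everywhere}) and $x\in V=\overline{V\cap U}$, continuity of $\rho$ gives $\rho(x)\in\overline{\rho(V\cap U)}=\overline{f(V\cap U)}\subseteq f(V)$. The last set is closed because $f$ is a closed map and $V$ is closed. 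Hence $\rho(x)=f(v)$ for some $v\in V$, so $v$ is one of the $w_i$, a contradiction.

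You already had the key inclusion $\overline{\rho(V)}\subseteq f(V)$ in hand; you only needed to apply it to $\rho(x)$ rather than $f(x)$. This orientation also removes any need for finiteness of the fibers of $\rho$ or for compactifying.
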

\begin{proof}
    Let $x\in\widetilde X$, and let $y=f(x)$ and $z=\rho(x)$. Toward a contradiction, assume $y\neq z$. Noting that $f$ is finite-to-one (as the composition of two finite-to-one maps), let $w_1,...,w_k$ be the preimages of $z$ under $f$. Note that $x\notin\{w_1,...,w_k\}$ by assumption (otherwise $f(x)=z$).

    We now essentially repeat the argument of Lemma \ref{L: agreement in codim 1}, switching the roles of $f$ and $\rho$ and using the closedness of $f$ in lieu of passing to projective closures.
    
    First, by Fact \ref{F: curve through tuple}, we can let $V\subset\widetilde X$ be a closed irreducible subvariety containing $x$ and a generic $u\in\widetilde X$ but not containing any $w_i$. As in Lemma \ref{L: agreement in codim 1}, we argue the following:
    \begin{itemize}
        \item $f$ and $\rho$ agree on some open set $U$ around $x$.
        \item $x\in\overline{V\cap U}$.
        \item $\rho(x)=z\in\overline{\rho(V\cap U)}=\overline{f(V\cap U)}\subset\overline{f(V)}$.
        \item $f(V)$ is already closed, and thus $z\in f(v)$, contradicting that $w_i\notin V$ for each $i$. (This time we show $f(V)$ is closed using that $f$ is closed and $V$ is closed, rather than projectivity).
    \end{itemize}
\end{proof}

Finally:

\begin{proof}[Proof of Theorem \ref{T: normalization square}] By Lemma \ref{L: two maps agree everywhere}, $f=\varphi\circ\nu_X$ is given by a morphism $\rho:\widetilde X\rightarrow Y$. By the universal property of normalizations, this morphism factors through $\widetilde Y$, which gets the desired commuting square to prove Theorem \ref{T: normalization square}.
\end{proof}

\section{The Main Theorem}

To prove Theorem \ref{T: main}, our final task is to strengthen \ref{T: normalization square} by showing that the lifted map on normalizations is a universal homeomorphism. The key idea is to apply Theorem \ref{T: normalization square} both $\varphi$ and $\varphi^{-1}$, and deduce that the resulting morphism between normalizations must be bijective. Unfortunately, Theorem \ref{T: normalization square} only works in the generically rational case, and in general one cannot simultaneously arrange that $\varphi$ and $\varphi^{-1}$ are rational (that is, in positive characteristic we cannot factor an arbitrary homeomorphism into a field automorphism and a birational homeomorphism). So before we proceed, we need a version of Theoren \ref{T: normalization square} not requiring generic rationality:

\subsection{Lifting in the non-rational case}

\begin{lemma}\label{L: nonrational commuting square}
Let $K$ be an uncountable algebraically closed field, and let $\varphi:X\rightarrow Y$ be a $K$-definable homeomorphism between irreducible quasiprojective varieties of dimension $d\geq 2$ over $K$. Let $\nu_X:\widetilde{X}\rightarrow X,\nu_Y:\widetilde{Y}\rightarrow Y$ denote the normalizations of $X$ and $Y$. Let $\Gamma\subset X\times Y$ be the graph of $\varphi$, and let $\widetilde\Gamma$ be the preimage of $\Gamma$ in $\widetilde X\times\widetilde Y$. Then $\widetilde\Gamma$ contains an irreducible closed $d$-dimensional subvariety $V\subset\widetilde X\times\widetilde Y$ projecting bijectively to $\widetilde X$. 
\end{lemma}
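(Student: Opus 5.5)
Plan: reduce to the generically rational case of Theorem \ref{T: normalization square} by peeling off a Frobenius power, and take $V$ to be the graph of the resulting lifted map, twisted back by Frobenius.

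First, by quantifier elimination, $\varphi$ agrees on a dense open subset of $X$ with $F^m\circ r$. Here $F$ is the Frobenius on the ambient projective space of $Y$ (with $m\ge 0$, and $F=\mathrm{id}$ in characteristic zero), and $r:X\dashrightarrow Y'$ is a rational map to the variety $Y':=F^{-m}(Y)$. Since the Frobenius is a field automorphism, $Y'$ is again an irreducible quasi-projective variety of dimension $d$. The map $F^m:Y'\to Y$ is a definable homeomorphism and a bijective morphism, and it is a universal homeomorphism. Its normalization lift $\widetilde{Y'}\to\widetilde Y$ is obtained by applying the field automorphism to $\widetilde{Y'}$, so it is bijective and compatible with $\nu_Y$ and $\nu_{Y'}$ (normalization commutes with field automorphisms).

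Set $\varphi':=F^{-m}\circ\varphi:X\to Y'$. This is a definable homeomorphism that agrees generically with the rational map $r$. Theorem \ref{T: normalization square} applied to $\varphi'$ then gives a morphism $\tilde\varphi':\widetilde X\to\widetilde{Y'}$ with $\nu_{Y'}\circ\tilde\varphi'=\varphi'\circ\nu_X$. Composing with the bijective map $\widetilde{Y'}\to\widetilde Y$ induced by $F^m$, we get a map $g:\widetilde X\to\widetilde Y$ with $\nu_Y\circ g=\varphi\circ\nu_X$. This $g$ is not a morphism in general, but it is a morphism followed by the Frobenius.

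Take $V$ to be the graph of $g$. It is the image of the graph of $\tilde\varphi'$ under $\mathrm{id}\times F^m$. The graph of $\tilde\varphi'$ is closed and irreducible of dimension $d$ in $\widetilde X\times\widetilde{Y'}$, because it is the graph of a morphism to a separated variety. The map $\mathrm{id}\times F^m$ is a homeomorphism of the relevant products, so $V$ is closed, irreducible and $d$-dimensional in $\widetilde X\times\widetilde Y$. It projects bijectively to $\widetilde X$ because it is a graph. Finally, $V\subset\widetilde\Gamma$: for $(x,g(x))\in V$ we have $(\nu_X(x),\nu_Y(g(x)))=(\nu_X(x),\varphi(\nu_X(x)))\in\Gamma$.

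The main obstacle is the bookkeeping in the first step. One must set up the Frobenius twist so that $F^{-m}(Y)$ and its normalization are genuinely varieties with the required properties, and check that the Frobenius twist on $\widetilde Y$ is compatible with $\nu_Y$. The Frobenius here is the coordinatewise $p$-th power on projective space, applied to $Y$ viewed in its ambient space. It is not a $K$-morphism, but it is a field-automorphism twist, so everything transports. An alternative that avoids twisting is to take $V$ to be the closure in $\widetilde X\times\widetilde Y$ of the generic part of $\widetilde\Gamma$ and show it equals the graph of $g$; this is the same argument in a different form.
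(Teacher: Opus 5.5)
Your argument is essentially the paper's: you peel off a Frobenius power, apply Theorem \ref{T: normalization square} to the generically rational remainder, and take $V$ to be the graph of the lifted morphism transported back through the Frobenius twist. The only cosmetic difference is that the paper twists the source (moving $X,\widetilde X,\Gamma,\widetilde\Gamma$ by $(\operatorname{Fr}^k,\mathrm{id})$), while you twist the target.

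One sign slip needs fixing. Generically $\varphi(x)^{p^k}$ is rational in $x$, so in your factorization $\varphi=F^m\circ r$ you need $m\le 0$. With $m\ge 0$ the composite $F^m\circ r$ is already rational, so you would only cover the trivial case. Consequently $F^m:Y'\to Y$ is inverse Frobenius rather than a morphism. This is harmless: $\mathrm{id}\times F^m$ is then the inverse of the base change of a Frobenius morphism, which is a universal homeomorphism. So it is still a homeomorphism of the product Zariski topologies, and the rest of your argument goes through unchanged.
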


\begin{proof}
The idea is that the statement of Lemma \ref{L: nonrational commuting square} is insensitive to applying a Frobenius power everywhere, and thus we can reduce to the generically rational case and apply Theorem \ref{T: normalization square}.

Specifically, let us first embed $X$ and $\widetilde X$ into some $\mathbb P^m(K)$, and $Y$ and $\widetilde Y$ into some $\mathbb P^n(K)$. Then by quantifier elimination in algebraically closed fields, $\varphi$ can be factored as $f\circ\operatorname{Fr}^k$ for some $k$, where $\operatorname{Fr
}^k$ is the $k$th Frobenius power on $\mathbb P^m(K)$ and $f:\operatorname{Fr}^k(X)\rightarrow Y$ is generically rational. Then $(\operatorname{Fr}^k,\operatorname{id})$ defines a universal homeomorphism sending $X$, $\widetilde{X}$, $\Gamma$, and $\widetilde\Gamma$ to their images under $\operatorname{Fr}^k$, thereby converting the whole setup of Lemma \ref{L: nonrational commuting square} to an equivalent setup for the generically rational map $f:\operatorname{Fr}^k(X)\rightarrow Y$. So without loss of generality, we may replace $X$ with $\operatorname{Fr}^k(X)$ and thereby assume $\varphi$ is already generically rational.

In this case, Theorem \ref{T: normalization square} gives that $\widetilde\Gamma$ contains the graph of the morphism $\widetilde\varphi$, which is a closed irreducible $d$-dimensional subvariety of $\widetilde X\times\widetilde Y$ projecting bijectively to $\widetilde X$.
\end{proof}

\subsection{The rational case revisited}

We now use Lemma \ref{L: nonrational commuting square} to show that the morphism from Theorem \ref{T: normalization square} is a universal homeomorphism:

\begin{lemma}\label{L: gen rat full}
    In the setting of Convention~\ref{Con: generically rational setting}, there is a universal homeomorphism of $K$-varieties
\[
\tilde{\varphi} : \widetilde{X} \to \widetilde{Y}
\]
such that the following diagram commutes:
\[
\xymatrix{
\widetilde{X} \ar[r]^{\tilde{\varphi}} \ar[d]_{\nu_X} & \widetilde{Y} \ar[d]^{\nu_Y} \\
X \ar[r]_{\varphi} & Y
}
\]
\end{lemma}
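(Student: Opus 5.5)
Theorem \ref{T: normalization square} already gives a morphism $\tilde\varphi:\widetilde X\to\widetilde Y$ with $\nu_Y\circ\tilde\varphi=\varphi\circ\nu_X$. Over an algebraically closed field, it suffices to show that $\tilde\varphi$ is bijective on closed points and that $|\tilde\varphi|$ is a homeomorphism. Closedness will come for free: $\tilde\varphi$ is finite, since $\nu_Y\circ\tilde\varphi=\varphi\circ\nu_X$ is finite-to-one and proper (being closed, as shown before Lemma \ref{L: two maps agree everywhere}) and $\nu_Y$ is separated. A bijective closed continuous map is a homeomorphism. So the plan is to prove bijectivity.

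\textbf{Surjectivity.} The map $\tilde\varphi$ is dominant: $\varphi$ is a homeomorphism, so it sends the dense open set $\nu_X$-isomorphic locus onto a dense set. Hence $\tilde\varphi$ is closed and dominant, and therefore surjective.

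\textbf{Injectivity.} Apply Lemma \ref{L: nonrational commuting square} to the $K$-definable homeomorphism $\varphi^{-1}:Y\to X$. Note that $\varphi^{-1}$ need not be generically rational, which is why that lemma was needed. It gives an irreducible closed $d$-dimensional $W\subset\widetilde Y\times\widetilde X$ contained in the preimage of the graph of $\varphi^{-1}$, and projecting bijectively onto $\widetilde Y$. Let $W'\subset\widetilde X\times\widetilde Y$ be its transpose, and let $G$ be the graph of $\tilde\varphi$. Both $G$ and $W'$ are irreducible closed $d$-dimensional subvarieties of $\widetilde\Gamma$, the preimage of the graph $\Gamma$ of $\varphi$.

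The key step is to show $G=W'$. Since $\nu_X\times\nu_Y$ is finite, $\widetilde\Gamma\to\Gamma$ is finite, and $\Gamma\cong X$ has dimension $d$. So $\widetilde\Gamma$ has only finitely many $d$-dimensional irreducible components, and both $G$ and $W'$ are among them. To see they coincide, pass to the dense open set $X^0\subset X$ over which $\nu_X$ is an isomorphism and whose image $\varphi(X^0)$ lies in the locus $Y^0$ where $\nu_Y$ is an isomorphism. Shrinking $X^0$ if necessary, we may take it open with $\varphi(X^0)\subset Y^0$ open, using that $\varphi$ is a homeomorphism. Over $X^0\times Y^0$ the map $\widetilde\Gamma\to\Gamma$ is a bijection, so $\widetilde\Gamma$ restricted there is the irreducible set $\Gamma\cap(X^0\times Y^0)$, itself a copy of $X^0$. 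Both $G$ and $W'$ are $d$-dimensional and meet this open piece: $G$ does because it projects onto $\widetilde X$, and $W'$ does because it projects onto $\widetilde Y$. Hence both equal the closure of that unique component, so $G=W'$. Since $W'$ projects bijectively onto $\widetilde Y$, the graph $G$ projects bijectively to $\widetilde Y$, which is exactly injectivity of $\tilde\varphi$.

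Combining the two, $\tilde\varphi$ is a bijective finite morphism, hence a homeomorphism on closed points. By the remark after Theorem \ref{T: main} (Zariski's main theorem together with the cited Stacks tags), it is then a universal homeomorphism, i.e. radicial. I expect the main obstacle to be the identification $G=W'$: one must check carefully that the open sets over which the normalizations are isomorphisms can be matched under $\varphi$, and that no $d$-dimensional component of $\widetilde\Gamma$ lies entirely over the non-normal locus. The latter holds because any such component would map finitely into a proper closed subset of $\Gamma$, which has dimension less than $d$.
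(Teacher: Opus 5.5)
Your core argument is the paper's. You obtain $\tilde\varphi$ from Theorem \ref{T: normalization square}, apply Lemma \ref{L: nonrational commuting square} to $\varphi^{-1}$, and identify the resulting $d$-dimensional subvariety with the graph $G$ of $\tilde\varphi$, because $\widetilde\Gamma$ has only one $d$-dimensional irreducible piece. Your check that $G$ and $W'$ both meet the open set where $\nu_X\times\nu_Y$ is an isomorphism carefully spells out the paper's one-line uniqueness claim. Note that $G=W'$ already shows $G\to\widetilde Y$ is bijective, i.e.\ $\tilde\varphi$ is bijective and not merely injective. So your separate surjectivity paragraph is unnecessary.

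The gap is the finiteness of $\tilde\varphi$, on which both your surjectivity paragraph and your final step rest. You deduce that $\varphi\circ\nu_X$ is proper from the fact that it is a closed map, but being closed on points is not universal closedness. For example, take the normalization $\mathbb A^1\to C$ of a nodal cubic and restrict it to $\mathbb A^1$ minus one of the two points over the node. This is a quasi-finite morphism which is bijective, radicial, and a homeomorphism, yet it is not finite. So, as written, you have not shown that $\tilde\varphi$ is finite, closed, or a homeomorphism.

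In dimension $\geq 2$ the finiteness is in fact true, but that argument must be supplied. One way: compactify $\varphi\circ\nu_X$ over $Y$ by Zariski's main theorem, then rule out boundary points using Fact \ref{F: curve through tuple} and the argument of Lemma \ref{L: two maps agree everywhere}.

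The easier repair is the paper's: a bijective morphism of irreducible normal varieties is a universal homeomorphism. After factoring off a purely inseparable bijective morphism, one is in the birational case. There Zariski's main theorem (as in Fact \ref{F: ZMT}) makes the map an open immersion, and surjectivity then makes it an isomorphism.

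Relatedly, the remark after Theorem \ref{T: main} has radiciality as a hypothesis. So ``it is then a universal homeomorphism, i.e.\ radicial'' runs the logic backwards. For a finite bijective morphism over an algebraically closed field radiciality does hold, but it needs to be argued.
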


\begin{proof}
    Theorem \ref{T: normalization square} provides such a diagram where $\tilde\varphi$ is a morphism; we show it must be a universal homeomorphism. The trick is to apply Lemma \ref{L: nonrational commuting square} to $\varphi^{-1}$. First let $\Gamma\subset X\times Y$ be the graph of $\varphi$, and $\widetilde\Gamma\subset\widetilde X\times\widetilde Y$ its preimage. Then, noting that $\varphi^{-1}$ is also a definable homeomorphism, Lemma \ref{L: nonrational commuting square} gives that $\widetilde\Gamma$ contains an irreducible $d$-dimensional closed subvariety $V\subset\widetilde X\times\widetilde Y$ projecting bijectively to $\widetilde Y$. But the projection $\widetilde\Gamma\rightarrow\Gamma\rightarrow X$ is finite-to-one everywhere and generically bijective; so since $X$ is irreducible of dimension $d$, it follows that $\widetilde\Gamma$ can contain at most one irreducible $d$-dimensional closed subvariety of $\widetilde X\times\widetilde Y$. So since the graph of $\tilde\varphi$ is an irreducible $d$-dimensinal closed subvariety, it must be that $V$ is the graph of $\tilde\varphi$. In particular, since $V\rightarrow\widetilde Y$ is bijective, we conclude that $\tilde\varphi$ is bijective. Finally, the lemma follows since a bijective morphism of normal varieties is always a universal homeomorphism (precisely, after factoring out a purely inseparable map, we can reduce to the birational case (see e.g. \cite[Lemma 9.26]{CasHasYe}), from which we conclude by Zariski's main theorem -- see also the proof of Proposition \ref{P: qp spec}). 
\end{proof}

\subsection{Main theorem}

Finally, we have proven our main theorem:

\begin{proof}[Proof of Theorem \ref{T: main}]
By Theorem \ref{T: reduction to definable case}, we reduce to the case that $K_1=K_2=K$ and $\varphi$ is $K$-definable. Then after canceling out an appropriate Frobenius power (and adding it to the field isomorphism), we reduce to the case that $\varphi$ is generically rational. In this case, the theorem follows by Lemma \ref{L: gen rat full}.
\end{proof}

\section{Reducible Varieties}

In this section we show how Theorems \ref{T: reducible schemes} and \ref{T: main reducible} follow from Theorem \ref{T: main}. The idea is that Theorem \ref{T: reducible schemes} follows immediately, while Theorem \ref{T: main reducible} can be resolved with a bit more model theoretic analysis.

\subsection{The scheme formulation}

We begin with Theorem \ref{T: reducible schemes}. \textbf{Fix uncountable algebraically closed fields $K_1,K_2$, and quasi-projective varieties $X_i$ over $K_i$, all of whose irreducible components have dimension at least 2. Moreover, fix a homeomorphism $\varphi:X_1\rightarrow X_2$.} Recall that Theorem \ref{T: reducible schemes} says that $\varphi$ lifts to a universal homeomorphism of normalized schemes $\widetilde{X_1}\rightarrow\widetilde{X_2}$. 

\begin{proof}[Proof of Theorem \ref{T: reducible schemes}] First assume the $X_i$ are irreducible. Then by Theorem \ref{T: main}, $\varphi$ lifts to a map $\widetilde\varphi:\widetilde{X_1}\rightarrow\widetilde{X_2}$ which factors into a field isomorphism followed by a universal homeomorphism over $K_2$. Both of these factors define universal homeomorphisms of schemes, so the composite map $\widetilde\varphi$ is also a universal homeomorphism of schemes, and thus we are done.

Now assume the $X_i$ are reducible. Then $\varphi$ induces a bijection $g$ between the irreducible components of $X_1$ and $X_2$, and moreover gives a homeomorphism $C\rightarrow g(C)$ for each component $C$ of $X_1$. By separate applications of the irreducible case above, the restriction of $\varphi$ to each pair of components $(C,g(C))$ lifts to a universal homeomorphism $\widetilde C\rightarrow\widetilde{g(C)}$ of normalized schemes. But $\widetilde{X_1}$ is just the disjoint union of the $\widetilde C$, and $\widetilde{X_2}$ is the disjoint union of $\widetilde{g(C)}$. So the disjoint union map $\widetilde\varphi$ then gives a universal homeomorphism $\widetilde{X_1}\rightarrow\widetilde{X_2}$, which is a lift of $\varphi$.
\end{proof}

\begin{remark}
    The main point of Theorem \ref{T: reducible schemes} is that we forget the base fields, so that it is easier to give a morphism of normalizations (in particular, the map $\widetilde\varphi$ is in general not factorable into a field isomorphism and a morphism of varieties). 
\end{remark}

\subsection{Counterexamples from finite intersections}

Here we make explicit the reason we cannot hope Theorem \ref{T: main} to translate verbatim to the reducible case (i.e. the reason Theorem \ref{T: reducible schemes} is the best possible statement in general for reducible varieties). As in the introduction, we define:

\begin{definition}
    A variety $X$ over the algebraically closed field $K$ is \textit{strongly connected} if $X$ is not expressible as the union of two proper closed subsets with finite intersection.
\end{definition}

Lemma \ref{L: counterexample reducible} says that if we want a factorization $f\circ\sigma$ as in the irreducible case, we \textit{must} assume the varieties in question are strongly connected:

\begin{lemma}\label{L: counterexample reducible}
Let $K$ be an algebraically closed field, and let $X$ be a variety over $K$, all of whose irreducible components have dimension at least 2. Assume $X$ is not strongly connected. Then there is a homeomorphism $\varphi:X\rightarrow X$ for which the conclusion of Theorem \ref{T: main} fails. That is, there does not exist a lift $\widetilde\varphi:\widetilde X\rightarrow\widetilde X$ factoring into a field isomorphism composed with a morphism of varieties.
\end{lemma}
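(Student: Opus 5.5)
We follow the construction in Example \ref{E: planes meeting at a point}. Since $X$ is not strongly connected, write $X=C\cup D$ with $C,D$ proper closed subsets and $C\cap D$ finite. We may take $C$ and $D$ to be unions of irreducible components: every component lies in $C$ or in $D$, and a component lying in both would be contained in the finite set $C\cap D$, which is impossible since components have dimension at least 2. Both $C$ and $D$ are then nonempty, and no component is shared.

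Fix a finitely generated field $F\le K$ over which $X$, $C$, $D$, every irreducible component, and every point of $C\cap D$ are defined. For $\tau\in\operatorname{Gal}(K/F)$, define $\varphi$ to be the identity on $C$ and $\tau$ on $D$. This is well defined because $\tau$ fixes the points of $C\cap D$. On $D$ it is a homeomorphism, since $\tau$ preserves $D$ and sends closed sets to closed sets, with inverse $\tau^{-1}$. Because $C$ and $D$ are closed and cover $X$, the pasting lemma makes $\varphi$ a homeomorphism of $X$.

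Now suppose there is a field automorphism $\sigma$ of $K$ and a morphism $g$ of $K$-varieties $\widetilde{\sigma(X)}\to\widetilde X$ lifting a map $\psi$ with $\varphi=\psi\circ\sigma$. Pick a component $C_0\subset C$. On $C_0$, the map $\varphi=\mathrm{id}$ factors as $\psi\circ\sigma$. Hence $\sigma^{-1}$ restricted to the variety $\sigma(C_0)$ is $\psi$, which lifts to a morphism of normalizations.

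Ignoring normalization for the moment, this says $\sigma^{-1}\colon\sigma(C_0)\to C_0$ is constructible in $K$. So $\sigma$ agrees, on a positive-dimensional set of points, with a map definable over some countable set of parameters. By quantifier elimination, a field automorphism that is definable on the coordinates of a positive-dimensional variety must be a Frobenius power modulo $\operatorname{Aut}(K/E)$ for a countable field $E$. More precisely, $\sigma$ agrees with $\mathrm{Fr}^{m}$ on the field generated by the coordinates of points of $C_0$, and that field is all of $K$ once $C_0$ has a generic point over $E$. Thus $\sigma=\mathrm{Fr}^{m}$ for some $m\in\mathbb Z$ (with $m=0$ in characteristic $0$).

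The same argument on a component $D_0\subset D$ shows that $\tau\circ\sigma^{-1}$ is definable on $\sigma(D_0)$, hence $\tau\sigma^{-1}=\mathrm{Fr}^{m'}$. Therefore $\tau$ is a Frobenius power. Since $\tau$ fixes $F$ and $F$ is not perfect whenever it is finitely generated and transcendental, $\tau$ must be the identity. So choosing any $\tau\ne\mathrm{id}$ (possible since $K$ is uncountable and $F$ is countable) gives the contradiction.

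The main obstacle is making rigorous the step ``an automorphism definable on a positive-dimensional variety is a Frobenius power.'' Two ingredients are needed. First, $\sigma^{-1}|_{\sigma(C_0)}$ is definable: it equals $\nu\circ g\circ\tilde\sigma^{-1}$-type composites, more simply $\psi$, which is the composite of a morphism with a normalization and is therefore constructible. Second, one shows that if a field automorphism $\alpha$ has constructible graph on $V\times\alpha(V)$, then $\alpha$ is $\mathrm{Fr}^m$. For this, take a generic point $v$ over the defining parameters $E$. Then $\alpha(v)\in\operatorname{dcl}(Ev)$, which is the perfect closure of $E(v)$. Varying $v$ over independent generics, whose coordinates generate $K$ over $E$ up to algebraic closure, $\alpha$ is given by a fixed $\mathrm{Fr}^m$ composed with a rational function on a transcendence basis. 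Additivity and multiplicativity of $\alpha$ then force the rational function to be trivial. I would handle this by restricting to a curve inside $C_0$ and comparing $\alpha(v+w)$ against $\alpha(v)+\alpha(w)$ on the affine coordinates.
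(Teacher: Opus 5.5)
Your proof is correct in substance, but it reaches the conclusion by a different mechanism than the paper.

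The paper argues by counting. If $\varphi_\tau$ admits a factorization with a morphism $f$ and a field automorphism $\sigma$, then $\sigma$ is recovered from $f$, since it inverts $f$ over $C$. Then $\tau$ is recovered from $f$ and $\sigma$, since it agrees with their composite over $D$. So $\tau\mapsto f$ is injective. Because $|\operatorname{Gal}(K/F)|>|K|$ while there are at most $|K|$ morphisms, some $\tau$ must fail.

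You instead use rigidity. Constructibility of $\psi$ on $\sigma(C_0)$ and on $\sigma(D_0)$ makes $\sigma^{-1}$ and $\tau\sigma^{-1}$ definable automorphisms of $K$, hence Frobenius powers, so $\tau$ is itself a Frobenius power. Your route buys an explicit description of the bad homeomorphisms: every $\tau\in\operatorname{Gal}(K/F)$ that is not a power of Frobenius works, whereas the paper only produces an unspecified $\tau$. In exchange, the paper's pigeonhole needs no classification of definable automorphisms; it only uses that a field automorphism is determined by its action on a positive-dimensional variety. It also runs uniformly over every algebraically closed $K$.

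Two small repairs are needed.

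\textbf{The step you flag as the main obstacle.} It is easier than your sketch suggests. Your comparison of $\alpha(v+w)$ with $\alpha(v)+\alpha(w)$ does not make sense as stated, since $v+w$ need not lie on $C_0$. Instead, project the constructible graph $\{(w,\sigma^{-1}(w)):w\in\sigma(C_0)\}$ to an affine coordinate that is nonconstant on $\sigma(C_0)$. The image is the graph of $\sigma^{-1}$ on an infinite constructible, hence cofinite, subset of $K$. So $\sigma^{-1}$ is a definable automorphism of $K$, and it is standard that these are exactly the Frobenius powers.

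\textbf{Why $\tau$ must be trivial.} The reason is not that $F$ is imperfect. It is that $\operatorname{Fr}^k$ with $k\neq 0$ fixes only $\mathbb F_{p^{|k|}}$, so you need $F$ infinite. Arrange this by adjoining a transcendental to $F$, which is possible for uncountable $K$. However, the lemma does not assume uncountability. For $K=\overline{\mathbb F_p}$, every finitely generated $F$ is finite. A nontrivial Frobenius power $\tau$ fixing $F$ then genuinely yields a factorizable $\varphi_\tau$, so your choice of ``any $\tau\neq\mathrm{id}$'' fails there. You must instead take $\tau\in\operatorname{Gal}(K/F)\cong\widehat{\mathbb Z}$ outside the countable subgroup of Frobenius powers, which your argument still allows.
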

\begin{proof}
    We essentially copy \cite[Lemma 10.10]{CasHasAV}. Let $X=C\cup D$ where $C,D$ are proper closed subsets with finite intersection. Then let $F\leq K$ be a finitely generated field defining $X$, $C$, $D$, and each point of $C\cap D$. Then for each $\tau\in\operatorname{Gal}(K/F)$, we get a homeomorphism $\varphi_\tau:X\rightarrow X$ by acting as the identity on $C$ and as $\tau$ on $D$. Suppose for some $\tau$ that $\varphi_\tau$ lifts to a map factoring as $f\circ\sigma$ as in Theorem \ref{T: main}. Then $\sigma$ just inverts $f$ on $f(\widetilde C)$. In particular, $\sigma$ is entirely determined by $f$. Moreover, $\tau$ is determined entirely by $\sigma$ and $f$, since $\tau$ acts as $f\circ\sigma$ on $\widetilde D$. Put together, $\tau$ is determined by $f$.

    But the order of the group $\operatorname{Gal}(K/F)$ is strictly greater than the number of possible morphisms $f$ (the latter is $|K|$ at most, and the former can be checked to be strictly larger than $|K|$, using that $F$ is finitely generated). So there must be some choice of $\tau$ for which no such $f$ and $\sigma$ exist.
\end{proof}

\subsection{Model theory of strongly connected varieties}

From now on, we consider the strongly connected case, with the goal of proving Theorem \ref{T: main reducible}. To recall, this theorem says that the statement of Theorem \ref{T: main} works verbatim for reducible varieties which are strongly connected. In this subsection we give the model-theoretic content, by showing how to endow a strongly connected variety with a full relic structure.

\begin{convention}\label{Con: reducible full} \textbf{In this subsection, we fix the following:}
\begin{enumerate}
    \item An uncountable algebraically closed field $K$.
    \item A strongly connected quasi-projective variety $X$ over $K$, all of whose irreducible components have dimension at least 2.
    \item A countable algebraically closed subfield $K_0\leq K$ over which $X$ (and thus all of its irreducible components) are defined.
\end{enumerate}
    Moreover:
    \begin{enumerate}
        \item[(a)] We let $Y_1,...,Y_m$ be the irreducible components of $X$.
        \item[(b)] For each $Y_i$, we fix a structure $\mathcal Y_i^{hyp}$ as in Definition \ref{D: hyp relic}, \textbf{potentially obtained via different projective embeddings of each component} (the idea is that we need to choose a non-degenerate embedding for each $Y_i$).
        \item[(c)] We let $\mathcal X^{hyp}$ be the structure on $X$ obtained by naming each $Y_i$ with a unary predicate, and then endowing each $Y_i$ with the structure $\mathcal Y_i^{hyp}$ in disjoint languages.
    \end{enumerate}
\end{convention}

So $\mathcal X^{hyp}$ is a $K$-relic, which really consists of $m$ different $K$-relics $\mathcal Y_i^{hyp}$ in $m$ different languages, which are glued together along their intersections. A priori, each $\mathcal Y_i^{hyp}$ is full (by Theorem \ref{T: full relic}), but $\mathcal Y$ might not be. \textbf{The main content of this section is that if $X$ is strongly connected, $\mathcal Y$ \textit{is} full}. This is really a further computation with internality -- the idea is that strong connectedness gives just enough interaction between the $\mathcal Y_i^{hyp}$ that they all become internal to each other. Now we give the proof:

\begin{theorem}\label{T: reducible full}
    In the setting of Convention \ref{Con: reducible full}, the relic $\mathcal X^{hyp}$ is full.
\end{theorem}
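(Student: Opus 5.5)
The plan is to reuse the fullness criterion of Fact \ref{F: trichotomy}(2). Each $\mathcal Y_i^{hyp}$ is full by Theorem \ref{T: full relic}. It is interpretable (indeed definable) in $\mathcal X^{hyp}$, since $Y_i$ is named and its structure is part of the language. By Fact \ref{F: trichotomy}(2), $\mathcal Y_i^{hyp}$ interprets a field $F_i\cong K$ with $Y_i$ internal to $F_i$. In particular $\mathcal X^{hyp}$ interprets the field $F_1$. Since $X=Y_1\cup\dots\cup Y_m$, it then suffices to show that each $Y_i$ is $\mathcal X^{hyp}$-internal to $Y_1$. Indeed, a finite union of sets each internal to $F_1$ is internal to $F_1$: use Fact \ref{F: internality char} with the union of the countable parameter sets. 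This would give fullness of $\mathcal X^{hyp}$ by Fact \ref{F: trichotomy}(2).

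Internality is transitive, so consider the graph $G$ on $\{1,\dots,m\}$ with an edge $i$–$j$ whenever $\dim(Y_i\cap Y_j)\geq 1$. I claim strong connectedness makes $G$ connected. Otherwise, split the vertices into two nonempty classes $I,J$ with no edges between them. Set $C=\bigcup_{i\in I}Y_i$ and $D=\bigcup_{j\in J}Y_j$. Then $X=C\cup D$, and both are proper closed subsets because the components are distinct. Their intersection $C\cap D=\bigcup_{i\in I,j\in J}(Y_i\cap Y_j)$ is finite, contradicting strong connectedness. So it suffices to treat one edge: if $\dim(Y_i\cap Y_j)\geq 1$, then $Y_j$ is internal to $Y_i$ (and symmetrically).

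For the edge step, let $W\subset Y_i\cap Y_j$ be a $K_0$-definable irreducible curve, which exists since $K_0$ is algebraically closed. Work inside the full relic $\mathcal Y_j^{hyp}$: $W$ is $K_0$-definable, hence $\emptyset$-definable there, and it interprets $F_j$ with $Y_j$ internal to $F_j$. The curve $W$ is strongly minimal and $Y_j$ is internal to $F_j$, so by fullness $W$ is non-orthogonal to $F_j$ and hence almost internal to $F_j$. Conversely, I expect $F_j$ to be internal to any infinite definable subset of a full relic. Fullness means the relic carries all $K$-constructible structure, and $F_j$ is $K$-definably isomorphic to $K$ by Fact \ref{F: trichotomy}(1). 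Moreover $K$ is in $K$-definable finite-to-one correspondence with the curve $W$, via a projection to a coordinate line and then taking the code of the finite fiber, so $F_j$ is internal to $W$ in $\mathcal Y_j^{hyp}$. Hence $Y_j$ is internal to $W$ in $\mathcal Y_j^{hyp}$, and thus in $\mathcal X^{hyp}$, because the structure of $\mathcal Y_j^{hyp}$ is part of $\mathcal X^{hyp}$. Since $W\subset Y_i$, we get that $Y_j$ is $\mathcal X^{hyp}$-internal to $Y_i$.

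The main obstacle is this last internality claim. The steps from full relic to internal to an arbitrary infinite definable set, and from a finite-to-one correspondence to genuine internality, need care. Finite-to-one first gives only almost internality: $K$ maps into $W^{eq}$ via codes of finite sets. But coding finite subsets of $W$ is available in $W^{eq}$ within a full relic, since fullness lets us take $K$-definable equivalence relations on $W^n$, so symmetric powers are definable. That should upgrade the correspondence to an honest definable injection $F_j\hookrightarrow W^{eq}$.
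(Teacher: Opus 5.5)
Your proof is correct. Its outer structure matches the paper's: fullness of each $\mathcal Y_i^{hyp}$ by Theorem \ref{T: full relic}, connectedness of the intersection graph from strong connectedness, transitivity of internality along paths, and fullness of $\mathcal X^{hyp}$ via Fact \ref{F: trichotomy}(2). The genuine difference is in the edge step.

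The paper argues at the level of the interpreted fields. A curve $C\subset Y_i\cap Y_j$ is internal to both $F_i$ and $F_j$. Strong minimality and the symmetry of non-orthogonality then make $F_i$ non-orthogonal to $F_j$. Finally the paper cites results from \cite{CasHasVA}: fields in $K$-relics are full, hence very ample, and for very ample strongly minimal sets non-orthogonality upgrades to internality in both directions.

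You instead argue at the level of the components, using fullness of $\mathcal Y_j^{hyp}$ directly. The $K$-definable isomorphism $F_j\cong K$ and the fibre relation of a finite-to-one map $\pi:W\to\mathbb A^1$ pull back to $K$-constructible subsets of powers of $Y_j$, so they are relic-definable. Hence $F_j$ embeds definably into $W/\ker\pi$, and so $Y_j$ is internal to $W\subset Y_i$. This holds off the finitely many points of $F_j$ missed by $\pi(W)$, which you should note explicitly; they are harmlessly absorbed as parameters via Fact \ref{F: internality char}. Your aside that $W$ is almost internal to $F_j$ is not needed, since only the direction from $F_j$ into $W$ is used.

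Both routes give the same conclusion. In particular, yours also yields mutual internality of all the $F_i$, since each $F_i$ is interpretable in $Y_i$.

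What your route buys is economy. It replaces the non-orthogonality and very-ampleness machinery by a hands-on construction that uses only the fullness already established and Fact \ref{F: trichotomy}(1). It also never needs $F_i$ for $i\neq 1$. The paper's route is softer and more conceptual: it is phrased entirely in the general language of strongly minimal sets and never writes down an explicit definable embedding.
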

\begin{proof}
    By Theorem \ref{T: full relic} applied to each component, we get that each $\mathcal Y_i^{hyp}$ is full. So by Fact \ref{F: trichotomy}(3), for each $i$ there is a $\mathcal Y_i^{hyp}$-definable algebraically closed field $F_i$ such that $Y_i$ is internal to $F_i$ in $\mathcal Y_i^{hyp}$. Of course, this implies that $F_i$ is $\mathcal X^{hyp}$-definable and $Y_i$ is internal to $F_i$ in $\mathcal X^{hyp}$.

    We now draw a graph $G$ on the vertex set $\{1,...,m\}$, by putting an edge between $i$ and $j$ (for $i\neq j$) if and only if $Y_i\cap Y_j$ is infinite. Then we immediately have:

    \begin{claim}
        $G$ is connected.
    \end{claim}
    \begin{proof}
        If not, we can partition $\{1,...,m\}$ into two non-empty subsets $I$ and $J$ so that there are no edges between $I$ and $J$. Let $Y_I$ be the union of $Y_i$ for $i\in I$, and let $Y_J$ be the union of $Y_j$ for $j\in J$. Then for all $(i,j)\in I\times J$ the intersection $Y_i\cap Y_j$ is finite; and adding over each of the finitely many pairs $(i,j)$, it follows that $Y_I\cap Y_J$ is finite. This shows that $X$ is not strongly connected, which gives a contradiction.
    \end{proof}

    Now the main point of the proof is:
    \begin{claim}
        Suppose $(i,j)$ is an edge in $G$. Then the fields $F_i$ and $F_j$ are internal to each other in $\mathcal X^{hyp}$.\footnote{With a bit more work, one can even show that $F_i$ and $F_j$ are \textit{$\mathcal X^{hyp}$-definably isomorphic} (precisely, this follows from internality and \cite[Theorem 4.15]{PoiGroups}). We do not need this, however.}
    \end{claim}
    \begin{proof} Since $(i,j)$ is an edge, the intersection $Y_i\cap Y_j$ is infinite. So there is an irreducible curve $C\subset Y_i\cap Y_j$. Then $C$ is $\mathcal X^{hyp}$-definable (as a closed subset of the full relic $Y_i$); and then since it is contained in both $Y_i$ and $Y_j$, we get that $C$ is internal (as a definable set in $\mathcal X^{hyp}$) to both $F_i$ and $F_j$. Note that $C$ is strongly minimal in $\mathcal X^{hyp}$ (since it is an irreducible curve). Similarly, $F_i$ and $F_j$ are strongly minimal (this is a well-known consequence of the fact that they are $K$-definably isomorphic to $K$, see \cite[Theorem 4.15]{PoiGroups}).

    Now since $C$ is internal to $F_i$, and both are strongly minimal, it follows that $F_i$ is \textit{almost internal} to $C$ (this is because almost internality is equivalent to non-orthogonality for strongly minimal sets, and non-orthogonality is symmetric). In particular, since $C$ is \textit{also} internal to $F_j$, we conclude that $F_i$ is almost internal to $F_j$. In particular, $F_i$ and $F_j$ are non-orthogonal.

    Finally, it follows by results in \cite{CasHasVA} that for definable fields in $K$-relics, the non-orthogonality relation is the same as the internality relation. Precisely, since $F_i,F_j$ are fields, Fact \ref{F: trichotomy} gives that they are both full as $K$-relics (with their induced structures from $\mathcal X^{hyp}$). In particular, they are \textit{very ample} as strongly minimal sets (\cite[Theorem 4.14]{CasHasVA}), so that internality in each direction follows from non-orthogonality (\cite[Proposition 4.1]{CasHasVA}). 
    \end{proof}
    We immediately conclude:
    \begin{claim}
        For all $i$ and $j$, the fields $F_i$ and $F_j$ are internal to each other in $\mathcal X^{hyp}$.
    \end{claim}
    \begin{proof} By following a path through $G$ from $i$ to $j$ (using that $G$ is connected).
    \end{proof}

    Finally, it now follows that $X$ is internal to $F_1$ in $\mathcal X^{hyp}$. Indeed, we can fix a single countable set $A$ witnessing the internality of each $Y_i$ to $F_i$, and witnessing the internality of each $F_i$ to $F_1$. Then every element of $X$ is definable over $A\cup F_1$. Indeed, let $x\in X$. So $x\in Y_i$ for some $i$, and thus $x$ is definable over $A\cup F_i$. But each element of $F_i$ is in turn definable over $A\cup F_1$, so in fact $x$ is definable over $A\cup F_1$, as desired.

    So we have shown that $X$ is internal to $F_1$, and this gives fullness by Fact \ref{F: trichotomy}.
\end{proof}

\subsection{Proof of the theorem}

Finally, we are ready to conclude our final main theorem.

\begin{convention}\label{Con: reducible strongly connected}
    \textbf{We now fix:}
    \begin{enumerate}
        \item Uncountable algebraically closed fields $K_1,K_2$.
        \item Quasi-projective varieties $X_i$ over $K_i$, all of whose irreducible components have dimension at least 2.
        \item A homeomorphism $\varphi:X_1\rightarrow X_2$.
    \end{enumerate}
\end{convention}

Recall that Theorem \ref{T: main reducible} says that, just as in Theorem \ref{T: main}, we can find a field isomorphism $\sigma:K_1\rightarrow K_2$ and a diagram

\[
\xymatrix{
\widetilde{\sigma(X_1)} \ar[r]^{\tilde{f}} \ar[d] & \widetilde{X_2} \ar[d] \\
\sigma(X_1) \ar[r]^{f} & X_2
}
\]
where $\varphi=f\circ\sigma$ on $X_1$ and $\tilde{f}:\widetilde{\sigma(X_1)}\rightarrow\widetilde{X_2}$ is a universal homeomorphism of $K_2$-varieties. \textbf{Theorem \ref{T: main reducible} now follows essentially immediately}, by arguing exactly as in Theorem \ref{T: main} and using the fullness of $\mathcal X^{hyp}$ (as provided by Theorem \ref{T: reducible full}). Let us be a bit more precise:
\begin{proof}[Proof of Theorem \ref{T: main reducible}]
    Endow $X_1$ with the full $K_1$-relic structure $\mathcal X_1^{hyp}$ as in Theorem \ref{T: reducible full}, and let $\mathcal X_2^{hyp}$ be the image of $\mathcal X_1^{hyp}$ under $\varphi$. As in Theorem \ref{T: main}, it follows that $\mathcal X_2^{hyp}$ is a $K_2$-relic (by arguing separately in each component). So, continuing to argue exactly as in Theorem \ref{T: main}, we may reduce to the case that $K_1=K_2=K$ and $\varphi$ is $K$-definable.

    Now on each component, $\varphi$ is given by a sufficiently negative Frobenius power composed with a generically rational map. Since there are only finitely many components, one negative Frobenius power will do for all. So, factoring in a negative Frobenius power into the field isomorphism $K_1\rightarrow K_2$, we further reduce to the case that $\varphi$ is generically rational on each component. Then, continuing to argue as in Theorem \ref{T: main}, we separately obtain a commuting square as above on each component, where the top map in each case is a universal homeomorphism of normalized $K$-varieties. Finally, since the normalization of each $X_i$ is just the disjoint union of the normalizations of the components, we can freely glue these separate universal homeomorphisms into a global universal homeomorphism between all of $X_1$ and $X_2$. 
    \end{proof}

    \begin{remark} The proof of Theorem \ref{T: main reducible} is really the same as the proof of Theorem \ref{T: reducible schemes}; the only difference is that because the relics $\mathcal X_i^{hyp}$ are full, we can simultaneously reduce to the case that $\varphi$ is generically rational on \textit{all} components. In this case, we simply repeat the proof of Theorem \ref{T: reducible schemes} in the category of $K_2$-varieties instead of the category of schemes.
    \end{remark}

    \section{Proofs of the KLOS Speculations}

    We finish the paper with a very short section explicitly deriving the relevant questions asked in \cite{KLOS}:

    \subsection{Quasi-projective case}

    The following is \cite[Speculation 2.2.14]{KLOS} specialized to uncountable algebraically closed fields:

    \begin{proposition}\label{P: qp spec}
        Let $K_1,K_2$ be uncountable algebraically closed fields of characteristic zero, and let $X_i$ be an irreducible, normal, quasi-projective variety over $K_i$ of dimension at least 2. Let $\varphi:X_1\rightarrow X_2$ be a homeomorphism. Then $\varphi$ factors into a field isomorphism $K_1\rightarrow K_2$ followed by an isomorphism of $K_2$-varieties. 
    \end{proposition}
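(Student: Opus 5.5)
We derive this directly from Theorem \ref{T: main}. Since $X_1,X_2$ are irreducible, quasi-projective, of dimension at least 2, over uncountable algebraically closed fields, Theorem \ref{T: main} gives a field isomorphism $\sigma:K_1\rightarrow K_2$, a map $\psi:\sigma(X_1)\rightarrow X_2$ with $\varphi=\psi\circ\sigma$, and a universal homeomorphism $f:\widetilde{\sigma(X_1)}\rightarrow\widetilde{X_2}$ of $K_2$-varieties compatible with the normalization maps. Normality of $X_1$ gives normality of $\sigma(X_1)$, since $\sigma$ induces an isomorphism of schemes. Hence both vertical normalization maps are isomorphisms, which we take to be identities. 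The commuting square then says $\psi=f$, so $\psi$ is a universal homeomorphism $\sigma(X_1)\rightarrow X_2$ of $K_2$-varieties. It remains to show that such a morphism is an isomorphism in characteristic zero.

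For this step, let $f:V\rightarrow W$ be a universal homeomorphism of irreducible normal varieties over an algebraically closed field of characteristic zero. First, $f$ is bijective on closed points and dominant, so it is quasi-finite. It is also radicial (purely inseparable), by the equivalence cited after Theorem \ref{T: main}. In characteristic zero, the induced extension of function fields $K(W)\subset K(V)$ is finite, separable and purely inseparable, hence trivial. So $f$ is birational.

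Now $f$ is a quasi-finite birational morphism onto the normal variety $W$. By Zariski's main theorem (as used in the proof of Fact \ref{F: ZMT}, \cite[Corollaire 18.12.13]{ega4}), $f$ is an isomorphism onto an open subvariety of $W$. Since $f$ is surjective, that open subvariety is all of $W$, so $f$ is an isomorphism. This gives $\varphi=\psi\circ\sigma$ with $\psi$ an isomorphism of $K_2$-varieties, as claimed.

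The only step that needs care is the birationality claim. We argue it via generic bijectivity: $f$ is bijective, so its generic degree equals its separable degree, which is 1 in characteristic zero. This is the same mechanism as in Fact \ref{F: ZMT}.
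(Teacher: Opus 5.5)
Your proposal is correct and follows essentially the same route as the paper: apply Theorem \ref{T: main}, use normality to discard the normalizations, then show that the resulting universal homeomorphism $\sigma(X_1)\rightarrow X_2$ is birational in characteristic zero, hence an open immersion by Zariski's main theorem, hence an isomorphism by surjectivity. Your extra justification of birationality (a purely inseparable extension that is also separable is trivial, or equivalently generic degree equals separable degree, which is $1$) only spells out the paper's one-line remark that $f$ is automatically separable and therefore birational.
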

    \begin{proof}
        Theorem \ref{T: main} applies, except that the normalizations of the $X_i$ are isomorphisms (as the $X_i$ are already normal), so there is no need to lift to the normalizations. Thus, we write $\varphi$ as the composition of a field isomorphism $K_1\rightarrow K_2$ and a universal homeomorphism of $K_2$-varieties $f:\sigma(X_1)\rightarrow X_2$. But then in characteristic zero, $f$ is automatically separable, and thus is a birational morphism. By normality, it then follows by Zariski's Main Theorem (since $f$ is clearly quasi-finite) that $f$ is an open immersion. But $f$ is also surjective (as it is a homeomorphism), and a surjective open immersion is an isomorphism.  
    \end{proof}

    \subsection{Positive characteristic}

    The following generalizes \cite[Speculation 2.2.12]{KLOS} in the case of uncountable algebraically closed fields:

    \begin{proposition}
        Let $K_1,K_2$ be uncountable algebraically closed fields of different characteristics, and let $X_i$ be an irreducible variety of dimension at least 2 over $K_i$. Then $X_1$ and $X_2$ are not homeomorphic.
    \end{proposition}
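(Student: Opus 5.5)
Suppose toward a contradiction that $\varphi:X_1\rightarrow X_2$ is a homeomorphism, where $X_i$ is an irreducible variety of dimension at least 2 over $K_i$ and $\operatorname{char}(K_1)\neq\operatorname{char}(K_2)$. The plan is to run the machinery of Parts 1 and 2 unchanged, and to find a field isomorphism $\sigma:K_1\rightarrow K_2$, which cannot exist because field isomorphisms preserve characteristic.

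First we reduce to the quasi-projective setting of Convention \ref{Convention homeomorphism}. Choose a dense open affine $U_1\subset X_1$. Then $\varphi(U_1)$ is a dense open subset of $X_2$, so it contains a dense open affine $U_2$. Replacing $U_1$ by $\varphi^{-1}(U_2)\cap U_1$, which is still quasi-projective, we may assume both sides are irreducible quasi-projective varieties of dimension $\hat d\geq 2$ and that $\varphi$ is a homeomorphism between them. Dimension agrees on both sides because it is a topological invariant: it is the length of the longest chain of irreducible closed subsets. Next embed $X_1$ non-degenerately in some $\mathbb P^n(K_1)$. None of Sections 4 through 8 used any relation between the characteristics of $K_1$ and $K_2$. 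The only inputs were uncountability, algebraic closedness, and the topology. So Theorem \ref{T: definability of hyperplane images} applies, and $CH_2$ is $K_2$-definable.

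Now form $\mathcal X_1^{hyp}$ and its image $\mathcal X_2^{hyp}$. Exactly as in the claim inside the proof of Theorem \ref{T: reduction to definable case}, $\mathcal X_2^{hyp}$ is a $K_2$-relic. By Theorem \ref{T: full relic}, $\mathcal X_1^{hyp}$ is a full $K_1$-relic, and by Fact \ref{F: isomorphism theorem}(1), $\mathcal X_2^{hyp}$ is full as well. Fact \ref{F: isomorphism theorem}(2) then writes $\varphi$ as a definable map composed with a map induced by a field isomorphism $K_1\rightarrow K_2$. That isomorphism is impossible, so we have our contradiction.

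The main obstacle is checking that the facts quoted from the literature do not silently assume equal characteristic. The Zilber trichotomy for relics and the isomorphism theorem for full relics are stated for separate fields $K_1,K_2$, so we take this as given. If one wants to avoid relying on that, there is a more self-contained route. By Fact \ref{F: trichotomy}(1), $\mathcal X_1^{hyp}$ interprets a field $F$ that is $K_1$-definably isomorphic to $K_1$, so $F$ has characteristic $\operatorname{char}(K_1)$. Transporting the interpretation through the isomorphism $\varphi$ of structures produces a field interpreted in the $K_2$-relic $\mathcal X_2^{hyp}$ and isomorphic to $F$. By Fact \ref{F: trichotomy}(1) again, that field is $K_2$-definably isomorphic to $K_2$, so it has characteristic $\operatorname{char}(K_2)$. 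Since the two fields are isomorphic, the characteristics are equal, which is the contradiction. I would present this second argument, since it uses only the existence clause of the trichotomy and is transparently characteristic-sensitive.
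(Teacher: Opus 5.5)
Your argument is correct. Your first route is essentially the paper's: the paper just cites Theorem \ref{T: main}, whose $\sigma$ comes from Theorem \ref{T: reduction to definable case}. Your preferred second route is genuinely more economical, and it also fills a small gap in the paper.

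\textbf{The quasi-projective reduction.} The paper applies Theorem \ref{T: main}, which is stated only for quasi-projective varieties, to an arbitrary irreducible variety without comment. Your reduction to a dense open affine $U_2\subset\varphi(U_1)$, with $\varphi^{-1}(U_2)$ then quasi-affine, supplies the missing step.

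\textbf{What the second route avoids.} The paper's route passes through fullness of $\mathcal X_1^{hyp}$ (the internality analysis of Section 9), the isomorphism theorem, and even the normalization analysis of Sections 11--12. None of this is needed to compare characteristics. Your argument uses only three inputs:
\begin{itemize}
\item Part 1, so that $\mathcal X_2^{hyp}$ is a $K_2$-relic;
\item non-1-basedness of $\mathcal X_1^{hyp}$;
\item clause (1) of Fact \ref{F: trichotomy}.
\end{itemize}
It actually yields $K_1\cong F\cong F'\cong K_2$, so it recovers an abstract field isomorphism without fullness, not merely equality of characteristics. The paper's route buys brevity once the main theorem is in hand. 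Yours isolates exactly which layer of the machinery detects the characteristic.

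\textbf{Two citation corrections.}
\begin{itemize}
\item The existence of $F$ does not follow from Fact \ref{F: trichotomy}(1) alone. It needs Proposition \ref{P: 1-based}, equivalently Corollary \ref{C: interpretable field}, so cite that explicitly.
\item The argument is not using ``only the existence clause.'' It uses both halves of Fact \ref{F: trichotomy}(1): the Castle--Hasson--Ye existence result on the $K_1$ side, and Poizat's identification of interpretable fields with the ambient field on both sides. What it does avoid is clause (2) and Fact \ref{F: isomorphism theorem}.
\end{itemize}
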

    \begin{proof}
        Theorem \ref{T: main} in particular gives a field isomorphism $K_1\rightarrow K_2$, which means the $K_i$ have the same characteristic. 
    \end{proof}

    Then the following generalizes \cite[Speculation 2.2.16]{KLOS} in the case of uncountable algebraically closed fields:

    \begin{proposition}
        Let $K_1,K_2$ be uncountable algebraically closed fields, and let $X_i$ be an irreducible, normal, quasi-projective variety over $K_i$ of dimension at least 2. Let $\varphi:X_1\rightarrow X_2$ be a homeomorphism. Then $\varphi$ factors into a field isomorphism $K_1\rightarrow K_2$ followed by a purely inseparable morphism of $K_2$-varieties. 
    \end{proposition}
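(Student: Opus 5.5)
We will deduce this directly from Theorem \ref{T: main}, in the same way as Proposition \ref{P: qp spec}, except that we no longer use characteristic zero to upgrade the universal homeomorphism to an isomorphism. Theorem \ref{T: main} applies to $\varphi$, since the $X_i$ are irreducible, quasi-projective, of dimension at least 2, and over uncountable algebraically closed fields. It gives a field isomorphism $\sigma:K_1\rightarrow K_2$, a universal homeomorphism $f:\widetilde{\sigma(X_1)}\rightarrow\widetilde{X_2}$ of $K_2$-varieties, and a homeomorphism $\psi:\sigma(X_1)\rightarrow X_2$. These fit into the commuting square with the normalization maps, and satisfy $\varphi=\psi\circ\sigma$.

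The first step is to remove the normalizations. Since $X_1$ is normal, so is its base change $\sigma(X_1)$; normality is preserved because $\sigma(X_1)$ is isomorphic to $X_1$ as an abstract scheme. Hence the normalization maps $\widetilde{\sigma(X_1)}\rightarrow\sigma(X_1)$ and $\widetilde{X_2}\rightarrow X_2$ are isomorphisms. Identifying each normalization with the variety itself, commutativity of the square shows that $\psi$ is the underlying map of the morphism $f$ (conjugated by these isomorphisms). So $\psi$ is a universal homeomorphism $\sigma(X_1)\rightarrow X_2$ of $K_2$-varieties.

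The second step is to record that a universal homeomorphism of varieties is a purely inseparable (radicial) morphism. As noted in the introduction after Theorem \ref{T: main}, this follows from Zariski's main theorem and the cited Stacks Project tags: a morphism inducing a homeomorphism is a universal homeomorphism if and only if it is radicial. Thus $\varphi=\psi\circ\sigma$ is a field isomorphism followed by a purely inseparable morphism of $K_2$-varieties, as required.

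There is no real obstacle here. The only point needing care is the identification of $\psi$ with $f$ after removing the normalizations. This is just the commutativity of the square together with the vertical maps being isomorphisms, so that $\psi=\nu_{X_2}\circ f\circ\nu_{\sigma(X_1)}^{-1}$ is a composite of morphisms.
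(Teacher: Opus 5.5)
Your proposal is correct and matches the paper's proof: apply Theorem \ref{T: main}, note that the normalization maps are isomorphisms because $X_2$ and $\sigma(X_1)$ are normal, so $\psi$ is itself a universal homeomorphism, and then use that a universal homeomorphism is radicial, i.e.\ purely inseparable. You also spell out explicitly the identification $\psi=\nu_{X_2}\circ f\circ\nu_{\sigma(X_1)}^{-1}$, which the paper leaves implicit.
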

    \begin{proof}
        As in the Proposition \ref{P: qp spec}, we may apply Theorem \ref{T: main} without passing to the normalizations. So we write $\varphi=f\circ\sigma$ where $\sigma$ is induced by a field isomorphism and $f$ is a universal homeomorphism of $K_2$-varieties. Finally, any such morphism is purely inseparable.
    \end{proof}

    \subsection{Reducible varieties}

    The following generalizes \cite[Speculation 2.2.10]{KLOS} in the case of uncountable algebraically closed fields:

    \begin{proposition}
                Let $K_1,K_2$ be uncountable algebraically closed fields, and let $X_i$ be a quasi-projective variety over $K_i$, all of whose irreducible components have dimension at least 2. Let $\varphi:X_1\rightarrow X_2$ be a homeomorphism. Then $\varphi$ lifts uniquely to a homeomorphism of normalizations $\widetilde\varphi:\widetilde{X_1}\rightarrow\widetilde{X_2}$. 
    \end{proposition}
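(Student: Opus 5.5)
Existence of the lift comes straight from Theorem \ref{T: reducible schemes}. That result gives a universal homeomorphism of normalized schemes $\widetilde{X_1}\rightarrow\widetilde{X_2}$ lifting $\varphi$. Restricting to closed points gives a homeomorphism $\widetilde\varphi$ with $\nu_{X_2}\circ\widetilde\varphi=\varphi\circ\nu_{X_1}$. So the real content is \emph{uniqueness}, and the plan is to show that any two homeomorphic lifts must coincide.

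Let $\psi_1,\psi_2:\widetilde{X_1}\rightarrow\widetilde{X_2}$ be two homeomorphisms lifting $\varphi$. Then $\theta:=\psi_2^{-1}\circ\psi_1$ is a self-homeomorphism of $\widetilde{X_1}$ satisfying $\nu_{X_1}\circ\theta=\nu_{X_1}$. It suffices to show $\theta=\operatorname{id}$.

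The idea is that $\nu_{X_1}$ is finite and birational on each component. Hence there is a dense open $U\subset X_1$, namely the normal locus on each component, minus the pairwise intersections of components, over which $\nu_{X_1}$ is bijective. On $\widetilde U:=\nu_{X_1}^{-1}(U)$ the condition $\nu_{X_1}\circ\theta=\nu_{X_1}$ forces $\theta(x)=x$, because each fiber over $U$ is a single point. Thus $\theta$ is the identity on the dense open set $\widetilde U$.

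The remaining step, which I expect to be the main obstacle, is to pass from agreement on a dense open set to agreement everywhere. Zariski topologies are not Hausdorff, so ``equal on a dense set implies equal'' fails for arbitrary continuous maps. I would use closedness of points together with the finiteness of fibers.

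Fix $x\in\widetilde{X_1}$ and suppose $\theta(x)=x'\neq x$. Both points lie in the finite fiber $\nu_{X_1}^{-1}(\nu_{X_1}(x))$. By Fact \ref{F: curve through tuple}, applied on the component of $\widetilde{X_1}$ containing $x$ (irreducible of dimension $\ge 2$), choose a closed irreducible $V$ through $x$ that meets $\widetilde U$ and avoids the other points of that fiber, in particular $x'$. To get $V$ meeting $\widetilde U$, include a point of $\widetilde U$ in the set $A$ that $V$ is required to pass through.

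Since $V$ is irreducible and meets the open $\widetilde U$, the set $V\cap\widetilde U$ is dense in $V$, and $\theta$ fixes it pointwise. By continuity, $\theta(V)\subset\overline{\theta(V\cap\widetilde U)}=\overline{V\cap \widetilde U}=V$. Hence $x'=\theta(x)\in V$, contradicting the choice of $V$. Therefore $\theta=\operatorname{id}$, which gives $\psi_1=\psi_2$ and proves the proposition.
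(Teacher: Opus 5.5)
Your proof is correct, and for uniqueness it takes a genuinely different route from the paper's.

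The paper applies Theorem~\ref{T: reducible schemes} a second time, now to the two lifts themselves; since their source and target are already normal, this shows both lifts underlie universal homeomorphisms of schemes. It then says that two morphisms agreeing on the dense open locus where the normalization is an isomorphism must coincide. You never upgrade the lifts to morphisms. Instead you pass to the self-map $\theta=\psi_2^{-1}\circ\psi_1$ of $\widetilde{X_1}$ over $X_1$, note that it fixes $\widetilde U$ pointwise, and use Fact~\ref{F: curve through tuple} with continuity ($\theta(V)\subset\overline{V\cap\widetilde U}=V$) to rule out $\theta(x)\neq x$.

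What your route buys:
\begin{itemize}
\item Uniqueness becomes an elementary topological statement, independent of all the model-theoretic machinery. Comparing with the lift from the existence part, it even shows that every \emph{continuous} lift equals that one.
\item It shows exactly where $\dim\ge 2$ is needed. For a nodal curve, swapping the two points over the node gives a second homeomorphic lift, and this is precisely where Fact~\ref{F: curve through tuple} fails.
\item It sidesteps a point the paper's final sentence passes over quickly. The two lifts are morphisms of schemes that may be twisted by different field isomorphisms, and they are only known to agree on the open set as maps of points. So the usual rigidity lemma for morphisms over a common base does not apply verbatim; recall that the absolute Frobenius is a non-identity morphism that is the identity on points.
\end{itemize}
The paper's route is, of course, a one-liner once the main theorem is available.

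One cosmetic point: Fact~\ref{F: curve through tuple} needs $B\subset W=\widetilde C$. So take $B=\{x'\}$ if $x'\in\widetilde C$, and $B=\emptyset$ otherwise, since then $V\subset\widetilde C$ misses $x'$ automatically. In particular, finiteness of the fibers is not actually needed.
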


    \begin{proof}
        We showed existence of the lift in Theorem \ref{T: reducible schemes}. For uniqueness, let $\widetilde\varphi_1,\widetilde\varphi_2$ be two such lifts. Applying Theorem \ref{T: reducible schemes} to the $\widetilde\varphi_i$ shows that each $\widetilde\varphi_i$ is a universal homeomorphism of schemes (i.e. $\widetilde\varphi_i$ lifts to a universal homeomorphism of normalizations, but since the $\widetilde{X_i}$ are already normal, the $\widetilde\varphi_i$ are thus already universal homeomorphisms). Finally, the $\widetilde\varphi_i$ agree on a dense open subset of $X_1$ (i.e. the locus where the normalizations are isomorphisms), and since they are both morphisms, it follows that the $\widetilde\varphi_i$ are equal.
    \end{proof}

\noindent\textit{Acknowledgments:} We thank Martin Olsson and Tom Scanlon for making us aware of the topic of this paper. We also thank Martin Olsson, Tom Scanlon, Assaf Hasson, and Lou van den Dries for helpful discussions on the material.

\bibliography{References}
\bibliographystyle{plain}

\end{document}